\newtheorem{theorem}{Theorem}[section] %(If you want theorem numbered
\newtheorem{lemma}[theorem]{Lemma}%               with section number.  Same
\newtheorem{corollary}[theorem]{Corollary}%       goes for lemmas, etc.)
\newtheorem{proposition}[theorem]{Proposition}
\newtheorem{algorithm}[theorem]{Algorithm}
\theoremstyle{definition}
\newtheorem{example}{Example}
\newtheorem{definition}{Definition}
\theoremstyle{remark}
\newtheorem{remark}{Remark}
\newtheorem*{ack}{Acknowledgements}
\begin{document}
\title[Derivatives and Integrals]{Algorithms for parabolic inductions and Jacquet modules in $\mathrm{GL}_n$}
\author[Chan and Pattanayak]{Kei Yuen Chan and Basudev Pattanayak}
\address{Department of Mathematics, University of Hong Kong, Hong Kong}
\email{kychan1@hku.hk and pbasudev93@gmail.com}
\subjclass{22E50}
\keywords{Parabolic inductions, Jacquet modules, Jantzen-M\'inguez algorithm, M\oe glin-Waldspurger algorithm, highest derivative multisegments}
\date{}
\begin{abstract}
In this article, we present algorithms for computing parabolic inductions and Jacquet modules for the general linear group $G$ over a non-Archimedean local field. Given the  Zelevinsky data or Langlands data of an irreducible smooth representation $\pi$ of $G$ and an essentially square-integrable representation $\sigma$, we explicitly determine the Jacquet module of $\pi$ with respect to $\sigma$ and the socle of the normalized parabolic induction $\pi \times \sigma$. Our result builds on and extends some previous work of M\oe glin-Waldspurger, Jantzen, M\'inguez, and Lapid-M\'inguez, and also uses other methods such as sequences of derivatives and an exotic duality. As an application, we give a simple algorithm for computing the highest derivative multisegment and an algorithm for computing the Langlands parameter of the highest Bernstein-Zelevinsky derivatives.
\end{abstract}
\maketitle
%
%\tableofcontents
%
%%%%%%%%%%%%%%%%%%%%%%%%%%%%%%%%%%%%%%%% MAIN PART %%%%%%%%%%%%%%%%%%%%%%%%%%%%%%%%%%%%%%%%%%
%
\section{Introduction}

Let $\mathrm{GL}_n(F)$ be the general linear group over a non-archimedean local field $F$. The smooth representation theory of $\mathrm{GL}_n(F)$ is primarily shaped by two essential tools: parabolic inductions and Jacquet modules. These two functors have been studied long in the literature and have given resolutions to several classical problems, including the classification of irreducible representations \cite{Zel}, unitary dual problem \cite{Ta86, LM16}, Zelevinsky dual \cite{MW, Jan}, theta correspondence \cite{Mi08}, and the branching laws \cite{Ch22}. Even investigation on their own properties, such as irreducibility of parabolic inductions \cite{BLM13, LM16, LM19, LM_inven, LM_pamq}, composition factors of parabolic inductions \cite{Ta15, Gu21}, and homological properties of the functors \cite{Ch24}, is also interesting and has connections to other subjects.

Our primary goal is to establish some efficient computational tools for problems on branching laws and Bernstein-Zelevinsky derivatives (see  Section \ref{ss application} for more discussions). This relies on two essential notions: derivatives from Jacquet functors and integrals from parabolic inductions. We shall now introduce more notations to explain our results.

%In this paper, we investigate these fundamental components from a combinatorial perspective. Our goal is twofolds: one is to explain some hidden ingredients in the study of simple quotients of Bernstein-Zelevinsky derivatives and another is to provide explicit practical means to determine the quotient branching laws in \cite{Cha_qbl}. 

\subsection{Notion of some representations}
In his work \cite{Zel}, Zelevinsky classifies all irreducible smooth complex representations of $\mathrm{GL}_n(F)$ in terms of combinatorial objects known as multisegments. These multisegments consist of a finite number of segments attached to some irreducible supercuspidal representation, along with a pair of integers. Each segment $\Delta$ can be associated with a segment representation $\langle \Delta \rangle$ and a generalized Steinberg representation $\mathrm{St}(\Delta)$ through parabolic induction. For any irreducible smooth complex representation $\pi$ of $\mathrm{GL}_n(F)$, there exists a multisegment $\mathfrak{m}$ such that $\pi$ takes the form $Z(\mathfrak{m})$ (or $L(\mathfrak{m}$)), which is the unique irreducible submodule of the parabolic induction of tensor product of $\langle \Delta \rangle$ (resp. $\mathrm{St}(\Delta)$) for $\Delta \in \mathfrak{m}$ in a certain order. Refer to Sections \ref{sec:seg} and \ref{sec:LZ_classification} for more details of the notations.

\subsection{Notion of derivatives}
Let $\pi$ be an irreducible smooth representation of $\mathrm{GL}_n(F)$ and $\sigma$ be an essentially square integrable representation of $\mathrm{GL}_\ell(F)$ for $\ell < n$. Then $\sigma=\mathrm{St}(\Delta)$ for some segment $\Delta$. Let $N_\ell \subset \mathrm{GL}_n(F)$ be the unipotent radical of the standard parabolic subgroup corresponding to the partition $(n-\ell, \ell)$, i.e. $N_\ell$ is the unipotent subgroup containing matrices of the form $\begin{pmatrix}
	I_{n-\ell} & u\\
	& I_\ell
\end{pmatrix}$, 
where $u$ is a $(n-\ell) \times \ell$ matrix over $F$. There exists at most one irreducible smooth representation $\tau$ of $\mathrm{GL}_{n-\ell}(F)$ such that 
\begin{equation*}\label{def:right_der}
	\tau \boxtimes \sigma   \hookrightarrow \mathrm{Jac}_{N_\ell}(\pi)  \quad( \text{alternatively, } \mathrm{Jac}_{N_\ell}(\pi) \twoheadrightarrow \tau \boxtimes \sigma ), 
\end{equation*}
where $\mathrm{Jac}_{N_\ell}(\pi)$ is the normalized Jacquet module of $\pi$ associated to $N_\ell$. If such $\tau$ exists, that $\tau$ is called the derivative of $\pi$ under $\mathrm{St}(\Delta)$ and is denoted by $\mathrm{D}^\mathrm{R}_{\Delta}(\pi)$. If no such $\tau$ exist, we set $\mathrm{D}^\mathrm{R}_{\Delta}(\pi)=0$. Similarly, there exists at most one irreducible smooth representation $\tau^\prime$ of $\mathrm{GL}_{n-\ell}(F)$ such that $\sigma \boxtimes \tau^\prime   \hookrightarrow \mathrm{Jac}_{N_{n-\ell}}(\pi)$.  The left derivative $\mathrm{D}^\mathrm{L}_{\Delta}(\pi)$ is defined as $\tau^\prime$ if such $\tau^\prime$ exists;  otherwise, we set $\mathrm{D}^\mathrm{L}_{\Delta}(\pi)=0$.  The derivatives under irreducible supercuspidal representations $\rho$ are called $\rho$-derivatives, and under essentially square integrable representations $\mathrm{St}(\Delta)$ are called St-derivatives.

We remark that in some contexts e.g. \cite{Jan} (also see \cite{Jan14, Xu17, Jan18, Ato, Ta22} for other classical groups), the notion of derivatives is (roughly) defined to be a collection of the composition factors of the form $\tau \boxtimes \sigma$ appearing in the semisimplification of $\mathrm{Jac}_{N_{\ell}}(\pi)$, which is related but different notion from the one used in this article. However, when one considers the so-called highest $\rho$-derivatives, those notions coincide with certain multiplicity one results \cite{Jan, Min}. The notion we use here fits our needs better in the applications discussed below. We also remark that in general, those factors $\tau\boxtimes \sigma$ may not appear in a submodule or quotient of $\mathrm{Jac}_{N_{\ell}}(\pi)$, and some of such higher structure issue is studied in \cite{Ch24}.

The complexity of the algorithms presented in this article is about sorting segments in certain orderings. It is quite computable by hand, even in some large cases (see examples given in the article), and we hope this can give insights and is more applicable to the problems mentioned. We also remark that for all composition factors in parabolic inductions and Jacquet modules, it can in general be computed by (variations of) Kazhdan-Lusztig algorithms, but such information does not directly give much information on socles and cosocles, as our algorithms do. 

\subsection{Notion of integrals}
Let $\pi$ be an irreducible smooth representation of $\mathrm{GL}_n(F)$ and let $\sigma=\mathrm{St}(\Delta)$ be an essentially square integrable representation of $\mathrm{GL}_\ell(F)$ for some segment $\Delta$. Then, there exists a unique simple submodule  
\[\mathrm{I}^\mathrm{R}_{\Delta}(\pi)  \hookrightarrow \pi \times \mathrm{St}(\Delta) ~ \big(\text{resp. } \mathrm{I}^\mathrm{L}_{\Delta}(\pi)  \hookrightarrow \mathrm{St}(\Delta) \times \pi \big)\]
of the normalized parabolic induction $\pi \times \mathrm{St}(\Delta)$ (resp. $\mathrm{St}(\Delta) \times \pi$). We call $\mathrm{I}^\mathrm{R}_{\Delta}(\pi)$ (resp. $\mathrm{I}^\mathrm{L}_{\Delta}(\pi)$) the right (resp. left) integral of $\pi$ under $\mathrm{St}(\Delta)$. According to \cite[Corollary 2.4]{LM19} and the second adjointness of parabolic induction, we have: 
\begin{equation*}\label{rel:der_int}
	\mathrm{D}^\mathrm{R}_{\Delta} \circ \mathrm{I}^\mathrm{R}_{\Delta}(\pi)\cong \pi \text{ and if } \mathrm{D}^\mathrm{R}_{\Delta}(\pi) \neq 0,  \mathrm{I}^\mathrm{R}_{\Delta} \circ \mathrm{D}^\mathrm{R}_{\Delta}(\pi)\cong \pi.    
\end{equation*}
A similar result holds for left derivatives and left integrals. When $\sigma=\mathrm{St}(\Delta)$ is a supercuspidal representation $\rho$, the integrals $\mathrm{I}^\mathrm{R/L}_{\Delta}(\pi)$ under $\sigma$ are called $\rho$-integral.

The problem of determining when $\mathrm{I}^{\mathrm R}_{\Delta}(\pi)\cong \mathrm{I}^{\mathrm L}_{\Delta}(\pi)$ is explored in \cite{LM16}, and also in a series of work of \cite{LM19,LM_inven,LM_pamq} for more general situation of $\square$-irreducible representations.

\subsection{Main results} \label{ss main results}
Let $\pi$ be an irreducible smooth representation of $\mathrm{GL}_n(F)$ and let  $\sigma=\mathrm{St}(\Delta)$, which is a generalized Steinberg representation of $\mathrm{GL}_\ell(F)$ for some segment $\Delta$. Then, there exist multisegments $\mathfrak{m}$, and $\mathfrak{n}$ such that $\pi=L(\mathfrak{m})$ (in Langlands classification) and $\pi=Z(\mathfrak{n})$ (in  Zelevinsky classification).  By Algorithm \ref{alg:der:Lang} and Algorithm \ref{alg:int:Lang} (resp. Algorithm \ref{alg:der_Zel} and Algorithm \ref{alg:int:Zel}),  we can attach multisegments $\mathcal{D}^\mathrm{Lang}_\Delta(\mathfrak{m})$ and $\mathcal{I}^\mathrm{Lang}_\Delta(\mathfrak{m})$
(resp. $\mathcal{D}^\mathrm{Zel}_\Delta(\mathfrak{n})$ and $\mathcal{I}^\mathrm{Zel}_\Delta(\mathfrak{n})$) respectively to the multisegment $\mathfrak{m}$ (resp. $\mathfrak{n}$). We then show the following main results of this paper:

\begin{enumerate}
	\item For derivatives in Langlands classification: \[\mathrm{D}^\mathrm{R}_\Delta(L(\mathfrak{m}))\cong \begin{cases}
		L \left(\mathcal{D}^\mathrm{Lang}_\Delta(\mathfrak{m})\right) &\mbox{ if } \mathcal{D}^\mathrm{Lang}_\Delta(\mathfrak{m}) \neq \infty\\
		0 &\mbox{ otherwise},
	\end{cases}\]
	and for derivatives in Zelevinsky classification: \[\mathrm{D}^\mathrm{R}_\Delta(Z(\mathfrak{n}))\cong \begin{cases}
		Z \left(\mathcal{D}^\mathrm{Zel}_\Delta(\mathfrak{n})\right) &\mbox{ if } \mathcal{D}^\mathrm{Zel}_\Delta(\mathfrak{n}) \neq \infty\\
		0 &\mbox{ otherwise}.
	\end{cases}\]
	\item For integrals in Langlands classification: $\mathrm{I}^\mathrm{R}_\Delta(L(\mathfrak{m}))\cong
		L \left(\mathcal{I}^\mathrm{Lang}_\Delta(\mathfrak{m})\right),$  and for integrals in Zelevinsky classification:  $\mathrm{I}^\mathrm{R}_\Delta(Z(\mathfrak{n})) \cong 
		L \left(\mathcal{I}^\mathrm{Zel}_\Delta(\mathfrak{n})\right) .$
\end{enumerate}
Here, we denote $\mathcal{D}^\mathrm{Lang}_\Delta(\mathfrak{m})$ or $\mathcal{D}^\mathrm{Zel}_\Delta(\mathfrak{n})$ as $\infty$  if some steps of the respective algorithms fail to construct the multisegment $\mathcal{D}^\mathrm{Lang}_\Delta(\mathfrak{m})$ or $\mathcal{D}^\mathrm{Zel}_\Delta(\mathfrak{n})$. A similar result holds for left derivatives and left integrals in both classifications.

The algorithms are mainly formulated in terms of linked relations of segments, which is probably not so surprising, as already seen in the work of \cite{MW, Jan, Min, LM16}. Our results can be viewed as extensions of theirs, and for more comparison of results/methods in \cite{MW, Jan, Min, LM16}, see Remarks \ref{rmk lang alg matching} and \ref{rmk zel alg mw}. However, as seen in this article, it takes much effort to obtain our formulation. We shall explain some of our key inputs in the following section.

\subsection{Methods of proofs}

We shall use rather different perspectives to deal with each case of the main results in Section \ref{ss main results}. We now highlight some key ingredients of our proofs:
\begin{enumerate}
 \item (Derivative algorithm for Langlands classification, Section \ref{sec:der_Lang}) We exploit the commutativity of derivatives and sequences of derivatives to reduce to $\rho$-derivatives. The representation-theoretic counterpart of such ideas is studied in \cite{Cha_csq}.
 \item (Derivative algorithm for Zelevinsky classification, Section \ref{sec:der_Zel}) We use the M\oe glin-Waldspurger (MW) algorithm as a basic case and introduce the notion of minimally linked multisegments to systematically study the combinatorics arising from multiple MW algorithms.
 \item (Integral algorithm for Langlands classification, Section \ref{sec:int}) We establish an exotic duality between the right integral algorithm and the left derivative algorithm. This duality allows one to transfer properties between two algorithms and then use those properties to reduce to the case of $\rho$-integrals. Here we refer the duality to be exotic because it comes from the left derivative and right integral, which have no obvious relation from a representation-theoretic viewpoint.
 \item (Integral algorithm for Zelevinsky classification, Section \ref{sec:int_Zel}) The proof uses an idea of gluing minimally linked multisegments, explained in Appendix \ref{s glue minimal multiseg}, in addition to the MW algorithm. 
\end{enumerate}

\subsection{Applications/motivations} \label{ss application}

\begin{enumerate}
\item Let $\pi$ be an irreducible smooth representation of $\mathrm{GL}_n(F)$ and $\Delta$ be a segment. Define $\varepsilon^\mathrm{R}_{\Delta}(\pi)$ to be the largest non-negative integer $k$
 such that $\left(\mathrm{D}^\mathrm{R}_{\Delta}\right)^k(\pi) \neq 0$. For a segment $[a,b]_\rho$, define the (right) $\eta$-invariant by \[\eta^\mathrm{R}_{[a,b]_\rho}(\pi)=\left(\varepsilon^\mathrm{R}_{[a,b]_\rho}(\pi),\varepsilon^\mathrm{R}_{[a+1,b]_\rho}(\pi),...,\varepsilon^\mathrm{R}_{[b,b]_\rho}(\pi)\right)\] 
Let $\Delta, \Delta^\prime$ be two segments. A triple $\left(\Delta, \Delta^\prime, \pi\right)$ is called combinatorially RdLi-commutative if $\mathrm{D}^\mathrm{R}_\Delta(\pi) \neq 0$ and $\eta^\mathrm{R}_{\Delta}\left(\mathrm{I}^\mathrm{L}_{\Delta^\prime}(\pi)\right)=\eta^\mathrm{R}_{\Delta}(\pi)$. The notion of generalized GGP relevant pair in \cite{Cha_duality} is an extension of this notion to a multisegment version. Results in this article allow one to check the GGP relevance condition, and, for example, are practical to recover some classical known branching laws such as generic representations. While the algorithms in this article are sufficient to determine quotient branching law in finite processes, one can still improve the efficiency of determining quotient branching laws by incorporating the ideas in \cite{Cha_qbl}. We hope to address this elsewhere.
\item It is shown in \cite{Cha_qbl} and  \cite{Cha_csq} that a sequence of derivatives of essentially square-integrable representations can be used to compute simple quotients of Bernstein-Zelevinsky derivatives of irreducible representations. It is also shown in \cite{Cha_csq} that those simple quotients can be classified in terms of the highest derivative multisegment and removal process (see Section \ref{ss hd removal}). As a consequence of our study, we also explain a simple algorithm to compute the highest derivative multisegment of an irreducible representation in Section \ref{s alg hd der}, and hence provide an effective solution to that classification problem.
\item Using \cite{CW25}, one can also compute explicitly simple quotients and submodules of certain parabolically induced modules and simple quotients of some translation functors for $\mathrm{GL}_n(\mathbb C)$. Those algebraic structures also have applications to branching laws. One may expect to obtain similar applications for $\mathrm{GL}_n(\mathbb R)$ via the Schur-Weyl duality constructed in \cite{CT12}.
\end{enumerate}

\begin{ack}
Some of the results are announced in Tianyuan Conference on Real Reductive Groups and Theta Correspondence in August 2024. The authors would like to thank the organizers Ning Li, Jiajun Ma, Binyong Sun, and Lei Zhang for their kind invitations. This project is supported in part by the Research Grants Council of the Hong Kong Special Administrative Region, China (Project No: 17305223, 17308324 ) and the National Natural Science Foundation of China (Project No. 12322120). 
\end{ack}

%%%%%%%%%%%%%%%%%%%%%%%%%%%%%% Preliminaries  %%%%%%%%%%%%%%%%%%%%%%%%%%%%%%%%%%%%%%%%%
%
\section{Preliminaries}\label{prelim}
Let $F$ be a non-archimedean local field with normalized
 absolute value $|\cdot|_F$. For every integer $n \geq 0$, let $G_n=\mathrm{GL}_n(F)$, where $G_0$ is considered as the trivial group. The character $\nu_n:G_n \rightarrow \mathbb{C}^\times$ is defined by $\nu_n(g)=|\mathrm{det}(g)|_F$ for $g \in G_n$. For any integer $n \geq 0$, let $\mathrm{Rep}(G_n)$ be the category of smooth complex representations of $G_n$ of finite length and let $\mathrm{Irr}(G_n)$ be the set of irreducible objects of $\mathrm{Rep}(G_n)$ up to equivalence. For every integer $n \geq 1$, let $\mathrm{Irr}^c(G_n)$ be the set of irreducible supercuspidal representations of $G_n$. We set
 \[ ~\mathrm{Irr}=\bigsqcup\limits_{n\geq 0}\mathrm{Irr}(G_n), \text{ and } \mathrm{Irr}^c=\bigsqcup\limits_{n \geq 1}\mathrm{Irr}^c(G_n).\]
Let $P=LN$ be a standard parabolic subgroup of $G_n$, where the Levi subgroup $L$ is isomorphic to $G_{n_1} \times \cdots \times G_{n_r}$ for some composition $n=n_1+\cdots+n_r$. Let $\pi_i$ be a smooth representation of $G_{n_i}$ for $1 \leq i \leq r$ and let $\pi$ denote a smooth representation of $G_n$. The normalized parabolic-induced representation is denoted by
\[\pi_1 \times \cdots \times \pi_r = \mathrm{Ind}^{G_n}_{P} (\pi_1 \boxtimes \cdots \boxtimes \pi_r),\]
and the normalized Jacquet module of $\pi$ with respect to $P$ is denoted by
\[\mathrm{Jac}_N(\pi)= \frac{\delta^{-\frac{1}{2}}\cdot \pi}{\mathrm{span}\{ x \cdot v -v \mid x \in N, v \in \pi\}},\] where $\delta$ is the modular character of $P$. For $\pi \in \mathrm{Rep}(G_n)$, the socle of $\pi$, denoted by $\mathrm{soc}(\pi)$, is the maximal semisimple subrepresentation of $\pi$ and the cosocle of $\pi$, denoted by $\mathrm{cosoc}(\pi)$, is the maximal semisimple quotient of $\pi$. %For $\pi \in \mathrm{Rep}$, $\pi$ is called socle irreducible (abbreviated as `SI') if $\mathrm{soc}(\pi)$ is irreducible and occurs with multiplicity one in the semisimplification $[\pi]$. 
\subsection{Segments and multisegments}\label{sec:seg}
We recall the notion of segments and multisegments introduced in \cite{Zel}.  Let $a,b \in \mathbb{Z}$ such that $b-a \in \mathbb{Z}_{\geq 0}$ and let $\rho \in \mathrm{Irr}^c(G_k)$. A segment in the cuspidal line $\rho$ is denoted either by a void set or by $\Delta = [a,b]_\rho$, which is essentially the set $\{ \nu^a \rho, \nu^{a+1}\rho, ..., \nu^b \rho \}$ with the character $\nu=\nu_k$. The segment  $[a,a]_\rho$ is written as $[a]_\rho$. We denote $\mathrm{Seg}$ for the set of all segments and $\mathrm{Seg}_\rho$ for the set of segments in the cuspidal line $\rho$. We set $[a,a-1]_\rho=\emptyset$ for $a \in \mathbb{Z}$.  For a segment $\Delta = [a,b]_\rho$, the starting (or begining) element $ \nu^a \rho$ is denoted by $s(\Delta)$, and the ending element $\nu^b \rho$ is denoted by $e(\Delta)$. The relative length of $\Delta = [a,b]_\rho$ is denoted by $\ell_{rel}(\Delta)=b- a + 1$. By convention, the length of the void segment is $0$. Two non-void segments $\Delta$ and $\Delta^\prime$ are said to be linked if  $\Delta \nsubseteq \Delta^\prime$, $\Delta^\prime \nsubseteq \Delta$ and  $\Delta \cup \Delta^\prime$ remains a segment. If not, they are considered unlinked or not linked. For two linked segments $\Delta=[a,b]_\rho$ and $\Delta^\prime=[a^\prime,b^\prime]_\rho$,  $\Delta$ is said to precede $\Delta^\prime$ if $a < a^\prime$, $b < b^\prime$ and $a^\prime \leq b+1$. If $\Delta$ precedes $\Delta^\prime$, we denote $\Delta \prec \Delta^\prime$. If $\Delta = [a,b]_\rho$ is a non-void segment, we define \[\Delta^+ =[a,b+1]_\rho,~\Delta^- = [a,b-1]_\rho,~ {^+}\Delta =[a-1,b]_\rho, \text{ and }{^-}\Delta = [a+1,b]_\rho\] with
the convention that $\Delta^-$ and ${^-}\Delta$ are void if $a=b$.

A multisegment, denoted as $\mathfrak{m}=\{\Delta_1,...,\Delta_r\}$, a multiset of non-void segments and is represented as $\mathfrak{m}=\Delta_1+\cdots +\Delta_r$. Let $\mathrm{Mult}$ be the set of all multisegments and $\mathrm{Mult}_\rho$ be the set of those multisegments consisting of segments in the cuspidal line $\rho$. The relative length of a multisegment $\mathfrak{m} \in \mathrm{Mult}_\rho$ is defined by $\ell_{rel}(\mathfrak{m})=\sum_{\Delta \in \mathfrak{m}}\ell_{rel}(\Delta)$ and is $0$ if $\mathfrak{m}$ is void. For a multisegment $\mathfrak{m}$, the number of non-void segments in $\mathfrak{m}$ is denoted by $|\mathfrak{m}|$. The support of a multisegment
$\mathfrak{m}$ is the multiset of integers obtained by taking the union (with multiplicities) of the segments in $\mathfrak{m}$. For two multisegments $\mathfrak{m}, \mathfrak{m}^\prime \in \mathrm{Mult}$, we write $\mathfrak{m} + \mathfrak{m}^\prime$ for the union $\mathfrak{m} $ and $\mathfrak{m}^\prime$ counting multiplicities. For a segment $\Delta$, we set $\mathfrak{m}+\Delta=\mathfrak{m}+\{\Delta\}$ if $\Delta \neq \emptyset$, and $\mathfrak{m}+\Delta=\mathfrak{m}$ if $\Delta=\emptyset$. Similarly, we define $\mathfrak{m} - \mathfrak{m}^\prime$ and $\mathfrak{m}-\Delta$. For $\mathfrak{m} \in \mathrm{Mult}_\rho$ and $i \in \mathbb{Z}$, we define, $\mathfrak{m}[i]=\left\{[a,b]_\rho \in \mathfrak{m} \mid a=i \right\}$ and $\mathfrak{m}\langle i \rangle=\left\{[a,b]_\rho \in \mathfrak{m} \mid b=i \right\}$. For a multisegment $\mathfrak{m}=\Delta_1+ \cdots + \Delta_r \in \mathrm{Mult}_\rho$ (with all $\Delta_i\neq \emptyset$), we define
\[\mathfrak{m}^+=\Delta_1^++ \cdots + \Delta_r^+ \text{ and } \mathfrak{m}^-=\Delta_1^-+ \cdots + \Delta_r^-.\]

\subsection{Zelevinsky and Langlands classification}\label{sec:LZ_classification}
Let $\Delta=[a,b]_\rho$ be a segment for some $\rho \in \mathrm{Irr}^c$. The normalized parabolic-induced representation $\nu^a \rho \times \nu^{a+1}\rho \times \cdots \times \nu^b \rho $ has a unique irreducible submodule denoted by  $\langle \Delta \rangle=\mathrm{soc}\left(\nu^a \rho \times \nu^{a+1}\rho \times  \cdots \times \nu^b \rho \right)$, and a unique irreducible quotient denoted by the generalized Steinberg representation  \[\mathrm{St}(\Delta)=\mathrm{cosoc}\left(\nu^a \rho \times \nu^{a+1}\rho \times  \cdots \times \nu^b \rho \right).\]
\subsubsection{Zelevinsky classification}Consider an ordered multisegment $\mathfrak{m}=\Delta_1+\Delta_2+ \cdots+\Delta_r$ with $\Delta_i \nprec \Delta_j$ for $i < j$. Then, the normalized parabolic-induced representation $\zeta(\mathfrak{m})=\langle \Delta_1 \rangle \times \langle \Delta_2 \rangle \times \cdots \times \langle \Delta_r \rangle$ has a unique irreducible submodule, denoted by 
\[Z(\mathfrak{m})=\mathrm{soc}\left(\zeta(\mathfrak{m}) \right).\]
If $\pi$ is any irreducible smooth representation of $G_n$, there exists a unique multisegment $\mathfrak{m}$ such that $\pi$ is isomorphic to $Z(\mathfrak{m})$.
\subsubsection{Langlands classification}
Consider an ordered multisegment $\mathfrak{m}=\Delta_1+\Delta_2+ \cdots+\Delta_r$ with $\Delta_i \nprec \Delta_j$ for $i > j$. We denote the unique irreducible subrepresentation of the normalized parabolic-induced representation $\lambda(\mathfrak{m})=\mathrm{St}(\Delta_1) \times \mathrm{St}(\Delta_2) \times  \cdots \times \mathrm{St}(\Delta_r)$ by
\[L(\mathfrak{m})=\mathrm{soc}\left(\lambda(\mathfrak{m}) \right).\]
 Again, for any irreducible smooth representation $\pi$ of $G_n$, there exists a unique multisegment $\mathfrak{m}$ such that $\pi$ is isomorphic to $L(\mathfrak{m})$.
\subsubsection{Zelevinsky involution} \label{ss zel invol}
Le $\mathcal{R}(G_n)$ be the Grothendieck group of $\mathrm{Rep}(G_n)$ and put $\mathcal{R}= \bigoplus\limits_{n \geq 0} \mathcal{R}(G_n)$. The normalized parabolic induction gives a product map
\[\mathcal{R} \times \mathcal{R} \longrightarrow \mathcal{R} \text{ defined by } \left([\pi], [\pi^\prime]\right) \mapsto [\pi \times \pi^\prime],\]
which transforms $\mathcal{R}$ into a ring of polynomials in the indeterminates $\langle \Delta \rangle$ (as well as $\mathrm{St}(\Delta)$) for $\Delta \in \mathrm{Seg}$ over $\mathbb{Z}$. The map $\imath: \mathrm{Irr} \longrightarrow  \mathrm{Irr}$ defined by the involution $\imath: \mathrm{St}(\Delta) \mapsto \langle \Delta \rangle$ can be extended uniquely to a ring endomorphism $\imath:\mathcal{R} \longrightarrow \mathcal{R},$ such that $\imath$ is an involution and for any multisegment $\mathfrak{m} \in \mathrm{Mult}$, we have 
\[\imath(Z(\mathfrak{m}))\cong L(\mathfrak{m}) \text{ and }\imath(L(\mathfrak{m})) \cong Z(\mathfrak{m}).\]
In \cite{MW},  M\oe glin and Waldspurger provide an algorithm to compute the multisegment $\mathfrak{m}^\#$ associated to each multisegment $\mathfrak{m} \in \mathrm{Mult}$ such that \[Z(\mathfrak{m})\cong \imath(L(\mathfrak{m}))\cong L(\mathfrak{m}^\#) \text{ and } L(\mathfrak{m})\cong \imath(Z(\mathfrak{m})) \cong Z(\mathfrak{m}^\#).\]

\subsubsection{Gelfand-Kazhdan involution} \label{sss gk invol}

Let $\theta: G_n \rightarrow G_n$ given by $\theta(g)=g^{-T}$, the inverse transpose of $g$. This induces a covariant auto-equivalence, still denoted by $\theta$, on $\mathrm{Rep}(G_n)$. On the combinatorial side, we define $\theta: \mathrm{Seg}_{\rho}\rightarrow \mathrm{Seg}_{\rho^{\vee}}$ given by $\theta\left([a,b]_{\rho}\right)=[-b,-a]_{\rho^{\vee}}$. Define 
\[ \Theta: \mathrm{Mult}_{\rho}\rightarrow \mathrm{Mult}_{\rho^{\vee}}, \quad \Theta(\left\{ \Delta_1, \ldots, \Delta_k\right\})=\left\{ \theta(\Delta_1), \ldots, \theta(\Delta_k) \right\}. \]

Gelfand-Kazhdan showed that for $\pi \in \mathrm{Irr}$, $\theta(\pi)$ is isomorphic to the smooth dual of $\pi$. In particular, we have:
\[  \theta(L(\mathfrak m))=L(\Theta(\mathfrak m)), \quad \theta(Z(\mathfrak m))=Z(\Theta(\mathfrak m)) .
\]

Using the relation: for any $\pi_1 \in \mathrm{Rep}(G_{n_1})$ and $\pi_2 \in \mathrm{Rep}(G_{n_2})$, $\theta(\pi_1\times \pi_2) \cong \theta(\pi_2)\times \theta(\pi_1)$, one can relate left and right integrals/derivatives as follows:
\[  \mathrm I^{\mathrm L}_{[a,b]_{\rho}}(\theta(\pi)) \cong  \theta \left(I^{\mathrm R}_{[-b,-a]_{\rho^{\vee}}}(\pi )\right), \quad  \mathrm D^{\mathrm L}_{[a,b]_{\rho}}(\theta(\pi)) \cong  \theta \left(D^{\mathrm R}_{[-b,-a]_{\rho^{\vee}}}(\pi )\right).
\]
The above isomorphisms can be reformulated as: for $\mathfrak m \in \mathrm{Mult}_{\rho}$ and $[a,b]_\rho \in \mathrm{Seg}_\rho$,
\begin{align}
\mathrm I^{\mathrm L}_{[a,b]_{\rho}}(L(\mathfrak m)) \cong \theta(\mathrm I^{\mathrm R}_{[-b,-a]_{\rho^{\vee}}}(L(\Theta(\mathfrak m)))), ~  \mathrm D^{\mathrm L}_{[a,b]_{\rho}}(L(\mathfrak m)) \cong  \theta(\mathrm D^{\mathrm R}_{[-b,-a]_{\rho^{\vee}}}(L(\Theta(\mathfrak m) )))
\end{align}
\begin{align}
\mathrm I^{\mathrm L}_{[a,b]_{\rho}}(Z(\mathfrak m)) \cong \theta(\mathrm I^{\mathrm R}_{[-b,-a]_{\rho^{\vee}}}(Z(\Theta(\mathfrak m)))), ~  \mathrm D^{\mathrm L}_{[a,b]_{\rho}}(Z(\mathfrak m)) \cong  \theta(\mathrm D^{\mathrm R}_{[-b,-a]_{\rho^{\vee}}}(Z(\Theta(\mathfrak m) ))).
\end{align}
We shall use these later to deduce the algorithms from right derivatives/integrals to those for left derivatives/integrals.
 
 \subsubsection{Representations along a fixed cuspidal line} We fix a cuspidal representation $\rho \in \mathrm{Irr}^c$. Define the set of irreducible representations along the $\rho$-line by \[\mathrm{Irr}_\rho=\left\{\pi \in \mathrm{Irr} \mid \pi=L(\mathfrak{m}) \text{ for some } \mathfrak{m} \in \mathrm{Mult}_\rho\right\}.\]
 In other words, $\mathrm{Irr}_\rho$ consists of the  elements of $\mathrm{Irr}$ which are an irreducible quotient of $\nu^{a_1}\rho \times \nu^{a_2}\rho \times \cdots \times \nu^{a_r}\rho,$
 for some integers $a_1,a_2,...,a_r$. According to Zelevinsky \cite{Zel}, it is most interesting to study the parabolic inductions and Jacquet modules for representations in $\mathrm{Irr}_\rho$ for a fixed supercuspidal representation $\rho$. The general case of our results can be deduced from this.

\subsection{Highest derivative multisegment and removal process} \label{ss hd removal}
Fix an integer $c$. Let $\Delta=[c,d]_\rho$, and $\Delta^\prime=[c,d^\prime]_\rho$ be non-void segments. We define the ordering $\Delta \leq_c^a \Delta^\prime$ if $d \leq d^\prime$. A multisegment $\mathfrak{m}$ is said to be at the point $\nu^c \rho$ if every non-empty segment of $\mathfrak{m}$ is of the form $[c,d]_\rho$ for some $d \geq c$. For $\pi \in \mathrm{Irr}_\rho$, there exists a unique $\leq^a_c$-maximal multisegment $\mathfrak{h}_c$ at the point $\nu^c\rho$ such that $\mathrm{D}^\mathrm{R}_{\mathfrak{h}_c}(\pi) \neq 0$ (see \cite{Cha_csq} for the notion $\mathrm{D}^\mathrm{R}_{\mathfrak{h}_c}$). The highest derivative multisegment of $\pi$ is defined by $\mathfrak{hd}(\pi)=\sum_{c \in \mathbb{Z}} \mathfrak{h}_c.$ In \cite{Cha_csq, Cha_csq_iii}, the author shows that $\mathrm{D}^\mathrm{R}_{\mathfrak{hd}(\pi)}(\pi)=\pi^-$, the highest Bernstein-Zelevinsky derivative of $\pi$, where $\pi^-\cong Z(\mathfrak{m}^-)$ if $\pi=Z(\mathfrak{m})$ (see \cite{Zel} for more details) and for any $\mathfrak{n} \in \mathrm{Mult}_\rho$, there exists $\sigma \in \mathrm{Irr}_\rho$ such that $\mathfrak{hd}(\sigma)=\mathfrak{n}$.

\begin{lemma} \cite[Corollary 9.5]{Cha_csq} \label{lem non-zero der hd}
Let $\pi \in \mathrm{Irr}_{\rho}$ and let $[a,b]_{\rho} \in \mathrm{Seg}_{\rho}$. Then $\mathrm{D}^\mathrm{R}_{[a,b]_{\rho}}(\pi)\neq 0$ if and only if $\mathfrak{hd}(\pi)$ contains a segment of the form $[a,c]_{\rho}$ for some $c \geq b$.
\end{lemma}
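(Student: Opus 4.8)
The plan is to reduce the statement to the known structure of highest derivative multisegments via the compatibility of $\rho$-derivatives with St-derivatives. Recall that $\mathrm{D}^\mathrm{R}_{[a,b]_\rho}$ can be realized as a composition involving $\rho$-derivatives: concretely, $\mathrm{D}^\mathrm{R}_{[a,b]_\rho}(\pi) \neq 0$ is equivalent to the existence of an embedding $\tau \boxtimes \mathrm{St}([a,b]_\rho) \hookrightarrow \mathrm{Jac}(\pi)$, and by iterated second adjointness this can be detected by successively applying the $\nu^b\rho$-, $\nu^{b-1}\rho$-, ..., $\nu^a\rho$-derivatives (in that order for the right derivative), requiring each to be nonzero and the multiset of highest-derivative data to behave correctly. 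Since Lemma \ref{lem non-zero der hd} is the single-segment case, I would first isolate the elementary fact that for a supercuspidal $\rho' = \nu^b\rho$, the $\rho'$-derivative $\mathrm{D}^\mathrm{R}_{[b,b]_\rho}(\pi)$ is nonzero iff $\mathfrak{hd}(\pi)$ contains a segment ending at $b$, equivalently (since $\mathfrak{hd}(\pi)$ lies at points $\nu^c\rho$, i.e. every segment is $[c,d]_\rho$) a segment $[a,b]_\rho$ with $a \le b$ — this is exactly the $\rho$-derivative description already implicit in \cite{Cha_csq}.

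Next I would handle the general segment $[a,b]_\rho$ by induction on $b - a$. For the inductive step, write $\mathrm{D}^\mathrm{R}_{[a,b]_\rho}(\pi)$ in terms of $\mathrm{D}^\mathrm{R}_{[a+1,b]_\rho}$ followed by a $\nu^a\rho$-derivative, or alternatively use the sequence-of-derivatives formalism of \cite{Cha_csq} which gives $\mathrm{D}^\mathrm{R}_{[a,b]_\rho} = \mathrm{D}^\mathrm{R}_{\nu^a\rho} \circ \cdots \circ \mathrm{D}^\mathrm{R}_{\nu^b\rho}$ when all intermediate derivatives are suitably nonzero. The key point is to track how $\mathfrak{hd}$ changes under a single $\rho$-derivative: taking $\mathrm{D}^\mathrm{R}_{\nu^b\rho}$ removes one occurrence of a segment ending at $b$ from $\mathfrak{hd}$ and replaces it with the shortened segment (or removes it entirely if it was $[b,b]_\rho$). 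Then the condition "$\mathfrak{hd}(\pi)$ contains $[a,c]_\rho$ for some $c \ge b$" translates, after one step, into the corresponding condition for $\mathrm{D}^\mathrm{R}_{\nu^b\rho}(\pi)$ with the segment $[a,b-1]_\rho$, closing the induction. One must be careful that the St-derivative $\mathrm{D}^\mathrm{R}_{[a,b]_\rho}$ is the full "cut one segment $[a,b]_\rho$" operation, not an iterated $\rho$-derivative that might overshoot; the multiplicity-one results \cite{Jan, Min} cited in the introduction guarantee these notions align for the data we track.

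The main obstacle I anticipate is controlling the order of operations and the bookkeeping of which segment of $\mathfrak{hd}(\pi)$ gets modified: a priori $\mathfrak{hd}(\pi)$ may contain several segments $[a,c_1]_\rho, [a,c_2]_\rho, \dots$ all starting at $a$, and one must argue that taking $\mathrm{D}^\mathrm{R}_{\nu^b\rho}$ interacts with exactly the right one (the $\leq^a_c$-maximality of $\mathfrak{h}_c$ should force the relevant choice). A cleaner route, which I would pursue in parallel, is to invoke directly the main derivative algorithm in Langlands classification (point (1) of Section \ref{ss main results}, via Algorithm \ref{alg:der:Lang}) applied to $\pi = L(\mathfrak{hd}(\pi))$ — or rather, to use that $\mathrm{D}^\mathrm{R}_{[a,b]_\rho}(\pi) = 0$ iff $\mathcal{D}^\mathrm{Lang}_{[a,b]_\rho}(\mathfrak{m}) = \infty$, and then read off from the algorithm's failure condition (a linking/matching obstruction) that it fails precisely when no segment $[a,c]_\rho$ with $c \ge b$ is available in $\mathfrak{hd}(\pi)$. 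Since Lemma \ref{lem non-zero der hd} is stated as a citation from \cite{Cha_csq}, the expected proof in this paper is likely just the short reduction to $\rho$-derivatives sketched above together with a pointer to that reference; I would present it in that spirit rather than reproving the whole machinery.
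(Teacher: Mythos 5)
The paper offers no proof of this lemma; it is stated as a citation to \cite[Corollary~9.5]{Cha_csq}, and you correctly recognize this at the end of your write-up. Your sketch of what such a proof would look like, however, contains a concrete misunderstanding of how $\rho$-derivatives interact with the highest derivative multisegment, and this would derail the inductive bookkeeping you propose. You claim that ``taking $\mathrm{D}^\mathrm{R}_{\nu^b\rho}$ removes one occurrence of a segment \emph{ending} at $b$ from $\mathfrak{hd}$ and replaces it with the shortened segment,'' i.e.\ that $[c,b]_{\rho}$ becomes $[c,b-1]_{\rho}$. This is backwards. By Lemma~\ref{lem removal process singleton} (together with the definition of the removal process and ${}^-\Delta$), the segment modified by $\mathrm{D}^\mathrm{R}_{[b]_{\rho}}$ is $\Upsilon([b]_{\rho},\pi)\in\mathfrak{hd}(\pi)[b]$, which is a segment \emph{starting} at $b$, and it is replaced by ${}^-\Upsilon([b]_{\rho},\pi)$, which moves the \emph{start} from $b$ to $b+1$. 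It is precisely because $\mathfrak{hd}(\pi)[a]$ records segments starting at $a$ that the lemma's criterion is phrased in terms of a segment $[a,c]_{\rho}$ with $c\ge b$ rather than a segment ending at $b$.

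Correspondingly, your decomposition order is reversed relative to what the paper actually establishes and uses: Lemma~\ref{lem:D[ab]=D[a+1,b]_D[a]_pi} proves $\mathrm{D}^\mathrm{R}_{[a,b]_\rho}(\pi)\cong\mathrm{D}^\mathrm{R}_{[a+1,b]_\rho}\circ\mathrm{D}^\mathrm{R}_{[a]_\rho}(\pi)$ under the hypothesis $\varepsilon^{\mathrm R}_{[a]_{\rho}}(\pi)=1$, i.e.\ the $\nu^a\rho$-derivative is applied \emph{first}, not the $\nu^b\rho$-derivative as you state. This is not merely cosmetic: with the correct order, the first step immediately engages the condition that $\mathfrak{hd}(\pi)[a]\neq\emptyset$, matching the lemma. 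Moreover that factorization only holds under an $\varepsilon$-hypothesis, so the full argument cannot be a single iterated composition; it needs the case analysis in Lemmas~\ref{lem:D[ab]=D[a+1,b]_D[a]_pi}, \ref{lem:comm_D[ab]_D[a]_pi}, \ref{lem:comm_D_ab:D_a+1_pi} depending on $\varepsilon^{\mathrm R}_{[a]_{\rho}}(\pi)$ and $\varepsilon^{\mathrm R}_{[a+1]_{\rho}}(\pi)$, all of which in fact \emph{use} the lemma you are trying to prove. That circularity is itself evidence that the argument genuinely lives in \cite{Cha_csq}, as the paper indicates by giving only the citation.
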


To provide a proof of Lemmas \ref{lem:D[ab]=D[a+1,b]_D[a]_pi}, \ref{lem:comm_D[ab]_D[a]_pi} and \ref{lem:comm_D_ab:D_a+1_pi} later, we have utilized a combinatorial removal process, denoted as $\mathfrak{r}(\Delta, \pi)$ (see \cite[Definition 8.2]{Cha_csq}), and the first segment of the process, denoted as $\Upsilon(\Delta,\pi)$ for a segment $\Delta \in \mathrm{Seg}_\rho$ satisfying $\varepsilon^{\mathrm{R}}_{\Delta}(\pi)\neq 0$. Here, $\Upsilon(\Delta,\pi)$ is the shortest length segment among all the segments $\Delta' \in \mathfrak{hd}(\pi)$ such that $s(\Delta)=s(\Delta')$ and $e(\Delta)\leq e(\Delta')$. We also utilized the derivative resultant multisegment, denoted as $\mathfrak{r}(\mathfrak{n}, \pi)$, for a multisegment $\mathfrak{n} \in \mathrm{Mult}_\rho$ that is admissible to $\pi$. For further details, we refer to Section 8 in \cite{Cha_csq}, and we only mention few properties we frequently need:

\begin{lemma} \label{lem removal process singleton}
Let $\pi \in \mathrm{Irr}_{\rho}$ and $a \in \mathbb{Z}$. Then the following holds:
\begin{enumerate}
\item[(i)] $\Upsilon([a]_{\rho}, \pi) \in \mathfrak{hd}(\pi)[a]$; and 
\item[(ii)] $\mathfrak r([a]_{\rho}, \pi)=\mathfrak{hd}(\pi)-\Upsilon([a]_{\rho},\pi)+{}^-\Upsilon([a]_{\rho},\pi)$.
\end{enumerate}
\end{lemma}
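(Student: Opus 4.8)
The plan is to reduce the claimed description of the removal process and its first segment for a \emph{singleton} segment $[a]_\rho$ to the basic structure of the highest derivative multisegment, exploiting that $[a]_\rho$ has relative length one, so there is essentially no combinatorial ambiguity in which segment of $\mathfrak{hd}(\pi)$ is ``hit''. First I would recall from Lemma \ref{lem non-zero der hd} that the hypothesis $\varepsilon^{\mathrm R}_{[a]_\rho}(\pi)\neq 0$ is equivalent to $\mathfrak{hd}(\pi)$ containing a segment starting at $a$, i.e. $\mathfrak{hd}(\pi)[a]\neq\emptyset$, so the statement of (i) at least makes sense: there is a candidate for $\Upsilon([a]_\rho,\pi)$ inside $\mathfrak{hd}(\pi)[a]$. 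I would then invoke the defining properties of the removal process $\mathfrak{r}(\Delta,\pi)$ and $\Upsilon(\Delta,\pi)$ from Section 8 of \cite{Cha_csq}: by construction $\Upsilon(\Delta,\pi)$ is the first segment selected when matching $\Delta$ against $\mathfrak{hd}(\pi)$, and this selection is governed by linkedness and the $\leq^a_c$-type orderings at the relevant point.

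The key steps, in order, would be: (1) Use the characterization of $\mathfrak{h}_a$ (the $\leq^a_a$-maximal multisegment at $\nu^a\rho$ with $\mathrm D^{\mathrm R}_{\mathfrak{h}_a}(\pi)\neq 0$) together with the fact that $\mathrm D^{\mathrm R}_{[a]_\rho}(\pi)\neq 0$ to pin down that the removal process for $[a]_\rho$ interacts only with segments of $\mathfrak{hd}(\pi)$ beginning at $a$. (2) Show that among those segments $[a,c]_\rho \in \mathfrak{hd}(\pi)$, the removal process selects a specific one — I expect the one maximizing (or, depending on the convention in \cite{Cha_csq}, minimizing) the endpoint $c$ — and identify it with $\Upsilon([a]_\rho,\pi)$; this gives (i). (3) For (ii), unwind the definition of $\mathfrak{r}([a]_\rho,\pi)$: removing the singleton $[a]_\rho$ from the matched segment $\Upsilon([a]_\rho,\pi)=[a,c]_\rho$ replaces it by $[a+1,c]_\rho = {}^-\Upsilon([a]_\rho,\pi)$, which is exactly the asserted formula $\mathfrak{hd}(\pi)-\Upsilon([a]_\rho,\pi)+{}^-\Upsilon([a]_\rho,\pi)$ (with the convention that if $c=a$ the term ${}^-\Upsilon$ is void and simply drops out). (4) Finally, to make this rigorous one must check that this candidate for $\mathfrak r([a]_\rho,\pi)$ genuinely equals $\mathfrak{hd}$ of the derivative, i.e. that the removal process output is consistent with $\mathfrak{hd}(\mathrm D^{\mathrm R}_{[a]_\rho}(\pi))$ when applicable; this should follow from the compatibility results relating $\mathfrak r$ and $\mathfrak{hd}$ proved in \cite{Cha_csq}.

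The main obstacle I anticipate is step (2): precisely matching the combinatorial selection rule of the removal process in \cite{Cha_csq} with the naive guess ``pick the segment in $\mathfrak{hd}(\pi)[a]$ of extremal length.'' The removal process there is defined for general $\Delta$ via an iterative matching that can skip segments and is sensitive to linkedness with segments \emph{not} starting at $a$; one has to verify that when $\Delta=[a]_\rho$ has length one, all of that machinery collapses and only the ``local'' data at $\nu^a\rho$ matters. Concretely, I would need to rule out that a segment $[a',c']_\rho$ with $a'<a$ or $a'>a$ participates — the former is impossible because $[a]_\rho \not\subseteq$ and is not linked in the preceding sense with such a segment in a way that would be chosen first, and the latter because the process begins at the start point $s([a]_\rho)=\nu^a\rho$. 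Once the length-one reduction is justified, the rest is bookkeeping with the definitions of $\Delta^-$, ${}^-\Delta$, and the void-segment conventions already fixed in Section \ref{sec:seg}.
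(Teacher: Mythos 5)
Your proposal is correct and takes essentially the same approach as the paper, which dispatches the lemma in one line by citing \cite[Definition 8.2]{Cha_csq}. The ``main obstacle'' you anticipate (whether the removal process picks the shortest or longest segment in $\mathfrak{hd}(\pi)[a]$) is a non-issue: part (i) only asserts membership $\Upsilon([a]_{\rho},\pi)\in\mathfrak{hd}(\pi)[a]$ rather than a specific choice, and part (ii) is then immediate bookkeeping from the definition.
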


\begin{proof}
This follows directly from the definition of the removal process in \cite[Definition 8.2]{Cha_csq}.
\end{proof}

The relation to derivatives is the following:
\begin{lemma} \cite[Theorem 9.3]{Cha_csq} \label{lem derivative removal}
Let $\pi \in \mathrm{Irr}_{\rho}$ and $[a,b]_{\rho}\in \mathrm{Seg}_{\rho}$. Then, for any $c \geq a$, \[\mathfrak{hd}(\mathrm{D}_{[a,b]_{\rho}}^\mathrm{R}(\pi))[c]=\mathfrak r([a,b]_{\rho}, \pi)[c].\]
\end{lemma}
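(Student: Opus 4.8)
The plan is to induct on the relative length $\ell:=\ell_{rel}([a,b]_\rho)=b-a+1$ of $\Delta:=[a,b]_\rho$, taking the singleton description of Lemma~\ref{lem removal process singleton} as the seed and reducing a longer segment to a shorter one by peeling one $\rho$-derivative off its left end. One may assume $\mathrm{D}^{\mathrm{R}}_{\Delta}(\pi)\neq 0$, so that $\mathfrak{r}(\Delta,\pi)$ is defined; by Lemma~\ref{lem non-zero der hd} this forces $\mathfrak{hd}(\pi)$ to contain a segment $[a,c_0]_\rho$ with $c_0\geq b$, hence in particular $\mathrm{D}^{\mathrm{R}}_{[a]_\rho}(\pi)\neq 0$. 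Since both sides of the claimed equality are multisegments supported at the single point $\nu^c\rho$, they are compared segment by segment.

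\emph{Base case $\ell=1$.} Here $\mathrm{D}^{\mathrm{R}}_{[a]_\rho}$ is an ordinary $\rho$-derivative, and by Lemma~\ref{lem removal process singleton}(ii) what must be shown is that, for every $c\geq a$, the transition $\pi\mapsto \mathrm{D}^{\mathrm{R}}_{[a]_\rho}(\pi)$ alters $\mathfrak{hd}(\pi)[c]$ only by deleting the distinguished segment $\Upsilon([a]_\rho,\pi)=[a,c_1]_\rho$ (when $c=a$) and adjoining $[a+1,c_1]_\rho$ (when $c=a+1$), leaving $\mathfrak{hd}(\pi)[c]$ unchanged for $c\geq a+2$. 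For $c\geq a+2$ this is the standard commutation of a $\rho$-derivative at the point $\nu^a\rho$ with the highest-$\rho$-derivative operations defining $\mathfrak{h}_c$ at the non-adjacent point $\nu^c\rho$ (Jacquet-module combinatorics in the spirit of Jantzen and M\'inguez), so $\mathfrak{h}_c$ is untouched. At the two adjacent points $c\in\{a,a+1\}$ one argues directly with the pertinent Jacquet modules: Lemma~\ref{lem non-zero der hd} applied to both $\pi$ and $\mathrm{D}^{\mathrm{R}}_{[a]_\rho}(\pi)$ shows that exactly one segment of $\mathfrak{hd}(\pi)$ starting at $a$ gets shortened, and unwinding the selection rule of \cite[Definition~8.2]{Cha_csq} — which is forced by the maximality defining $\mathfrak{h}_a$ and $\mathfrak{h}_{a+1}$ — identifies it with $\Upsilon([a]_\rho,\pi)$.

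\emph{Inductive step $\ell\geq 2$.} From the embedding $\mathrm{St}([a,b]_\rho)\hookrightarrow\mathrm{St}([a+1,b]_\rho)\times\nu^a\rho$, second adjointness, exactness of the Jacquet functors and the relevant multiplicity-one statements, I would first establish the identity
\[
\mathrm{D}^{\mathrm{R}}_{[a,b]_\rho}(\pi)\;\cong\;\mathrm{D}^{\mathrm{R}}_{[a+1,b]_\rho}\bigl(\mathrm{D}^{\mathrm{R}}_{[a]_\rho}(\pi)\bigr),
\]
legitimate since $\mathrm{D}^{\mathrm{R}}_{[a]_\rho}(\pi)\neq 0$: peeling the rightmost $\nu^a\rho$ off any embedding $\tau\boxtimes\mathrm{St}([a,b]_\rho)\hookrightarrow\mathrm{Jac}(\pi)$ exhibits $\tau$ as $\mathrm{D}^{\mathrm{R}}_{[a+1,b]_\rho}\mathrm{D}^{\mathrm{R}}_{[a]_\rho}(\pi)$, and conversely. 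Put $\sigma:=\mathrm{D}^{\mathrm{R}}_{[a]_\rho}(\pi)$; by the base case $\mathfrak{hd}(\sigma)[c]=\mathfrak{r}([a]_\rho,\pi)[c]$ for $c\geq a$, and by the inductive hypothesis applied to $\sigma$ with the shorter segment $[a+1,b]_\rho$, $\mathfrak{hd}(\mathrm{D}^{\mathrm{R}}_{[a+1,b]_\rho}(\sigma))[c]=\mathfrak{r}([a+1,b]_\rho,\sigma)[c]$ for $c\geq a+1$. It then remains to verify the purely combinatorial identity $\mathfrak{r}([a+1,b]_\rho,\sigma)[c]=\mathfrak{r}([a,b]_\rho,\pi)[c]$ for $c\geq a+1$, together with the case $c=a$; this is a compatibility statement about the removal process of \cite[\S8]{Cha_csq} — essentially that ``remove $[a,b]_\rho$'' factors on points $\geq a+1$ as ``remove $[a]_\rho$, then remove $[a+1,b]_\rho$'', and that $\mathfrak{r}([a+1,b]_\rho,\sigma)[c]$ for $c\geq a+1$ is sensitive to $\mathfrak{hd}(\sigma)$ only through its components at points $\geq a$, which allows rewriting it through $\mathfrak{r}([a]_\rho,\pi)$. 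The case $c=a$ is immediate, as a segment of $\mathfrak{hd}(\pi)$ starting at $a$ survives the removal of $[a,b]_\rho$ only once the full $[a,b]_\rho$ has been peeled.

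\emph{Main obstacle.} The delicate points both sit at the two adjacent points $\nu^a\rho,\nu^{a+1}\rho$: in the base case, showing that a single $\rho$-derivative shortens \emph{precisely one} segment of $\mathfrak{hd}(\pi)$ — the one selected by the removal process — and changes nothing else at points $\geq a$; and in the inductive step, ruling out a spurious interaction between the shortened segment $[a+1,c_1]_\rho$ and genuine segments of $\mathfrak{hd}(\sigma)$ starting at $a+1$ when the removal process for $[a+1,b]_\rho$ is run. Both rest on the extremality characterisations of the $\mathfrak{h}_c$ and on the fine structure of $\Upsilon$ and $\mathfrak{r}$ in \cite[\S8]{Cha_csq}. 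One legitimate alternative is to transport the whole statement through the bijection $\mathrm{D}^{\mathrm{R}}_{[a,b]_\rho}\leftrightarrow\mathrm{I}^{\mathrm{R}}_{[a,b]_\rho}$ and prove the corresponding ``insertion'' statement for integrals, but this merely relocates the same obstruction.
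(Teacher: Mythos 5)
The decisive flaw is in your inductive step. You base it on the identity $\mathrm{D}^\mathrm{R}_{[a,b]_\rho}(\pi)\cong \mathrm{D}^\mathrm{R}_{[a+1,b]_\rho}\bigl(\mathrm{D}^\mathrm{R}_{[a]_\rho}(\pi)\bigr)$, claimed to be "legitimate since $\mathrm{D}^\mathrm{R}_{[a]_\rho}(\pi)\neq 0$", and this is false at that level of generality. Take $\pi=L([a]_\rho+[a,b]_\rho)\cong \nu^a\rho\times\mathrm{St}([a,b]_\rho)$ with $b>a$ (the two segments are unlinked, so this is irreducible, and $\varepsilon^\mathrm{R}_{[a]_\rho}(\pi)=2$). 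Then $\mathrm{D}^\mathrm{R}_{[a,b]_\rho}(\pi)\cong\nu^a\rho\neq 0$ (e.g. by Algorithm \ref{alg:der:Lang} and Theorem \ref{thm:der:Lang}), while $\mathrm{D}^\mathrm{R}_{[a]_\rho}(\pi)\cong\mathrm{St}([a,b]_\rho)$ and $\mathrm{D}^\mathrm{R}_{[a+1,b]_\rho}(\mathrm{St}([a,b]_\rho))=0$, because right Jacquet modules of $\mathrm{St}([a,b]_\rho)$ only yield initial pieces $[a,c]_\rho$. The "peeling" argument breaks down because an embedding $\tau\boxtimes\mathrm{St}([a+1,b]_\rho)\boxtimes\nu^a\rho$ into an iterated Jacquet module need not factor through the subobject defining $\mathrm{D}^\mathrm{R}_{[a]_\rho}(\pi)\boxtimes\nu^a\rho$ once $\varepsilon^\mathrm{R}_{[a]_\rho}(\pi)\geq 2$; the factorization you want is exactly Lemma \ref{lem:D[ab]=D[a+1,b]_D[a]_pi}(i) of this paper, which carries the hypothesis $\varepsilon^\mathrm{R}_{[a]_\rho}(\pi)=1$ for precisely this reason. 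The companion combinatorial claim, that "remove $[a,b]_\rho$" factors on points $\geq a+1$ as "remove $[a]_\rho$, then remove $[a+1,b]_\rho$", fails in the same example, since $\Upsilon([a]_\rho,\pi)=[a]_\rho$ whereas $\Upsilon([a,b]_\rho,\pi)=[a,b]_\rho$.

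There is also a structural problem. In this paper the statement is an imported result, quoted from \cite[Theorem 9.3]{Cha_csq} and not proved here; the case analysis you would need to repair the induction (separating $\varepsilon^\mathrm{R}_{[a]_\rho}(\pi)=1$ from $\geq 2$ and invoking commutation with $\mathrm{D}^\mathrm{R}_{[a+1]_\rho}$) is developed in Section \ref{sec:der_Lang} of this paper by \emph{using} the present lemma, so following that route would be circular. Your base case is likewise not established: the assertion that a single $\nu^a\rho$-derivative leaves $\mathfrak{hd}(\pi)[c]$ untouched for $c\geq a+2$ and shortens exactly the segment $\Upsilon([a]_\rho,\pi)$ is the singleton instance of the theorem itself, and deferring the "delicate points" to the extremality of the $\mathfrak{h}_c$ and to the fine structure of $\Upsilon$ and $\mathfrak{r}$ in \cite[Section 8]{Cha_csq} assumes what is to be proved. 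A genuine proof would have to reconstruct the removal-process machinery of \cite{Cha_csq}, or argue directly through the $\varepsilon$-invariants (e.g. via \cite[Proposition 5.2]{Cha_csq}), neither of which the proposal carries out.
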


%%%%%%%%%%%%%%%%%%%%%%%%%%%%%%%%% Derivatives in Langlands Classification %%%%%%%%%%%%%%%%%%%%%%%%%%%%%%
%
\section{Derivatives in Langlands classification} \label{sec:der_Lang}
In this section, we introduce an algorithm to calculate the derivatives of the irreducible representations of $\mathrm{GL}_n(F)$, wherein the representations are expressed in terms of Langlands data. The algorithm of Jantzen and M\'inguez serves as the initial step in an inductive argument to validate the Algorithm \ref{alg:der:Lang}.

\subsection{Algorithm for $\rho$-derivatives.}

We first state an algorithm for computing the right $\rho$-derivative of an irreducible representation in Langlands classification, which is already obtained in \cite{Jan, Min, LM16} in different words/terminology.

\subsubsection*{$\mathfrak{tds}$-process:} To illustrate an algorithm for the $\rho$-derivative, we initially propose a removal process for two linked segments (abbreviated as the $\mathfrak{tds}$-process) in $\mathfrak{m} \in \mathrm{Mult}_\rho$ for a fixed integer $c$. This process is executed through the following steps: 
\begin{itemize} 
\item[(i)] Select the longest segment $\Delta^{\prime \prime}$ from $\mathfrak{m}[c+1]$.
\item[(ii)] Choose the longest segment $\Delta^{\prime}$ from $\mathfrak{m}[c]$ such that $\Delta^{\prime}$ precedes $\Delta^{\prime \prime}$. \item[(iii)] If both $\Delta^{\prime}$ and $\Delta^{\prime\prime}$ exist, remove them to define a new multisegment as $\mathfrak{tds}(\mathfrak{m}, c)= \mathfrak{m}-\Delta^{\prime} - \Delta^{\prime \prime}.$ 
 \end{itemize}
We say that $\Delta^{\prime}$ (resp. $\Delta^{\prime\prime}$) participates in (the removal step of) the $\mathfrak{tds}(-, c)$ process on $\mathfrak{m}$.

\begin{algorithm}[Right $\nu^a\rho$-derivative]\label{alg:rho_der:Lang}
Suppose $\mathfrak{m} \in \mathrm{Mult}_{\rho}$ and $a \in \mathbb{Z}$. Define a new multisegment $\mathcal{D}^\mathrm{Lan}_{[a]_{\rho}} (\mathfrak{m})$ by the following steps:

Step 1. Set $\mathfrak{m}_{0}=\mathfrak{m}$ and recursively define $\mathfrak{m}_{i}= \mathfrak{tds}(\mathfrak{m}_{i-1}, a)$ until the process terminates. Suppose this $\mathfrak{tds}(-,a)$ process terminates after $k$ times and the final multisegment is $\mathfrak{m}_{k}$.

Step 2. Choose the shortest segment ${\Delta}_* \in \mathfrak{m}_{k}[a]$ (if it exists) and define the multisegment 
\begin{equation}\label{eq:rho_der}
\mathcal{D}^\mathrm{Lan}_{[a]_{\rho}} (\mathfrak{m}) :=\mathfrak{m}- {\Delta}_* + {^-}{\Delta}_*.    
\end{equation}
If such segment ${\Delta}_*$ does not exist, we write 
$\mathcal{D}^\mathrm{Lan}_{[a]_{\rho}} (\mathfrak{m}) :=\infty.$ 
\end{algorithm}

\begin{example}
Let $\mathfrak{m}=\left\{ [0,4]_\rho, [1,5]_\rho, [1,4]_\rho, [1,3]_\rho, [1,2]_\rho, [2,5]_\rho,[2,3]_\rho \right\}$ and $a=1$. Then $\mathfrak{m}_1=\mathfrak{tds}(\mathfrak{m},1)=\mathfrak{m}-[1,4]_\rho-[2,5]_\rho$, and $\mathfrak{m}_2=\mathfrak{tds}(\mathfrak{m}_1,1)=\mathfrak{m}_1-[1,2]_\rho-[2,3]_\rho$.
The $\mathfrak{tds}(-,1)$ process terminates on $\mathfrak{m}_2$, and  $[1,3]_\rho$ is the shortest segment in $\mathfrak{m}_2[1]=\left\{[1,5]_\rho,[1,3]_\rho\right\}$. Therefore, we have 
\[\mathcal{D}^\mathrm{Lan}_{[1]_\rho} (\mathfrak{m}) =\mathfrak{m} - [1,3]_\rho+[2,3]_\rho =\left\{ [0,4]_\rho, [1,5]_\rho, [1,4]_\rho, [2,3]_\rho, [1,2]_\rho, [2,5]_\rho,[2,3]_\rho \right\}.\] 
If $a=0$, we have $\mathfrak{tds}(\mathfrak{m}, a)=\mathfrak{m}-[0,4]_\rho-[1,5]_\rho$, and so $\mathcal{D}^\mathrm{Lan}_{[0]_\rho} (\mathfrak{m}) = \infty$ since $\mathfrak{tds}(\mathfrak{m}, a)[0]=\emptyset$.
\end{example}

\begin{theorem}(cf. \cite[Theorem 2.2.1]{Jan}, \cite[Th\'eor\`eme 7.5]{Min})\label{thm:der:Lang:length=1}
 Suppose $\mathfrak{m} \in \mathrm{Mult}_\rho$ and $a \in \mathbb{Z}$. Then, the right $\nu^a\rho$-derivative of $L(\mathfrak{m})$ is given by:
\[\mathrm{D}^\mathrm{R}_{[a]_\rho}(L(\mathfrak{m})) \cong \begin{cases}
  L \left( \mathcal{D}_{[a]_\rho}^\mathrm{Lan}(\mathfrak{m})\right) &\mbox{ if }  \mathcal{D}_{[a]_\rho}^\mathrm{Lan}(\mathfrak{m}) \neq \infty\\
  0 &\mbox{ otherwise} .
\end{cases}\] 
%and the highest right $\nu^a\rho$-derivative
%\[\left(\mathrm{D}^\mathrm{R}_{[a]_\rho}\right)^{\varepsilon^\mathrm{R}_{[a]_\rho}(L(\mathfrak{m}))}(L(\mathfrak{m})) \cong \begin{cases}
%  L \left( \left(\mathcal{D}^\mathrm{Lan}_{\nu^a\rho} \right)^{r_a(\mathfrak{m})} (\mathfrak{m})\right) &\mbox{ if }  \mathcal{D}_{[a]_\rho}^\mathrm{Lan}(\mathfrak{m}) \neq \infty\\
%  0 &\mbox{ otherwise}.
%\end{cases}\]
\end{theorem}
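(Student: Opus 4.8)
The plan is to treat this as a reformulation, in the $\mathfrak{tds}$-language, of the $\rho$-derivative algorithms of Jantzen \cite[Theorem 2.2.1]{Jan} and M\'inguez \cite[Th\'eor\`eme 7.5]{Min}, and to organize the argument around the derivative-integral adjunction rather than re-deriving everything from scratch. We may work inside $\mathrm{Irr}_\rho$ throughout, since the right $\nu^a\rho$-derivative only sees the $\rho$-line and the hypothesis already places $\mathfrak m$ in $\mathrm{Mult}_\rho$. First I would record two elementary inputs: on the Grothendieck ring $\mathcal R$ the operator $\mathrm{D}^\mathrm{R}_{[a]_\rho}$ --- the $\nu^a\rho$-isotypic part of the $(n-1,1)$-Jacquet functor --- is a derivation, by the geometric lemma; and $\mathrm{D}^\mathrm{R}_{[a]_\rho}(\mathrm{St}([x,y]_\rho))$ equals $\mathrm{St}([x,y-1]_\rho)$ if $y=a$ and is $0$ otherwise. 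Applying these to $\lambda(\mathfrak m)$ already gives an explicit upper bound for $\mathrm{D}^\mathrm{R}_{[a]_\rho}(L(\mathfrak m))$ in $\mathcal R$.

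By \cite[Corollary 2.4]{LM19}, if $\tau:=\mathrm{D}^\mathrm{R}_{[a]_\rho}(L(\mathfrak m))\neq 0$ then $\tau$ is the unique irreducible with $L(\mathfrak m)\cong\mathrm{soc}(\tau\times\nu^a\rho)$; so it suffices to prove (i) that $\mathrm{D}^\mathrm{R}_{[a]_\rho}(L(\mathfrak m))\neq 0$ and $L(\mathfrak m)\cong\mathrm{soc}\big(L(\mathcal D^\mathrm{Lan}_{[a]_\rho}(\mathfrak m))\times\nu^a\rho\big)$ when $\mathcal D^\mathrm{Lan}_{[a]_\rho}(\mathfrak m)\neq\infty$, and (ii) that $\mathrm{D}^\mathrm{R}_{[a]_\rho}(L(\mathfrak m))=0$ when $\mathcal D^\mathrm{Lan}_{[a]_\rho}(\mathfrak m)=\infty$. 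For (ii): the algorithm returns $\infty$ exactly when no segment of $\mathfrak m[a]$ survives the $\mathfrak{tds}(-,a)$-process, and I would show this forces $\mathfrak{hd}(L(\mathfrak m))[a]=\emptyset$ --- the $\mathfrak{tds}$-reductions being the combinatorial shadow of the segment pairings underlying $\mathfrak{hd}$ --- so that $\mathrm{D}^\mathrm{R}_{[a]_\rho}(L(\mathfrak m))=0$ by Lemma \ref{lem non-zero der hd}. For (i), induct on $\ell_{rel}(\mathfrak m)$: starting from the standard-module embedding $L(\mathfrak m)\hookrightarrow\lambda(\mathfrak m)$, reorder the factors into another ordering satisfying the Langlands condition $\Delta_i\nprec\Delta_j$ for $i>j$ in which $\mathrm{St}(\Delta_*)$, together with the segments $\Delta',\Delta''$ participating in the $\mathfrak{tds}(-,a)$-reductions, occupy a controlled block; then, using the irreducibility and socle-irreducibility criteria for products of Steinberg (and $\square$-irreducible) representations of Lapid--M\'inguez \cite{LM16, LM19}, migrate one copy of $\nu^a\rho$ to the far right to obtain the embedding into $L(\mathcal D^\mathrm{Lan}_{[a]_\rho}(\mathfrak m))\times\nu^a\rho$, peeling off either one $\mathfrak{tds}$-pair or the segment $\Delta_*$ to reduce to strictly smaller relative length.

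I expect the main obstacle to be the lower bound in (i): showing that $L(\mathcal D^\mathrm{Lan}_{[a]_\rho}(\mathfrak m))\times\nu^a\rho$ has \emph{simple} socle equal to $L(\mathfrak m)$, rather than the embedding one writes down landing in a strictly smaller submodule. This is exactly where the greedy choices in the $\mathfrak{tds}$-process (longest in $\mathfrak m[a+1]$, longest preceding it in $\mathfrak m[a]$, shortest surviving in $\mathfrak m[a]$) must be shown to be forced, and where one relies most heavily on the Lapid--M\'inguez machinery; controlling the socle when several linked segments in positions $a$ and $a+1$ compete is the delicate point. A secondary, purely bookkeeping obstacle is the honest comparison of the $\mathfrak{tds}$-recipe with the algorithms of \cite{Jan, Min}, which are phrased with different orderings and, in Jantzen's case, a different normalization of the word ``derivative''.
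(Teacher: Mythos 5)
There is nothing in the paper to compare your argument against: Theorem \ref{thm:der:Lang:length=1} is stated with a ``cf.'' to \cite[Theorem 2.2.1]{Jan} and \cite[Th\'eor\`eme 7.5]{Min}, and the surrounding text says the algorithm ``is already obtained in \cite{Jan, Min, LM16} in different words/terminology.'' The paper's entire treatment is the translation of those known results into the $\mathfrak{tds}$-language; no internal proof is given. So the honest comparison with Jantzen's and M\'inguez's statements, which you set aside at the end as a ``purely bookkeeping obstacle,'' is in fact the whole of what the paper relies on, and if you carried it out carefully your theorem would follow at once and the rest of your sketch would be unnecessary.

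Judged as a free-standing proof, your sketch has genuine gaps at exactly the points where the content lies. First, in case (ii) you claim that termination of the $\mathfrak{tds}(-,a)$-process with no surviving segment in $\mathfrak{m}[a]$ forces $\mathfrak{hd}(L(\mathfrak m))[a]=\emptyset$; calling the $\mathfrak{tds}$-reductions ``the combinatorial shadow of the segment pairings underlying $\mathfrak{hd}$'' is an assertion, not an argument, and this equivalence (a matching statement between segments starting at $a$ and segments starting at $a+1$) is precisely the nonvanishing criterion one has to prove before invoking Lemma \ref{lem non-zero der hd}. Second, and more seriously, the identification of the socle in case (i) --- that the greedy choices are forced, that the \emph{shortest} surviving segment $\Delta_*$ is the one to shorten, and that $\mathrm{soc}\bigl(L(\mathcal D^{\mathrm{Lan}}_{[a]_\rho}(\mathfrak m))\times \nu^a\rho\bigr)\cong L(\mathfrak m)$ --- is left as an ``expected obstacle'' rather than resolved. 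Note that simplicity of that socle is not the issue (it is automatic for a product with a cuspidal factor, as the paper's definition of $\mathrm I^{\mathrm R}$ already presupposes); what must be shown is that this simple socle equals $L(\mathfrak m)$ for your particular $\Delta_*$ and fails for any other choice, and your appeal to ``the Lapid--M\'inguez machinery'' does not specify how. Until either that combinatorial argument is completed or the comparison with \cite{Jan} and \cite{Min} is actually carried out, the proposal is a plausible plan but not a proof.
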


\subsection{The number $\varepsilon^\mathrm{R}_{[a]_{\rho}}(\mathfrak m)$}

Recall that, for $\pi \in \mathrm{Irr}_{\rho}$, the number $\varepsilon^{\mathrm{R}}_{[a]_{\rho}}(\pi)$ is defined in Section \ref{ss application}. For $\mathfrak m \in \mathrm{Mult}_{\rho}$ and $a \in \mathbb{Z}$, we define $\varepsilon^{\mathrm{R}}_{[a]_{\rho}}(\mathfrak m)=\varepsilon^{\mathrm{R}}_{[a]_{\rho}}(L(\mathfrak m))$. We give a combinatorial characterization on the number $\varepsilon^{\mathrm{R}}_{[a]_{\rho}}$:

\begin{lemma} \label{lem combinatorial varepsilon}
Let $\mathfrak m \in \mathrm{Mult}_{\rho}$. We use the notations in Algorithm \ref{alg:rho_der:Lang}. Then $\varepsilon^{\mathrm{R}}_{[a]_{\rho}}(\mathfrak m)$ is equal to $|\mathfrak m_k[a]|$ i.e. the number of segments in $\mathfrak m_k$ starting with $\nu^a\rho$.
\end{lemma}

\begin{proof}
It follows by applying Theorem \ref{thm:der:Lang:length=1} multiple times.
\end{proof}

\subsection{Algorithm for $\mathrm{St}$-derivatives}\label{subsec:der:lang}
To establish an algorithm for derivatives, we need to define the following upward sequence of maximally linked segments to arrange the segments of the multisegment corresponding to the given irreducible representation.

{\it Upward sequence $\underline{\mathfrak{Us}}$:} We define the upward sequence of (maximally linked) segments in a multisegment $\mathfrak{n} \in \mathrm{Mult}_\rho$ as follows: identify the smallest number $a_{1}$ for which $\mathfrak{n}[a_{1}] \neq \emptyset$ and choose the longest segment $\Delta_{1}\in \mathfrak{n}[a_{1}]$. Recursively for $j \geq 2$, find the smallest number $a_{j}$ (if it exists) such that $a_{j-1} < a_{j}$ and there exists a segment $\Delta^\prime_j \in \mathfrak{n}[a_{j}]$ with $\Delta_{j-1} \prec \Delta^\prime_j$. Then, we pick a longest segment $\Delta_{j}\in \mathfrak{n}[a_{j}]$ such that $\Delta_{j-1} \prec \Delta_j$. This process terminates after a finite number of steps, say $r$, and  $\Delta_{1}, \Delta_{2}, \ldots, \Delta_{r}$ are all obtained in this process. We then define the following upward sequence:
\[\underline{\mathfrak{Us}}(\mathfrak{n}):=\left\{\Delta_{1} , \Delta_{2}, \ldots ,\Delta_{r}\right\}=\Delta_1 + \Delta_2+\ldots + \Delta_r.\]
We also regard $\underline{\mathfrak{Us}}(\mathfrak n)$ as a multisegment.

\begin{algorithm}\label{alg:der:Lang}
Given $\mathfrak{m} \in \mathrm{Mult}_\rho$ and $\Delta=[a,b]_\rho \in \mathrm{Seg}_\rho$, we consider the following multisegment \[\mathfrak{m}_{[a,b]}:=\{[a^\prime, b^\prime]_\rho \in \mathfrak{m} \mid a \leq a^\prime \leq b+1 \leq b^\prime +1 \}.\] 

Step 1. (Arrange upward sequences): Set $\mathfrak{m}_1=\mathfrak{m}_{[a,b]}$ and let $\underline{\mathfrak{Us}}(\mathfrak{m}_1)= \{\Delta_{1,1}, \Delta_{1,2}, \ldots, \Delta_{1,r_1}\}$ be the upward sequence of maximally linked segments on $\mathfrak{m}_{1}$ with $\Delta_{1,j} \prec \Delta_{1,j+1}$. Recursively for $2 \leq i \leq k$, we define $\mathfrak{m}_{i}= \mathfrak{m}_{i-1} - \underline{\mathfrak{Us}}(\mathfrak{m}_{i-1})$ and the corresponding upward sequence by
\[\underline{\mathfrak{Us}}(\mathfrak{m}_i)= \{\Delta_{i,1} , \Delta_{i,2} , \ldots, \Delta_{i,r_i}\} \text{ with }\Delta_{i,j} \prec \Delta_{i,j+1}.\]
Here $k$ is the smallest integer for which $\mathfrak{m}_{k+1}=\emptyset$. 

Step 2. (Removable free points): Denote $\Delta_{i,j}=\left[a_{i,j},b_{i,j}\right]_\rho$. We define the `removable free' section for the segment $\Delta_{i,j}$ for each $1 \leq i \leq k$ as:
\begin{align}\label{eqn removable free}
\mathfrak{rf}\left(\Delta_{i,j} \right)= \begin{cases}
    \left[a_{i,j},~ a_{i,j+1}-2 \right]_\rho &\mbox{ if } 1 \leq j < r_i\\
    \Delta_{i,r_i} &\mbox{ if } j=r_i.
\end{cases}
\end{align}
Here, $\mathfrak{rf}\left(\Delta_{i,j}\right)=\emptyset$ if $a_{i,j}> a_{i,j+1}-2$. For $y \in \mathbb{Z}$, we call $[y]_\rho$ a `removable free point' of $\Delta_{i,j} $  if $x \leq y \leq z$, where $ \mathfrak{rf}\left(\Delta_{i,j}\right)=[x,z]_\rho$.

Step 3. (Selection): We then select some segments $\Delta_{i,j}$ in the following way: 
\begin{enumerate}
    \item[(i)] Choose a segment $\Delta_{i_1,j_1} \in \mathfrak{m}_1$ (if it exists) where $i_1$ is the largest integer in $\{1,...,k\}$ such that $[a_{i_1,j_1},b]_\rho \subseteq \mathfrak{rf}\left(\Delta_{i_1,j_1} \right)$ for some $j_1 \in \left\{ 1,...,r_{i_1} \right\}$.
    \item[(ii)] Recursively for $t \geq 2$, Choose a segment $\Delta_{i_t,j_t} \in \mathfrak{m}_1$ (if it exists), where $i_t$ is the largest integer in $\{1,...,i_{t-1}\}$ such that 
    \begin{align} \label{eqn der algorithm free condition}
    \left[a_{i_t,j_t}, a_{i_{t-1}, j_{t-1}}-1\right]_\rho \subseteq \mathfrak{rf}\left(\Delta_{i_t,j_t} \right).
    \end{align}
    \item[(iii)] This process terminates (when no further such segment can be found) after a finite number of steps and suppose $\Delta_{i_\ell, j_\ell}$ is the last segment of the process.
\end{enumerate}

Step 4. (Truncation):  If $a_{i_\ell, j_\ell} = a,$ then we define new left truncated segments as follows:
\begin{equation*}%\label{trc_alg_der_lang}
\Delta_{i_1, j_1}^\mathrm{trc} = \left[b+1, ~ b_{i_1, j_1}\right]_\rho, \text{ and } \Delta_{i_t, j_t}^\mathrm{trc}= \left[a_{i_{t-1},j_{t-1}}, ~ b_{i_t, j_t}\right]_\rho ~ \text{ for } 2 \leq i \leq \ell. 
\end{equation*}
As convention, $[c, c-1]_\rho = \emptyset$. Then, the right derivative multisegment in the Langalands classification is defined by 
\begin{equation} \label{alg_der_lang}
\mathcal{D}_{[a,b]_\rho}^\mathrm{Lang}(\mathfrak{m}) = \mathfrak{m}- \sum\limits_{t=1}^{\ell} \Delta_{i_t, j_t} + \sum\limits_{t=1}^{\ell} \Delta_{i_t, j_t}^\mathrm{trc}.
\end{equation}
We shall call those segments $\Delta_{i_1,j_1}, \ldots, \Delta_{i_{\ell},j_{\ell}}$ participate in the truncation process for $\mathcal D^{\mathrm{Lang}}_{[a,b]_{\rho}}(\mathfrak m)$.

Step 4'. If $\Delta_{i_1,j_1}$ does not exist, or $a_{i_\ell, j_\ell} \neq a,$ we write \[\mathcal{D}_{[a,b]_{\rho}}^\mathrm{Lang}(\mathfrak{m}) = \infty.\]
\end{algorithm}
\begin{remark}\label{rem:der_equality}
    Note that $\mathcal{D}^\mathrm{Lang}_{[a]_\rho}(\mathfrak{m})=\mathcal{D}^\mathrm{Lan}_{[a]_\rho}(\mathfrak{m})$ for any $\mathfrak{m} \in \mathrm{Mult}_\rho$. We use the algorithm of $\mathcal{D}^\mathrm{Lan}_{[a]_\rho}(\mathfrak{m})$ to get $\mathcal{D}^\mathrm{Lang}_{[a]_\rho}(\mathfrak{m})$ without mentioning it further.
\end{remark}

\begin{example}
Let $\mathfrak{m}=\left\{ [0,5]_\rho, [0,4]_\rho, [1,2]_\rho, [2,6]_\rho,[2,3]_\rho \right\}$. Then, we have the following $\mathcal{D}^\mathrm{Lang}_\Delta(\mathfrak m )$ for various $\Delta$:

(i) Suppose $\Delta=[0,2]_\rho$.  Then, $\mathfrak{m}_1=\mathfrak{m}_{[0,2]}=\mathfrak{m}$ with $\underline{\mathfrak{Us}}(\mathfrak{m}_1)=\left\{[0,5]_\rho,[2,6]_\rho\right\}$:
\begin{align*}
   \xymatrix{  &    & \stackrel{2}{{\color{blue}{\bullet}}} \ar@{-}[r]   & \stackrel{3}{{\color{blue}{\bullet}}} \ar@{-}[r]  & \stackrel{4}{{\color{blue}{\bullet}}} \ar@{-}[r]  & \stackrel{5}{{\color{blue}{\bullet}}} \ar@{-}[r]  & \stackrel{6}{{\color{blue}{\bullet}}}\\
  \stackrel{0}{{\color{blue}{\bullet}}} \ar@{-}[r]   & \stackrel{1}{\bullet} \ar@{-}[r]   & \stackrel{2}{\bullet} \ar@{-}[r]   & \stackrel{3}{\bullet} \ar@{-}[r]  & \stackrel{4}{\bullet}\ar@{-}[r]  & \stackrel{5}{{{\bullet}}} & }
\end{align*}
$\mathfrak{m}_2=\mathfrak{m}_1- \underline{\mathfrak{Us}}(\mathfrak{m}_1)=\left\{ [0,4]_\rho, [1,2]_\rho, [2,3]_\rho \right\}$ with $\underline{\mathfrak{Us}}(\mathfrak{m}_2)=\left\{[0,4]_\rho\right\}$:  
\begin{align*}
 \xymatrix{  
  \stackrel{0}{{\color{red}{\bullet}}} \ar@{-}[r]   & \stackrel{1}{{\color{red}{\bullet}}} \ar@{-}[r]   & \stackrel{2}{{\color{blue}{\bullet}}} \ar@{-}[r]   & \stackrel{3}{{\color{blue}{\bullet}}} \ar@{-}[r]  & \stackrel{4}{{\color{blue}{\bullet}}} & & }
  \end{align*}
and $\mathfrak{m}_3=\mathfrak{m}_2- \underline{\mathfrak{Us}}(\mathfrak{m}_2)=\left\{[1,2]_\rho, [2,3]_\rho \right\}$ with $\underline{\mathfrak{Us}}(\mathfrak{m}_3)=\left\{[1,2]_\rho, [2,3]_\rho\right\}$: 
\begin{align*}
\xymatrix{  &    & \stackrel{2}{{\color{red}{\bullet}}} \ar@{-}[r]   & \stackrel{3}{{\color{blue}{\bullet}}}   &   &  & \\
    & \stackrel{1}{\bullet} \ar@{-}[r]   & \stackrel{2}{\bullet}  &  &   &  & }
\end{align*}
The blue and red points in the graphs represent the removable free points of the segments and the red points in the graphs represent those removable free points to be removed to get the derivative $\mathcal{D}^\mathrm{Lang}_{[0,2]_\rho}(\mathfrak m ) =\left\{[0,5]_\rho, [2,4]_\rho, [1,2]_\rho, [2,6]_\rho,[3]_\rho \right\}.$

(ii) Suppose $\Delta=[0,3]_\rho$.  Then $\mathfrak{m}_1=\mathfrak{m}_{[0,3]}=\left\{[0,5]_\rho, [0,4]_\rho, [2,6]_\rho,[2,3]_\rho \right\}$ with $\underline{\mathfrak{Us}}(\mathfrak{m}_1)=\left\{[0,5]_\rho,[2,6]_\rho\right\}$:
\begin{align*}
   \xymatrix{  &    & \stackrel{2}{{\color{blue}{\bullet}}} \ar@{-}[r]   & \stackrel{3}{{\color{blue}{\bullet}}} \ar@{-}[r]  & \stackrel{4}{{\color{blue}{\bullet}}} \ar@{-}[r]  & \stackrel{5}{{\color{blue}{\bullet}}} \ar@{-}[r]  & \stackrel{6}{{\color{blue}{\bullet}}}\\
  \stackrel{0}{{\color{blue}{\bullet}}} \ar@{-}[r]   & \stackrel{1}{\bullet} \ar@{-}[r]   & \stackrel{2}{\bullet} \ar@{-}[r]   & \stackrel{3}{\bullet} \ar@{-}[r]  & \stackrel{4}{\bullet}\ar@{-}[r]  & \stackrel{5}{{{\bullet}}} & }
\end{align*}
$\mathfrak{m}_2=\mathfrak{m}_1- \underline{\mathfrak{Us}}(\mathfrak{m}_1)=\left\{ [0,4]_\rho, [2,3]_\rho \right\}$ with $\underline{\mathfrak{Us}}(\mathfrak{m}_2)=\left\{[0,4]_\rho\right\}$:  
\begin{align*}
 \xymatrix{  
  \stackrel{0}{{\color{red}{\bullet}}} \ar@{-}[r]   & \stackrel{1}{{\color{red}{\bullet}}} \ar@{-}[r]   & \stackrel{2}{{\color{blue}{\bullet}}} \ar@{-}[r]   & \stackrel{3}{{\color{blue}{\bullet}}} \ar@{-}[r]  & \stackrel{4}{{\color{blue}{\bullet}}} & & }
  \end{align*}
and $\mathfrak{m}_3=\mathfrak{m}_2- \underline{\mathfrak{Us}}(\mathfrak{m}_2)=\left\{[2,3]_\rho \right\}$ with $\underline{\mathfrak{Us}}(\mathfrak{m}_3)=\left\{[2,3]_\rho\right\}$: 
\begin{align*}
\xymatrix{  &    & \stackrel{2}{{\color{red}{\bullet}}} \ar@{-}[r]   & \stackrel{3}{{\color{red}{\bullet}}}   &   &  & }
\end{align*}
The blue and red points in the graphs represent the removable free points of the segments and the red points in the graphs represent the removable free points to be removed to get the derivative $\mathcal{D}^\mathrm{Lang}_{[0,3]_\rho}(\mathfrak m )=\left\{ [0,5]_\rho, [2,4]_\rho, [1,2]_\rho, [2,6]_\rho\right\}.$

(iii) Let $\Delta=[0,5]_\rho$. Then, $\mathfrak{m}_1=\mathfrak{m}_{[0,5]}=\left\{[0,5]_\rho, [2,6]_\rho\right\}=\underline{\mathfrak{Us}}(\mathfrak{m}_1)$. Here, $[1]_\rho$ is not a removable free point of any segment in $\underline{\mathfrak{Us}}(\mathfrak{m}_1)$. Therefore, $\mathcal{D}^\mathrm{Lang}_{[0,5]_\rho}(\mathfrak m )= \infty$. 
\end{example}

The following simple observation is sometimes useful:

\begin{lemma} \label{lem either prec or overlap}
Let $\mathfrak m\in \mathrm{Mult}_{\rho}$ and let $[a,b]_{\rho}\in \mathrm{Seg}_{\rho}$. For any $\Delta, \Delta' \in \mathfrak m_{[a,b]}$, either one of the following hold:
\begin{enumerate}
    \item $\Delta \prec \Delta'$ or $\Delta' \prec \Delta$; or 
    \item $\Delta \subset \Delta'$ or $\Delta'\subset \Delta$.
\end{enumerate}
\end{lemma}

\begin{proof}
This is straightforward from the definition of $\mathfrak m_{[a,b]}$.
\end{proof}

%We also record one simple but useful consequence of the choice of the set $\mathfrak m_{[a,b]}$:

%\begin{lemma}
%Let $\mathfrak m \in \mathrm{Mult}_{\rho}$ and let $[a,b]_{\rho} \in \mathrm{Seg}_{\rho}$. Let $\Delta, \Delta' \in \mathfrak m_{[a,b]}$. Then one of the following holds:
%\begin{enumerate}
% \item $\Delta \subset \Delta'$ or $\Delta' \subset \Delta$; or
% \item $\Delta \prec \Delta'$ or $\Delta' \prec \Delta$.
%\end{enumerate}
%\end{lemma}

%\begin{proof}
%This follows from $\nu^b \rho \in \Delta \cap \Delta'$ and so $\Delta \cap \Delta'\neq \emptyset$.
%\end{proof}

\subsection{Composition of $\mathcal D_{[a+1,b]_{\rho}}^{\mathrm{Lang}}$ and $\mathcal D_{[a]_{\rho}}^{\mathrm{Lang}}$}

In the following lemmas, we shall compare properties of derivatives on algorithms and derivatives on the representation theory side. For algorithm side, we have to investigate the change of upward sequences between $\mathfrak m$ and $\mathcal D^{\mathrm{Lang}}_{[a]_{\rho}}(\mathfrak m)$, and it turns out the changes (see (\ref{eq:Us_a+1_b}), (\ref{eq:us_on_rho_der})) are reasonably simple to show some properties of derivatives. For the representation-theoretic side, one utilizes Section \ref{ss hd removal}.

\begin{lemma} \label{upward seq a a+1,b}
Let $\mathfrak m \in \mathrm{Mult}_{\rho}$  and $[a,b]_\rho \in \mathrm{Seg}_\rho$ with $a < b$. Suppose $\mathcal D^{\mathrm{Lang}}_{[a,b]_{\rho}}(\mathfrak{m})\neq \infty$ and $\varepsilon_{[a]_{\rho}}^{\mathrm{R}}(\mathfrak m)=1$. We use the notations in Algorithm \ref{alg:der:Lang} for $\mathcal D^{\mathrm{Lang}}_{[a,b]_{\rho}}(\mathfrak m)$. Let $k_0$ be the largest integer such that $\underline{\mathfrak{Us}}(\mathfrak m_{k_0})[a]\neq \emptyset$. Define $\mathfrak n_1=\mathfrak n_{[a+1,b]}$ and recursively define $\mathfrak n_{i+1}=\mathfrak n_i-\underline{\mathfrak{Us}}(\mathfrak n_i)$. Then 
\begin{align}\label{eq:Us_a+1_b}
 \underline{\mathfrak{Us}}(\mathfrak{n}_i)=\begin{cases}
     \underline{\mathfrak{Us}}(\mathfrak{m}_i)-\Delta_{i,1} &\mbox{ if } 1 \leq i \leq k_0 \text{ and } i \neq i_\ell\\
     \underline{\mathfrak{Us}}(\mathfrak{m}_i) - \Delta_{i,1} + {^-}\Delta_{i,1} &\mbox{ if } i = i_\ell\\
     \underline{\mathfrak{Us}}(\mathfrak{m}_i) &\mbox{ if } k_0+1 \leq i \leq k.
 \end{cases}   
\end{align}

\end{lemma}

\begin{proof}
It is a book-keeping exercise. Note that $\mathcal D^{\mathrm{Lang}}_{[a,b]_{\rho}}(\mathfrak{m})\neq \infty$ and $\varepsilon_{[a]_{\rho}}^{\mathrm{R}}(\mathfrak m)=1$ imply that $a_{i,2}=a+1$ for $1 \leq i \leq k_0$ except for $i=i_\ell$.
\end{proof}

\begin{example}\label{ex:demo_1}
Let $\mathfrak{m}=\left\{[1,5]_\rho, [1,4]_\rho, [1,3]_\rho, [2,6]_\rho, [2,4]_\rho, [2,3]_\rho, [3,7]_\rho, [3,5]_\rho, [3,4]_\rho \right\}$ with $a=1$ and $b=3$. Then, $\mathfrak{n}=\mathcal{D}_{[1]_\rho}^\mathrm{Lang}(\mathfrak{m})=\mathfrak{m}-[1,4]_\rho + [2,4]_\rho$ and  $\mathcal{D}_{[1]_\rho}^\mathrm{Lang}(\mathfrak{n})=\infty$. Set $\mathfrak{m}_1=\mathfrak{m}_{[a,b]}$ (resp. $\mathfrak{n}_1=\mathfrak{n}_{[a+1,b]}$) and recursively for $i>1$, $\mathfrak{m}_i=\mathfrak{m}_{i-1} - \underline{\mathfrak{Us}}(\mathfrak m_{i-1})$ (resp. $\mathfrak{n}_i=\mathfrak{n}_{i-1} - \underline{\mathfrak{Us}}(\mathfrak n_{i-1})$). Then, $\underline{\mathfrak{Us}}(\mathfrak{m}_{1})=\left\{[1,5]_\rho, [2,6]_\rho,[3,7]_\rho\right\}$, $\underline{\mathfrak{Us}}(\mathfrak{m}_{2})=\left\{[1,4]_\rho, [3,5]_\rho\right\}$, $\underline{\mathfrak{Us}}(\mathfrak{m}_{3})=\left\{[1,3]_\rho, [2,4]_\rho\right\}$, and $\underline{\mathfrak{Us}}(\mathfrak{m}_{4})=\left\{[2,3]_\rho, [3,4]_\rho\right\}$. On the other hand, $\underline{\mathfrak{Us}}(\mathfrak{n}_{1})=\left\{[2,6]_\rho,[3,7]_\rho\right\}$, $\underline{\mathfrak{Us}}(\mathfrak{n}_{2})=\left\{[2,4]_\rho, [3,5]_\rho\right\}$, $\underline{\mathfrak{Us}}(\mathfrak{n}_{3})=\left\{[2,4]_\rho\right\}$, and $\underline{\mathfrak{Us}}(\mathfrak{n}_{4})=\left\{[2,3]_\rho, [3,4]_\rho\right\}$. 
\end{example}

\begin{lemma}\label{lem:D[ab]=D[a+1,b]_D[a]}
Let $\mathfrak{m}\in \mathrm{Mult}_\rho$ and $[a,b]_\rho \in \mathrm{Seg}_\rho$ with $b>a$. Suppose $\varepsilon^\mathrm{R}_{[a]_\rho}(\mathfrak{m}) =1$.
\begin{enumerate}
\item[(i)] If $\mathcal{D}_{[a,b]_\rho}^\mathrm{Lang}(\mathfrak{m}) \neq \infty$, we have $\mathcal{D}^\mathrm{Lang}_{[a+1,b]_\rho} \circ \mathcal{D}_{[a]_\rho}^\mathrm{Lang}(\mathfrak{m}) =  \mathcal{D}_{[a,b]_\rho}^\mathrm{Lang}(\mathfrak{m}).$  
\item[(ii)] If $\varepsilon^\mathrm{R}_{[a+1]_{\rho}}(\mathfrak m)=0$ and $\mathcal D^\mathrm{Lang}_{[a+1,b]_{\rho}}\circ \mathcal D^\mathrm{Lang}_{[a]_{\rho}}(\mathfrak m)\neq \infty$, we have  \[ \mathcal{D}_{[a,b]_\rho}^\mathrm{Lang}(\mathfrak{m}) = \mathcal D^\mathrm{Lang}_{[a+1,b]_{\rho}}\circ \mathcal D^\mathrm{Lang}_{[a]_{\rho}}(\mathfrak m).\]
\end{enumerate}
\end{lemma}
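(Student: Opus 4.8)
The plan is to prove both parts as purely combinatorial statements about Algorithm~\ref{alg:der:Lang} and Algorithm~\ref{alg:rho_der:Lang}, by running the two relevant instances of Algorithm~\ref{alg:der:Lang} side by side and matching their outputs term by term. Write $\Delta_* = [a,c_*]_\rho$ for the segment chosen in Step~2 of Algorithm~\ref{alg:rho_der:Lang}, so that $\mathfrak{m}' := \mathcal{D}^\mathrm{Lan}_{[a]_\rho}(\mathfrak{m}) = \mathfrak{m} - \Delta_* + {}^-\Delta_*$, and let $\Delta_{i_1,j_1}, \dots, \Delta_{i_\ell,j_\ell}$, with truncations $\Delta^\mathrm{trc}_{i_t,j_t}$, be the segments participating in the truncation process for $\mathcal{D}^\mathrm{Lang}_{[a,b]_\rho}(\mathfrak{m})$. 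The crux of the argument is the single identification $\Delta_* = \Delta_{i_\ell,j_\ell}$; once this is in hand one gets for free that $\Delta_* \in \mathfrak{m}_{[a,b]}$, $j_\ell = 1$, and $c_* = b_{i_\ell,j_\ell} \geq b$, and the rest is bookkeeping.

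For the bookkeeping I would establish that passing from $\mathfrak{m}_{[a,b]}$ to $\mathfrak{m}'_{[a+1,b]}$ — which deletes all segments of $\mathfrak{m}_{[a,b]}$ whose starting element is $\nu^a\rho$ (these are precisely the initial terms of the first few upward sequences) and inserts ${}^-\Delta_* = [a+1,c_*]_\rho$ — alters the upward-sequence decomposition only mildly: the removable-free sections $\mathfrak{rf}(-)$ of every segment relevant to the selection are unchanged, and $[a+1,b]$ is covered in $\mathfrak{m}'_{[a+1,b]}$ by exactly the chain $\Delta_{i_1,j_1}, \dots, \Delta_{i_{\ell-1},j_{\ell-1}}, {}^-\Delta_*$ obtained from the original chain by replacing its last member $\Delta_* = \Delta_{i_\ell,1}$ with ${}^-\Delta_*$. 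Granting this, running Step~4 of Algorithm~\ref{alg:der:Lang} on $\mathfrak{m}'$ and $[a+1,b]$ reproduces $\Delta^\mathrm{trc}_{i_t,j_t}$ for $t < \ell$ and produces for ${}^-\Delta_*$ the truncated segment $[a_{i_{\ell-1},j_{\ell-1}}, c_*]_\rho$ (or $[b+1,c_*]_\rho$ when $\ell=1$), which equals $\Delta^\mathrm{trc}_{i_\ell,j_\ell}$; substituting $\mathfrak{m}' = \mathfrak{m} - \Delta_* + {}^-\Delta_*$ and cancelling the ${}^-\Delta_*$ term gives $\mathcal{D}^\mathrm{Lang}_{[a+1,b]_\rho}(\mathfrak{m}') = \mathfrak{m} - \sum_{t=1}^\ell \Delta_{i_t,j_t} + \sum_{t=1}^\ell \Delta^\mathrm{trc}_{i_t,j_t} = \mathcal{D}^\mathrm{Lang}_{[a,b]_\rho}(\mathfrak{m})$, which is part (i). Part (ii) is the same comparison read backwards: from the composite being $\neq \infty$ one recovers the chain for $\mathcal{D}^\mathrm{Lang}_{[a,b]_\rho}(\mathfrak{m})$, and the extra hypothesis $\varepsilon^\mathrm{R}_{[a+1]_\rho}(L(\mathfrak{m})) = 0$ forces ${}^-\Delta_*$ to be the $\nu^{a+1}\rho$-starting segment of $\mathfrak{m}'$ that controls the bottom of that chain, so the reconstruction is unambiguous.

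For the key identification $\Delta_* = \Delta_{i_\ell,j_\ell}$ the main input is Theorem~\ref{thm:der:Lang:length=1}: the hypothesis $\varepsilon^\mathrm{R}_{[a]_\rho}(L(\mathfrak{m})) = 1$ translates into $\mathcal{D}^\mathrm{Lan}_{[a]_\rho}(\mathfrak{m}) \neq \infty$ together with $\mathcal{D}^\mathrm{Lan}_{[a]_\rho}(\mathcal{D}^\mathrm{Lan}_{[a]_\rho}(\mathfrak{m})) = \infty$, i.e.\ the $\mathfrak{tds}(-,a)$-process on $\mathfrak{m}$ leaves a single unmatched $\nu^a\rho$-starting segment (up to the choice in Step~2), and that segment is $\Delta_*$. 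One then has to match the $\mathfrak{tds}(-,a)$-matching against the greedy construction of the $\underline{\mathfrak{Us}}(\mathfrak{m}_i)$ and against the ``$i$ largest'' clause of Step~3: the $\nu^a\rho$-starting segments of $\mathfrak{m}_{[a,b]}$ are the first terms $\Delta_{1,1}, \Delta_{2,1}, \dots$ of the successive upward sequences (of non-increasing relative length), a segment $\Delta_{i,1}$ is matched in $\mathfrak{tds}(-,a)$ precisely when a suitable $\nu^{a+1}\rho$-segment is still available at that stage, and the unique unmatched one is the last such $\Delta_{i_0,1}$, which is exactly the segment Step~3 selects as $\Delta_{i_\ell,j_\ell}$.

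I expect this last matching, together with the mildness claim of the second paragraph, to be the main obstacle. The two greedy procedures have opposite priorities — $\mathfrak{tds}(-,a)$ chooses the longest $\nu^{a+1}\rho$-segment first, whereas $\underline{\mathfrak{Us}}$ chooses the longest $\nu^a\rho$-segment first — so one must argue they nonetheless yield compatible pairings; and one must verify that deleting the $\nu^a\rho$-starting segments and inserting ${}^-\Delta_*$ really does perturb the upward-sequence decomposition only as claimed, the delicate case being when $\mathfrak{m}$ contains a $\nu^{a+1}\rho$-starting segment longer than $\Delta_*$. It is precisely here that the hypotheses $\varepsilon^\mathrm{R}_{[a]_\rho}(L(\mathfrak{m})) = 1$ and, for part (ii), $\varepsilon^\mathrm{R}_{[a+1]_\rho}(L(\mathfrak{m})) = 0$ are used to rule out or control the configurations that would otherwise break the term-by-term matching.
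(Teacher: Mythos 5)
Your plan follows essentially the same route as the paper's proof: identify the segment $\Delta_*$ removed by the $\rho$-derivative with the last segment $\Delta_{i_\ell,1}$ participating in the truncation process for $\mathcal D^{\mathrm{Lang}}_{[a,b]_\rho}(\mathfrak m)$ (this is where $\varepsilon^{\mathrm R}_{[a]_\rho}(L(\mathfrak m))=1$ enters), record how the upward sequences of $\mathfrak m_{[a,b]}$ change when $\Delta_*$ is replaced by ${}^{-}\Delta_*$ (the paper's \eqref{eq:Us_a+1_b}), and then match the two truncation processes term by term; part (ii) is the same comparison run in reverse, with $\varepsilon^{\mathrm R}_{[a+1]_\rho}(L(\mathfrak m))=0$ controlling the bottom of the chain, exactly as in the paper.

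One intermediate claim in your plan is stated too strongly. You assert that $[a+1,b]_\rho$ is covered in $\mathcal D^{\mathrm{Lan}}_{[a]_\rho}(\mathfrak m)$ by \emph{exactly} the original chain with $\Delta_*$ replaced by ${}^{-}\Delta_*$, i.e.\ that ${}^{-}\Delta_*$ always participates in the truncation process for $\mathcal D^{\mathrm{Lang}}_{[a+1,b]_\rho}(\mathfrak n)$, $\mathfrak n=\mathfrak m-\Delta_*+{}^{-}\Delta_*$. This fails precisely when the chain for $\mathcal D^{\mathrm{Lang}}_{[a,b]_\rho}(\mathfrak m)$ already contains a segment starting with $\nu^{a+1}\rho$ (that is, $a_{i_{\ell-1},j_{\ell-1}}=a+1$): then the selection for $[a+1,b]_\rho$ on $\mathfrak n$ terminates at $\Delta_{i_{\ell-1},j_{\ell-1}}$ and ${}^{-}\Delta_*$ is not truncated at all. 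The final identity survives in that configuration only because $\Delta_*^{\mathrm{trc}}$ is then exactly ${}^{-}\Delta_*$, which is already a member of $\mathfrak n$, so the two multisegments agree for a different reason; this is the Case 1/Case 2 dichotomy in the paper's proof, and your term-by-term matching must include it both in (i) and in the reconstruction step of (ii). With that case distinction added, and with the compatibility you flag between the $\mathfrak{tds}(-,a)$-matching and the upward-sequence decomposition worked out (in the paper this is exactly the statement that $a_{i,2}=a+1$ for all $i\leq k_0$ except the one index carrying $\Delta_*$, supplemented in (ii) by $a_{i,3}=a+2$ for $i<i_*$ coming from $\varepsilon^{\mathrm R}_{[a+1]_\rho}(L(\mathfrak m))=0$), your outline fills in to the paper's argument.
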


\begin{example}
We continue Example \ref{ex:demo_1}. We have $\varepsilon^\mathrm{R}_{[1]_\rho}(\mathfrak{m})=1$ and $\varepsilon^\mathrm{R}_{[2]_\rho}(\mathfrak{m})=0$. Observe that $\mathcal{D}_{[a,b]_\rho}^\mathrm{Lang}(\mathfrak{m})=\mathfrak{m}-[1,4]_\rho-[2,4]_\rho-[3,4]_\rho+{^-}[1,4]_\rho +{^-}[2,4]_\rho + {^-}[3,4]_\rho$ and $\mathcal{D}_{[a+1,b]_\rho}^\mathrm{Lang}(\mathfrak{n})=\mathfrak{n}-[2,4]_\rho-[3,4]_\rho + {^-}[2,4]_\rho+ {^-}[3,4]_\rho=\left\{[1,5]_\rho, [2,4]_\rho, [1,3]_\rho, [2,6]_\rho, [2,3]_\rho, [3,4]_\rho, [4]_\rho, [3,7]_\rho, [3,5]_\rho \right\}$, which is equal to $\mathcal{D}_{[a,b]_\rho}^\mathrm{Lang}(\mathfrak{m})$.
\end{example}

\noindent
{\it Proof of Lemam \ref{lem:D[ab]=D[a+1,b]_D[a]}.}
Let's assume all the notations as mentioned in Algorithm \ref{alg:der:Lang} applied for $\mathcal D^{\mathrm{Lang}}_{[a,b]_{\rho}}(\mathfrak m)$. %We fix an integer $k_0$ with $1 \leq k_0 \leq k$ such that $a_{i,1}=a$ for $1 \leq i \leq k_0 $ and $ a_{i,1}\neq a $ for $  i \geq k_0+1.$ 

(i) As $\mathcal{D}_{[a,b]_\rho}^\mathrm{Lang}(\mathfrak{m}) \neq \infty$ with $[a]_\rho \subset \mathfrak{rf}\left(\Delta_{i_\ell,1} \right)$ (here $j_\ell =1$) and $\varepsilon^\mathrm{R}_{[a]_\rho}(\mathfrak{m}) =1$, we have
\begin{equation}\label{eq:n=der_a_m}
 \mathfrak{n}:=\mathcal{D}_{[a]_\rho}^\mathrm{Lang}(\mathfrak{m})=\mathfrak{m}-\Delta_{i_\ell,1}+ {^-}\Delta_{i_\ell,1}.   
\end{equation}
Let $\mathfrak{n}_1=\mathfrak{n}_{[a+1,b]}$. By Lemma \ref{upward seq a a+1,b}, we have   $\mathfrak{n}_1= \mathfrak{m}_1-\mathfrak{m}_1[a] + {^-}\Delta_{i_\ell,1}.$ For $i>1$, define $\mathfrak{n}_{i}=\mathfrak{n}_{i-1}-\underline{\mathfrak{Us}}(\mathfrak{n}_{i-1})$. By (\ref{eqn removable free}) and (\ref{eq:Us_a+1_b}), one sees that the segments participating in the truncation process for $\mathcal D^{\mathrm{Lang}}_{[a+1,b]_{\rho}}(\mathfrak n)$ and for $\mathcal D^{\mathrm{Lang}}_{[a,b]_{\rho}}(\mathfrak m)$ agree except that
\begin{enumerate}
    \item[(a)] $\Delta_{i_{\ell},1}$ participates in the truncation process for $\mathcal D^{\mathrm{Lang}}_{[a,b]_{\rho}}(\mathfrak m)$;
    \item[(b)] ${}^-\Delta_{i_{\ell},1}$ participates in the truncation process for $\mathcal D^{\mathrm{Lang}}_{[a+1,b]_{\rho}}(\mathfrak{n})$ if and only if any segment participating in the truncation process for $\mathcal D^{\mathrm{Lang}}_{[a,b]_{\rho}}(\mathfrak m)$ does not start with $\nu^{a+1}\rho$. 
    %For the only if direction, if a segment in $\mathfrak{m}_1$ starting with $\nu^{a+1}\rho$, participates in the truncation process for $\mathcal D^{\mathrm{Lang}}_{[a,b]_{\rho}}(\mathfrak{m})$, that segment is $\Delta_{i_{\ell-1}, j_{\ell-1}}$, and then, the fact $i_{\ell}<i_{\ell-1}$ and (\ref{eq:Us_a+1_b}) ensure that $\Delta_{i_{\ell-1}, j_{\ell-1}}$ also participates in the truncation process for $\mathcal D^{\mathrm{Lang}}_{[a+1,b]_{\rho}}(\mathfrak n)$. For the if direction, we first observe \[ \mathfrak{rf}({}^-\Delta_{i_{\ell},1})\cup \left\{ \nu^a\rho\right\} =\mathfrak{rf}(\Delta_{i_\ell,1}) ,\]
%where the first $\mathfrak{rf}$ is considered for $\mathfrak n$ and the second $\mathfrak{rf}$ is considered for $\mathfrak m$. Since $\mathfrak{rf}(\Delta_{i_\ell,1})$ (considered for $\mathfrak m$) satisfies the condition  (\ref{eqn der algorithm free condition}), $\mathfrak{rf}({}^-\Delta_{i_\ell,1})$ (considered for $\mathfrak{n}$) also satisfies the condition (\ref{eqn der algorithm free condition}) i.e. $[a+1,a_{i_{\ell-1},j_{\ell-1}}-1]_\rho \subset \mathfrak{rf}({}^-\Delta_{i_\ell,1})$. Now, if ${}^-\Delta_{i_{\ell},1}$ does not participate in the truncation process for  $\mathcal D^{\mathrm{Lang}}_{[a+1,b]_{\rho}}(\mathfrak n)$, then one can choose an index greater than $i_{\ell}$ for the last segment in the algorithm for $\mathcal D^{\mathrm{Lang}}_{[a+1,b]_{\rho}}(\mathfrak n)$. One sees that segment will then also participate in the truncation process for $\mathcal D^{\mathrm{Lang}}_{[a,b]_{\rho}}(\mathfrak m)$.
\end{enumerate}

 We divide it into two cases.
\begin{itemize}
    \item  Case 1: ${}^-\Delta_{i_{\ell},1}$ participates in the truncation process for $\mathcal D^{\mathrm{Lang}}_{[a+1,b]_{\rho}}(\mathfrak n)$. If we shorten the segment ${}^-\Delta_{i_\ell,1}$ by removing $\left[a+1, a_{i_{\ell-1},j_{\ell-1}}-1\right]_\rho$ from the left, the remaining part is $\left({^-}\Delta_{i_\ell,1}\right)^\mathrm{trc}=\Delta^\mathrm{trc}_{i_\ell,1}$. In this case, applying Algorithm \ref{alg:der:Lang} for $\mathcal{D}^\mathrm{Lang}_{[a+1,b]_\rho}(\mathfrak{n})$ we have: 
\begin{align*}
    \mathcal{D}^\mathrm{Lang}_{[a+1,b]_\rho} \circ \mathcal{D}_{[a]_\rho}^\mathrm{Lang}(\mathfrak{m}) &= \mathfrak{n}- \sum\limits_{t=1}^{\ell-1} \Delta_{i_t, j_t}-{^-}\Delta_{i_\ell,1} + \sum\limits_{t=1}^{\ell-1} \Delta_{i_t, j_t}^\mathrm{trc}+\left({^-}\Delta_{i_\ell,1}\right)^\mathrm{trc}\\
    &= \mathfrak{m}- \sum\limits_{t=1}^{\ell} \Delta_{i_t, j_t} + \sum\limits_{t=1}^{\ell} \Delta_{i_t, j_t}^\mathrm{trc} \quad (\text{by } \eqref{eq:n=der_a_m})\\ 
    &= \mathcal{D}_{[a,b]_\rho}^\mathrm{Lang}(\mathfrak{m}).
\end{align*}
\item Case 2: ${}^-\Delta_{i_{\ell,1}}$ does not participate in the truncation process for $\mathcal D^{\mathrm{Lang}}_{[a+1,b]_{\rho}}(\mathfrak n)$. Then $\Delta_{i_{\ell},1}^{\mathrm{trc}}={}^-\Delta_{i_{\ell},1}$. In this case, applying Algorithm \ref{alg:der:Lang} for $\mathcal{D}^\mathrm{Lang}_{[a+1,b]_\rho}(\mathfrak{n})$ we have:
\begin{align*}
    \mathcal{D}^\mathrm{Lang}_{[a+1,b]_\rho} \circ \mathcal{D}_{[a]_\rho}^\mathrm{Lang}(\mathfrak{m})
    &= \mathfrak{n}- \sum\limits_{t=1}^{\ell-1} \Delta_{i_t, j_t} + \sum\limits_{t=1}^{\ell-1} \Delta_{i_t, j_t}^\mathrm{trc} \\
    &= \mathfrak{m}- \sum\limits_{t=1}^{\ell} \Delta_{i_t, j_t}+{^-}\Delta_{i_\ell,1} + \sum\limits_{t=1}^{\ell-1} \Delta_{i_t, j_t}^\mathrm{trc}
    = \mathcal{D}_{[a,b]_\rho}^\mathrm{Lang}(\mathfrak{m}).
\end{align*}
\end{itemize}

(ii) The conditions indeed imply that  $\nu^a\rho$ must be in a removable free section of a segment in $\mathfrak m_1=\mathfrak m_{[a,b]}$. Otherwise, $\mathfrak m_{[a,b]}=\mathfrak n_{[a,b]}$ and so $\varepsilon^\mathrm{R}_{[a+1]_{\rho}}(\mathfrak m)=0$ implies that $\varepsilon^\mathrm{R}_{[a+1]_{\rho}}\left(\mathfrak n_{[a+1,b]}\right)=0$, where $\mathfrak n=\mathcal D_{[a]_{\rho}}^{\mathrm{Lang}}(\mathfrak m)$. This contradicts to $\mathcal D_{[a+1,b]_{\rho}}^{\mathrm{Lang}}(\mathfrak n)\neq \infty$. 

Now, one observes that the formula for the removal sequences for $\mathfrak n$ in Lemma \ref{upward seq a a+1,b} still applies. The remaining argument is the same as (i). 

%We denote $\mathfrak{n}=\mathcal{D}^\mathrm{Lang}_{[a]_\rho}(\mathfrak{m})=\mathfrak{m}-\Delta_* + {}^-\Delta_*$, for some $\Delta_* \in \mathfrak{m}[a]$. As $\mathcal{D}_{[a+1]_\rho}^\mathrm{Lang}(\mathfrak{m})= \infty$ and $\mathcal{D}_{[a+1,b]_\rho}^\mathrm{Lang}(\mathfrak{n})\neq  \infty$, we have $\Delta_* \in \mathfrak{m}_1,$ say $\Delta_*=\Delta_{i_*,1}$ for some $1 \leq i_*\leq k_0$ with $a_{i_*,2}\geq a+2$ and hence $\nu^a\rho \in \mathfrak{rf}(\Delta_*)$ (considered for $\mathfrak m$).

%As $\varepsilon^\mathrm{R}_{[a]_\rho}(L(\mathfrak{m}))=1$, by Remark \ref{rem:der_equality}, there exists only one $[a]_\rho$, which is a removable free point in $\mathfrak{m}_1$. Therefore, we have
%\begin{equation}
%    a_{i,2}=a+1 ~\text{ for } 1 \leq i \leq k_0 \text{ except for } i=i_*,
%\end{equation}
%and as $\mathcal{D}_{[a+1]_\rho}^\mathrm{Lang}(\mathfrak{m})= \infty$, we have
%\begin{equation}\label{eq:comp_E}
%    a_{i,3}=a+2 ~\text{ for all } i < i_*,
%\end{equation}
%and so $\nu^{a+1}\rho \notin \mathfrak{rf}(\Delta_{i,2})$ for all $i <i_*$. The rest of the arguments are similar to the proof of assertion (i) if we replace $i_\ell$ by $i_*$ and replace the fact $\mathcal{D}_{[a,b]_\rho}^\mathrm{Lang}(\mathfrak{m}) \neq \infty$ by $\mathcal{D}_{[a+1,b]_\rho}^\mathrm{Lang}(\mathfrak{n}) \neq \infty$ and use \eqref{eq:comp_E}. 
\qed

\subsection{Composition of $\mathrm{D}^{\mathrm{R}}_{[a]_{\rho}}$ and $\mathrm{D}^{\mathrm{R}}_{[a+1,b]_{\rho}}$}

\begin{lemma}\label{lem:D[ab]=D[a+1,b]_D[a]_pi}
Let $\pi\in \mathrm{Irr}_\rho$ and $[a,b]_\rho \in \mathrm{Seg}_\rho$ with $a <b$. Suppose $\varepsilon^\mathrm{R}_{[a]_\rho}(\pi) =1$. Then, 
\begin{enumerate}
    \item[(i)] If $\varepsilon^\mathrm{R}_{[a,b]_\rho}(\pi) \neq 0$, we have $\mathrm{D}^\mathrm{R}_{[a+1,b]_\rho} \circ \mathrm{D}^\mathrm{R}_{[a]_\rho}(\pi) \cong  \mathrm{D}^\mathrm{R}_{[a,b]_\rho}(\pi).$
    \item[(ii)] If $\varepsilon^\mathrm{R}_{[a+1]_\rho}(\pi) =0$ and  $\mathrm{D}^\mathrm{R}_{[a+1,b]_\rho} \circ \mathrm{D}^\mathrm{R}_{[a]_\rho}(\pi) \neq 0$, we have
    \[\mathrm{D}^\mathrm{R}_{[a,b]_\rho}(\pi)\cong \mathrm{D}^\mathrm{R}_{[a+1,b]_\rho} \circ \mathrm{D}^\mathrm{R}_{[a]_\rho}(\pi).\]
\end{enumerate}
\end{lemma}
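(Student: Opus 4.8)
Write $\pi=L(\mathfrak m)$. I would first settle the (non)vanishing of the two sides, and then identify the resulting irreducible representations through their highest derivative multisegments together with one more Bernstein--Zelevinsky derivative, running an induction on $\ell_{rel}(\mathfrak m)$. The combinatorial engine is the removal process of \cite{Cha_csq}: recall that it computes $\mathfrak{hd}$ of derivatives (Lemmas~\ref{lem removal process singleton}, \ref{lem derivative removal}), and that an object $\sigma\in\mathrm{Irr}_\rho$ is determined by the pair $(\mathfrak{hd}(\sigma),\sigma^-)$, since $\sigma$ is recovered from $\sigma^-=\mathrm D^{\mathrm R}_{\mathfrak{hd}(\sigma)}(\sigma)$ by the inverse sequence of integrals (cf. \cite{Cha_csq,Cha_csq_iii}). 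Also, via Lemma~\ref{lem non-zero der hd} and these formulas, $\varepsilon^{\mathrm R}_{[a]_\rho}(\pi)=1$ forces $\mathfrak{hd}(\pi)[a]$ to be a single segment $[a,d]_\rho$, and then $\mathfrak{hd}(\mathrm D^{\mathrm R}_{[a]_\rho}(\pi))=\mathfrak{hd}(\pi)-[a,d]_\rho+[a+1,d]_\rho$.

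\textbf{Step 1: both sides vanish or both do not.} By Lemma~\ref{lem non-zero der hd}, $\mathrm D^{\mathrm R}_{[a,b]_\rho}(\pi)\neq 0$ iff $d\geq b$, while $\mathrm D^{\mathrm R}_{[a+1,b]_\rho}\circ\mathrm D^{\mathrm R}_{[a]_\rho}(\pi)\neq 0$ iff $\mathfrak{hd}(\mathrm D^{\mathrm R}_{[a]_\rho}(\pi))$ contains a segment $[a+1,c]_\rho$ with $c\geq b$, i.e. iff $d\geq b$ or $\mathfrak{hd}(\pi)[a+1]$ already contains such a segment. In case (i) the hypothesis $\varepsilon^{\mathrm R}_{[a,b]_\rho}(\pi)\neq 0$ gives $d\geq b$ directly; in case (ii) the hypothesis $\varepsilon^{\mathrm R}_{[a+1]_\rho}(\pi)=0$ means $\mathfrak{hd}(\pi)[a+1]=\emptyset$, so the assumed nonvanishing of the composition again forces $d\geq b$, hence $\mathrm D^{\mathrm R}_{[a,b]_\rho}(\pi)\neq 0$ too. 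Thus in both cases both sides are nonzero; write $\tau:=\mathrm D^{\mathrm R}_{[a,b]_\rho}(\pi)$ and $\tau':=\mathrm D^{\mathrm R}_{[a+1,b]_\rho}\circ\mathrm D^{\mathrm R}_{[a]_\rho}(\pi)$, and set $\xi:=\mathrm D^{\mathrm R}_{[a]_\rho}(\pi)$.

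\textbf{Step 2: $\tau\cong\tau'$.} First I match highest derivative multisegments. By Lemma~\ref{lem derivative removal}, $\mathfrak{hd}(\tau)[c]=\mathfrak r([a,b]_\rho,\pi)[c]$ for $c\geq a$ and $\mathfrak{hd}(\tau')[c]=\mathfrak r([a+1,b]_\rho,\xi)[c]$ for $c\geq a+1$; for $c<a$ neither derivative alters $\mathfrak{hd}$, and at $c=a$ both give the empty multiset (for $\tau$ because the chain removed by the $[a,b]_\rho$-process meets the point $a$ only in $[a,d]_\rho$, which it shortens away; for $\tau'$ because $\mathfrak{hd}(\xi)[a]=\emptyset$). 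The equality $\mathfrak r([a,b]_\rho,\pi)[c]=\mathfrak r([a+1,b]_\rho,\xi)[c]$ for all $c\geq a+1$ is the removal-process transcription of Lemma~\ref{lem:D[ab]=D[a+1,b]_D[a]}: using $\varepsilon^{\mathrm R}_{[a]_\rho}(\pi)=1$ together with the hypothesis of (i) ($d\geq b$), resp. of (ii) ($\mathfrak{hd}(\pi)[a+1]=\emptyset$), the chain removed from $\mathfrak{hd}(\xi)$ in the process $\mathfrak r([a+1,b]_\rho,\xi)$ must begin at $[a+1,d]_\rho$ and then coincides, segment by segment, with the tail (beyond the point $a$) of the chain removed from $\mathfrak{hd}(\pi)$ by $\mathfrak r([a,b]_\rho,\pi)$. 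Hence $\mathfrak{hd}(\tau)=\mathfrak{hd}(\tau')=:\mathfrak g$, and note $\mathfrak g[a]=\emptyset$. If $\mathfrak g=\emptyset$ then $\tau=\mathrm{\mathbf 1}$, and $\tau'=\mathrm{\mathbf 1}$ too by matching supports, so we are done; otherwise, $\tau^-=\mathrm D^{\mathrm R}_{\mathfrak g}\circ\mathrm D^{\mathrm R}_{[a,b]_\rho}(\pi)$ and $(\tau')^-=\mathrm D^{\mathrm R}_{\mathfrak g}\circ\mathrm D^{\mathrm R}_{[a+1,b]_\rho}\circ\mathrm D^{\mathrm R}_{[a]_\rho}(\pi)$, and by the commutativity of sequences of derivatives from \cite{Cha_csq} I would move $\mathrm D^{\mathrm R}_{\mathfrak g}$ inside, rewriting both as $\mathrm D^{\mathrm R}_{[a,b]_\rho}(\varpi)$ and $\mathrm D^{\mathrm R}_{[a+1,b]_\rho}\circ\mathrm D^{\mathrm R}_{[a]_\rho}(\varpi)$ for the single representation $\varpi:=\mathrm D^{\mathrm R}_{\mathfrak g}(\pi)$, which has $\ell_{rel}(\varpi)<\ell_{rel}(\pi)$. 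Checking that $\varpi$ still satisfies the hypotheses of the relevant part (here $\mathfrak g[a]=\emptyset$ keeps $\varepsilon^{\mathrm R}_{[a]_\rho}$ unchanged), the induction hypothesis yields $\tau^-\cong(\tau')^-$, and the pair-determination gives $\tau\cong\tau'$.

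\textbf{Main obstacle.} The heart of the argument is the combinatorial matching of the two removal chains in Step~2: Lemma~\ref{lem:D[ab]=D[a+1,b]_D[a]} is the exact template, but it must be carried out for the removal process of \cite{Cha_csq}, verifying that stripping $[a]_\rho$ first and then $[a+1,b]_\rho$ removes literally the same segments of $\mathfrak{hd}(\pi)$ as stripping $[a,b]_\rho$ in one stroke --- the single-segment hypothesis $\varepsilon^{\mathrm R}_{[a]_\rho}(\pi)=1$ being precisely what pins the entry point of the chain; it is exactly at this step that the identity fails when $\varepsilon^{\mathrm R}_{[a]_\rho}(\pi)\geq 2$, or, in the setting of (ii), when $\varepsilon^{\mathrm R}_{[a+1]_\rho}(\pi)>0$ (small examples such as $\pi=L(\{[a]_\rho,[a,a+1]_\rho,[a+1,b]_\rho\})$, where the composition is nonzero but $\mathrm D^{\mathrm R}_{[a,b]_\rho}(\pi)=0$, show that naive bookkeeping of the $\varepsilon$-invariants is not enough). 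The remaining subtleties --- that derivatives along segments starting at $\geq a$ do not disturb $\mathfrak{hd}$ below the point $a$, the commutativity bookkeeping moving $\mathrm D^{\mathrm R}_{\mathfrak g}$ inward, and the verification that $\varpi$ inherits the hypotheses --- are all of the type already developed in \cite{Cha_csq,Cha_csq_iii} and are where most of the technical work sits.
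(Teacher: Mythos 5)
Your Step 1 is fine: the reduction of both parts to the single non-vanishing statement ``$\mathfrak{hd}(\pi)[a]=\{[a,d]_\rho\}$ with $d\geq b$'' via Lemma~\ref{lem non-zero der hd}, Lemma~\ref{lem removal process singleton} and Lemma~\ref{lem derivative removal} is essentially the paper's own argument for part (ii). The gap is in Step 2, where the isomorphism itself is supposed to be proved. First, the $\mathfrak{hd}$-matching rests on claims that the cited lemmas do not give and that are false in general: Lemma~\ref{lem derivative removal} controls $\mathfrak{hd}$ of a derivative only at points $c\geq a$ (resp.\ $c\geq a+1$), and a right derivative can change $\mathfrak{hd}$ at lower points --- for instance $\mathrm{D}^{\mathrm{R}}_{[a]_\rho}(Z([a-1,a]_\rho))=\nu^{a-1}\rho$ creates a segment at the point $a-1$, and $\mathrm{D}^{\mathrm{R}}_{[a+1]_\rho}(Z([a,a+1]_\rho))=\nu^{a}\rho$ shows that $\mathfrak{hd}(\xi)[a]=\emptyset$ need not persist after applying $\mathrm{D}^{\mathrm{R}}_{[a+1,b]_\rho}$. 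So your equality $\mathfrak{hd}(\tau)=\mathfrak{hd}(\tau')$ is not established at the points $\leq a$, and at the points $\geq a+1$ it is exactly the chain-matching you yourself defer as the ``main obstacle''. Second, the induction step needs $\mathrm{D}^{\mathrm{R}}_{\mathfrak g}$ to be moved past $\mathrm{D}^{\mathrm{R}}_{[a,b]_\rho}$ and needs $\varpi=\mathrm{D}^{\mathrm{R}}_{\mathfrak g}(\pi)$ to inherit $\varepsilon^{\mathrm{R}}_{[a]_\rho}=1$; neither is automatic, since $\mathfrak g$ may contain segments linked to $[a,b]_\rho$ (the commutativity results of \cite{Cha_csq} are for unlinked segments or under extra $\varepsilon$-hypotheses), and $\varepsilon^{\mathrm{R}}_{[a]_\rho}$ can change under derivatives taken at other points, as the same small examples show. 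So as written the proposal proves the non-vanishing statements but not the isomorphism.

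The paper's proof bypasses all of this scaffolding. Since $\varepsilon^{\mathrm{R}}_{[a]_\rho}(\pi)=1$ and $d\geq b$, the unique segment $[a,c]_\rho\in\mathfrak{hd}(\pi)[a]$ satisfies $\Upsilon([a,b]_\rho,\pi)=\Upsilon([a]_\rho,\pi)=[a,c]_\rho$, hence $\mathfrak r([a]_\rho,\pi)=\mathfrak{hd}(\pi)-\Upsilon([a,b]_\rho,\pi)+{}^-\Upsilon([a,b]_\rho,\pi)$; then \cite[Lemma 8.7 and Definition 8.13]{Cha_csq} give $\mathfrak r([a,b]_\rho,\pi)=\mathfrak r([a+1,b]_\rho,\mathfrak r([a]_\rho,\pi))=\mathfrak r(\{[a]_\rho,[a+1,b]_\rho\},\pi)$, and \cite[Theorem 10.2]{Cha_csq} converts this equality of derivative resultant multisegments directly into the isomorphism $\mathrm{D}^{\mathrm{R}}_{[a,b]_\rho}(\pi)\cong\mathrm{D}^{\mathrm{R}}_{[a+1,b]_\rho}\circ\mathrm{D}^{\mathrm{R}}_{[a]_\rho}(\pi)$. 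That theorem --- identifying the representations themselves, not merely their $\mathfrak{hd}$'s --- is the device your proposal is missing; with it, the $\mathfrak{hd}$-matching, the pair-determination and the induction on $\ell_{rel}(\mathfrak m)$ are unnecessary. For (ii), once $\mathrm{D}^{\mathrm{R}}_{[a,b]_\rho}(\pi)\neq 0$ is known, the isomorphism is just an application of (i), as you in effect observe.
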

\begin{proof}
(i) As $\varepsilon^\mathrm{R}_{[a]_\rho}(\pi)=1$, there is exactly one segment in $\mathfrak{hd}(\pi)[a]$. Moreover, by Lemma \ref{lem non-zero der hd}, there exists at least one segment $[a,c]_\rho \in \mathfrak{hd}(\pi)[a]$ such that $c \geq b$ because $\varepsilon^\mathrm{R}_{[a,b]_\rho}(\pi) \neq 0$. 
Therefore, $\mathfrak{hd}(\pi)[a]= \left\{[a,c]_\rho\right\}.$ Then, $[a, b]_\rho$ (as well as $[a]_\rho$) is a nonempty segment admissible to $\pi$, and the first segment (in this situation, the only segment) in the removal sequence is $\Upsilon\left([a, b]_\rho, \pi\right)=\Upsilon\left([a]_\rho, \pi\right)=[a, c]_\rho.$ By  \cite[Lemma 8.7]{Cha_csq}, 
\begin{align*}
 \mathfrak{r}([a]_\rho, \pi)&= \mathfrak{hd}(\pi) - [a, c]_\rho + [a+1, c]_\rho   \\
 &= \mathfrak{hd}(\pi) - \left\{\Upsilon\left([a, b]_\rho, \pi\right)\right\} + \left\{{^-}\Upsilon\left([a, b]_\rho, \pi\right)\right\}.
\end{align*}
Then, since $\varepsilon_{[a+1]_{\rho}}(\mathfrak m)=0$, $\mathfrak{r}([a]_{\rho}, \mathfrak m)[a+1]$ contains precisely one segment which is $[a+1,c]_{\rho}$. Now, by the definition of the removal process, we get
\[\mathfrak{r}([a, b]_\rho, \pi)= \mathfrak{r}\left([a+1, b]_\rho, \mathfrak{r}([a]_\rho, \pi)\right).\]
By \cite[Theorem 10.2]{Cha_csq}, we conclude that $\mathrm{D}^\mathrm{R}_{[a,b]_\rho}(\pi) \cong \mathrm{D}^\mathrm{R}_{[a+1,b]_\rho} \circ \mathrm{D}^\mathrm{R}_{[a]_\rho}(\pi).$

(ii) As $\varepsilon^\mathrm{R}_{[a+1]_{\rho}}(\pi)= 0$, Lemma \ref{lem non-zero der hd} implies that $\mathfrak{hd}(\pi)[a+1]=\emptyset$. On the other hand, $\mathrm{D}^\mathrm{R}_{[a+1,b]_{\rho}}\circ \mathrm{D}^\mathrm{R}_{[a]_{\rho}}(\pi)\neq 0$ and so Lemma \ref{lem non-zero der hd} implies that  $\mathfrak{hd}\left(\mathrm{D}^\mathrm{R}_{[a]_{\rho}}(\pi)\right)$ has a segment of the form $[a+1,d]_{\rho}$ for some $d \geq b$. Now, by Lemma \ref{lem derivative removal} and $\mathfrak{hd}(\pi)[a+1]=\emptyset$, we have $\mathfrak{r}([a]_{\rho}, \pi)[a+1]=\{[a+1,d]_{\rho}\}$. Then, since $\mathfrak{hd}(\pi)[a+1]=\emptyset$, $\mathfrak{hd}(\pi)$ must contain the segment $[a,d]_{\rho}$, in order to produce the segment $[a+1,d]_{\rho}$ in $\mathfrak r([a]_{\rho}, \pi)$.  Hence, $\mathrm{D}^\mathrm{R}_{[a,b]_{\rho}}(\pi)\neq 0$ by Lemma \ref{lem non-zero der hd}.
\end{proof}

\begin{remark}
Alternatively, one can observe that Lemma \ref{lem:D[ab]=D[a+1,b]_D[a]_pi}(ii) follows from the fact that $\mathrm{St}([a,b]_\rho)$ is the unique subquotient of $\mathrm{St}([a,a]_\rho) \times \mathrm{St}([a+1,b]_\rho)$ not admitting a right $\rho \nu^{a+1}$-derivative.
\end{remark}

\subsection{$\mathfrak{tds}$ after $\rho$-derivatives}

\begin{lemma} \label{lem tds on a ab commute}
Let $\mathfrak m \in \mathrm{Mult}_{\rho}$  and $[a,b]_\rho \in \mathrm{Seg}_\rho$. Suppose $\mathcal D_{[a,b]_{\rho}}^{\mathrm{Lang}}(\mathfrak m) \neq \infty$. Let 
\[ \mathfrak{n}:=\mathcal D^{\mathrm{Lang}}_{[a]_{\rho}}(\mathfrak m)=\mathfrak m-\Delta_a+{}^-\Delta_a
\]
for some $\Delta_a \in \mathfrak m[a]$. Suppose $\Delta_a \notin \mathfrak m_{[a,b]}$ and so $\mathfrak m_{[a,b]}=\mathfrak n_{[a,b]}$. Then after the removal steps for the $\mathfrak{tds}(-,a)$ process for both $\mathfrak m$ and $\mathcal D_{[a,b]_{\rho}}^{\mathrm{Lang}}(\mathfrak m)$, one obtains the same multiset of segments starting with $\nu^a\rho$ and not lying in $\mathfrak m_{[a,b]}$.
\end{lemma}

\begin{example}
     Consider $\mathfrak m=\left\{ [1]_{\rho}, [1,2]_{\rho}, [1,5]_{\rho}, [2,4]_{\rho}\right\}$ and $a=1$ and $b=3$. In such example, $\mathfrak m_{[1,3]}=\mathfrak n_{[1,3]}=\left\{ [1,5]_{\rho}, [2,4]_{\rho} \right\}$. Now, $\mathcal D^{\mathrm{Lang}}_{[1,3]_{\rho}}(\mathfrak m)=\left\{[1]_\rho, [1,2]_\rho, [2,5]_{\rho}, [4]_{\rho}\right\}$. Note that the segments participating in the removal step for the $\mathfrak{tds}(\mathfrak m, a)$-process are $[1,2]_{\rho}, [2,4]_{\rho}$, while the segments participating in the removal step for the $\mathfrak{tds}\left(\mathcal D_{[1,3]_{\rho}}^{\mathrm{Lang}}(\mathfrak{m}), a\right)$-process are $[1,2]_{\rho}$ and $[2,5]_{\rho}$. One may think that the segment $[2,4]_{\rho}$ in $\mathfrak{tds}(\mathfrak m,a)$ is truncated in the algorithm for $\mathcal D^{\mathrm{Lang}}_{[1,3]_{\rho}}(\mathfrak m)$, and so one has to replace with the $[2,5]_{\rho}$ in $\mathcal D_{[1,3]_{\rho}}^{\mathrm{Lang}}(\mathfrak m)$ (which is truncated from $[1,5]_{\rho}$ in $\mathfrak m$). The remaining segment starting with $\nu\rho$ after $\mathfrak{tds}(-,1)$-process on both $\mathfrak m$ and $\mathcal D^{\mathrm{Lang}}_{[a]_{\rho}}(\mathfrak m)$ is the segment $[1]_{\rho}$.
     
     %Now one can see to delete $[1]_{\rho}$ in $\mathcal D_{[1,3]_{\rho}}^{\mathrm{Lang}}(\mathfrak m)$ to obtain $\mathcal D_{[1]_{\rho}}^{\mathrm{Lang}}\circ \mathcal D_{[1,3]_{\rho}}^{\mathrm{Lang}}(\mathfrak m)$, and on the other hand, $[1]_{\rho}$ is also deleted in $\mathfrak m$ to obtain $\mathcal D_{[1]_{\rho}}^{\mathrm{Lang}}(\mathfrak m)$. Since $\mathfrak m_1=\mathfrak n_1$, we then also have $\mathcal D_{[1,3]_{\rho}}^{\mathrm{Lang}}\circ \mathcal D_{[1]_{\rho}}^{\mathrm{Lang}}(\mathfrak m)=\mathcal D_{[1]_{\rho}}^{\mathrm{Lang}}\circ \mathcal D_{[1,3]_{\rho}}^{\mathrm{Lang}}(\mathfrak m)$. The general argument follows this pattern, but it is not so educational to write down all details and so we omit.
\end{example}

\noindent
{\it Proof of Lemma \ref{lem tds on a ab commute}.}  Let's assume all the notations as mentioned in Algorithm \ref{alg:der:Lang} applied for $\mathcal D^{\mathrm{Lang}}_{[a,b]_{\rho}}(\mathfrak m)$.
\begin{enumerate}
\item When $a_{i_{\ell-1},j_{\ell-1}}=a+1$,  the segment ${^-}\Delta_{i_\ell,1}$ replaces $\Delta_{i_{\ell-1},j_{\ell-1}}$ to  participate in the removal steps of the $\mathfrak{tds}(-,a)$ process on $\mathcal{D}_{[a,b]_\rho}^\mathrm{Lang}(\mathfrak{m})$, whereas 
$\Delta_{i_{\ell-1},j_{\ell-1}}$ participates in the removal steps of the $\mathfrak{tds}(-,a)$ process on $\mathfrak{m}$. Now the remaining segments starting with $\nu^a\rho$ are the same. In particular, we have the lemma.
\item When $a_{i_{\ell-1},j_{\ell-1}}\neq a+1$, the segment $\Delta_{i_\ell, 1}$ is not in the removal step of the $\mathfrak{tds}(-,a)$-process for $\mathfrak m$ since $[a]_\rho$ is a removable free point. Then one has the same segments for the removal steps of the $\mathfrak{tds}(-, a)$-process for both $\mathfrak m$ and $\mathcal D^{\mathrm{Lang}}_{[a,b]_{\rho}}(\mathfrak m)$. Now the lemma follows.
\end{enumerate}
\qed

In the proofs of Lemmas \ref{lem:comm_D[ab]_D[a]} and \ref{lem:comm_D_ab:D_a+1} below, we shall need some variations of the above lemma. The details are similar, and we shall not provide full arguments each time.

\subsection{Commutativity of $\mathcal D_{[a]_{\rho}}^{\mathrm{Lang}}$ and $\mathcal D_{[a,b]_{\rho}}^{\mathrm{Lang}}$}
%The following Lemmas play an integral role in the inductive reasoning used to prove some of the key conclusions of this paper, such as in Theorem \ref{thm:zero_der_Lang} and \ref{thm:der:Lang}. 

We record the following observation, whose proof is straightforward:

\begin{lemma} \label{lem change upward seq  a ab}
Let $\mathfrak m \in \mathrm{Mult}_{\rho}$ and let $a \in \mathbb Z$. Suppose $\mathcal D^{\mathrm{Lang}}_{[a]_{\rho}}(\mathfrak m)\neq \infty$. Then
\[  \mathfrak n:= \mathcal D^{\mathrm{Lang}}_{[a]_{\rho}}(\mathfrak m)=\mathfrak m-\Delta_*+{}^-\Delta_*
\]
for some $\Delta_* \in \mathfrak m[a]$. We use the notations of Algorithm \ref{alg:der:Lang} for both $\mathcal D^{\mathrm{Lang}}_{[a,b]_{\rho}}(\mathfrak m)$ and $\mathcal D^{\mathrm{Lang}}_{[a,b]_{\rho}}(\mathfrak n)$. Analogously, $\mathfrak n_1=\mathfrak n_{[a,b]}$ and $\mathfrak n_{i+1}=\mathfrak n_i-\underline{\mathfrak{Us}}(\mathfrak n_i)$. Let $k_0$ be the largest integer such that $\underline{\mathfrak{Us}}(\mathfrak m_{k_0})[a]\neq \emptyset$. Suppose $\Delta_* \in \mathfrak m_{[a,b]}$ i.e. $\Delta_*=\Delta_{i_*,1} \in \underline{\mathfrak{Us}}(\mathfrak m_{i_*})$ for some $1 \leq i_* \leq k_0$. Then, $i_*$ is the largest integer $i \leq k_0$ such that $\Delta_{i,1}$ contains $\nu^a\rho$ in its removable free section. Furthermore, we have

\begin{align}\label{eq:us_on_rho_der}
 \underline{\mathfrak{Us}}(\mathfrak{n}_i)=\begin{cases}
     \underline{\mathfrak{Us}}(\mathfrak{m}_i) &\mbox{ if } 1 \leq i < i_* \text{ or } k_0+1 \leq i \leq k\\
     \underline{\mathfrak{Us}}(\mathfrak{m}_i) - \Delta_{i,1} + {^-}\Delta_{i,1} + \Delta_{i+1,1} &\mbox{ if } i = i_* \text{ and } i+1 \leq k_0\\
     \underline{\mathfrak{Us}}(\mathfrak{m}_i) - \Delta_{i,1} + {^-}\Delta_{i,1} &\mbox{ if } i = i_* = k_0\\
     \underline{\mathfrak{Us}}(\mathfrak{m}_i) - \Delta_{i,1} + \Delta_{i+1,1} &\mbox{ if }  i_* <i <k_0\\
     \underline{\mathfrak{Us}}(\mathfrak{m}_i) - \Delta_{i,1} &\mbox{ if }  i_*< i = k_0,
 \end{cases}   
\end{align}
\end{lemma}

\begin{proof}
By Algorithm \ref{alg:rho_der:Lang}, $i^*$ is the largest integer $\leq k_0$ such that the unique segment in $\underline{\mathfrak{Us}}(\mathfrak m_{i^*})[a]$ has $\nu^a\rho$ in its removable free section. With this choice of $i^*$, for $i_*<i \leq k_0$, we have $\underline{\mathfrak{Us}}(\mathfrak m_i)[a+1]\neq \emptyset$ (i.e. $a_{\ell-1, 2}=a+1$). Now one keeps track of the indices to obtain (\ref{eq:us_on_rho_der}).
\end{proof}

\begin{example}
Let $\mathfrak m=\left\{ [1,2]_{\rho}, [1,3]_{\rho}, [2,4]_{\rho}, [2,3]_{\rho}, [1,4]_{\rho}, [1,5]_{\rho}, [3,5]_{\rho} \right\}$. Suppose we consider $a=1$ and $b=2$ for the upward sequences.
\begin{enumerate}
\item Note that $\underline{\mathfrak{Us}}(\mathfrak m_1)=\left\{[1,5]_{\rho}\right\}$, $\underline{\mathfrak{Us}}(\mathfrak m_2)=\left\{ [1,4]_{\rho}, [3,5]_{\rho} \right\}$, $\underline{\mathfrak{Us}}(\mathfrak m_3)=\left\{ [1,3]_{\rho},[2,4]_{\rho}\right\}$ and $\underline{\mathfrak{Us}}(\mathfrak m_4)=\left\{ [1,2]_{\rho},[2,3]_{\rho}\right\}$.
\item Let $\mathfrak n=\mathcal D_{[1]_{\rho}}^{\mathrm{Lang}}(\mathfrak m)=\left\{ [1,2]_{\rho},[1,3]_{\rho}, [2,4]_{\rho}, [2,3]_{\rho},  [2,4]_{\rho}, [1,5]_{\rho}, [3,5]_{\rho}\right\}$ and $\mathfrak{n}_1=\mathfrak{n}_{[1,2]}$. Then, $\underline{\mathfrak{Us}}(\mathfrak n_1)=\left\{ [1,5]_{\rho}\right\}$, $\underline{\mathfrak{Us}}(\mathfrak n_2)=\left\{ [1,3]_{\rho}, [2,4]_{\rho}, [3,5]_{\rho} \right\}$, $\underline{\mathfrak{Us}}(\mathfrak n_3)=\left\{[1,2]_{\rho}, [2,4]_{\rho}\right\}$ and $\underline{\mathfrak{Us}}(\mathfrak n_4)=\left\{[2,3]_{\rho}\right\}$ (Here $\mathfrak{n}_i=\mathfrak{n}_{i-1}-\underline{\mathfrak{Us}}(\mathfrak n_{i-1})$ for $i >1$).
\end{enumerate}
This example illustrates changes in the upward sequences after taking $\mathcal D^{\mathrm{Lang}}_{[1]_{\rho}}$. For example, the segments in $\underline{\mathfrak{Us}}(\mathfrak n_2)$ is obtained from $\underline{\mathfrak{Us}}(\mathfrak m_2)$ by truncating $[1,4]_{\rho}$, and adding$[1,3]_{\rho}$ (the segment starting $[1]_{\rho}$ in $\underline{\mathfrak{Us}}(\mathfrak m_3)$). Further, the segments in $\underline{\mathfrak{Us}}(\mathfrak n_3)$ is obtained from $\underline{\mathfrak{Us}}(\mathfrak m_3)$ by replacing the segment $[1,3]_{\rho}$ in $\underline{\mathfrak{Us}}(\mathfrak m_3)$ with the segment $[1,2]_{\rho}$ in $\underline{\mathfrak{Us}}(\mathfrak m_4)$. 
\end{example}

The upshot of Lemma \ref{lem change upward seq  a ab} is the following:

\begin{corollary}
We use the notations in Lemma \ref{lem change upward seq  a ab}. Let $[a_{i_1, j_1},b_{i_1, j_1}]_{\rho}, \ldots, [a_{i_\ell, j_{\ell}}, b_{i_{\ell}, j_{\ell}}]_{\rho}$ be the segments participating in the truncation process for $\mathcal D^{\mathrm{Lang}}_{[a,b]_{\rho}}(\mathfrak m)$ as in  Algorithm \ref{alg:der:Lang} . If such $a_{i_p, j_p}\neq a, a+1$, then the segment $[a_{i_p,j_p}, b_{i_p,j_p}]$ also participates in the truncation process for  $\mathcal D^{\mathrm{Lang}}_{[a,b]_{\rho}}(\mathfrak n)$.
\end{corollary}

With the above corollary, one has to investigate how to pick the last one or two segments participating in the truncation process for $\mathcal D^{\mathrm{Lang}}_{[a,b]_{\rho}}(\mathfrak n)$ and the key is the following lemma:

\begin{lemma} \label{lem free point after Da}
We use the notations in Lemma \ref{lem change upward seq  a ab}. Suppose further that $\varepsilon^{\mathrm{R}}_{[a]_{\rho}}(\mathfrak m)\geq 2$. Then
\begin{enumerate}
\item There exists $i_{**}<i_*$ such that $\underline{\mathfrak{Us}}(\mathfrak m_{i_{**}})[a]\neq \emptyset$ and $\nu^a\rho$ is in the removable free section of the unique segment $\Delta_{i_{**},1}$ in $\underline{\mathfrak{Us}}(\mathfrak m_{i_{**}})[a]$.
\item Suppose $\mathcal D^{\mathrm{Lang}}_{[a,b]_{\rho}}(\mathfrak m)\neq \infty$. If $\Delta_*$ does not participate in the truncation process for $\mathcal D^{\mathrm{Lang}}_{[a,b]_{\rho}}(\mathfrak m)$, then $i_{\ell}<i_*$. Otherwise, $i_{\ell}=i_*$.
\end{enumerate}
\end{lemma}

\begin{proof}
(1) follows from the conditions $\varepsilon^{\mathrm{R}}_{[a]_{\rho}}(\mathfrak m)\geq 2$ and $\Delta_* \in \mathfrak m_{[a,b]}$, and Lemma \ref{lem combinatorial varepsilon}. For (2), it follows from Lemma \ref{lem change upward seq  a ab} that $i_{\ell} \leq i_*$, since  $[a_{i_\ell,1}, b_{i_\ell,1}]$ is a segment in $\mathfrak{m}_{[a,b]}$ with $\nu^a\rho$ in its removable free section. Then one has the two situations according to the given condition.
\end{proof}

We can now state and prove our main lemma in this subsection:

\begin{lemma}\label{lem:comm_D[ab]_D[a]}
Let $\mathfrak{m}\in \mathrm{Mult}_\rho$ and $[a,b]_\rho \in \mathrm{Seg}_\rho$. Suppose $\varepsilon^\mathrm{R}_{[a]_\rho}(\mathfrak{m}) \geq 2$.
\begin{itemize}
\item[(i)] If $\mathcal{D}^\mathrm{Lang}_{[a,b]_\rho}(\mathfrak{m}) \neq \infty$, we have $\mathcal{D}^\mathrm{Lang}_{[a]_\rho} \circ \mathcal{D}_{[a,b]_\rho}^\mathrm{Lang}(\mathfrak{m}) =  \mathcal{D}_{[a,b]_\rho}^\mathrm{Lang} \circ \mathcal{D}^\mathrm{Lang}_{[a]_\rho}(\mathfrak{m})\neq \infty.$ 
\item[(ii)] If $\mathcal D^{\mathrm{Lang}}_{[a,b]_{\rho}}\circ \mathcal D^{\mathrm{Lang}}_{[a]_{\rho}}(\mathfrak m)\neq \infty$, we have $\mathcal{D}^\mathrm{Lang}_{[a,b]_\rho}(\mathfrak{m}) \neq \infty$.
\end{itemize}
\end{lemma}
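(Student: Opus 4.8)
This statement is purely combinatorial, and I would prove it by directly analysing Algorithm \ref{alg:der:Lang}, in the same style as the proof of Lemma \ref{lem:D[ab]=D[a+1,b]_D[a]}. Throughout I adopt the notation of Algorithm \ref{alg:der:Lang} applied to $\mathcal D^{\mathrm{Lang}}_{[a,b]_\rho}(\mathfrak m)$: the multisegments $\mathfrak m_1 = \mathfrak m_{[a,b]} \supseteq \cdots \supseteq \mathfrak m_k$, the upward sequences $\underline{\mathfrak{Us}}(\mathfrak m_i) = \{\Delta_{i,1} \prec \cdots \prec \Delta_{i,r_i}\}$, the removable free sections $\mathfrak{rf}(\Delta_{i,j})$ of \eqref{eqn removable free}, the selected segments $\Delta_{i_1,j_1}, \ldots, \Delta_{i_\ell,j_\ell}$ with their truncations $\Delta_{i_t,j_t}^{\mathrm{trc}}$, and the index $k_0$ with $a_{i,1} = a$ exactly for $1 \le i \le k_0$ (as in the proof of Lemma \ref{lem:D[ab]=D[a+1,b]_D[a]}). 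I also use that $\mathcal D^{\mathrm{Lang}}_{[a]_\rho} = \mathcal D^{\mathrm{Lan}}_{[a]_\rho}$, so that $\mathcal D^{\mathrm{Lang}}_{[a]_\rho}$ acts by running the $\mathfrak{tds}(-,a)$-process of Algorithm \ref{alg:rho_der:Lang} to its terminal multisegment and then left-shortening the shortest segment of that multisegment which begins at $\nu^a\rho$.

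The first and principal step is to identify the segment $\Delta_*$ that $\mathcal D^{\mathrm{Lang}}_{[a]_\rho}$ removes, in the language of the $[a,b]_\rho$-algorithm. A $\mathfrak{tds}(-,a)$-pair $(\Delta',\Delta'')$ with $\Delta'\in\mathfrak m[a]$, $\Delta''\in\mathfrak m[a+1]$ and $\Delta'\prec\Delta''$ is exactly the configuration that stops the removable free section of $\Delta'$ from reaching $\nu^a\rho$; translating this, the segments $\Delta_{i,1}$ ($1\le i\le k_0$) that survive the $\mathfrak{tds}(-,a)$-process are precisely those with $\nu^a\rho\in\mathfrak{rf}(\Delta_{i,1})$, and $\Delta_*=\Delta_{i_*,1}$ for an appropriate such index $i_*$. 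The hypothesis $\varepsilon^{\mathrm R}_{[a]_\rho}(L(\mathfrak m))\ge 2$ — in contrast with the case $=1$ of Lemma \ref{lem:D[ab]=D[a+1,b]_D[a]} — ensures that after deleting $\Delta_*$ at least one further segment beginning at $\nu^a\rho$ whose removable free section reaches $\nu^a\rho$ still remains, so that the $[a,b]_\rho$-algorithm applied to $\mathcal D^{\mathrm{Lang}}_{[a]_\rho}(\mathfrak m)$ can again terminate its selection with a segment starting at $\nu^a\rho$, and symmetrically for the other composition order. Making this dictionary precise, uniformly over the (at least two) segments starting at $\nu^a\rho$, is the part I expect to require the most care.

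Given Step 1, part (i) reduces to the kind of bookkeeping already carried out in Lemma \ref{lem:D[ab]=D[a+1,b]_D[a]}. For the order $\mathcal D^{\mathrm{Lang}}_{[a]_\rho}\circ\mathcal D^{\mathrm{Lang}}_{[a,b]_\rho}$, set $\mathfrak n=\mathcal D^{\mathrm{Lang}}_{[a,b]_\rho}(\mathfrak m)$; since truncation only left-shortens the selected segments and only $\Delta_{i_\ell,j_\ell}$ among them begins at $\nu^a\rho$ or $\nu^{a+1}\rho$ (it begins at $\nu^a\rho$, as $\mathcal D^{\mathrm{Lang}}_{[a,b]_\rho}(\mathfrak m)\neq\infty$), the $\mathfrak{tds}(-,a)$-process on $\mathfrak n$ differs from that on $\mathfrak m$ in a controlled way, in analogy with the comparison of upward sequences \eqref{eq:Us_a+1_b} in the proof of Lemma \ref{lem:D[ab]=D[a+1,b]_D[a]}. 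One then identifies the segment of $\mathfrak n$ that $\mathcal D^{\mathrm{Lang}}_{[a]_\rho}$ removes and matches the outcome with the multisegment obtained by first deleting $\Delta_*$ from $\mathfrak m$ and then running the $[a,b]_\rho$-algorithm; a short case split — according to whether that segment of $\mathfrak n$ is one of the truncated segments $\Delta_{i_t,j_t}^{\mathrm{trc}}$ or not — closes this direction, just as Cases 1 and 2 do in Lemma \ref{lem:D[ab]=D[a+1,b]_D[a]}. The order $\mathcal D^{\mathrm{Lang}}_{[a,b]_\rho}\circ\mathcal D^{\mathrm{Lang}}_{[a]_\rho}$ is handled symmetrically: form $\mathfrak m-\Delta_*+{}^-\Delta_*$, recompute the upward sequences and removable free sections, use $\varepsilon^{\mathrm R}_{[a]_\rho}(L(\mathfrak m))\ge 2$ to see that the selection step still ends at a segment starting at $\nu^a\rho$, and compare the two truncation outputs segment by segment.

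Finally, part (ii) is a non-failure statement: assuming $\mathcal D^{\mathrm{Lang}}_{[a,b]_\rho}\circ\mathcal D^{\mathrm{Lang}}_{[a]_\rho}(\mathfrak m)\neq\infty$, I must show the selection step of Algorithm \ref{alg:der:Lang} for $\mathcal D^{\mathrm{Lang}}_{[a,b]_\rho}(\mathfrak m)$ can be completed with $a_{i_\ell,j_\ell}=a$. Passing from $\mathfrak m$ to $\mathfrak n=\mathcal D^{\mathrm{Lang}}_{[a]_\rho}(\mathfrak m)$ only left-shortens $\Delta_*$; for a segment $\Delta_{i,j}\neq\Delta_*$ this can only enlarge $\mathfrak{rf}(\Delta_{i,j})$, and where it does shrink $\mathfrak{rf}({}^-\Delta_*)$ on the left, the point removed is exactly $\nu^a\rho$ — the identity $\mathfrak{rf}({}^-\Delta_{i_\ell,1})\cup\{\nu^a\rho\}=\mathfrak{rf}(\Delta_{i_\ell,1})$ of Lemma \ref{lem:D[ab]=D[a+1,b]_D[a]} — which is restored by $\Delta_*$ itself becoming available to the selection. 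Hence a valid selection chain for $\mathfrak n$ lifts to one for $\mathfrak m$, giving $\mathcal D^{\mathrm{Lang}}_{[a,b]_\rho}(\mathfrak m)\neq\infty$.
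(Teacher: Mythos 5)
Your proposal follows the same combinatorial strategy as the paper's proof (direct comparison of the two algorithm runs, in the style of Lemma \ref{lem:D[ab]=D[a+1,b]_D[a]}), but there is a genuine gap in your Step 1. You assert that the segment $\Delta_*$ removed by $\mathcal D^{\mathrm{Lang}}_{[a]_\rho}$ is always one of the segments $\Delta_{i,1}$ appearing in the upward sequences of Algorithm \ref{alg:der:Lang}, i.e.\ that $\Delta_* \in \mathfrak m_1 = \mathfrak m_{[a,b]}$. This is false: $\Delta_*$ is the \emph{shortest} surviving segment of $\mathfrak m[a]$ under the $\mathfrak{tds}(-,a)$-process, and nothing forces it to end at $\nu^b\rho$ or later. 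For instance, take $\mathfrak m=\{[a,b]_\rho,[a,b]_\rho,[a,a]_\rho,[a,a]_\rho,[a+1,b+1]_\rho\}$ with $b>a$. Here $\varepsilon^{\mathrm R}_{[a]_\rho}(L(\mathfrak m))\ge 2$ and $\mathcal D^{\mathrm{Lang}}_{[a,b]_\rho}(\mathfrak m)\neq\infty$, yet $\Delta_*=[a,a]_\rho$, which lies outside $\mathfrak m_1$ and is not any $\Delta_{i,1}$. Consequently, your later sentence that ``after deleting $\Delta_*$ at least one further segment beginning at $\nu^a\rho$ whose removable free section reaches $\nu^a\rho$ still remains'' is also built on the wrong picture: when $\Delta_*\notin\mathfrak m_1$, deleting $\Delta_*$ does not touch $\mathfrak m_1$ or its removable free sections at all.

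The paper avoids this by splitting at the outset into the two cases $\Delta_a\notin\mathfrak m_1$ and $\Delta_a\in\mathfrak m_1$. The first case turns out to be the \emph{easier} one precisely because $\mathfrak n_1=\mathfrak m_1$ there, so the truncation data of the $[a,b]_\rho$-algorithm on $\mathfrak m$ and on $\mathcal D^{\mathrm{Lang}}_{[a]_\rho}(\mathfrak m)$ coincide (with a minor adjustment to the $\mathfrak{tds}(-,a)$ pairing when $a_{i_{\ell-1},j_{\ell-1}}=a+1$), and only in the second case does the careful bookkeeping of the recomputed upward sequences \eqref{eq:us_on_rho_der} and removable free parts \eqref{eq:rf_on_rho_der} come in. Your dictionary observation (that a $\mathfrak{tds}(-,a)$ pairing of $\Delta_{i,1}$ with some $\Delta_{i',2}$ corresponds to $\nu^a\rho\notin\mathfrak{rf}(\Delta_{i,1})$) is a genuinely useful heuristic and is implicit in the paper's inequality $i_*\ge i_\ell$, but it only applies to the segments that are actually in $\mathfrak m_1$; the segments of $\mathfrak m[a]$ with endpoint before $\nu^b\rho$ play in the $\mathfrak{tds}(-,a)$-process but are invisible to the $[a,b]_\rho$-algorithm, and one must account for them separately. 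Once you add the case split, the rest of your outline for (i) and (ii) tracks the paper's argument closely and should go through.
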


Before giving a proof of Lemma \ref{lem:comm_D[ab]_D[a]}, we give an example:

\begin{example}
Let $\mathfrak m =\left\{ [2]_{\rho}, [2,4]_{\rho}, [2,5]_{\rho}, [3]_{\rho}, [4,5]_{\rho} \right\}$. Note that 
\begin{enumerate}
\item $\mathcal D_{[2]_{\rho}}^{\mathrm{Lang}}(\mathfrak m)=\mathfrak m-[2,4]_{\rho}+[3,4]_{\rho}=\left\{ [2]_{\rho}, [3,4]_{\rho}, [2,5]_{\rho}, [3]_{\rho}, [4,5]_{\rho} \right\}$ and $\mathcal D_{[2,3]_{\rho}}^{\mathrm{Lang}}\circ \mathcal D_{[2]_{\rho}}^{\mathrm{Lang}}(\mathfrak m)=\mathcal D_{[2]_{\rho}}^{\mathrm{Lang}}(\mathfrak m)-[2,5]_{\rho}+[3,5]_{\rho}-[3]_{\rho}=\left\{ [2]_{\rho}, [3,4]_{\rho}, [3,5]_{\rho}, [4,5]_{\rho} \right\}$.
\item $\mathcal D^{\mathrm{Lang}}_{[2,3]_{\rho}}(\mathfrak m)=\mathfrak m-[2,4]_{\rho}+[3,4]_{\rho}-[3]_{\rho}=\left\{ [2],[3,4]_{\rho},[2,5]_{\rho},[4,5]_{\rho} \right\}$ and $\mathcal D^{\mathrm{Lang}}_{[2]_{\rho}}\circ \mathcal D^{\mathrm{Lang}}_{[2,3]_{\rho}}(\mathfrak m)=\mathcal D^{\mathrm{Lang}}_{[2,3]_{\rho}}(\mathfrak m)-[2,5]_{\rho}+[3,5]_{\rho}=\left\{ [2]_{\rho},[3,4]_{\rho},[3,5]_{\rho},[4,5]_{\rho}\right\}$.
\end{enumerate}
\end{example}

\noindent
{\it Proof of Lemma \ref{lem:comm_D[ab]_D[a]}.} We assume all the notations as mentioned in Algorithm \ref{alg:der:Lang}. Let $k_0$ be the largest integer such that $\underline{\mathfrak{Us}}(\mathfrak m_{k_0})[a]\neq \emptyset$. As $\varepsilon^\mathrm{R}_{[a]_\rho}(\mathfrak{m}) \neq 0$, by Algorithm \ref{alg:rho_der:Lang}, there exists a non-empty segment $\Delta_a \in \mathfrak{m}[a]$ such that   
$\mathfrak{n}:=\mathcal{D}^\mathrm{Lang}_{[a]_\rho}  (\mathfrak{m})=\mathfrak{m}-\Delta_a + {^-}\Delta_a.$ Consider the multisegment $\mathfrak{n}_1=\mathfrak{n}_{[a,b]}$ and recursively for $i > 1$, we set $\mathfrak{n}_{i}=\mathfrak{n}_{i-1}-\underline{\mathfrak{Us}}(\mathfrak{n}_{i-1})$. 

We first assume that $\Delta_a \notin \mathfrak{m}_1=\mathfrak{m}_{[a,b]}$, that is $\mathfrak{n}_1=\mathfrak{m}_1$. Then, the assertion (i) follows from Lemma \ref{lem tds on a ab commute}, and the assertion (ii) follows immediately as  $\mathcal D^{\mathrm{Lang}}_{[a,b]_{\rho}}(\mathfrak{n}) \neq \infty$ implies $\mathcal D^{\mathrm{Lang}}_{[a,b]_{\rho}}(\mathfrak{m}_1)=\mathcal D^{\mathrm{Lang}}_{[a,b]_{\rho}}(\mathfrak{n}_1) \neq \infty$.

%If $a_{i_{\ell-1},j_{\ell-1}} \neq a+1$,  the segments participating in the removal steps of the $\mathfrak{tds}(-,a)$ process on $\mathcal{D}_{[a,b]_\rho}^\mathrm{Lang}(\mathfrak{m})$ are exactly same as in the removal steps of the $\mathfrak{tds}(-,a)$ process on $\mathfrak{m}$, since $[a]_\rho \subset \mathfrak{rf}(\Delta_{i_\ell,1})$ and $\Delta_{i_\ell,1}$ does not participate in the later one. If $a_{i_{\ell-1},j_{\ell-1}}=a+1$,  the segment ${^-}\Delta_{i_\ell,1}$ replaces $\Delta_{i_{\ell-1},j_{\ell-1}}$ to  participate in the removal steps of the $\mathfrak{tds}(-,a)$ process on $\mathcal{D}_{[a,b]_\rho}^\mathrm{Lang}(\mathfrak{m})$, whereas 
%$\Delta_{i_{\ell-1},j_{\ell-1}}$ participates in the removal steps of the $\mathfrak{tds}(-,a)$ process on $\mathfrak{m}$ and all other removal steps remain unchanged in the $\mathfrak{tds}(-,a)$ process on $\mathcal{D}_{[a,b]_\rho}^\mathrm{Lang}(\mathfrak{m})$ compared to the process on $\mathfrak{m}$. Therefore, if $\Delta_a \notin \mathfrak{m}_1$, both the assertions (i) and (ii) hold. 

For the remainder of the proof, we assume that $\Delta_a \in \mathfrak{m}_1=\mathfrak{m}_{[a,b]}$. Then, $\Delta_a=\Delta_{i_*,1}$ for some $1 \leq i_* \leq k_0$.

(i) Suppose $\mathcal{D}_{[a,b]_\rho}^\mathrm{Lang}(\mathfrak{m}) = \mathfrak{m}- \sum\limits_{t=1}^{\ell} \Delta_{i_t, j_t} + \sum\limits_{t=1}^{\ell} \Delta_{i_t, j_t}^\mathrm{trc} \neq \infty$ as in Algorithm \ref{alg:der:Lang}. By Lemma \ref{lem free point after Da}, we have $i_* \geq i_\ell$. Further, as $\varepsilon^\mathrm{R}_{[a]_{\rho}}(\mathfrak{m})\geq 2$, there exists largest positive integer $i_{**} < i_*$ such that 
\[\mathcal{D}^\mathrm{Lang}_{[a]_\rho} \circ \mathcal{D}^\mathrm{Lang}_{[a]_\rho}(\mathfrak{m})=\mathcal{D}^\mathrm{Lang}_{[a]_\rho}(\mathfrak{m})-\Delta_{i_{**},1}+ {^-}\Delta_{i_{**},1}.\]
%Therefore, if $i_*=i_\ell$, we have $\nu^a \rho \in \mathfrak{rf}(\Delta_{i_{**},1})$ where $\mathfrak{rf}$ is considered both in $\mathfrak{m}$ and $\mathfrak{n}$.

If $i_*=i_\ell$, the segment ${^-}\Delta_{i_\ell,1} \in \mathfrak{n}_{[a,b]}$ and it participates in the truncation process for $\mathcal{D}_{[a,b]_\rho}^\mathrm{Lang}(\mathfrak{n})$ when $a_{i_{\ell-1},j_{\ell-1}} \neq a+1$ in $\mathfrak{m}_{[a,b]}$. In that case, we shorten the segment ${^-}\Delta_{i_\ell,1}$ by removing $\left[a+1, a_{i_{\ell-1},j_{\ell-1}}-1\right]_\rho$ from left and the remaining part is denoted by $\left({^-}\Delta_{i_\ell,1}\right)^\mathrm{trc}=\Delta^\mathrm{trc}_{i_\ell,1}$. Therefore, using \eqref{alg_der_lang},
Lemmas \ref{lem change upward seq  a ab} and \ref{lem free point after Da}, we have $\mathcal{D}^\mathrm{Lang}_{[a,b]_\rho} \circ \mathcal{D}^\mathrm{Lang}_{[a]_\rho}(\mathfrak{m})$
\begin{align*}
    &=\mathcal{D}^\mathrm{Lang}_{[a]_\rho}(\mathfrak{m})-\sum\limits_{t=1}^{\ell-1} \Delta_{i_t, j_t} + \sum\limits_{t=1}^{\ell-1} \Delta_{i_t, j_t}^\mathrm{trc}+\begin{cases}
     -{^-}\Delta_{i_\ell,1} +  \left({^-}\Delta_{i_\ell,1}\right)^\mathrm{trc} - \Delta_{i_{**},1} + {^-}\Delta_{i_{**},1} &\mbox{ if } i_*=i_\ell,\\
     -\Delta_{i_\ell,1} +  \Delta^\mathrm{trc}_{i_\ell,1}  &\mbox{otherwise}
    \end{cases}\\
     &=\mathfrak{m}-\sum\limits_{t=1}^{\ell} \Delta_{i_t, j_t} + \sum\limits_{t=1}^{\ell} \Delta_{i_t, j_t}^\mathrm{trc}+\begin{cases}
      - \Delta_{i_{**},1} + {^-}\Delta_{i_{**},1} &\mbox{ if }  i_*=i_\ell,\\
      - \Delta_{i_{*},1} + {^-}\Delta_{i_{*},1} &\mbox{otherwise}
    \end{cases}\\
    &=\mathcal{D}^\mathrm{Lang}_{[a,b]_\rho}(\mathfrak{m})+\begin{cases}
      - \Delta_{i_{**},1} + {^-}\Delta_{i_{**},1} &\mbox{ if } i_*=i_\ell, \\
     - \Delta_{i_{*},1} + {^-}\Delta_{i_{*},1}  &\mbox{ otherwise}.
    \end{cases}
\end{align*}

We now turn to compute $\mathcal{D}^\mathrm{Lang}_{[a]_\rho} \circ \mathcal{D}^\mathrm{Lang}_{[a,b]_\rho}(\mathfrak{m})$. It is similar to Lemma \ref{lem tds on a ab commute} (also see Example \ref{ex tds a commute aa+1 Delta in m1} in Appendix \ref{s example tds process}). Indeed, if $i_* > i_\ell$ and $a_{i_{\ell-1},j_{\ell-1}}=a+1$, we have $i_* > i_{\ell-1}$ and the segment ${^-}\Delta_{i_\ell,1}$ replaces $\Delta_{i_{\ell-1},j_{\ell-1}}$ to participate in the removal step of the $\mathfrak{tds}(-,a)$ process on $\mathcal{D}_{[a,b]_\rho}^\mathrm{Lang}(\mathfrak{m})$, whereas 
$\Delta_{i_{\ell-1},j_{\ell-1}}$ participates in the removal step of the $\mathfrak{tds}(-,a)$ process on $\mathfrak{m}$. Otherwise the $\mathfrak{tds}(-,a)$ process on both $\mathcal{D}_{[a,b]_\rho}^\mathrm{Lang}(\mathfrak{m})$ and $\mathfrak{m}$ removes  same set of segments starting with $\nu^a\rho$ and $ \nu^{a+1}\rho$. Hence, using 
\eqref{eq:us_on_rho_der},
\begin{align*}
\mathcal{D}^\mathrm{Lang}_{[a]_\rho} \circ \mathcal{D}^\mathrm{Lang}_{[a,b]_\rho}(\mathfrak{m})=\mathcal{D}^\mathrm{Lang}_{[a,b]_\rho}(\mathfrak{m})+\begin{cases}
      - \Delta_{i_{**},1} + {^-}\Delta_{i_{**},1} &\mbox{ if } i_*=i_\ell,\\
     - \Delta_{i_{*},1} + {^-}\Delta_{i_{*},1}  &\mbox{ if } i_*>i_\ell.
    \end{cases}    
\end{align*}
Combining the above two expressions, we have the lemma.

(ii) Suppose $\mathcal D^{\mathrm{Lang}}_{[a,b]_{\rho}}\circ D^{\mathrm{Lang}}_{[a]_{\rho}}(\mathfrak m)\neq \infty$. Then, by Lemmas \ref{lem change upward seq  a ab} and \ref{lem free point after Da}, we can trace the algorithm to see that $\mathcal D^{\mathrm{Lang}}_{[a,b]_{\rho}}(\mathfrak m)\neq \infty$.

\subsection{Commutativity of $\mathrm{D}^{\mathrm R}_{[a,b]_{\rho}}$ and $\mathrm{D}^{\mathrm R}_{[a]_{\rho}}$}

\begin{lemma}\label{lem:comm_D[ab]_D[a]_pi}
Let $\pi \in \mathrm{Irr}_\rho$ and $[a,b]_\rho \in \mathrm{Seg}_\rho$. Suppose $\varepsilon^\mathrm{R}_{[a]_\rho}(\pi) \geq 2$. Then, 
\begin{enumerate}
    \item[(i)] If $\varepsilon^\mathrm{R}_{[a,b]_\rho}(\pi) \neq 0$, we have  $\mathrm{D}^\mathrm{R}_{[a]_\rho} \circ \mathrm{D}^\mathrm{R}_{[a,b]_\rho}(\pi) \cong \mathrm{D}^\mathrm{R}_{[a,b]_\rho} \circ \mathrm{D}^\mathrm{R}_{[a]_\rho}  (\pi)\neq 0.$
    \item[(ii)] If $\mathrm{D}^\mathrm{R}_{[a,b]_\rho} \circ \mathrm{D}^\mathrm{R}_{[a]_\rho}  (\pi)\neq 0$, we have $\varepsilon^\mathrm{R}_{[a,b]_\rho}(\pi) \neq 0$.
\end{enumerate}
\end{lemma}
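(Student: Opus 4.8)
The plan is to deduce this representation-theoretic statement from its combinatorial counterpart, Lemma \ref{lem:comm_D[ab]_D[a]}, together with Theorems \ref{thm:der:Lang:length=1} and the main derivative algorithm (Algorithm \ref{alg:der:Lang}), exactly paralleling how Lemma \ref{lem:D[ab]=D[a+1,b]_D[a]_pi} was deduced from Lemma \ref{lem:D[ab]=D[a+1,b]_D[a]}. Writing $\pi = L(\mathfrak m)$ for $\mathfrak m \in \mathrm{Mult}_\rho$, the hypothesis $\varepsilon^\mathrm{R}_{[a]_\rho}(\pi) \geq 2$ is precisely the combinatorial hypothesis of Lemma \ref{lem:comm_D[ab]_D[a]}. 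The key translation inputs are: $\mathrm D^\mathrm{R}_{[a]_\rho}(L(\mathfrak m)) \cong L(\mathcal D^\mathrm{Lan}_{[a]_\rho}(\mathfrak m))$ (which is never $\infty$ here since $\varepsilon^\mathrm{R}_{[a]_\rho}(\pi)\geq 2 > 0$, so a shortest segment $\Delta_*\in \mathfrak m_k[a]$ exists), and — granting the main theorem of Section \ref{sec:der_Lang}, which the paper states holds in general — $\mathrm D^\mathrm{R}_{[a,b]_\rho}(L(\mathfrak m)) \cong L(\mathcal D^\mathrm{Lang}_{[a,b]_\rho}(\mathfrak m))$ when the latter is $\neq \infty$ and $0$ otherwise, and similarly $\varepsilon^\mathrm{R}_{[a,b]_\rho}(\pi)\neq 0$ iff $\mathcal D^\mathrm{Lang}_{[a,b]_\rho}(\mathfrak m)\neq \infty$. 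Since these are exactly the objects appearing in Lemma \ref{lem:comm_D[ab]_D[a]}, the two assertions transfer directly.

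For part (i): assume $\varepsilon^\mathrm{R}_{[a,b]_\rho}(\pi)\neq 0$, so $\mathcal D^\mathrm{Lang}_{[a,b]_\rho}(\mathfrak m)\neq\infty$. By Lemma \ref{lem:comm_D[ab]_D[a]}(i),
\[ \mathcal D^\mathrm{Lang}_{[a]_\rho}\circ \mathcal D^\mathrm{Lang}_{[a,b]_\rho}(\mathfrak m) = \mathcal D^\mathrm{Lang}_{[a,b]_\rho}\circ \mathcal D^\mathrm{Lang}_{[a]_\rho}(\mathfrak m) \neq \infty. \]
Applying $L(-)$ and the algorithm-realizes-derivative theorem to each composition (one needs that the relevant intermediate $\varepsilon^\mathrm{R}$ at $[a]_\rho$ on $\mathrm D^\mathrm{R}_{[a,b]_\rho}(\pi)$ is still positive, which follows because the right-hand combinatorial composite is nonzero and by the $\rho$-derivative theorem a nonzero $\mathcal D^\mathrm{Lan}_{[a]_\rho}$ is equivalent to a nonzero $\mathrm D^\mathrm{R}_{[a]_\rho}$) yields
\[ \mathrm D^\mathrm{R}_{[a]_\rho}\circ \mathrm D^\mathrm{R}_{[a,b]_\rho}(\pi) \cong \mathrm D^\mathrm{R}_{[a,b]_\rho}\circ \mathrm D^\mathrm{R}_{[a]_\rho}(\pi) \neq 0, \]
which is assertion (i). For part (ii): assume $\mathrm D^\mathrm{R}_{[a,b]_\rho}\circ \mathrm D^\mathrm{R}_{[a]_\rho}(\pi)\neq 0$. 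Then $\mathrm D^\mathrm{R}_{[a]_\rho}(\pi)\neq 0$, hence equals $L(\mathfrak n)$ with $\mathfrak n = \mathcal D^\mathrm{Lang}_{[a]_\rho}(\mathfrak m)\neq\infty$, and then $\mathrm D^\mathrm{R}_{[a,b]_\rho}(L(\mathfrak n))\neq 0$ forces $\mathcal D^\mathrm{Lang}_{[a,b]_\rho}(\mathfrak n) = \mathcal D^\mathrm{Lang}_{[a,b]_\rho}\circ\mathcal D^\mathrm{Lang}_{[a]_\rho}(\mathfrak m) \neq \infty$. Now Lemma \ref{lem:comm_D[ab]_D[a]}(ii) gives $\mathcal D^\mathrm{Lang}_{[a,b]_\rho}(\mathfrak m)\neq\infty$, which by the derivative-algorithm theorem means $\mathrm D^\mathrm{R}_{[a,b]_\rho}(\pi)\neq 0$, i.e. $\varepsilon^\mathrm{R}_{[a,b]_\rho}(\pi)\neq 0$.

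The main subtlety — and the step I would be most careful about — is the logical dependency: Lemma \ref{lem:comm_D[ab]_D[a]_pi} is being proved inside Section \ref{sec:der_Lang}, before the general theorem $\mathrm D^\mathrm{R}_{[a,b]_\rho}(L(\mathfrak m))\cong L(\mathcal D^\mathrm{Lang}_{[a,b]_\rho}(\mathfrak m))$ has been established (indeed these commutativity lemmas are among the inductive tools used to prove it). So one cannot simply invoke that theorem. The honest route, mirroring the proof of Lemma \ref{lem:D[ab]=D[a+1,b]_D[a]_pi}, is to argue purely on the highest-derivative-multisegment side: use $\varepsilon^\mathrm{R}_{[a]_\rho}(\pi)\geq 2$ to control $\mathfrak{hd}(\pi)[a]$, invoke Lemma \ref{lem non-zero der hd} to translate non-vanishing of $\mathrm D^\mathrm{R}_{[a,b]_\rho}$ and $\mathrm D^\mathrm{R}_{[a]_\rho}$ into the presence of suitable segments $[a,c]_\rho$ in $\mathfrak{hd}(\pi)$, then use Lemma \ref{lem derivative removal} to track how these segments propagate through the removal process $\mathfrak r([a]_\rho,\pi)$ and $\mathfrak r([a,b]_\rho,\pi)$, and finally apply the commutativity/associativity of removal processes (\cite[Lemma 8.7, Definition 8.13, Theorem 10.2]{Cha_csq}) to obtain $\mathfrak r([a,b]_\rho,\pi) = \mathfrak r(\{[a,b]_\rho,[a]_\rho\},\pi)$ and hence the isomorphism of derivatives; for (ii) one argues as in Lemma \ref{lem:D[ab]=D[a+1,b]_D[a]_pi}(ii) that a segment $[a,d]_\rho$ with $d\geq b$ must already lie in $\mathfrak{hd}(\pi)$. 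The bookkeeping for $\varepsilon^\mathrm{R}_{[a]_\rho}(\pi)\geq 2$ is slightly heavier than the $=1$ case of Lemma \ref{lem:D[ab]=D[a+1,b]_D[a]_pi} because $\mathfrak{hd}(\pi)[a]$ now has at least two segments and one must pick out the correct one participating in the removal step, but no new idea is required beyond what Lemma \ref{lem removal process singleton} and Lemma \ref{lem derivative removal} already provide.
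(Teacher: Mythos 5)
Your final (self-corrected) route lands essentially where the paper does, and you were right to discard your first plan: deducing the lemma from Theorems \ref{thm:zero_der_Lang} and \ref{thm:der:Lang} is circular, since Case 1 of the induction proving those theorems invokes precisely this lemma. For part (ii) your sketch is the paper's argument: from $\mathrm{D}^\mathrm{R}_{[a,b]_\rho}\circ \mathrm{D}^\mathrm{R}_{[a]_\rho}(\pi)\neq 0$ and Lemma \ref{lem non-zero der hd}, $\mathfrak{hd}(\mathrm{D}^\mathrm{R}_{[a]_\rho}(\pi))$ contains some $[a,c]_\rho$ with $c\geq b$, and Lemmas \ref{lem removal process singleton} and \ref{lem derivative removal} show such a segment must already lie in $\mathfrak{hd}(\pi)$ (the removal for $[a]_\rho$ only trades one segment of $\mathfrak{hd}(\pi)[a]$ for its left-shortening, which starts at $a+1$), whence $\mathrm{D}^\mathrm{R}_{[a,b]_\rho}(\pi)\neq 0$. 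For part (i) the paper is more economical than your sketch: since $[a]_\rho\subseteq [a,b]_\rho$ the two segments are unlinked, so the commutativity is just \cite[Lemma 4.4]{Cha_csq}, and the non-vanishing is quoted directly from \cite[Proposition 5.5]{Cha_csq}; you propose instead to rederive both by removal-process bookkeeping. The non-vanishing part of that rederivation is viable (with $|\mathfrak{hd}(\pi)[a]|\geq 2$, the removal process $\mathfrak r([a,b]_\rho,\pi)$ consumes at most one segment of $\mathfrak{hd}(\pi)[a]$, so $\varepsilon^\mathrm{R}_{[a]_\rho}(\mathrm{D}^\mathrm{R}_{[a,b]_\rho}(\pi))\geq 1$), but the identity you write for the commutativity, $\mathfrak r([a,b]_\rho,\pi)=\mathfrak r(\{[a,b]_\rho,[a]_\rho\},\pi)$, is imported from the proof of Lemma \ref{lem:D[ab]=D[a+1,b]_D[a]_pi} and is not the relevant statement here (as written it is false, since the right-hand side removes an additional point); what you actually need is that both orderings of $\{[a]_\rho,[a,b]_\rho\}$ compute the same multisegment derivative, or simply the unlinked-segment commutativity citation that the paper uses.
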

\begin{proof}
(i) The commutativity part follows from \cite[Lemma 4.4]{Cha_csq}, and the non-zero part follows from \cite[Proposition 5.5]{Cha_csq}.

(ii) As $\mathrm{D}^\mathrm{R}_{[a,b]_{\rho}}\circ \mathrm{D}^\mathrm{R}_{[a,a]_{\rho}}(\pi)\neq 0$, it follows from Lemma \ref{lem non-zero der hd} that $\mathfrak{hd}(\mathrm{D}^\mathrm{R}_{[a,a]_{\rho}}(\pi))$ contains a segment of the form $[a,c]_{\rho}$ for some $c \geq b$. Now Lemmas \ref{lem removal process singleton} and \ref{lem derivative removal} imply $\mathfrak{hd}(\pi)$ has the segment $[a,c]_{\rho}$, and so $\mathrm{D}^\mathrm{R}_{[a,b]_{\rho}}(\pi)\neq 0$ by Lemma \ref{lem non-zero der hd}.
\end{proof}

\subsection{Commutativity of $\mathcal D_{[a,b]_{\rho}}^{\mathrm{Lang}}$ and $\mathcal D_{[a+1]_{\rho}}^{\mathrm{Lang}}$}

We now compare the upward sequences for computing $\mathcal D^{\mathrm{Lang}}_{[a,b]_{\rho}}(\mathfrak m)$ and $\mathcal D^{\mathrm{Lang}}_{[a,b]_{\rho}}(\mathfrak n)$, where $\mathfrak n=\mathcal D^{\mathrm{Lang}}_{[a+1]_{\rho}}(\mathfrak m)$.

\begin{lemma} \label{lem upward seq expression a+1}
Let $\mathfrak m \in \mathrm{Mult}_{\rho}$. Suppose $\mathcal D^{\mathrm{Lang}}_{[a+1]_{\rho}}(\mathfrak m)\neq \infty$. Then one has
\[ \mathfrak n :=  \mathcal D^{\mathrm{Lang}}_{[a+1]_{\rho}}(\mathfrak m)=\mathfrak m-\Delta^*+{}^-\Delta^*
\]
for some $\Delta^* \in \mathfrak m[a+1]$. We use the notations in Algorithm (\ref{alg:der:Lang}), and in particular, let $\mathfrak m_1=\mathfrak m_{[a,b]}$ and $\mathfrak n_1=\mathfrak n_{[a,b]}$ , and $\mathfrak n_{i+1}=\mathfrak n_i-\underline{\mathfrak{Us}}(\mathfrak n_i)$. Suppose $\Delta^*\in\mathfrak m_{[a,b]}$. Let $i^*$ be the largest integer such that $\Delta^* \in \underline{\mathfrak{Us}}(\mathfrak m_{i^*})$. Then $\nu^{a+1}\rho$ is in the removable free section of $\Delta^*$ (considered as a segment in $\underline{\mathfrak{Us}}(\mathfrak m_{i^*})$) and furthermore,
\begin{enumerate}
\item for $i<i^*$, $\underline{\mathfrak{Us}}(\mathfrak n_i)=\underline{\mathfrak{Us}}(\mathfrak m_i)$
\item for $i=i^*$, 
\begin{itemize}
    \item Suppose there exists a segment $\Delta'$ in $\underline{\mathfrak{Us}}(\mathfrak m_{i^*+1})[a+1]$. Suppose furthermore that either $\underline{\mathfrak{Us}}(\mathfrak m_{i^*})[a]=\emptyset$ or $\Delta'$ is linked to the unique segment $\Delta_{i^*,1}$ in $\underline{\mathfrak{Us}}(\mathfrak m_{i^*})[a]$ ,
\[  \underline{\mathfrak{Us}}(\mathfrak n_{i^*})=\underline{\mathfrak{Us}}(\mathfrak m_{i^*})-\Delta^*+{}^-\Delta^*+\Delta'
\] 
   \item otherwise, 
   \[  \underline{\mathfrak{Us}}(\mathfrak n_{i^*})=\underline{\mathfrak{Us}}(\mathfrak m_{i^*})-\Delta^*+{}^-\Delta^* \]
\end{itemize}
\item Suppose we are in the first bullet of (2). For $i = i^*+1$, we have $\underline{\mathfrak{Us}}(\mathfrak n_i)$ as follows: let $\Delta_i'$ be the unique segment in $\underline{\mathfrak{Us}}(\mathfrak m_i)[a+1]$
\begin{itemize}
 \item Suppose there exists a segment $\Delta_{i+1}'$ in $\underline{\mathfrak{Us}}(\mathfrak m_{i+1})[a+1]$. Suppose, furthermore that either $\underline{\mathfrak{Us}}(\mathfrak m_i)[a]=\emptyset$ or $\Delta_{i+1}'$ is linked to the unique segment in $\underline{\mathfrak{Us}}(\mathfrak m_i)[a]$, then one has
 \[   \underline{\mathfrak{Us}}(\mathfrak n_i)=\underline{\mathfrak{Us}}(\mathfrak m_i)-\Delta'_i+\Delta'_{i+1}
 \]
 \item If any condition in the first bullet fails, one has
  \[   \underline{\mathfrak{Us}}(\mathfrak n_i)=\underline{\mathfrak{Us}}(\mathfrak m_i)-\Delta'_i
 \]
 \end{itemize}
 \item One recursively has the above description of $\underline{\mathfrak{Us}}(\mathfrak n_i)$ until it reaches the second bullet case, say at the index $i'$. If we are in the second bullet of (2), set $i'=i^*$.
 \item For $i>i'$, one has $\underline{\mathfrak{Us}}(\mathfrak n_i)=\underline{\mathfrak{Us}}(\mathfrak m_i)$.
\end{enumerate}

%Each upward sequence $\underline{\mathfrak{Us}}(\mathfrak n_i)$ is either equal to
%\begin{enumerate}
%\item $\underline{\mathfrak{Us}}(\mathfrak m_{j_i})$ (for some $j$); or
%\item $\underline{\mathfrak{Us}}(\mathfrak m_{j_i})-\Delta'+\Delta''$, where $\Delta'$ (resp. $\Delta''$) is the unique segment in $\underline{\mathfrak{Us}}(\mathfrak m_j)$ (resp. $\underline{\mathfrak{Us}}(\mathfrak m_{j_i+1})$) with starting point $\nu^{a+1}\rho$; or
%\item $\underline{\mathfrak{Us}}(\mathfrak m_{j_i})-\Delta'$, where $\Delta'$ is the unique segment in $\underline{\mathfrak{Us}}(\mathfrak m_{j_i})$ with starting point $\nu^{a+1}\rho$; or
%\item $\underline{\mathfrak{Us}}(\mathfrak m_i)-\Delta^*+{}^-\Delta^*+\Delta''$, where $\Delta''$ is the unique segment in $\underline{\mathfrak{Us}}(\mathfrak m_{i+1})$ with the starting point $\nu^a\rho$ (if such segment exists); or
%\item $\underline{\mathfrak{Us}}(\mathfrak m_i)-\Delta^*+{}^-\Delta^*$.
%\end{enumerate}
%Either Case (3) or (4) can happen and happen only precisely for one $i$, say $i^*$.   Indeed, if $|\mathfrak n|=|\mathfrak m|$; or more general the number of upward sequences for $\mathfrak m_{[a,b]_{\rho}}$ is equal to the number of upward sequences for $\mathfrak n_{[a,b]_{\rho}}$, we have $j_i=i$ for all $i$. For general situation, $j_i=i$ or $i+1$. 
\end{lemma}

\begin{proof}
We first briefly explain the part that $\nu^{a+1}\rho$ is in the removable free section of $\Delta^*$. Suppose not. Let $\widetilde{\Delta}$ be the unique segment in $\underline{\mathfrak{Us}}(\mathfrak m_{i^*})[a+2]$. Now, for $i<i^*$, if there exists a (unique) segment in $\underline{\mathfrak{Us}}(\mathfrak m_i)[a+1]$ such that it is linked to $\widetilde{\Delta}$, then our choice on $\Delta^*$ guarantees that $\underline{\mathfrak{Us}}(\mathfrak m_i)[a+2] \neq \emptyset$. However, now one readily uses the segments in $\underline{\mathfrak{Us}}(\mathfrak m_i)[a+2]$ to carry out the removal step in the $\mathfrak{tds}(\mathfrak m, a+1)$-process, and sees that $\Delta^*$ is also removed. This again gives contradiction to our choice of $\Delta^*$.  

We now consider the general formula of $\underline{\mathfrak{Us}}(\mathfrak n_i)$. One has to observe that if the first bullet of (3) happens, then $\underline{\mathfrak{Us}}(\mathfrak m_{i})[a+2]\neq \emptyset$, which one can prove inductively. We remark that it is possible that for some $i>i^*$, $\underline{\mathfrak{Us}}(\mathfrak m_{i})[a+1]\neq \emptyset$ and $\underline{\mathfrak{Us}}(\mathfrak m_{i})[a+2]=\emptyset$. However, in such case, one has some $i^*<i'<i$ such that $\underline{\mathfrak{Us}}(\mathfrak m_{i'})[a+1]=\emptyset$, and so one will get to the case of the second bullet of (3) before reaching such $i$. Proving such situation is again quite straightforward, while it is not immediate in some cases. 
\end{proof}

\begin{example} For $\mathfrak{m}\in \mathrm{Mult}_\rho$ with $[a,b]_\rho \in \mathrm{Seg}_\rho$, let $\mathfrak{n}=\mathcal{D}_{[a+1]_\rho}^\mathrm{Lang}(\mathfrak{m}) \neq \infty$. Set $\mathfrak{m}_1=\mathfrak{m}_{[a,b]}$ (resp. $\mathfrak{n}_1=\mathfrak{n}_{[a,b]}$) and recursively for $i>1$, $\mathfrak{m}_i=\mathfrak{m}_{i-1} - \underline{\mathfrak{Us}}(\mathfrak m_{i-1})$ (resp. $\mathfrak{n}_i=\mathfrak{n}_{i-1} - \underline{\mathfrak{Us}}(\mathfrak n_{i-1})$).
\begin{enumerate}
\item[(i)] Let $\mathfrak m=\left\{[1,5]_{\rho}, [2,4]_{\rho}, [2,5]_{\rho}, [3,5]_{\rho}, [5,6]_{\rho} \right\}$ with $a=1$ and $b=4$. Let $\mathfrak n=\mathcal D^{\mathrm{Lang}}_{[2]_{\rho}}(\mathfrak m)$. In this case, $\Delta^*=[2,5]_{\rho}$ and $i^*=2$. Then we have $\underline{\mathfrak{Us}}(\mathfrak m_1)=\left\{ [1,5]_{\rho}, [5,6]_{\rho}\right\}$, $\underline{\mathfrak{Us}}(\mathfrak m_2)=\left\{ [2,5]_{\rho} \right\}$ and $\underline{\mathfrak{Us}}(\mathfrak m_3)=\left\{[2,4]_{\rho}, [3,5]_{\rho}\right\}$. On the other hand, $\underline{\mathfrak{Us}}(\mathfrak n_1)=\left\{ [1,5]_{\rho}, [5,6]_{\rho}\right\}$, $\underline{\mathfrak{Us}}(\mathfrak n_2)=\left\{  [2,4]_{\rho}, [3,5]_{\rho}\right\}$ and $\underline{\mathfrak{Us}}(\mathfrak n_3)=\left\{ [3,5]_{\rho}\right\}$. 
\item[(ii)] Let $\mathfrak m=\left\{ [1,5]_{\rho}, [2,6]_{\rho}, [2,3]_{\rho}, [3,4]_{\rho}\right\}$ with $a=1$ and $b=3$, and let $\mathfrak n=\mathcal D_{[2]_{\rho}}^{\mathrm{Lang}}(\mathfrak m)$. In this case, $\Delta^*=[2,6]_{\rho}$ and $i^*=1$. We also have $\underline{\mathfrak{Us}}(\mathfrak m_1)=\left\{ [1,5]_{\rho}, [2,6]_{\rho}\right\}$, $\underline{\mathfrak{Us}}(\mathfrak m_2)=\left\{ [2,3]_{\rho}, [3,4]_{\rho}\right\}$. On the other hand, $\underline{\mathfrak{Us}}(\mathfrak n_1)=\left\{ [1,5]_{\rho}, [3,6]_{\rho} \right\}$ and $\underline{\mathfrak{Us}}(\mathfrak n_2)=\left\{ [2,3]_{\rho}, [3,4]_{\rho}\right\}$. 
%\item Let $\mathfrak m=\left\{ [1,6]_{\rho}, [1,5]_{\rho}, [2,5]_{\rho}, [2,7]_{\rho},  [3,7]_{\rho}\right\}$. In such case, $\Delta^*=[2,7]_{\rho}$ and $i^*=1$. We then have $\mathfrak{Us}(\mathfrak m_1)=\left\{ [1,6]_{\rho}, [2,7]_{\rho} \right\}$, $\mathfrak{Us}(\mathfrak m_2)=\left\{ [1,5]_{\rho}, [3,7]_{\rho}\right\}$, $\mathfrak{Us}(\mathfrak m_3)=\left\{ [2,5] \right\}$. On the other hand, $\mathfrak{Us}(\mathfrak n_1)=\left\{ [1,6]_{\rho}, [3,7]_{\rho} \right\}$, $\mathfrak{Us}(\mathfrak n_2)=\left\{ [1,5]_{\rho}, [3,7]_{\rho} \right\}$ and $\underline{\mathfrak{Us}}(\mathfrak n_3)=\left\{[2,5]_{\rho}\right\}$.
\end{enumerate}
\end{example}

We also record the following observation:

\begin{lemma} \label{lem lower for [a,b] a+1}
We use the notations in Lemma \ref{lem upward seq expression a+1}.  Recall that $[a_{i_{\ell-1},j_{\ell-1}}, b_{i_{\ell-1},j_{\ell-1}}]_{\rho}$ is the second last segment (if exists) in the truncation process for $\mathcal D^{\mathrm{Lang}}_{[a,b]_{\rho}}(\mathfrak m)$. If $a_{i_{\ell-1}, j_{\ell-1}}=a+1$, then 
either one of the following holds:
\begin{enumerate}
    \item $i_{\ell-1}\leq i^*$; or
    \item there exists $i^*< i' < i_{\ell-1}$ such that $\underline{\mathfrak{Us}}(\mathfrak m_{i'})[a+1]=\emptyset$. In particular, $i_{\ell-1}>i^*+1$.
   % \item there exists $i^*<i' \leq i_{\ell-1}$ such that $\underline{\mathfrak{Us}}(\mathfrak m_{i'})[a+1]\neq \emptyset$, $\underline{\mathfrak{Us}}(\mathfrak m_{i'-1})[a]\neq \emptyset$ and the unique segment in $\underline{\mathfrak{Us}}(\mathfrak m_{i'})[a+1]$ is not linked to the unique segment in $\underline{\mathfrak{Us}}(\mathfrak m_{i'-1})[a]$.
\end{enumerate} 

\end{lemma}

\begin{proof}
Suppose not. Then, for all $i^*<i' \leq i_{\ell-1}$, $\underline{\mathfrak{Us}}(\mathfrak m_{i'})[a+1] \neq \emptyset$. 

From how we pick $\Delta^*$ and the $\mathfrak{tds}(-,a+1)$-process, we must have $i_{\ell-1}-i^*$ many segments $\Delta^1, \ldots, \Delta^{i_{\ell-1}-i^*}$ in $\mathfrak m[a+2]$ satisfying the following two properties:
\begin{enumerate}
\item each $\Delta^i$ is not linked to $\Delta^*$; and
\item for each $i$, $\Delta^i$ is linked to the unique segment in $\underline{\mathfrak{Us}}(\mathfrak m_i)[a+1]$.
\end{enumerate}
However, those segments $\Delta^i$ then force that $\underline{\mathfrak{Us}}(\mathfrak m_i)[a+2]\neq \emptyset$ for $i^*<i \leq i_{\ell}$. This contradicts that the segment $[a_{i_{\ell-1}, j_{\ell-1}}, b_{i_{\ell-1},j_{\ell-1}}]_{\rho}$ has $\nu^{a+1}\rho$ in its removable free section.
\end{proof}

\begin{lemma}\label{lem:comm_D_ab:D_a+1}
Let $\mathfrak{m}\in \mathrm{Mult}_\rho$ and $[a,b]_\rho \in \mathrm{Seg}_\rho$ with $a < b$ and $\mathcal{D}_{[a+1]_\rho}^\mathrm{Lang}(\mathfrak{m}) \neq \infty$. Then,
\begin{itemize}
    \item[(i)] If $\mathcal{D}_{[a,b]_\rho}^\mathrm{Lang}(\mathfrak{m}) \neq \infty$, we have 
$\mathcal{D}^\mathrm{Lang}_{[a+1]_\rho} \circ \mathcal{D}_{[a,b]_\rho}^\mathrm{Lang}(\mathfrak{m}) =  \mathcal{D}_{[a,b]_\rho}^\mathrm{Lang} \circ \mathcal{D}^\mathrm{Lang}_{[a+1]_\rho}  (\mathfrak{m})\neq \infty.$ 
   \item[(ii)] If $\mathcal{D}_{[a,b]_\rho}^\mathrm{Lang} \circ \mathcal{D}^\mathrm{Lang}_{[a+1]_\rho}  (\mathfrak{m})\neq \infty$, we have $\mathcal{D}_{[a,b]_\rho}^\mathrm{Lang}(\mathfrak{m}) \neq \infty.$
\end{itemize}
\end{lemma}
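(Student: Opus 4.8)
The plan is to mirror, at the level of multisegment combinatorics, the structure of the proof of Lemma~\ref{lem:comm_D[ab]_D[a]}, but now with a $\mathfrak{tds}(-,a+1)$ step on the ``left'' of the truncation process rather than a $\mathfrak{tds}(-,a)$ step. First I would fix all notation from Algorithm~\ref{alg:der:Lang} applied to $\mathcal D^{\mathrm{Lang}}_{[a,b]_{\rho}}(\mathfrak m)$, and let $k_0$ be the largest index with $a_{i,1}=a$ for $1 \le i \le k_0$. Since $\mathcal D^{\mathrm{Lang}}_{[a+1]_{\rho}}(\mathfrak m)\neq\infty$, Algorithm~\ref{alg:rho_der:Lang} gives a segment $\Delta_{a+1}\in\mathfrak m[a+1]$ (obtained after running the $\mathfrak{tds}(-,a+1)$ process and taking the shortest remaining segment starting at $a+1$) with $\mathfrak n := \mathcal D^{\mathrm{Lang}}_{[a+1]_{\rho}}(\mathfrak m) = \mathfrak m - \Delta_{a+1} + {}^-\Delta_{a+1}$. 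The key dichotomy, exactly as in Lemma~\ref{lem:comm_D[ab]_D[a]}, is whether $\Delta_{a+1}$ lies in $\mathfrak m_1 = \mathfrak m_{[a,b]}$ or not. If $\Delta_{a+1}\notin\mathfrak m_1$, the two operations act on disjoint parts of the multisegment (the $\mathfrak{tds}(-,a+1)$ removing $\Delta_{a+1}$ happens outside the segments arranged into the upward sequences $\underline{\mathfrak{Us}}(\mathfrak m_i)$, except possibly as a relabelling of which segment with starting point $a+1$ plays the role in the truncation process), and both (i) and (ii) are immediate once one checks that the removable free sections $\mathfrak{rf}(\Delta_{i,j})$ are unchanged.

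The substantive case is $\Delta_{a+1}\in\mathfrak m_1$, say $\Delta_{a+1} = \Delta_{i_*,2}$ for some $i_*$ with $1 \le i_* \le k_0$ (it must be the second segment of its upward sequence since $a_{i_*,1}=a < a+1$ and the sequence is built by maximal linkage with smallest starting points; here one should carefully justify, using that $\Delta_{a+1}$ is a \emph{longest} segment starting at $a+1$ among the surviving ones, that it is indeed picked as $\Delta_{i_*,2}$ rather than being some $\Delta_{i,j}$ with $j\ge 3$). I would then record the analogue of \eqref{eq:us_on_rho_der}: passing from $\mathfrak m$ to $\mathfrak n$ replaces $\Delta_{i_*,2}$ by ${}^-\Delta_{i_*,2}$, possibly cascades a later segment with starting point $a+1$ forward into the $i_*$-th upward sequence, and leaves $\underline{\mathfrak{Us}}(\mathfrak m_i)$ for $i\neq i_*$ essentially unchanged; and the analogue of \eqref{eq:rf_on_rho_der}: $\mathfrak{rf}({}^-\Delta_{i_*,2})\cup\{\nu^{a+1}\rho\}$ relates to $\mathfrak{rf}(\Delta_{i_*,2})$, and the $\mathfrak{rf}$ of any cascaded-in segment is contained in the old $\mathfrak{rf}$. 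With these two bookkeeping identities in hand, statement (ii) is read off directly (a successful selection/truncation for $\mathcal D^{\mathrm{Lang}}_{[a,b]_{\rho}}\circ\mathcal D^{\mathrm{Lang}}_{[a+1]_{\rho}}(\mathfrak m)$ forces the removable free points of the selected segments of $\mathfrak m$ to still cover $[a,b]_\rho$, hence $\mathcal D^{\mathrm{Lang}}_{[a,b]_{\rho}}(\mathfrak m)\neq\infty$), and for (i) one tracks the selected segments $\Delta_{i_1,j_1},\ldots,\Delta_{i_\ell,j_\ell}$ through both orders of composition and checks that the net effect $\mathfrak m - \sum \Delta_{i_t,j_t} + \sum \Delta_{i_t,j_t}^{\mathrm{trc}}$, together with the single swap $-\Delta_{a+1}+{}^-\Delta_{a+1}$ (or its cascaded variant), comes out the same — the point being that the truncation from the left at $b+1$ never interacts with the begin-point $a+1$ since $a+1 \le b$.

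The main obstacle I anticipate is the case analysis on \emph{where} $\nu^{a+1}\rho$ sits relative to the selected segments. Unlike Lemma~\ref{lem:comm_D[ab]_D[a]}, where $\nu^a\rho$ is the begin-point of $\Delta=[a,b]_\rho$ and so is always a removable free point of the \emph{last} selected segment $\Delta_{i_\ell,1}$, here $\nu^{a+1}\rho$ may be an interior removable free point of one of the $\Delta_{i_t,j_t}$, or it may be the begin-point of some $\Delta_{i_t,j_t}$, or it may be irrelevant to the selection entirely. Each of these sub-cases changes which segment of $\mathfrak n$ plays the role of $\Delta_{i_t,j_t}$ in the algorithm for $\mathcal D^{\mathrm{Lang}}_{[a,b]_{\rho}}(\mathfrak n)$ and whether ${}^-\Delta_{a+1}$ itself gets truncated. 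I expect to handle this exactly as the Case~1/Case~2 split on p.~\pageref{lem:comm_D[ab]_D[a]}: either ${}^-\Delta_{a+1}$ (or the cascaded-in segment) participates in the truncation process for $\mathcal D^{\mathrm{Lang}}_{[a,b]_{\rho}}(\mathfrak n)$, in which case its truncated form coincides with the corresponding $\Delta^{\mathrm{trc}}$ for $\mathfrak m$; or it does not, in which case $\Delta^{\mathrm{trc}}_{i_*,2}={}^-\Delta_{i_*,2}$ and the $-\,\Delta + {}^-\Delta$ swaps telescope. The finiteness and termination of the selection process is inherited from Algorithm~\ref{alg:der:Lang}, so no new convergence argument is needed; the whole content is the combinatorial matching of the two outputs, which is routine but notation-heavy.
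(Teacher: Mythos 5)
Your overall strategy (track the upward sequences and removable free sections of Algorithm~\ref{alg:der:Lang} under the cuspidal derivative at $a+1$, then match the two selection/truncation processes) is the same as the paper's, but there is a genuine structural error at the point where you locate $\Delta_{a+1}$ inside the arrangement. You assert that if $\Delta_{a+1}\in\mathfrak m_{[a,b]}$ then it must be the \emph{second} segment $\Delta_{i_*,2}$ of an upward sequence with $a_{i_*,1}=a$. That is false: the segment removed by $\mathcal D^{\mathrm{Lang}}_{[a+1]_\rho}$ can just as well be the \emph{first} segment of an upward sequence that starts at $a+1$. For instance, take $\mathfrak m=\left\{[0,5]_\rho,[1,4]_\rho\right\}$ and $[a,b]_\rho=[0,2]_\rho$: here $\mathcal D^{\mathrm{Lang}}_{[1]_\rho}(\mathfrak m)=\mathfrak m-[1,4]_\rho+[2,4]_\rho\neq\infty$, $\mathcal D^{\mathrm{Lang}}_{[0,2]_\rho}(\mathfrak m)\neq\infty$, and $\Delta_{a+1}=[1,4]_\rho=\Delta_{2,1}$, the first segment of the second upward sequence. (Your parenthetical justification also relies on $\Delta_{a+1}$ being a \emph{longest} surviving segment starting at $a+1$, whereas Algorithm~\ref{alg:rho_der:Lang} picks the \emph{shortest} one, which is exactly why it need not be absorbed as a second element of a sequence starting at $a$.) The paper's proof treats the two possibilities $j_*=1$ (with $k'+1\leq i_*\leq k''$) and $j_*=2$ separately, because the bookkeeping identities you plan to record — the analogues of \eqref{eq:us_on_rho_der} and \eqref{eq:rf_on_rho_der} — take a different shape in the $j_*=1$ case: the replacement cascades through the whole block of sequences starting at $a+1$ (in the paper's notation $k_0'=k''+1$ when $j_*=1$, cf.\ \eqref{eq:changing:C}), not just into a single neighbouring sequence, and this is precisely the case in which $\Delta_{a+1}$ can coincide with a selected segment $\Delta_{i_{\ell-1},j_{\ell-1}}$ of the truncation process, as in the example above. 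Omitting it is not a notational matter; the case analysis you sketch for (i) and (ii) does not cover it.

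Two secondary points. First, even within your intended case, "leaves $\underline{\mathfrak{Us}}(\mathfrak m_i)$ for $i\neq i_*$ essentially unchanged" understates the cascade: the replacement $\Delta'_i\mapsto\Delta'_{i+1}$ propagates across a whole range of sequences (the paper's \eqref{eq:changing:C}), and the containments of the new $\mathfrak{rf}$'s in the old ones along this chain are exactly what part (ii) rests on, so they need to be stated and verified, not assumed. Second, for part (i) it is not enough to check that the two composites produce the same multisegment formally; one must also identify which segment the \emph{subsequent} $[a+1]_\rho$-derivative removes from $\mathcal D^{\mathrm{Lang}}_{[a,b]_\rho}(\mathfrak m)$, which requires an argument about the $\mathfrak{tds}(-,a+1)$ process on the truncated multisegment (the paper's $i_{**}$/$\Delta'_{i_{**}}$ analysis, using that ${}^-\Delta_{i_\ell,1}$ now lies in $\mathcal D^{\mathrm{Lang}}_{[a,b]_\rho}(\mathfrak m)[a+1]$); your proposal only gestures at this step.
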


Since an argument for Lemma \ref{lem:comm_D_ab:D_a+1} has a similar nature to the one of Lemma \ref{lem:comm_D[ab]_D[a]}, we shall be a bit sketchy.  \\
\noindent
{\it Sketch of a proof of Lemma \ref{lem:comm_D_ab:D_a+1}.} We have that 
\[ \mathfrak n:=\mathcal D_{[a+1]_{\rho}}^{\mathrm{Lang}}(\mathfrak m) =\mathfrak m-\Delta_{a+1}+{}^-\Delta_{a+1}
\]
for some $\Delta_{a+1} \in \mathfrak m[a+1]$. We use the notations as mentioned in Algorithm \ref{alg:der:Lang}. The collection of upward sequences for $\mathfrak n_{[a,b]}$ is described in Lemma \ref{lem upward seq expression a+1}.

The first situation is that $\Delta_{a+1}\notin \mathfrak m_{[a,b]}$.  In such case, $\mathfrak m_{[a,b]}= \mathfrak n_{[a,b]}$. Thus, the upward sequences for $\mathfrak m_{[a,b]}$ are the same as the upward sequences for $\mathfrak n_{[a,b]}$. Thus, one remains to investigate the $\mathfrak{tds}\left(\mathcal D^{\mathrm{Lang}}_{[a,b]_{\rho}}(\mathfrak m), a+1\right)$-process. Now, one carries out similar considerations as in Lemma \ref{lem tds on a ab commute}. Some examples are given in the Appendix \ref{s example tds process}.

The second situation is that $\Delta_{a+1} \in \mathfrak m_{[a,b]}$. In this case, one further considers whether $\Delta_{a+1}$ is a segment participating in the truncation process for $\mathcal D^{\mathrm{Lang}}_{[a,b]_{\rho}}(\mathfrak m)$. A complete argument is again routine, but slightly tedious. However, the general principle is that if one wants to compare the segments participating in the truncation process for $\mathcal D^{\mathrm{Lang}}_{[a,b]_{\rho}}(\mathfrak m)$ and $\mathcal D^{\mathrm{Lang}}_{[a,b]_{\rho}}(\mathfrak n)$, one uses Lemmas \ref{lem upward seq expression a+1} and \ref{lem lower for [a,b] a+1}. If one wants to compare the segments participating in the removal steps of the $\mathfrak{tds}(\mathfrak m, a+1)$ process and $\mathfrak{tds}(\mathcal D^{\mathrm{Lang}}_{[a,b]_{\rho}}(\mathfrak m), a+1)$ process, one applies a similar consideration in the previous paragraph and Lemma \ref{lem tds on a ab commute}. Now, with the segments participating in both processes, one can apply definitions to show the lemma.

\begin{example} \label{ex commute on a+1 case}
\begin{enumerate}
\item Let $\mathfrak m=\left\{ [1,5]_{\rho}, [2,6]_{\rho}, [1,4]_{\rho}, [2,3]_{\rho}, [2]_{\rho}\right\}$ with $a=1$ and $b=3$. Then 
\[\mathcal D^{\mathrm{Lang}}_{[2]_{\rho}}(\mathfrak m)=\mathfrak m -\left\{ [2]_{\rho} \right\}=\left\{ [1,5]_{\rho}, [2,6]_{\rho}, [1,4]_{\rho}, [2,3]_{\rho} \right\}\] and $\mathcal D^{\mathrm{Lang}}_{[1,3]_{\rho}}\circ \mathcal D_{[2]_{\rho}}^{\mathrm{Lang}}(\mathfrak m)=\mathcal D_{[2]_{\rho}}^{\mathrm{Lang}}(\mathfrak m)-[2,3]_{\rho}-[1,4]_{\rho}+[2,4]_{\rho}=\left\{[1,5]_{\rho}, [2,6]_{\rho}, [2,4]_{\rho}\right\}$. 
\item $\mathcal D^{\mathrm{Lang}}_{[1,3]_{\rho}}(\mathfrak m)=\mathfrak m-[2,3]_{\rho}-[1,4]_{\rho}+[2,4]_{\rho}$ and  $\mathcal D^{\mathrm{Lang}}_{[2]_{\rho}}\circ \mathcal D^{\mathrm{Lang}}_{[1,3]_{\rho}}(\mathfrak m)=\mathcal D^{\mathrm{Lang}}_{[1,3]_{\rho}}(\mathfrak m)-[2]_{\rho}=\left\{[1,5]_{\rho}, [2,6]_{\rho}, [2,4]_{\rho}\right\}$.
\end{enumerate}
\end{example}

Note that the segments participating in the truncation process are the same no matter the order of the derivatives $\mathcal D_{[2]}^{\mathrm{Lang}}$ and $\mathcal D_{[1,3]}^{\mathrm{Lang}}$. We now present another example:

\begin{example}
Let $\mathfrak m=\left\{ [0,2]_{\rho}, [0,6]_{\rho}, [1,3]_{\rho}, [1,5]_{\rho}  \right\}$ with $a=0$ and $b=2$. Then 
\begin{enumerate}
 \item $\mathcal D^{\mathrm{Lang}}_{[1]_{\rho}}(\mathfrak m)=\mathfrak m-[1,3]_{\rho}+[2,3]_{\rho}=\left\{ [0,2]_{\rho}, [0,6]_{\rho}, [2,3]_{\rho}, [1,5]_{\rho}  \right\} $ and $\mathcal D^{\mathrm{Lang}}_{[0,2]_{\rho}}\circ \mathcal D^{\mathrm{Lang}}_{[1]_{\rho}}(\mathfrak m)=\mathcal D^{\mathrm{Lang}}_{[1]_{\rho}}(\mathfrak m)-[0,6]_{\rho}+[1,6]_{\rho}-[1,5]_{\rho}+[2,5]_{\rho}-[2,3]_{\rho}+[3]_{\rho}=\left\{ [0,2]_{\rho}, [1,6]_{\rho}, [3]_{\rho}, [2,5]_{\rho}  \right\} $.
 \item $\mathcal D^{\mathrm{Lang}}_{[0,2]_{\rho}}(\mathfrak m)=\mathfrak m-[0,6]_{\rho}+[1,6]_{\rho}-[1,3]_{\rho}+[3]_{\rho}=\left\{ [0,2]_{\rho}, [1,6]_{\rho}, [3]_{\rho}, [1,5]_{\rho}\right\}$ and $\mathcal D^{\mathrm{Lang}}_{[1]_{\rho}}\circ \mathcal D^{\mathrm{Lang}}_{[0,2]_{\rho}}(\mathfrak m)=\mathcal D^{\mathrm{Lang}}_{[0,2]_{\rho}}(\mathfrak m)-[1,5]_{\rho}+[2,5]_{\rho}=\left\{ [0,2]_{\rho}, [1,6]_{\rho}, [3]_{\rho}, [2,5]_{\rho}  \right\}$.
\end{enumerate}
\end{example}

\subsection{Commutativity of $\mathrm{D}^{\mathrm{Lang}}_{[a,b]_{\rho}}$ and $\mathrm{D}^{\mathrm{Lang}}_{[a+1]_{\rho}}$}

\begin{lemma}\label{lem:comm_D_ab:D_a+1_pi}
Let $\pi\in \mathrm{Irr}_\rho$ and $[a,b]_\rho \in \mathrm{Seg}_\rho$ with $a < b$ and $\varepsilon_{[a+1]_\rho}^\mathrm{R}(\pi) \neq 0$. 
\begin{itemize}
    \item[(i)] If $\mathrm{D}_{[a,b]_\rho}^\mathrm{R}(\pi) \neq 0$, we have 
$\mathrm{D}^\mathrm{R}_{[a+1]_\rho} \circ \mathrm{D}_{[a,b]_\rho}^\mathrm{R}(\pi) \cong  \mathrm{D}_{[a,b]_\rho}^\mathrm{R} \circ \mathrm{D}^\mathrm{R}_{[a+1]_\rho}  (\pi)\neq 0.$ 
   \item[(ii)] If $\mathrm{D}_{[a,b]_\rho}^\mathrm{R} \circ \mathrm{D}^\mathrm{R}_{[a+1]_\rho}  (\pi)\neq 0$, we have $\mathrm{D}_{[a,b]_\rho}^\mathrm{R}(\pi) \neq 0.$
\end{itemize}
\end{lemma}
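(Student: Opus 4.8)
The plan is to carry the combinatorial identities of Lemma \ref{lem:comm_D_ab:D_a+1} over to representations through the highest derivative multisegment and the removal process, exactly in the spirit of the proofs of Lemmas \ref{lem:D[ab]=D[a+1,b]_D[a]_pi} and \ref{lem:comm_D[ab]_D[a]_pi}. The translation is supplied by Lemma \ref{lem non-zero der hd} (non-vanishing of $\mathrm{D}^{\mathrm{R}}_{\Delta}$ in terms of the shape of $\mathfrak{hd}$), Lemmas \ref{lem removal process singleton} and \ref{lem derivative removal} (iterated derivatives in terms of the removal process), and \cite[Theorem 10.2]{Cha_csq}.

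For part (i), I would first settle the non-vanishing $\mathrm{D}^{\mathrm{R}}_{[a+1]_\rho}\circ\mathrm{D}^{\mathrm{R}}_{[a,b]_\rho}(\pi)\neq 0$ by a counting argument on the removal process. Since $\mathrm{D}^{\mathrm{R}}_{[a,b]_\rho}(\pi)\neq 0$, Lemma \ref{lem non-zero der hd} produces a segment $[a,c]_\rho\in\mathfrak{hd}(\pi)$ with $c\geq b$, which $\mathfrak r([a,b]_\rho,\pi)$ deletes at its first step while inserting $[a+1,c]_\rho$; the process then runs at the points $\nu^{a+1}\rho,\dots,\nu^{b}\rho$, and because every such step succeeds (as $\mathrm{D}^{\mathrm{R}}_{[a,b]_\rho}(\pi)\neq 0$) it adds precisely one and later deletes precisely one segment at the point $\nu^{a+1}\rho$. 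Hence $|\mathfrak r([a,b]_\rho,\pi)[a+1]|=|\mathfrak{hd}(\pi)[a+1]|\geq 1$, the last inequality because $\varepsilon^{\mathrm{R}}_{[a+1]_\rho}(\pi)\neq 0$ (Lemma \ref{lem non-zero der hd}); by Lemma \ref{lem derivative removal} this set equals $\mathfrak{hd}(\mathrm{D}^{\mathrm{R}}_{[a,b]_\rho}(\pi))[a+1]$, so Lemma \ref{lem non-zero der hd} again gives $\varepsilon^{\mathrm{R}}_{[a+1]_\rho}(\mathrm{D}^{\mathrm{R}}_{[a,b]_\rho}(\pi))\neq 0$. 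The commutativity $\mathrm{D}^{\mathrm{R}}_{[a+1]_\rho}\circ\mathrm{D}^{\mathrm{R}}_{[a,b]_\rho}(\pi)\cong\mathrm{D}^{\mathrm{R}}_{[a,b]_\rho}\circ\mathrm{D}^{\mathrm{R}}_{[a+1]_\rho}(\pi)$ I would then deduce from the general commutation results for $\rho$-derivatives and $\mathrm{St}$-derivatives in \cite{Cha_csq} used in the proof of Lemma \ref{lem:comm_D[ab]_D[a]_pi}(i); equivalently, the computation above exhibits $\{[a,b]_\rho,[a+1]_\rho\}$ as admissible to $\pi$, so that \cite[Lemma 8.7 and Definition 8.13]{Cha_csq} together with \cite[Theorem 10.2]{Cha_csq} give the order-independence of the associated sequence of derivatives.

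For part (ii), I would imitate the proof of Lemma \ref{lem:comm_D[ab]_D[a]_pi}(ii): from $\mathrm{D}^{\mathrm{R}}_{[a,b]_\rho}\circ\mathrm{D}^{\mathrm{R}}_{[a+1]_\rho}(\pi)\neq 0$ and Lemma \ref{lem non-zero der hd} one gets a segment $[a,c]_\rho\in\mathfrak{hd}(\mathrm{D}^{\mathrm{R}}_{[a+1]_\rho}(\pi))$ with $c\geq b$, and it then suffices to exhibit $[a,c']_\rho\in\mathfrak{hd}(\pi)$ with $c'\geq b$, after which Lemma \ref{lem non-zero der hd} yields $\mathrm{D}^{\mathrm{R}}_{[a,b]_\rho}(\pi)\neq 0$. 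The step I expect to be the main obstacle is precisely this descent to $\mathfrak{hd}(\pi)$: Lemmas \ref{lem removal process singleton} and \ref{lem derivative removal} applied to the singleton $[a+1]_\rho$ identify $\mathfrak{hd}(\mathrm{D}^{\mathrm{R}}_{[a+1]_\rho}(\pi))[d]$ with $\bigl(\mathfrak{hd}(\pi)-\Upsilon([a+1]_\rho,\pi)+{}^-\Upsilon([a+1]_\rho,\pi)\bigr)[d]$ only for $d\geq a+1$, and hence say nothing about the point $\nu^a\rho$, which is exactly where the needed segment $[a,c']_\rho$ sits — this is the difference from Lemma \ref{lem:comm_D[ab]_D[a]_pi}(ii), where the relevant $\rho$-derivative was $\mathrm{D}^{\mathrm{R}}_{[a]_\rho}$ and Lemma \ref{lem derivative removal} did control the point $\nu^a\rho$. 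I plan to bridge this either by reducing to the singleton: $\mathfrak{hd}(\mathrm{D}^{\mathrm{R}}_{[a+1]_\rho}(\pi))[a]\neq\emptyset$ forces $\varepsilon^{\mathrm{R}}_{[a]_\rho}(\mathrm{D}^{\mathrm{R}}_{[a+1]_\rho}(\pi))\neq 0$, after which one moves $\mathrm{D}^{\mathrm{R}}_{[a]_\rho}$ to the outside using Lemmas \ref{lem:D[ab]=D[a+1,b]_D[a]_pi} and \ref{lem:comm_D[ab]_D[a]_pi} and reads the claim off $\mathfrak{hd}(\pi)$; or by establishing the auxiliary monotonicity that $\mathrm{D}^{\mathrm{R}}_{[a+1]_\rho}$ cannot lengthen the segments of $\mathfrak{hd}(\pi)$ starting at $a$ (which is essentially what part (ii) asserts). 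Once this descent is available the conclusion is immediate.
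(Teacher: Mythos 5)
Your part (i) is essentially the paper's route: the commutation is exactly the citation of \cite[Lemma 4.4]{Cha_csq}, and the non-vanishing you try to obtain by your counting argument is precisely the $\varepsilon$-preservation statement that the paper imports as the third bullet of \cite[Theorem 9.3]{Cha_csq}. Be aware, though, that your ``adds precisely one and later deletes precisely one segment at $\nu^{a+1}\rho$'' bookkeeping presupposes a pointwise-sequential description of $\mathfrak r([a,b]_\rho,\pi)$ which is not available from what is quoted in this paper: Lemma \ref{lem removal process singleton} only treats singletons, Lemma \ref{lem derivative removal} only controls points $\geq a$, and even in the proof of Lemma \ref{lem:D[ab]=D[a+1,b]_D[a]_pi} the identity $\mathfrak r([a,b]_\rho,\pi)=\mathfrak r([a+1,b]_\rho,\mathfrak r([a]_\rho,\pi))$ needed \cite[Lemma 8.7]{Cha_csq} and special hypotheses. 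So your (i) is fine in spirit but rests on unverified structure of the removal process rather than on the quoted external result.

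The genuine gap is in (ii): you have located the obstacle correctly (Lemma \ref{lem derivative removal} for $[a+1]_\rho$ says nothing at the point $\nu^a\rho$), but neither of your bridges closes it. The first bridge fails: knowing $\varepsilon^{\mathrm R}_{[a]_\rho}(\mathrm D^{\mathrm R}_{[a+1]_\rho}(\pi))\neq 0$ gives no way back to $\pi$, since $[a]_\rho$ and $[a+1]_\rho$ are linked, so no commutation lemma moves $\mathrm D^{\mathrm R}_{[a]_\rho}$ ``outside'', and the point-level descent is simply false: for $\pi=Z(\{[0,1]_\rho,[1,2]_\rho\})$ one has $\mathfrak{hd}(\pi)=\{[1,2]_\rho\}$, so $\varepsilon^{\mathrm R}_{[0]_\rho}(\pi)=0$, while $\mathrm D^{\mathrm R}_{[1]_\rho}(\pi)=Z(\{[0]_\rho,[1,2]_\rho\})$ has $\mathfrak{hd}=\{[0]_\rho,[2]_\rho\}$, so $\varepsilon^{\mathrm R}_{[0]_\rho}(\mathrm D^{\mathrm R}_{[1]_\rho}(\pi))\neq 0$; thus $\mathrm D^{\mathrm R}_{[a+1]_\rho}$ can create new segments at $\nu^a\rho$ in $\mathfrak{hd}$, and what must be ruled out is precisely that it creates one reaching past $\nu^{b}\rho$. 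Your second bridge is, as you admit, a restatement of (ii). A short complete fix stays representation-theoretic: put $\tau=\mathrm D^{\mathrm R}_{[a,b]_\rho}\circ\mathrm D^{\mathrm R}_{[a+1]_\rho}(\pi)\neq 0$; then $\mathrm D^{\mathrm R}_{[a+1]_\rho}(\pi)\hookrightarrow\tau\times\mathrm{St}([a,b]_\rho)$ and $\pi\cong \mathrm I^{\mathrm R}_{[a+1]_\rho}(\mathrm D^{\mathrm R}_{[a+1]_\rho}(\pi))\hookrightarrow \mathrm D^{\mathrm R}_{[a+1]_\rho}(\pi)\times\nu^{a+1}\rho$, hence $\pi\hookrightarrow\tau\times\mathrm{St}([a,b]_\rho)\times\nu^{a+1}\rho\cong\tau\times\nu^{a+1}\rho\times\mathrm{St}([a,b]_\rho)$ because $[a+1]_\rho\subseteq[a,b]_\rho$ are unlinked; passing to an irreducible subquotient of $\tau\times\nu^{a+1}\rho$ as in the proof of Proposition \ref{prop:Zel_der=0} yields $\pi\hookrightarrow\sigma\times\mathrm{St}([a,b]_\rho)$ and so $\mathrm D^{\mathrm R}_{[a,b]_\rho}(\pi)\neq 0$. (The paper's own one-line proof of (ii) instead defers to further removal-process properties from \cite{Cha_csq}; supplying such a property, or the embedding argument above, is exactly what your write-up is missing.)
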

\begin{proof}
    (i) The commutativity part follows from \cite[Lemma 4.4]{Cha_csq}. The non-zeroness part follows from the third bullet of \cite[Theorem 9.3]{Cha_csq}.

    (ii) One can argue similarly as in the proof of Lemma \ref{lem:D[ab]=D[a+1,b]_D[a]_pi}(ii) by using properties of the removal process in Section \ref{ss hd removal}. We omit the details.
\end{proof}

\subsection{Main results}

\begin{theorem}\label{thm:zero_der_Lang}
    Let $\pi=L(\mathfrak{m})$ for some $\mathfrak{m} \in \mathrm{Mult}_\rho$ and let $[a,b]_\rho\in \mathrm{Seg}_\rho$. Then, \[\mathcal{D}_{[a,b]_\rho}^\mathrm{Lang}(\mathfrak{m}) \neq \infty ~ \text{ if and only if }~ \mathrm{D}^\mathrm{R}_{[a,b]_\rho}(\pi)\neq 0.\]
\end{theorem}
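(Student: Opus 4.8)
The plan is to prove the equivalence by induction on $b-a$, using the accumulated commutativity/composition lemmas to reduce both sides to the base case $b=a$, where Theorem~\ref{thm:der:Lang:length=1} together with Lemma~\ref{lem non-zero der hd} already gives the statement. Concretely, I would argue as follows. For the base case $b=a$, Theorem~\ref{thm:der:Lang:length=1} says $\mathrm D^{\mathrm R}_{[a]_\rho}(L(\mathfrak m))=0$ precisely when $\mathcal D^{\mathrm{Lan}}_{[a]_\rho}(\mathfrak m)=\infty$, i.e. when $\mathfrak m_k[a]=\emptyset$; and $\mathcal D^{\mathrm{Lang}}_{[a]_\rho}=\mathcal D^{\mathrm{Lan}}_{[a]_\rho}$ by the Remark after Algorithm~\ref{alg:der:Lang}. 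This settles the length-one case on both combinatorial and representation-theoretic sides simultaneously.

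For the inductive step, suppose $a<b$ and the theorem holds for all shorter segments (in particular for $[a+1]_\rho$ and for $[a+1,b]_\rho$, and also for $[a,a]_\rho=[a]_\rho$). Set $\pi=L(\mathfrak m)$ and split into cases according to $\varepsilon^{\mathrm R}_{[a]_\rho}(\pi)$. If $\varepsilon^{\mathrm R}_{[a]_\rho}(\pi)=0$, then by Theorem~\ref{thm:der:Lang:length=1} $\mathcal D^{\mathrm{Lang}}_{[a]_\rho}(\mathfrak m)=\infty$, so $\mathfrak m_k[a]=\emptyset$, which forces no selected segment in Step~3 to start at $\nu^a\rho$; hence Step~4' applies and $\mathcal D^{\mathrm{Lang}}_{[a,b]_\rho}(\mathfrak m)=\infty$. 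On the representation side $\varepsilon^{\mathrm R}_{[a]_\rho}(\pi)=0$ gives $\mathfrak{hd}(\pi)[a]=\emptyset$ by Lemma~\ref{lem non-zero der hd}, hence $\mathfrak{hd}(\pi)$ contains no segment $[a,c]_\rho$, hence $\mathrm D^{\mathrm R}_{[a,b]_\rho}(\pi)=0$ again by Lemma~\ref{lem non-zero der hd}. If $\varepsilon^{\mathrm R}_{[a]_\rho}(\pi)=1$, apply Lemma~\ref{lem:D[ab]=D[a+1,b]_D[a]} on the combinatorial side and Lemma~\ref{lem:D[ab]=D[a+1,b]_D[a]_pi} on the representation side. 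More precisely: writing $\mathfrak n=\mathcal D^{\mathrm{Lang}}_{[a]_\rho}(\mathfrak m)$ (which is $\neq\infty$ here) and using that $\mathrm D^{\mathrm R}_{[a]_\rho}(\pi)=L(\mathfrak n)$ by Theorem~\ref{thm:der:Lang:length=1}, one checks $\mathrm D^{\mathrm R}_{[a,b]_\rho}(\pi)\neq 0 \iff \mathrm D^{\mathrm R}_{[a+1,b]_\rho}(L(\mathfrak n))\neq 0$ and $\mathcal D^{\mathrm{Lang}}_{[a,b]_\rho}(\mathfrak m)\neq\infty \iff \mathcal D^{\mathrm{Lang}}_{[a+1,b]_\rho}(\mathfrak n)\neq\infty$ by combining parts (i) and (ii) of those two lemmas (whichever side of the equivalence one starts from determines whether (i) or (ii) is invoked, possibly after also invoking the base-case theorem to relate $\varepsilon^{\mathrm R}_{[a+1]_\rho}(\pi)=0$ to $\mathcal D^{\mathrm{Lang}}_{[a+1]_\rho}(\mathfrak m)=\infty$); then apply the inductive hypothesis for $[a+1,b]_\rho$ to $\mathfrak n$. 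If $\varepsilon^{\mathrm R}_{[a]_\rho}(\pi)\geq 2$, one instead wants to commute $\mathrm D^{\mathrm R}_{[a,b]_\rho}$ past $\mathrm D^{\mathrm R}_{[a]_\rho}$: using Lemma~\ref{lem:comm_D[ab]_D[a]_pi} and Lemma~\ref{lem:comm_D[ab]_D[a]}, the nonvanishing of $\mathrm D^{\mathrm R}_{[a,b]_\rho}(\pi)$ is equivalent to that of $\mathrm D^{\mathrm R}_{[a,b]_\rho}(\mathrm D^{\mathrm R}_{[a]_\rho}(\pi))$ — note $\mathrm D^{\mathrm R}_{[a]_\rho}(\pi)\neq 0$ here — and likewise $\mathcal D^{\mathrm{Lang}}_{[a,b]_\rho}(\mathfrak m)\neq\infty\iff\mathcal D^{\mathrm{Lang}}_{[a,b]_\rho}(\mathfrak n)\neq\infty$ where $\mathfrak n=\mathcal D^{\mathrm{Lang}}_{[a]_\rho}(\mathfrak m)$; since $\varepsilon^{\mathrm R}_{[a]_\rho}(\mathrm D^{\mathrm R}_{[a]_\rho}(\pi))=\varepsilon^{\mathrm R}_{[a]_\rho}(\pi)-1$, finitely many such reductions bring us to the $\varepsilon^{\mathrm R}_{[a]_\rho}\leq 1$ cases already handled.

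There is one more wrinkle worth isolating: the case $\varepsilon^{\mathrm R}_{[a]_\rho}(\pi)=1$ but $\varepsilon^{\mathrm R}_{[a+1]_\rho}(\pi)\geq 1$ (so that Lemma~\ref{lem:D[ab]=D[a+1,b]_D[a]}(ii)/Lemma~\ref{lem:D[ab]=D[a+1,b]_D[a]_pi}(ii) do not directly apply since they assume $\varepsilon^{\mathrm R}_{[a+1]_\rho}=0$); but part (i) of both lemmas has no such hypothesis, so whenever $\mathcal D^{\mathrm{Lang}}_{[a,b]_\rho}(\mathfrak m)\neq\infty$ (resp. $\mathrm D^{\mathrm R}_{[a,b]_\rho}(\pi)\neq 0$) we may use (i) to rewrite it as the composition and then apply induction, and conversely the contrapositive of (i) combined with (ii) closes the remaining implication, or else one first peels off $\mathrm D^{\mathrm R}_{[a+1]_\rho}$ using Lemma~\ref{lem:comm_D_ab:D_a+1_pi} and Lemma~\ref{lem:comm_D_ab:D_a+1}. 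The main obstacle is precisely this bookkeeping: each of the four composition/commutativity lemma pairs has a one-directional hypothesis ($\mathcal D\neq\infty$ or the composite $\neq\infty$), and one must verify that the case division on $\varepsilon^{\mathrm R}_{[a]_\rho}(\pi)$ and $\varepsilon^{\mathrm R}_{[a+1]_\rho}(\pi)$ — translated via Theorem~\ref{thm:der:Lang:length=1} into statements about $\mathcal D^{\mathrm{Lang}}_{[a]_\rho}(\mathfrak m)$ and $\mathcal D^{\mathrm{Lang}}_{[a+1]_\rho}(\mathfrak m)$ being $\infty$ or not — provides, in every branch, matching hypotheses on the two sides so that the equivalence propagates through the induction. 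The representation-theoretic inputs (Lemma~\ref{lem non-zero der hd}, Lemma~\ref{lem derivative removal}) handle the ``$\varepsilon=0$'' vanishing uniformly, so no genuinely new idea is needed beyond organizing the induction carefully.
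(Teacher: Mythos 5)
Your outline is essentially the paper's proof: the same base case via Theorem~\ref{thm:der:Lang:length=1}, the same case split on $\varepsilon^\mathrm{R}_{[a]_\rho}(\pi)$ and $\varepsilon^\mathrm{R}_{[a+1]_\rho}(\pi)$, and the same paired combinatorial/representation-theoretic lemmas. Two details need tightening. First, the paper's induction is on the pair $\big(\ell_{rel}([a,b]_\rho),\,\ell_{rel}(\mathfrak m)\big)$, because the $\varepsilon^\mathrm{R}_{[a]_\rho}\geq 2$ case and the $\varepsilon^\mathrm{R}_{[a]_\rho}=1$, $\varepsilon^\mathrm{R}_{[a+1]_\rho}\geq 1$ case leave $[a,b]_\rho$ fixed while shrinking $\mathfrak m$; a single induction on $b-a$ is therefore not quite enough — your finite-descent-on-$\varepsilon^\mathrm{R}_{[a]_\rho}$ observation covers the first of these, but the analogous termination argument for the second is not obvious since $\varepsilon^\mathrm{R}_{[a]_\rho}$ is not manifestly preserved under $\mathrm D^\mathrm{R}_{[a+1]_\rho}$, whereas $\ell_{rel}(\mathfrak m)$ always drops. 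Second, your primary suggestion for the $\varepsilon^\mathrm{R}_{[a]_\rho}=1$, $\varepsilon^\mathrm{R}_{[a+1]_\rho}\geq 1$ wrinkle — using only part~(i) of Lemmas~\ref{lem:D[ab]=D[a+1,b]_D[a]} and~\ref{lem:D[ab]=D[a+1,b]_D[a]_pi} — does not close the equivalence: part~(i), in either direction, transfers nonvanishing \emph{into} the composition $\mathcal D^\mathrm{Lang}_{[a+1,b]_\rho}\circ \mathcal D^\mathrm{Lang}_{[a]_\rho}$ or $\mathrm D^\mathrm{R}_{[a+1,b]_\rho}\circ \mathrm D^\mathrm{R}_{[a]_\rho}$, and to come back one needs part~(ii), whose hypothesis $\varepsilon^\mathrm{R}_{[a+1]_\rho}=0$ fails here. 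Your fallback of peeling off $\mathrm D^\mathrm{R}_{[a+1]_\rho}$ via Lemmas~\ref{lem:comm_D_ab:D_a+1} and~\ref{lem:comm_D_ab:D_a+1_pi} is exactly the paper's Case~3 and is the correct route.
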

\begin{proof}
We prove the result by induction on $\ell_{rel}([a,b]_\rho)$ and $\ell_{rel}(\mathfrak m)$. If $a = b$, the statement follows from Theorem \ref{thm:der:Lang:length=1}. Assume $a < b$. We divide the proof into the following cases:

Case 1: $\varepsilon^\mathrm{R}_{[a]_{\rho}}(L(\mathfrak m))\geq 2$. In such cases, we conclude by
	\begin{align*}
		\mathcal D^\mathrm{Lang}_{{[a,b]_{\rho}}}(\mathfrak m) \neq \infty 
		\Longleftrightarrow   & \mathcal D^\mathrm{Lang}_{{[a,b]_{\rho}}} \circ \mathcal D^\mathrm{Lang}_{[a]_{\rho}}(\mathfrak m) \neq \infty \quad (\text{by Lemma } \ref{lem:comm_D[ab]_D[a]})\\
		{\Longleftrightarrow}  &  \mathrm{D}^\mathrm{R}_{[a,b]_{\rho}} \left(L \left( \mathcal D^\mathrm{Lang}_{[a]_{\rho}}(\mathfrak m)\right)\right) \neq 0 \quad (\text{by induction assumption})\\
        {\Longleftrightarrow}  &  \mathrm{D}^\mathrm{R}_{[a,b]_{\rho}}\circ \mathrm{D}^\mathrm{R}_{[a]_{\rho}}(L(\mathfrak m)) \neq 0 \quad (\text{by Theorem } \ref{thm:der:Lang:length=1})\\
		\Longleftrightarrow & \mathrm{D}^\mathrm{R}_{[a,b]_{\rho}}(L(\mathfrak m))\neq 0  \quad(\text{by Lemma } \ref{lem:comm_D[ab]_D[a]_pi})
	\end{align*}

 Case 2: $\varepsilon^\mathrm{R}_{[a]_{\rho}}(L(\mathfrak m))= 1$ and $\varepsilon^\mathrm{R}_{[a+1]_{\rho}}(L(\mathfrak m))= 0$. In such cases, we conclude by
	\begin{align*}
		\mathcal D^\mathrm{Lang}_{[a,b]_\rho}(\mathfrak m)\neq \infty 
		\Longleftrightarrow & \mathcal D^\mathrm{Lang}_{[a+1,b]_\rho}\circ \mathcal D^\mathrm{Lang}_{[a]_{\rho}}(\mathfrak m) \neq \infty \quad (\text{by Lemma } \ref{lem:D[ab]=D[a+1,b]_D[a]})\\
		{\Longleftrightarrow} & \mathrm D^\mathrm{R}_{[a+1,b]_\rho}\circ \mathrm D^\mathrm{R}_{[a]_{\rho}}(L(\mathfrak m)) \neq 0 \quad (\text{by induction assumption})\\
		\Longleftrightarrow & \mathrm D^\mathrm{R}_{[a,b]_\rho}(L(\mathfrak m)) \neq 0 \quad (\text{by Lemma } \ref{lem:D[ab]=D[a+1,b]_D[a]_pi} )
	\end{align*}

Case 3: $\varepsilon^\mathrm{R}_{[a]_{\rho}}(L(\mathfrak m))= 1$ and $\varepsilon^\mathrm{R}_{[a+1]_{\rho}}(L(\mathfrak m))\neq 0$. In such cases, we conclude by
    \begin{align*}
    	\mathcal D^\mathrm{Lang}_{[a,b]}(\mathfrak m)\neq \infty 
    	\Longleftrightarrow & \mathcal D^\mathrm{Lang}_{[a,b]_\rho}\circ \mathcal D^\mathrm{Lang}_{[a+1]_\rho}(\mathfrak m) \neq \infty \quad (\text{by Lemma } \ref{lem:comm_D_ab:D_a+1})\\
      {\Longleftrightarrow} & \mathrm D^\mathrm{R}_{[a,b]_\rho}\circ \mathrm D^\mathrm{R}_{[a+1]_\rho}(L(\mathfrak m)) \neq 0 \quad (\text{by induction assumption}) \\
    	\Longleftrightarrow & \mathrm D^\mathrm{R}_{[a,b]_\rho}(L(\mathfrak m)) \neq 0 \quad (\text{by Lemma } \ref{lem:comm_D_ab:D_a+1_pi})
    \end{align*}

Case 4: $\varepsilon^\mathrm{R}_{[a]_{\rho}}(L(\mathfrak m))=0$. Then $\mathcal D^\mathrm{Lang}_{[a]_\rho}(\mathfrak m)=\infty$ and $\mathrm{D}^\mathrm{R}_{[a]_\rho}(L(\mathfrak m))=0$. The first equality and Theorem \ref{thm:der:Lang:length=1} imply that $\nu^a\rho$ is not in $\mathfrak{rf}(\Delta_{i,j})$ for any segment $\Delta_{i,j}$ involved in the upward sequences of Algorithm \ref{alg:der:Lang} applied to $\mathfrak m$ and so $\mathcal D_{[a,b]_{\rho}}^{\mathrm{Lang}}(\mathfrak m)=\infty$, while the second equality implies $\mathfrak{hd}(\pi)[a]=\emptyset$ and so  $\mathrm{D}^\mathrm{R}_{[a,b]_{\rho}}(\pi)=0$ by Lemma \ref{lem non-zero der hd}. 
\end{proof}

\begin{lemma}\label{lem:iso_der=>iso_rep}
    Let $\tau_1, \tau_2 \in \mathrm{Irr}_{\rho}$. Let $\Delta \in \mathrm{Seg}_{\rho}$ such that $\mathrm{D}^\mathrm{R}_{\Delta}(\tau_1) \cong \mathrm{D}^\mathrm{R}_{\Delta}(\tau_2)\neq 0$. Then, $\tau_1 \cong \tau_2.$
\end{lemma}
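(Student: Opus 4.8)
The plan is to invert the derivative using the integral functor, which has already been recorded in the excerpt. Recall from the discussion following the definition of integrals that, by \cite[Corollary 2.4]{LM19} together with the second adjointness of parabolic induction, for every $\pi \in \mathrm{Irr}$ with $\mathrm{D}^\mathrm{R}_{\Delta}(\pi) \neq 0$ one has $\mathrm{I}^\mathrm{R}_{\Delta} \circ \mathrm{D}^\mathrm{R}_{\Delta}(\pi) \cong \pi$. The whole point is that this exhibits $\mathrm{I}^\mathrm{R}_{\Delta}$ as a one-sided inverse of $\mathrm{D}^\mathrm{R}_{\Delta}$ on the set of irreducibles with nonvanishing $\Delta$-derivative, so that $\mathrm{D}^\mathrm{R}_{\Delta}$ is injective there.

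First I would note that $\mathrm{I}^\mathrm{R}_{\Delta}$ is well defined on isomorphism classes: by definition $\mathrm{I}^\mathrm{R}_{\Delta}(\tau)$ is the unique simple submodule of $\tau \times \mathrm{St}(\Delta)$, and the parabolically induced module $\tau \times \mathrm{St}(\Delta)$ depends only on the isomorphism class of $\tau$. Next, the hypothesis $\mathrm{D}^\mathrm{R}_{\Delta}(\tau_1) \cong \mathrm{D}^\mathrm{R}_{\Delta}(\tau_2) \neq 0$ gives in particular $\mathrm{D}^\mathrm{R}_{\Delta}(\tau_i) \neq 0$ for $i=1,2$, and recall $\mathrm{D}^\mathrm{R}_{\Delta}(\tau_i)$ is irreducible by the very definition of the derivative, so the cited relation applies to each $\tau_i$. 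Thus
\[
\tau_1 \cong \mathrm{I}^\mathrm{R}_{\Delta}\bigl(\mathrm{D}^\mathrm{R}_{\Delta}(\tau_1)\bigr) \cong \mathrm{I}^\mathrm{R}_{\Delta}\bigl(\mathrm{D}^\mathrm{R}_{\Delta}(\tau_2)\bigr) \cong \tau_2,
\]
where the middle isomorphism uses that $\mathrm{I}^\mathrm{R}_{\Delta}$ respects isomorphism. This completes the argument.

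There is essentially no serious obstacle here; the only points that need a word of care are that $\mathrm{D}^\mathrm{R}_{\Delta}(\tau_i)$ is genuinely irreducible (so the integral relation is available and $\mathrm{I}^\mathrm{R}_{\Delta}$ can be applied to it) and that $\mathrm{I}^\mathrm{R}_{\Delta}$ is a function of isomorphism classes, both of which are immediate from the definitions. An alternative, self-contained route avoiding \cite{LM19} would run directly through Jacquet modules: $\mathrm{D}^\mathrm{R}_{\Delta}(\tau)\boxtimes \mathrm{St}(\Delta)$ is the maximal submodule of $\mathrm{Jac}_{N_{\ell}}(\tau)$ of that shape, and Frobenius reciprocity recovers $\tau$ as a (indeed the) simple submodule of $\mathrm{D}^\mathrm{R}_{\Delta}(\tau)\times \mathrm{St}(\Delta)$; but invoking the integral identity already stated in the paper is the cleanest presentation and is the version I would write.
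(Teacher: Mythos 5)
Your proposal is correct and is essentially the paper's own argument: the paper likewise proves this lemma by applying $\mathrm{I}^\mathrm{R}_{\Delta}$ to both sides and invoking the relation $\mathrm{I}^\mathrm{R}_{\Delta}\circ \mathrm{D}^\mathrm{R}_{\Delta}(\pi)\cong\pi$ when $\mathrm{D}^\mathrm{R}_{\Delta}(\pi)\neq 0$. Your additional remarks on irreducibility of the derivative and on $\mathrm{I}^\mathrm{R}_{\Delta}$ depending only on isomorphism classes are fine but just make explicit what the paper leaves implicit.
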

\begin{proof}
    Apply the integral $\mathrm{I}^\mathrm{R}_\Delta$ on both sides of $\mathrm{D}^\mathrm{R}_{\Delta}(\tau_1) \cong \mathrm{D}^\mathrm{R}_{\Delta}(\tau_2)$ to get the result.
\end{proof}

\begin{theorem}\label{thm:der:Lang}
Let $\mathfrak{m} \in \mathrm{Mult}_\rho$ and $\Delta=[a,b]_\rho \in \mathrm{Seg}_\rho$. If $\mathcal{D}_\Delta^\mathrm{Lang}(\mathfrak{m}) \neq \infty$, we have
\[\mathrm{D}^\mathrm{R}_\Delta \left(L(\mathfrak{m}) \right) \cong L \left( \mathcal{D}_\Delta^\mathrm{Lang}(\mathfrak{m})  \right).\]  
\end{theorem}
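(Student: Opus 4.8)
The plan is to reuse the case division already set up in the proof of Theorem~\ref{thm:zero_der_Lang}, carrying it through a double induction on the lexicographically ordered pair $(\ell_{rel}([a,b]_\rho),\ell_{rel}(\mathfrak m))$. Write $\pi=L(\mathfrak m)$ and $\tau=\mathrm D^\mathrm R_{[a,b]_\rho}(\pi)$; the hypothesis $\mathcal D^\mathrm{Lang}_{[a,b]_\rho}(\mathfrak m)\neq\infty$ together with Theorem~\ref{thm:zero_der_Lang} already gives $\tau\neq 0$, hence $\varepsilon^\mathrm R_{[a,b]_\rho}(\pi)\neq 0$, which is exactly the nonvanishing input needed to invoke the commutativity lemmas below. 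The base case $a=b$ is Theorem~\ref{thm:der:Lang:length=1}. For $a<b$ the case $\varepsilon^\mathrm R_{[a]_\rho}(\pi)=0$ cannot occur, since then $\mathcal D^\mathrm{Lang}_{[a]_\rho}(\mathfrak m)=\infty$ forces $\mathcal D^\mathrm{Lang}_{[a,b]_\rho}(\mathfrak m)=\infty$ (as in Case~4 of Theorem~\ref{thm:zero_der_Lang}), contrary to hypothesis; so only three cases remain.

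In the case $\varepsilon^\mathrm R_{[a]_\rho}(\pi)=1$ and $\varepsilon^\mathrm R_{[a+1]_\rho}(\pi)=0$, Lemma~\ref{lem:D[ab]=D[a+1,b]_D[a]_pi}(i) gives $\tau\cong\mathrm D^\mathrm R_{[a+1,b]_\rho}\circ\mathrm D^\mathrm R_{[a]_\rho}(\pi)$, and Theorem~\ref{thm:der:Lang:length=1} identifies $\mathrm D^\mathrm R_{[a]_\rho}(\pi)$ with $L(\mathcal D^\mathrm{Lang}_{[a]_\rho}(\mathfrak m))$. Since the algorithm for $\mathcal D^\mathrm{Lang}_{[a]_\rho}$ replaces a segment by a shorter one, $\ell_{rel}(\mathcal D^\mathrm{Lang}_{[a]_\rho}(\mathfrak m))<\ell_{rel}(\mathfrak m)$, while $\ell_{rel}([a+1,b]_\rho)<\ell_{rel}([a,b]_\rho)$; and Lemma~\ref{lem:D[ab]=D[a+1,b]_D[a]}(i) tells us $\mathcal D^\mathrm{Lang}_{[a+1,b]_\rho}\circ\mathcal D^\mathrm{Lang}_{[a]_\rho}(\mathfrak m)=\mathcal D^\mathrm{Lang}_{[a,b]_\rho}(\mathfrak m)\neq\infty$. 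Thus the induction hypothesis applies to the pair $([a+1,b]_\rho,\mathcal D^\mathrm{Lang}_{[a]_\rho}(\mathfrak m))$ and directly yields $\tau\cong L(\mathcal D^\mathrm{Lang}_{[a,b]_\rho}(\mathfrak m))$.

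The two remaining cases ($\varepsilon^\mathrm R_{[a]_\rho}(\pi)\geq 2$; and $\varepsilon^\mathrm R_{[a]_\rho}(\pi)=1$ with $\varepsilon^\mathrm R_{[a+1]_\rho}(\pi)\neq 0$) are handled uniformly through Lemma~\ref{lem:iso_der=>iso_rep}, by picking a point $[c]_\rho$ (with $c=a$ in the first case, $c=a+1$ in the second) and showing $\mathrm D^\mathrm R_{[c]_\rho}(\tau)\cong\mathrm D^\mathrm R_{[c]_\rho}(L(\mathcal D^\mathrm{Lang}_{[a,b]_\rho}(\mathfrak m)))\neq 0$. On the $\tau$ side one commutes $\mathrm D^\mathrm R_{[c]_\rho}$ past $\mathrm D^\mathrm R_{[a,b]_\rho}$ using the representation-theoretic commutativity --- Lemma~\ref{lem:comm_D[ab]_D[a]_pi}(i) when $c=a$, Lemma~\ref{lem:comm_D_ab:D_a+1_pi}(i) when $c=a+1$ --- replaces $\mathrm D^\mathrm R_{[c]_\rho}(\pi)$ by $L(\mathcal D^\mathrm{Lang}_{[c]_\rho}(\mathfrak m))$ via Theorem~\ref{thm:der:Lang:length=1}, and then applies the induction hypothesis (legitimate because $\ell_{rel}(\mathcal D^\mathrm{Lang}_{[c]_\rho}(\mathfrak m))<\ell_{rel}(\mathfrak m)$ with the segment $[a,b]_\rho$ unchanged) to compute $\mathrm D^\mathrm R_{[a,b]_\rho}$ of it; the nonvanishing $\mathcal D^\mathrm{Lang}_{[a,b]_\rho}(\mathcal D^\mathrm{Lang}_{[c]_\rho}(\mathfrak m))\neq\infty$ needed here is supplied by the combinatorial commutativity lemmas, Lemma~\ref{lem:comm_D[ab]_D[a]}(i) and Lemma~\ref{lem:comm_D_ab:D_a+1}(i). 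On the $L(\mathcal D^\mathrm{Lang}_{[a,b]_\rho}(\mathfrak m))$ side one instead applies Theorem~\ref{thm:der:Lang:length=1} directly. The two outputs coincide precisely because those same combinatorial commutativity statements give $\mathcal D^\mathrm{Lang}_{[c]_\rho}\circ\mathcal D^\mathrm{Lang}_{[a,b]_\rho}(\mathfrak m)=\mathcal D^\mathrm{Lang}_{[a,b]_\rho}\circ\mathcal D^\mathrm{Lang}_{[c]_\rho}(\mathfrak m)$, and they also certify that both sides avoid $\infty$, which is what makes the two derivatives nonzero so that Lemma~\ref{lem:iso_der=>iso_rep} applies; that lemma then concludes $\tau\cong L(\mathcal D^\mathrm{Lang}_{[a,b]_\rho}(\mathfrak m))$.

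I expect the only real work to be bookkeeping rather than any single hard estimate: one must verify at each reduction that the intermediate multisegments remain in $\mathrm{Mult}_\rho$ and stay away from $\infty$ so that both Theorem~\ref{thm:der:Lang:length=1} and the induction hypothesis are applicable, keep track of which of the three cases one is in (and that the $\varepsilon^\mathrm R$-conditions controlling the case remain valid after taking a $\rho$-derivative), and confirm the lexicographic descent in each branch. The genuinely load-bearing point is that the nonvanishing hypotheses of the representation-theoretic commutativity lemmas are all already guaranteed by Theorem~\ref{thm:zero_der_Lang}; granting that, the matching of the two sides is forced by the combinatorial commutativity lemmas, and the argument closes.
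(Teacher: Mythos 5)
Your proposal is correct and follows essentially the same route as the paper's proof: the same double induction with Theorem \ref{thm:der:Lang:length=1} as base case, the nonvanishing input supplied by Theorem \ref{thm:zero_der_Lang}, the composition Lemmas \ref{lem:D[ab]=D[a+1,b]_D[a]} and \ref{lem:D[ab]=D[a+1,b]_D[a]_pi} when $\varepsilon^{\mathrm R}_{[a]_\rho}=1$, and the commutation Lemmas \ref{lem:comm_D[ab]_D[a]} and \ref{lem:comm_D[ab]_D[a]_pi} combined with the cancellation Lemma \ref{lem:iso_der=>iso_rep} when $\varepsilon^{\mathrm R}_{[a]_\rho}\geq 2$. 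The only (harmless) difference is that you split $\varepsilon^{\mathrm R}_{[a]_\rho}=1$ according to $\varepsilon^{\mathrm R}_{[a+1]_\rho}$ and treat the subcase $\varepsilon^{\mathrm R}_{[a+1]_\rho}\neq 0$ by commuting with $[a+1]_\rho$ via Lemmas \ref{lem:comm_D_ab:D_a+1} and \ref{lem:comm_D_ab:D_a+1_pi}, whereas the paper handles all of $\varepsilon^{\mathrm R}_{[a]_\rho}=1$ uniformly through the composition lemmas (whose part (i) does not require $\varepsilon^{\mathrm R}_{[a+1]_\rho}=0$), so your extra subcase is valid but not needed.
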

\begin{proof}
We use an induction argument on the relative length $\ell_{rel}(\Delta)$, and $\ell_{rel}(\mathfrak{m})$ of $\mathfrak{m}$ to prove this result. By Theorem \ref{thm:der:Lang:length=1}, for any $\mathfrak{m}^\prime \in \mathrm{Mult}_\rho$, we have 
\begin{equation}\label{Eq:der_Lang_A}
\mathrm{D}^\mathrm{R}_{[a]_\rho}(L(\mathfrak{m}^\prime)) \cong L \left(\mathcal{D}^\mathrm{Lang}_{[a]_\rho}(\mathfrak{m}^\prime)\right).   
\end{equation}
As $\mathcal{D}_{[a,b]_\rho}^\mathrm{Lang}(\mathfrak{m}) \neq \infty$, we have $\varepsilon^\mathrm{R}_{[a,b]_\rho}(L(\mathfrak{m})) \neq 0$,  $\mathcal{D}_{[a]_\rho}^\mathrm{Lang}(\mathfrak{m}) \neq \infty$, and so, 
$\varepsilon^\mathrm{R}_{[a]_\rho}(L(\mathfrak{m})) \neq 0$. These show the case when $\ell_{\mathrm{rel}}(\Delta)=1$.

Case 1. $\varepsilon^\mathrm{R}_{[a]_\rho}(L(\mathfrak{m})) \geq 2$. As an inductive step, we assume that 
\begin{equation*}
 \mathrm{D}^\mathrm{R}_{[a,b]_\rho}(L(\mathfrak{n})) \cong L \left(\mathcal{D}^\mathrm{Lang}_{[a,b]_\rho}(\mathfrak{n})\right).  
\end{equation*}
for any $\mathfrak{n} \in \mathrm{Mult}_\rho$ with $\ell_{rel}(\mathfrak{n}) < \ell_{rel}(\mathfrak{m})$. Put $\mathfrak{n}=\mathcal{D}^\mathrm{Lang}_{[a]_\rho}(\mathfrak{m})$. As $\ell_{rel}(\mathfrak{n})<\ell_{rel}(\mathfrak{m})$, we have
\begin{equation}\label{Eq:der_Lang_B}
\mathrm{D}^\mathrm{R}_{[a,b]_\rho}\left(L\left(\mathcal{D}^\mathrm{Lang}_{[a]_\rho}(\mathfrak{m})\right)\right) \cong L \left(\mathcal{D}^\mathrm{Lang}_{[a,b]_\rho}\circ\mathcal{D}^\mathrm{Lang}_{[a]_\rho}(\mathfrak{m})\right).  
\end{equation}
Then, we get
\begin{align*}
\mathrm{D}^\mathrm{R}_{[a]_\rho}\left( L\left(\mathcal{D}^\mathrm{Lang}_{[a,b]_\rho}(\mathfrak{m}) \right)\right) &\cong L\left(\mathcal{D}^\mathrm{Lang}_{[a]_\rho}\circ \mathcal{D}^\mathrm{Lang}_{[a,b]_\rho} (\mathfrak{m})\right) \quad (\text{by }\eqref{Eq:der_Lang_A})\\
 &\cong L\left(\mathcal{D}^\mathrm{Lang}_{[a,b]_\rho}\circ \mathcal{D}^\mathrm{Lang}_{[a]_\rho} (\mathfrak{m})\right) \quad (\text{by Lemma }\ref{lem:comm_D[ab]_D[a]})\\
 &\cong \mathrm{D}^\mathrm{R}_{[a,b]_\rho}\left(L\left(\mathcal{D}^\mathrm{Lang}_{[a]_\rho}(\mathfrak{m})\right)\right) \quad (\text{by }\eqref{Eq:der_Lang_B})\\
 &\cong \mathrm{D}^\mathrm{R}_{[a,b]_\rho} \circ  \mathrm{D}^\mathrm{R}_{[a]_\rho}\left( L\left(\mathfrak{m}\right)\right) \quad (\text{by }\eqref{Eq:der_Lang_A})\\
 &\cong \mathrm{D}^\mathrm{R}_{[a]_\rho} \circ  \mathrm{D}^\mathrm{R}_{[a,b]_\rho}\left( L\left(\mathfrak{m}\right)\right) \quad (\text{by Lemma } \ref{lem:comm_D[ab]_D[a]_pi}).
\end{align*}
Therefore, by Lemma \ref{lem:iso_der=>iso_rep}, we conclude that 
 $\mathrm{D}^\mathrm{R}_{[a,b]_\rho}\left( L\left(\mathfrak{m}\right)\right)\cong L\left(\mathcal{D}^\mathrm{Lang}_{[a,b]_\rho}(\mathfrak{m}) \right).$

Case 2. $\varepsilon^\mathrm{R}_{[a]_\rho}(L(\mathfrak{m})) =1$.
%As an inductive step, we assume that for $\mathfrak{n} \in \mathrm{Mult}_\rho$,
%\begin{equation}\label{Eq:der_Lang_C}
% \mathrm{D}^\mathrm{R}_{[a+1,b]_\rho}(L(\mathfrak{n})) \cong L \left(\mathcal{D}^\mathrm{Lang}_{[a+1,b]_\rho}(\mathfrak{n})\right). 
%\end{equation}
In such case, we conclude by
\begin{align}\label{Eq:der_Lang_E}
 L\left(\mathcal{D}^\mathrm{Lang}_{[a,b]_\rho}(\mathfrak{m}) \right) &= L\left(\mathcal{D}^\mathrm{Lang}_{[a+1,b]_\rho}\circ \mathcal{D}^\mathrm{Lang}_{[a]_\rho} (\mathfrak{m})\right) \quad (\text{by Lemma } \ref{lem:D[ab]=D[a+1,b]_D[a]}) \nonumber\\
 &\cong \mathrm{D}^\mathrm{R}_{[a+1,b]_\rho} \left( L\left( \mathcal{D}^\mathrm{Lang}_{[a]_\rho} (\mathfrak{m})\right)\right)\\
 &\cong \mathrm{D}^\mathrm{R}_{[a+1,b]_\rho} \circ \mathrm{D}^\mathrm{R}_{[a]_\rho} \left( L\left( \mathfrak{m}\right)\right) \quad (\text{by  }\eqref{Eq:der_Lang_A})\nonumber\\
 &\cong \mathrm{D}^\mathrm{R}_{[a,b]_\rho} \left( L\left(\mathfrak{m}\right)\right) \quad (\text{by Lemma } \ref{lem:D[ab]=D[a+1,b]_D[a]_pi}).\nonumber
\end{align}
Here, the isomorphism \eqref{Eq:der_Lang_E} follows from induction as $\ell_{rel}([a+1,b]_{\rho})<\ell_{rel}([a,b]_{\rho})$.
\end{proof}

\subsection{Left derivative algorithm}

For $\mathfrak m \in \mathrm{Mult}_{\rho}$ and $[a,b]_{\rho} \in \mathrm{Seg}_\rho$, we define 
\[ \mathcal D^{\mathrm{Lang}, \mathrm L}_{[a,b]_{\rho}}(\mathfrak m)=\Theta\left(\mathcal D^{\mathrm{Lang}}_{[-b,-a]_{\rho^{\vee}}}(\Theta(\mathfrak m))\right) .
\]
Now, with Theorems \ref{thm:zero_der_Lang} and \ref{thm:der:Lang} and discussions in Section \ref{sss gk invol}, we have:

\begin{theorem}
Suppose $\mathfrak{m} \in \mathrm{Mult}_\rho$ and $\Delta \in \mathrm{Seg}_\rho$. Then, the following holds:
\begin{enumerate}
\item $\mathcal{D}_\Delta^{\mathrm{Lang}, \mathrm{L}}(\mathfrak{m}) \neq \infty$ if and only if $\mathrm D^{\mathrm L}_{\Delta}(L(\mathfrak m))\neq 0$; and
\item if $\mathcal{D}_\Delta^{\mathrm{Lang}, \mathrm{L}}(\mathfrak{m}) \neq \infty$, we have $\mathrm{D}^\mathrm{L}_\Delta \left(L(\mathfrak{m}) \right) \cong L \left( \mathcal{D}_\Delta^{\mathrm{Lang}, \mathrm{L}}(\mathfrak{m})  \right)$.
\end{enumerate}
\end{theorem}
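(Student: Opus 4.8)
The plan is to obtain both assertions as formal consequences of the right-derivative results (Theorems \ref{thm:zero_der_Lang} and \ref{thm:der:Lang}), transported across the Gelfand--Kazhdan involution exactly as set up in Section \ref{sss gk invol}. The two inputs I would use repeatedly are: (a) $\Theta$ is an involution on $\mathrm{Mult}$ which carries genuine multisegments to genuine multisegments, so that extending it by the convention $\Theta(\infty)=\infty$ is consistent and $\Theta$ never creates or destroys the symbol $\infty$, together with $\theta(L(\mathfrak n))=L(\Theta(\mathfrak n))$; and (b) the transfer isomorphism $\mathrm D^{\mathrm L}_{[a,b]_{\rho}}(L(\mathfrak m))\cong \theta\big(\mathrm D^{\mathrm R}_{[-b,-a]_{\rho^{\vee}}}(L(\Theta(\mathfrak m)))\big)$ recorded in Section \ref{sss gk invol}. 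Since $\theta$ is a covariant auto-equivalence of $\mathrm{Rep}(G_n)$, it sends nonzero representations to nonzero representations and $0$ to $0$ and detects isomorphisms; this is all that is needed to compare vanishing and to push isomorphisms through.

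For part (1), write $\Delta=[a,b]_{\rho}$ and chain equivalences: $\mathrm D^{\mathrm L}_{\Delta}(L(\mathfrak m))\neq 0$ iff $\mathrm D^{\mathrm R}_{[-b,-a]_{\rho^{\vee}}}(L(\Theta(\mathfrak m)))\neq 0$ by (b) and the faithfulness of $\theta$; iff $\mathcal D^{\mathrm{Lang}}_{[-b,-a]_{\rho^{\vee}}}(\Theta(\mathfrak m))\neq\infty$ by Theorem \ref{thm:zero_der_Lang}, applied along the $\rho^{\vee}$-line in place of the $\rho$-line (the right-derivative theorems are stated for an arbitrary fixed cuspidal line, and $[-b,-a]_{\rho^{\vee}}\in\mathrm{Seg}_{\rho^{\vee}}$, $\Theta(\mathfrak m)\in\mathrm{Mult}_{\rho^{\vee}}$); iff $\Theta\big(\mathcal D^{\mathrm{Lang}}_{[-b,-a]_{\rho^{\vee}}}(\Theta(\mathfrak m))\big)\neq\infty$ by the convention in (a); iff $\mathcal D^{\mathrm{Lang},\mathrm L}_{\Delta}(\mathfrak m)\neq\infty$ by the definition of $\mathcal D^{\mathrm{Lang},\mathrm L}_{\Delta}$.

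For part (2), assume $\mathcal D^{\mathrm{Lang},\mathrm L}_{\Delta}(\mathfrak m)\neq\infty$; by part (1) this forces $\mathcal D^{\mathrm{Lang}}_{[-b,-a]_{\rho^{\vee}}}(\Theta(\mathfrak m))\neq\infty$, so Theorem \ref{thm:der:Lang} gives $\mathrm D^{\mathrm R}_{[-b,-a]_{\rho^{\vee}}}(L(\Theta(\mathfrak m)))\cong L\big(\mathcal D^{\mathrm{Lang}}_{[-b,-a]_{\rho^{\vee}}}(\Theta(\mathfrak m))\big)$. Applying $\theta$, using (b) on the left side and $\theta(L(\mathfrak n))=L(\Theta(\mathfrak n))$ on the right side, and then unfolding the definition of $\mathcal D^{\mathrm{Lang},\mathrm L}_{\Delta}$ (together with $\Theta\circ\Theta=\mathrm{id}$), yields
\[
\mathrm D^{\mathrm L}_{\Delta}(L(\mathfrak m))\;\cong\;\theta\big(L(\mathcal D^{\mathrm{Lang}}_{[-b,-a]_{\rho^{\vee}}}(\Theta(\mathfrak m)))\big)\;\cong\;L\big(\Theta(\mathcal D^{\mathrm{Lang}}_{[-b,-a]_{\rho^{\vee}}}(\Theta(\mathfrak m)))\big)\;=\;L\big(\mathcal D^{\mathrm{Lang},\mathrm L}_{\Delta}(\mathfrak m)\big),
\]
which is the claim.

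I do not expect a genuine obstacle here: the argument is entirely formal once Section \ref{sss gk invol} is in place. The only points requiring care are bookkeeping — fixing the convention $\Theta(\infty)=\infty$, using $\Theta\circ\Theta=\mathrm{id}$ so that the double application of $\Theta$ in the definition of $\mathcal D^{\mathrm{Lang},\mathrm L}$ matches the transfer formula for $\theta$, and being explicit that Theorems \ref{thm:zero_der_Lang} and \ref{thm:der:Lang} are invoked for the cuspidal line $\rho^{\vee}$ rather than $\rho$.
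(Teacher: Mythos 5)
Your argument is correct and is essentially the paper's own: the paper deduces this theorem formally from Theorems \ref{thm:zero_der_Lang} and \ref{thm:der:Lang} applied along the $\rho^{\vee}$-line, transported through the Gelfand--Kazhdan involution identities of Section \ref{sss gk invol}, exactly as you do. Your added bookkeeping (the convention $\Theta(\infty)=\infty$, $\Theta\circ\Theta=\mathrm{id}$, and faithfulness of $\theta$) just makes explicit what the paper leaves implicit.
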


%\begin{remark}
%   An algorithm for the left derivative in Langlands classification can be easily written as a mirror symmetry to Algorithm \ref{alg:der:Lang}, we omit to describe it here. 
%\end{remark}
%%%%%%%%%%%%%%%%%%%%%%In Zelevinsky classification %%%%%%%%%%%%%%%%%%%%%%%%%%%%%%

\section{Derivatives in Zelevinsky classification}\label{sec:der_Zel}
In this section, we present an algorithm (refer to Algorithm \ref{alg:der_Zel}) for computing the $\mathrm{St}$-derivatives of irreducible representations of $\mathrm{GL}_n(F)$ in the Zelevinsky classification. Similar to the proof of the $\mathrm{St}$-derivative in the Langlands classification, we could have offered an inductive proof, where the $\rho$-derivative in the Zelevinsky classification and several results akin to Lemmas \ref{lem:D[ab]=D[a+1,b]_D[a]}, \ref{lem:comm_D[ab]_D[a]} and \ref{lem:comm_D_ab:D_a+1} would be required. Here, we avoid that approach and use the M\oe glin-Waldspurger (MW) algorithm for computing derivatives. By applying this algorithm, we can derive Corollary \ref{prop:derivative}, and then combine it with a reduction in Section \ref{ss reduction to max cus} to prove our main algorithm (refer to Theorem \ref{thm alg der zel}).

\subsection{MW algorithm for derivatives}
For $\mathfrak{m} \in \mathrm{Mult}_\rho$,  define the multisegment $\mathcal{D}^\mathrm{MW}(\mathfrak{m})$ associated to $\mathfrak{m}$ in the following way: let $b$ be the largest integer such that $\mathfrak{m}\langle b \rangle \neq \emptyset$ that means $\nu^b\rho$ is the maximal cuspidal support of $\mathfrak m$. Then, we choose the shortest segment $\Delta_0$ in $\mathfrak{m}\langle b \rangle$. Recursively for $1 \leq s \leq k$, we choose the shortest segment $\Delta_{s}$ in $\mathfrak{m}\langle b-s \rangle$ such that $\Delta_{s} \prec \Delta_{s-1}$ and $k$ is the largest possible integer for which such $\Delta_k$ exists. Define the first segment in $\mathfrak{m}^\#$ (produced by MW algorithm applied to $\mathfrak{m}$) as: 
\[\Delta(\mathfrak{m})=\{ \nu^{b-k}\rho, \nu^{b-k+1}\rho,...,\nu^b \rho\}=[b-k, b]_\rho,\] and the reduced multisegment by
\[\mathcal{D}^\mathrm{MW}(\mathfrak{m})=\mathfrak{m}- \sum\limits_{i=0}^k \Delta_i +\sum\limits_{i=0}^k \Delta_i^-.\]
By the MW algorithm in \cite{MW}, we get 
\begin{equation}\label{eq:MW}
	\mathfrak{m}^\#= \Delta(\mathfrak{m}) + \left(\mathcal{D}^\mathrm{MW}(\mathfrak{m})\right)^\#.
\end{equation}
We say that $\Delta_0, \Delta_1, \ldots, \Delta_k$ are the ordered segments participating in the MW algorithm for $\mathfrak m$.  For any $a \leq c$, define $\varepsilon_{[a,c]_{\rho}}^{\mathrm{MW}}(\mathfrak m)$ to be the multiplicity of the segment $[a,c]_{\rho}$ in $\mathfrak m^{\#}$. It can be shown that $\varepsilon_{[a,c]_{\rho}}^{\mathrm{MW}}(\mathfrak m)=\varepsilon^\mathrm{R}_{[a,c]_{\rho}}(Z(\mathfrak m))$, which explains the notion of $\varepsilon^{\mathrm{MW}}_{[a,c]_{\rho}}$.

%We may denote the multisegment $\mathcal{D}^\mathrm{MW}(\mathfrak{m})$ by $\mathcal{D}^\mathrm{MW}_{\Delta(\mathrm{m})}(\mathfrak{m})$.

\begin{proposition}\label{prop:MW_der} \cite{MW}
    Let $\mathfrak{m} \in \mathrm{Mult}_\rho$ and $\Delta(\mathfrak{m})$ be the first segment produced in the MW algorithm for $\mathfrak m$. Then, we have \[\mathrm{D}^\mathrm{R}_{\Delta(\mathfrak{m})}(Z (\mathfrak{m})) \cong Z \left( \mathcal{D}^\mathrm{MW}(\mathfrak{m}) \right).\]  
\end{proposition}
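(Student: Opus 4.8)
The plan is to appeal directly to the Mœglin–Waldspurger algorithm and the Zelevinsky involution, converting the statement about a $Z$-representation into one about an $L$-representation where the first-segment combinatorics of the MW algorithm become visible. First I would recall that by the defining property of the MW algorithm (equation \eqref{eq:MW}), applied to $\mathfrak m$, we have $\mathfrak m^{\#}=\Delta(\mathfrak m)+\bigl(\mathcal D^{\mathrm{MW}}(\mathfrak m)\bigr)^{\#}$, and hence in the Langlands classification $Z(\mathfrak m)\cong L(\mathfrak m^{\#})=L(\Delta(\mathfrak m)+(\mathcal D^{\mathrm{MW}}(\mathfrak m))^{\#})$. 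Writing $\Delta(\mathfrak m)=[b-k,b]_\rho$, the key point is that $\Delta(\mathfrak m)$ is, by construction, a segment whose starting point $\nu^{b-k}\rho$ is minimal among all segments of $\mathfrak m^{\#}$ starting at that point in the relevant sense, and whose endpoint $\nu^b\rho$ is maximal in the cuspidal support; in particular $L(\mathfrak m^{\#})$ admits a standard-module structure in which $\mathrm{St}(\Delta(\mathfrak m))$ appears as a distinguished tensor factor on the appropriate side.

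The heart of the argument is then to identify $\mathrm D^{\mathrm R}_{\Delta(\mathfrak m)}(Z(\mathfrak m))$. I would compute $\mathrm{Jac}_{N_\ell}(Z(\mathfrak m))$, where $\ell=\ell_{rel}(\Delta(\mathfrak m))\cdot\dim\rho$, using the geometric lemma (Bernstein–Zelevinsky filtration) applied to the parabolic induction $\zeta(\mathfrak m)=\langle\Delta_1\rangle\times\cdots\times\langle\Delta_r\rangle$ realizing $Z(\mathfrak m)$ as a submodule. The combinatorial content of the MW construction — choosing the shortest $\Delta_0\in\mathfrak m\langle b\rangle$, then recursively the shortest $\Delta_s\in\mathfrak m\langle b-s\rangle$ with $\Delta_s\prec\Delta_{s-1}$ — is precisely designed so that the segments $\Delta_0,\dots,\Delta_k$ are the ones from which one unit of cuspidal support $\nu^{b-s}\rho$ can be ``peeled off'' coherently from the right, producing $\mathrm{St}(\Delta(\mathfrak m))$ as the tensor factor and $Z(\mathcal D^{\mathrm{MW}}(\mathfrak m))=Z(\mathfrak m-\sum\Delta_i+\sum\Delta_i^-)$ as the complementary factor. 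Concretely, I would show $Z(\mathcal D^{\mathrm{MW}}(\mathfrak m))\boxtimes\mathrm{St}(\Delta(\mathfrak m))\hookrightarrow\mathrm{Jac}_{N_\ell}(Z(\mathfrak m))$ by exhibiting the relevant sub-quotient in the geometric-lemma filtration and checking that all other contributions either vanish or cannot contain $\mathrm{St}(\Delta(\mathfrak m))$ on the right because $\nu^b\rho$ is the maximal cuspidal support (so $\mathrm{St}(\Delta(\mathfrak m))$, whose cosocle ends at $\nu^b\rho$, forces a unique way of splitting off that support). Alternatively — and this is probably cleaner — one can transport everything through the Zelevinsky involution $\imath$ and the known formula $\imath(\mathrm D^{\mathrm R}_\Delta(\pi))$ relating derivatives of $Z(\mathfrak m)$ to left derivatives of $L(\mathfrak m)$, reducing to a statement about $L(\mathfrak m^{\#})$ where $\Delta(\mathfrak m)$ is literally the first factor of the standard module $\lambda(\mathfrak m^{\#})$; then $\mathrm D^{\mathrm L}_{\Delta(\mathfrak m)}(L(\mathfrak m^{\#}))\cong L(\mathfrak m^{\#}-\Delta(\mathfrak m))=L((\mathcal D^{\mathrm{MW}}(\mathfrak m))^{\#})\cong Z(\mathcal D^{\mathrm{MW}}(\mathfrak m))$ follows from a direct Jacquet-module computation on the standard module together with the irreducibility of the relevant socle.

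The main obstacle I anticipate is the \emph{multiplicity-one / uniqueness} step: one must verify that $\mathrm{St}(\Delta(\mathfrak m))$ appears with the right (minimal, equal to one) behavior in $\mathrm{Jac}_{N_\ell}(Z(\mathfrak m))$ so that the derivative in the \emph{socle} sense (as defined in the introduction, $\tau\boxtimes\sigma\hookrightarrow\mathrm{Jac}_{N_\ell}(\pi)$) genuinely equals $Z(\mathcal D^{\mathrm{MW}}(\mathfrak m))$, rather than merely appearing as a subquotient of the semisimplification. This is exactly where the careful ``shortest segment'' choices in the MW recursion matter, and where one invokes the multiplicity-one results of \cite{MW} (and \cite{Jan, Min}); in fact the cleanest route is simply to cite that Proposition~\ref{prop:MW_der} is a reformulation of the original Mœglin–Waldspurger computation in \cite{MW}, since the equality $Z(\mathfrak m)\cong L(\mathfrak m^{\#})$ together with their inductive description of $\mathfrak m^{\#}$ via $\Delta(\mathfrak m)$ and $\mathcal D^{\mathrm{MW}}(\mathfrak m)$ is, after unwinding, the assertion that peeling $\mathrm{St}(\Delta(\mathfrak m))$ off $Z(\mathfrak m)$ yields $Z(\mathcal D^{\mathrm{MW}}(\mathfrak m))$. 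I would therefore present the proof as: (1) recall $Z(\mathfrak m)\cong L(\mathfrak m^\#)$; (2) identify $\Delta(\mathfrak m)$ with the distinguished segment in $\mathfrak m^\#$; (3) deduce the derivative formula from the MW inductive structure and a standard Jacquet-module / second-adjointness argument, citing \cite{MW} for the multiplicity-one input.
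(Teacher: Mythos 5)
Your starting point coincides with the paper's: write $Z(\mathfrak m)\cong L(\mathfrak m^{\#})$ and use \eqref{eq:MW} to split $\mathfrak m^{\#}=\Delta(\mathfrak m)+(\mathcal D^{\mathrm{MW}}(\mathfrak m))^{\#}$, so that $\mathrm{St}(\Delta(\mathfrak m))$ appears as a distinguished factor of the standard module. But neither of your two routes closes the argument, and the paper's own proof is shorter than both. The paper embeds $L(\mathfrak m^{\#})$ into $\lambda(\mathfrak m^{\#})=\lambda\bigl((\mathcal D^{\mathrm{MW}}(\mathfrak m))^{\#}\bigr)\times \mathrm{St}(\Delta(\mathfrak m))$ (the segment $\Delta(\mathfrak m)$, having maximal end, may be placed last in the Langlands ordering) and then uses only the uniqueness of the irreducible submodule of a standard module: since $\mathrm{soc}\bigl(\lambda((\mathcal D^{\mathrm{MW}}(\mathfrak m))^{\#})\bigr)\cong Z(\mathcal D^{\mathrm{MW}}(\mathfrak m))$, the embedding factors through $Z(\mathfrak m)\hookrightarrow Z(\mathcal D^{\mathrm{MW}}(\mathfrak m))\times \mathrm{St}(\Delta(\mathfrak m))$. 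By definition of the integral this says $\mathrm I^{\mathrm R}_{\Delta(\mathfrak m)}\bigl(Z(\mathcal D^{\mathrm{MW}}(\mathfrak m))\bigr)\cong Z(\mathfrak m)$, and applying $\mathrm D^{\mathrm R}_{\Delta(\mathfrak m)}$ together with $\mathrm D^{\mathrm R}_{\Delta}\circ\mathrm I^{\mathrm R}_{\Delta}\cong \mathrm{id}$ (from \cite{LM19}) finishes. No geometric-lemma computation of $\mathrm{Jac}_{N_\ell}(Z(\mathfrak m))$ and no multiplicity-one analysis is needed; the uniqueness issue you flag is bypassed entirely by working with the integral rather than attacking the derivative directly.

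Two concrete problems with your write-up. First, the route you call cleaner --- transporting through the Zelevinsky involution and computing $\mathrm D^{\mathrm L}_{\Delta(\mathfrak m)}(L(\mathfrak m^{\#}))$ --- is off target: $Z(\mathfrak m)$ and $L(\mathfrak m^{\#})$ are literally the same representation, so no duality transport is needed or available, and the involution does not convert the right $\mathrm{St}(\Delta)$-derivative of $Z(\mathfrak m)$ into a left $\mathrm{St}(\Delta)$-derivative of $L(\mathfrak m)$; it exchanges $\mathrm{St}(\Delta)$ with $\langle\Delta\rangle$, so it would relate to a $\langle\Delta\rangle$-derivative, a different functor. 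Moreover, in the paper's convention $\Delta(\mathfrak m)$ sits at the right end of $\lambda(\mathfrak m^{\#})$, so it is the right derivative that is relevant, not the left one. Second, your main route (geometric lemma plus ``check all other contributions vanish'') is exactly the hard part and is left unexecuted, and the fallback of simply citing \cite{MW} does not prove the statement in the form needed here: \cite{MW} establishes the algorithm computing $\mathfrak m\mapsto\mathfrak m^{\#}$, and the translation of that combinatorial statement into the socle-level derivative statement $\mathrm D^{\mathrm R}_{\Delta(\mathfrak m)}(Z(\mathfrak m))\cong Z(\mathcal D^{\mathrm{MW}}(\mathfrak m))$ is precisely what the short integral argument above supplies.
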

\begin{proof}
%By the definition of $\mathrm{D}^\mathrm{R}_{\Delta(\mathfrak m)}$ and Frobenius reciprocity, 
%\begin{align} \label{eqn derivative 1}
%Z(\mathfrak m) \hookrightarrow \mathrm{D}^{\mathrm R}_{\Delta(\mathfrak m)}(Z(\mathfrak m)) \times \mathrm{St}(\Delta(\mathfrak m)) .
%\end{align}

By Langlands classification and discussions in Section \ref{ss zel invol},
\[ Z(\mathfrak m) \cong L(\mathfrak m^{\#}) \hookrightarrow \lambda(\mathfrak m^{\#})=\lambda(\mathcal D^{\mathrm{MW}}(\mathfrak m)^{\#})\times \mathrm{St}(\Delta(\mathfrak m)) .
\]
As the submodule of $\lambda(\mathcal D^{\mathrm{MW}}(\mathfrak m)^{\#}) \times \mathrm{St}(\Delta(\mathfrak{m}))$ is unique and the submodule of $\lambda(\mathcal D^{\mathrm{MW}}(\mathfrak m^{\#}))$ is isomorphic to $L(\mathcal{D}^{\mathrm{MW}}(\mathfrak m)^{\#})\cong Z(\mathcal D^{\mathrm{MW}}(\mathfrak m))$, the above map factors through the map
\begin{align} \label{eqn mw deri}  Z(\mathfrak m) \hookrightarrow Z(\mathcal D^{\mathrm{MW}}(\mathfrak m)) \times \mathrm{St}(\Delta(\mathfrak m)) .
\end{align}
Now (\ref{eqn mw deri}) gives that $\mathrm I^{\mathrm R}_{\Delta(\mathfrak m)}(Z(\mathcal D^{\mathrm{MW}}(\mathfrak m))) \cong Z(\mathfrak m)$ and so applying $\mathrm D_{\Delta(\mathfrak m)}^{\mathrm R}$ on both sides, we obtain the proposition.
\end{proof}
%{\color{blue} (Eq. (19) is true when the derivative is non-zero, that means that we are assuming the derivative is non-zero, and giving the proof but actually the proof should imply that this derivative is always non-zero)}

%Let $\mathfrak{n}, \mathfrak{n}^\prime \in \mathrm{Mult}$ with $\mathfrak{n} \nprec \mathfrak{n}^\prime$. Then, we have (see \cite[Prop. 2.5]{LM16}, \& \cite[Def. 2]{LM_inven} for more details), 
%\begin{equation}\label{Eq:LM}
%Z(\mathfrak{n} + \mathfrak{n}^\prime)= \mathrm{soc} \left( Z(\mathfrak{n}) \times Z(\mathfrak{n}^\prime) \right),  \text{ and } L(\mathfrak{n} + \mathfrak{n}^\prime)= \mathrm{soc} \left( L(\mathfrak{n}^\prime) \times L(\mathfrak{n}) \right).    
%\end{equation}
%The M\oe glin-Waldspurger algorithm gives $\mathfrak{m}^\# = \mathcal{D}^\mathrm{MW}(\mathfrak{m})^\# + \Delta(\mathfrak{m}).$ Observe that $\{\Delta(\mathfrak{m})\} \nprec \mathcal{D}^\mathrm{MW}(\mathfrak{m})^\#$. Therefore, applying (\ref{Eq:LM}), we get
%\begin{align*}
% Z(m)=L(\mathfrak{m}^\#) &= \mathrm{soc} \left( L\left(\mathcal{D}^\mathrm{MW}(\mathfrak{m})^\#\right) \times L(\Delta(\mathfrak{m})) \right)\\
% &= \mathrm{soc} \left( Z\left(\mathcal{D}^\mathrm{MW}(\mathfrak{m})\right) \times \mathrm{St}(\Delta(\mathfrak{m})) \right).
%\end{align*}
%Using the Frobenius reciprocity theorem and \cite[Proposition 2.1]{Cha_csq}, we have \[Z\left(\mathcal{D}^\mathrm{MW}(\mathfrak{m})\right) \otimes \mathrm{St}(\Delta(\mathfrak{m})) = \mathrm{soc} \left(\mathrm{Jac}_{N_{\ell_a(\Delta(\mathfrak{m}))}}\left(Z(\mathfrak{m})\right) \right).\]

\begin{example}\label{ex:MW}
Let $\mathfrak{m}=\left\{ [0,2]_\rho, [2,4]_\rho, [2,5]_\rho, [3,5]_\rho, [4,6]_\rho \right\}$. Then, $\Delta(\mathfrak{m})=[4,6]_\rho$ and the multisegment  $\mathcal{D}^\mathrm{MW}(\mathfrak{m})=\left\{ [0,2]_\rho, [2,3]_\rho, [2,5]_\rho, [3,4]_\rho, [4,5]_\rho \right\}$. Therefore, we have $\mathrm{D}^\mathrm{R}_{[4,6]_{\rho}}(Z(\mathfrak m) )=Z\left(\left\{ [0,2]_\rho, [2,3]_\rho, [2,5]_\rho, [3,4]_\rho, [4,5]_\rho \right\}\right)$.\qed
\end{example}
%\begin{lemma}\label{lem:der_not=zero=>} 
%Let $\mathfrak{m} \in \mathrm{Mult}_\rho$ and $\Delta \in \mathrm{Seg}_\rho$. Suppose $e(\Delta) \geq e\left(\Delta(\mathfrak{m})\right)$. Then,  \[\mathrm{D}^\mathrm{R}_{\Delta}(Z(\mathfrak{m})) \neq 0 \Longrightarrow \Delta \in \mathfrak{m}^\#.\]
%\end{lemma}
%\begin{proof}
%Assume $\mathrm{D}^\mathrm{R}_{\Delta}(Z(\mathfrak{m}))=Z(\mathfrak{n})$ for some $\mathfrak{n} \in \mathrm{Mult}_\rho$. By definition of integral, \[Z(\mathfrak{m}) =\mathrm{I}^\mathrm{R}_{\Delta} \circ \mathrm{D}^\mathrm{R}_{\Delta}(Z(\mathfrak{m})) =\mathrm{soc}\left(Z(\mathfrak{n}) \times \mathrm{St}(\Delta) \right).\] Then, $L(\mathfrak{m}^\#) = \mathrm{soc}\left(L(\mathfrak{n}^\#) \times L(\Delta) \right).$ As the maximal cuspidal support of $\mathfrak{n}$ is less than or equal to $e(\Delta)$, it follows that $\Delta \nprec \Delta^\prime$ for any $\Delta^\prime \in \mathfrak{n}^\#$. Therefore, we conclude that
% \[L(\mathfrak{m}^\#) = \mathrm{soc}\left(L(\mathfrak{n}^\#) \times L(\Delta) \right)=L\left(\mathfrak{n}^\# + \Delta \right), \text{ and so, } \Delta \in \mathfrak{m}^\#.\]
%\end{proof}

\subsection{Algorithm for derivatives}
%Let $\mathfrak{m}$ be a multisegment and $\Delta$ be a segment. We associate a multisegment, denoted as $\mathcal{D}^\mathrm{Zel}_{\Delta}(\mathfrak{m})$ to the pair $(\mathfrak{m},\Delta)$ by the following Algorithm \ref{alg:der_Zel} such that the right derivatives $\mathrm{D}^\mathrm{R}_\Delta(Z(\mathfrak{m}))$ of the irreducible representation $Z(\mathfrak{m})$ is given by $Z \left(\mathcal{D}^\mathrm{Zel}_{\Delta}(\mathfrak{m}) \right)$. 

%The algorithm for the left derivative in the Zelevinsky classification will be a mirror symmetry to this right version.

\begin{algorithm}\label{alg:der_Zel}
Let $\mathfrak{m} \in \mathrm{Mult}_\rho$ and $\Delta=[a,b]_\rho \in \mathrm{Seg}_\rho$. Set $\mathfrak{m}_0=\mathfrak{m}$ and perform the following steps:

Step 1. (Upward sequence of maximal linked segments): Define the removable upward sequence of maximal linked segments in neighbors on $\mathfrak{m}_0$ ranging from $a-1$ to $b$ as follows: start with the longest segment ${\Delta}^{a-1}_1$ (if exists) in $\mathfrak{m}_0\left\langle a-1 \right \rangle$. Recursively for $a \leq i \leq b$, we choose the longest segment ${\Delta}^{i}_1$ (if exists) in $\mathfrak{m}_0\left\langle i \right \rangle$ such that ${\Delta}^{i-1}_1 \prec {\Delta}^{i}_1$. Then the sequence ${\Delta}^{a-1}_1 \prec {\Delta}^{a}_1 \prec \cdots \prec {\Delta}^{b}_1$ defines an upward sequence of maximal linked segments in neighbors on $\mathfrak{m}_0$ ranging from $a-1$ to $b$. 

Step 2. (Remove) Replace $\mathfrak{m}_0$ by $\mathfrak{m}_1$ defined by 
\[\mathfrak{m}_1= \mathfrak{m}-\sum\limits_{i=a-1}^b {\Delta}^{i}_1 .\]

Step 3. (Repeat Steps 1 and 2): Again find \big(if it exists, say ${\Delta}^{a-1}_2 \prec {\Delta}^{a}_2 \prec \cdots \prec {\Delta}^{b}_2$\big) the upward sequence of maximal linked segments in neighbors on $\mathfrak{m}_1$ ranging from $a-1$ to $b$, and remove it to get the multisegment    $\mathfrak{m}_2= \mathfrak{m}_1-\sum\limits_{i=a-1}^b {\Delta}^{i}_2 .$ Repeat this removal process until it terminates after a finite number of times, say $k$ times. 

Step 4. (Final selection): If $\mathfrak{m}_k\left\langle b \right \rangle \neq \emptyset$, choose the shortest length segment say $\widetilde{\Delta}_{b} \in \mathfrak{m}_k\left\langle b \right \rangle$. Otherwise, we set $\widetilde{\Delta}_{b} = \emptyset$ the void segment. Recursively for $b-1 \geq i \geq a$, we choose the shortest segment $\widetilde{\Delta}_{i} \in \mathfrak{m}_k\left\langle i \right \rangle$ (if exists) such that $\widetilde{\Delta}_{i} \prec \widetilde{\Delta}_{i+1}$. Otherwise, we set $\widetilde{\Delta}_{i}=\emptyset$. 

Step 5. (Truncation): If $\widetilde{\Delta}_{i}\neq \emptyset$ for all $a \leq i \leq b$, we say that a downward sequence of minimal linked segments in neighbors on $\mathfrak{m}_k$ ranging from $b$ to $a$ exists and we define the right derivative multisegment by 
\[\mathcal{D}^\mathrm{Zel}_{[a,b]_\rho}(\mathfrak{m}) := \mathfrak{m} - \sum\limits_{i=a}^{b} \widetilde{\Delta}_{i} + \sum\limits_{i=a}^{b} (\widetilde{\Delta}_{i})^-.\]

Step 5'. If $\widetilde{\Delta}_{i}= \emptyset$ for some $a \leq i \leq b$, we set $\mathcal{D}^\mathrm{Zel}_{[a,b]_\rho}(\mathfrak{m})=\infty.$
\end{algorithm}

\begin{example}
(i) Let $\mathfrak{m}=\left\{ [0,4]_\rho, [2,4]_\rho, [2,5]_\rho, [2,5]_\rho, [3,5]_\rho, [4,5]_\rho \right\}$ and $\Delta=[5,5]_\rho$. Then, $[0,4]_\rho \prec [2,5]_\rho$ (resp. $[2,4]_\rho \prec [3,5]_\rho$) is the removable upward sequence of maximal linked segments on $\mathfrak{m}$ (resp. $\mathfrak{m}_1$) ranging from $4$ to $5$, where $\mathfrak{m}_1=\mathfrak{m}-[0,4]_\rho - [2,5]_\rho$, $\mathfrak{m}_2=\mathfrak{m}_1-[2,4]_\rho - [3,5]_\rho$ and there is no such sequence on $\mathfrak{m}_2$ as $\mathfrak{m}_2\left\langle 4 \right\rangle = \emptyset$. Since $[4,5]_\rho$ is the shortest segment in $\mathfrak{m}_2\left\langle 5 \right\rangle=\left\{[2,5]_\rho,[4,5]_\rho \right\}$, we have $\mathcal{D}^\mathrm{Zel}_{[5]_\rho}(\mathfrak{m})=\mathfrak{m}-[4,5]_\rho + [4]_\rho=\left\{ [0,4]_\rho, [2,4]_\rho, [2,5]_\rho, [2,5]_\rho, [3,5]_\rho, [4]_\rho \right\}.$

(ii) Let $\mathfrak{m}=\left\{ [0,4]_\rho,[3,4]_\rho, [2,5]_\rho, [3,5]_\rho, [4,6]_\rho \right\}$ and $\Delta=[4,6]_\rho$. There is no segment ending with $\nu^3 \rho$ to produce a removable upward sequence of maximal linked segments ranging from $3$ to $6$. Here, $\{[4,6]_\rho,[3,5]_\rho,[0,4]_\rho\}$ is the downward sequence of minimal linked segments in the neighbors of $\mathfrak{m}$ ranging from $6$ to $4$. Therefore, 
\begin{align*}
  \mathcal{D}^\mathrm{Zel}_{[4,6]_\rho}(\mathfrak{m})&=\mathfrak{m}- [4,6]_\rho-[3,5]_\rho-[0,4]_\rho + [4,5]_\rho+[3,4]_\rho+[0,3]_\rho \\
  &=\left\{ [0,3]_\rho,[3,4]_\rho, [2,5]_\rho, [3,4]_\rho, [4,5]_\rho \right\}.  
\end{align*}

(iii) Let $\mathfrak{m}=\left\{ [0,4]_\rho, [2,5]_\rho, [3,5]_\rho, [4,6]_\rho \right\}$ and $\Delta=[5,6]_\rho$. Here, $\left\{[0,4]_\rho, [2,5]_\rho,[4,6]_\rho\right\}$ is the only upward sequence of maximal linked segments in $\mathfrak{m}$ ranging from $4$ to $6$. But $\mathfrak{m}-\left\{[0,4]_\rho, [2,5]_\rho,[4,6]_\rho\right\}$ does not have any segment ending with $\nu^6 \rho$ to get a complete downward sequence of minimal linked segments ranging from $6$ to $5$. Therefore, $\mathcal{D}^\mathrm{Zel}_{[5,6]_\rho}(\mathfrak{m})=\infty$. \qed
\end{example}

%\begin{proposition}\label{prop:der_Zel=MW}
% Let $\mathfrak{m} \in \mathrm{Mult}_\rho$ and $\Delta(\mathfrak{m})$ be the first segment in $\mathfrak{m}^\#$. Then,
% \[\mathrm{D}^\mathrm{R}_{\Delta(\mathfrak{m})}(Z(\mathfrak{m}))\cong Z\left(\mathcal{D}^\mathrm{Zel}_{\Delta(\mathfrak{m})}(\mathfrak{m})\right).\]   
%\end{proposition}
%\begin{proof}
%    Follows from Proposition \ref{prop:MW_der} and the fact that $\mathcal{D}^\mathrm{Zel}_{\Delta(\mathfrak{m})}(\mathfrak{m})=\mathcal{D}^\mathrm{MW}(\mathfrak{m}).$
%\end{proof}

\subsection{Combinatorial structure from multiple MW algorithms}
For convenience, let $\Delta_1, \ldots, \Delta_r  \in \mathrm{Seg}_\rho$ be segments such that $e(\Delta_1)=\ldots =e(\Delta_r)$. We say that $\Delta_1, \ldots, \Delta_r$ are in  increasing order if $\Delta_1\subseteq \ldots \subseteq \Delta_r$, equivalently $s(\Delta_1)\geq \ldots \geq s(\Delta_r)$.

\begin{definition} \label{def minimal linked}
\begin{enumerate}
\item[(i)] Let $\mathfrak{n}_1, \mathfrak{n}_2 \in \mathrm{Mult}_{\rho}$. The multisegments $\mathfrak{n}_1 $ and $\mathfrak{n}_2$ are said to be linked by mapping if there exists an injective map $f:\mathfrak{n}_1 \rightarrow \mathfrak{n}_2$ such that  $\Delta \prec f(\Delta)$ for all $\Delta \in \mathfrak{n}_1$. 
\item[(ii)] Fix a multisegment $\mathfrak m \in \mathrm{Mult}_{\rho}$ and an integer $k$. Let $\mathfrak n_1$ be a submultisegment of $\mathfrak m\langle k-1 \rangle$ and let $\mathfrak n_2$ be a submultisegment of $\mathfrak m\langle k \rangle$. We say that $\mathfrak n_1$ and $\mathfrak n_2$ are minimally linked (in $\mathfrak m$) if 
\begin{enumerate}
    \item[(a)] $\mathfrak n_1$ and $\mathfrak n_2$ are linked by mapping; and
    \item[(b)] there does not exist $\mathfrak n_1^{\prime} { \subseteq \mathfrak m\langle k-1 \rangle}$ such that $|\mathfrak n_1^{\prime}|>|\mathfrak n_1|$ and $\mathfrak n_1^{\prime}$ and $\mathfrak n_2$ are linked by mapping;
    \item[(c)] there does not exist $\mathfrak n_1^{\prime} { \subseteq \mathfrak m\langle k-1 \rangle}$ such that $|\mathfrak n_1^{\prime}|=|\mathfrak n_1|$, $\mathfrak n_1' < \mathfrak n_1$, and $\mathfrak n_1^{\prime}$ and $\mathfrak n_2$ are linked by mapping. Here, we write both the multisegments $\mathfrak n_1=\left\{ \Delta_1, \ldots, \Delta_r\right\}$ and $\mathfrak n_1'=\left\{ \Delta_1', \ldots, \Delta_r'\right\}$ in the increasing order, and $\mathfrak n_1'< \mathfrak n_1$ means $s(\Delta_i)\leq s(\Delta_i')$ for all $i$ and at least one inequality is strict.
\end{enumerate}
\end{enumerate}

The minimality refers to the condition (c) in Definition \ref{def minimal linked} while in certain sense, we also require the number of segments to be the largest in condition (b). It is straightforward to observe that in Definition \ref{def minimal linked}(ii), for a fixed $\mathfrak n_2 \subseteq \mathfrak m\langle k \rangle$, there is at most one submultisegment $\mathfrak n_1 \subseteq \mathfrak m\langle k-1 \rangle$ minimally linked to $\mathfrak n_2$. 
\end{definition}

\begin{remark}
One can find the above defined minimally linkedness is similar to the notion of the best matching function as introduced in \cite[Section 5.3]{LM16}.  To observe that we consider $Y= \mathfrak n_2\subset \mathfrak m\langle k \rangle$, $X =  \mathfrak m\langle k-1 \rangle$, and for $\Delta \in X, \Delta' \in Y$, define the relation $\rightsquigarrow$ by $\Delta' \rightsquigarrow \Delta$ if and only if $\Delta \prec \Delta'$. On both $X$ and $Y$, we consider the standard ordering $\Delta_1 \leq \Delta_2$ whenever $s(\Delta_1) \leq s(\Delta_2)$ and $\Delta_1, \Delta_1 \in X$ (or in $Y$). Then, $\rightsquigarrow$ is traversable (see \cite{LM16} for definition) and the domain of the best $\rightsquigarrow$-matching function $f$ is $\mathfrak{n}_1$, which is minimally linked to $Y$.
\end{remark}

%Let $\mathfrak{m} \in \mathrm{Mult}_\rho$ and $\Delta \in \mathrm{Seg}_\rho$. We denote $\varepsilon^\mathrm{Zel}_\Delta(\mathfrak{m})$ for the highest integer $t$ such that  $\left(\mathcal{D}^\mathrm{Zel}_{\Delta}\right)^t(\mathfrak{m})\neq \infty$. Let $\Delta(\mathfrak{m})=[a,c]_\rho$ be the first segment in $\mathfrak{m}^\#$ via M\oe glin-Waldspurger algorithm. We denote $\varepsilon^\mathrm{MW}_{[a,c]_\rho}(\mathfrak{m})$ for number of segment $[a,c]_\rho$ in $\mathfrak{m}^\#$.

\begin{lemma} \label{lem structure of segments from MW}
Let $\mathfrak m \in \mathrm{Mult}_{\rho}$. Let $\nu^c\rho$ be the maximal cuspidal support of $\mathfrak m$. Let ${ r} \in \mathbb Z_{>0}$ such that $\nu^c\rho$ is still the maximal cuspidal support of $(\mathcal D^{\mathrm{MW}})^{r-1}(\mathfrak m)$. For $1\leq i\leq r$, let $\Delta_{c, i},\Delta_{c-1, i}, \ldots, \Delta_{a_i, i}$ be all the ordered segments participating in the MW-algorithm for $(\mathcal D^{\mathrm{MW}})^{i-1}(\mathfrak m)$ with $e\left(\Delta_{k, i}\right)=k$. Set $\mathfrak{m}_0=\mathfrak{m}$ and define, recursively,
\[ \mathfrak m_i=\mathfrak m_{i-1}-\Delta_{c, i}-\ldots-\Delta_{a_i ,i} .
\]
Then 
\begin{enumerate}
\item[(i)] The ordered segments participating in the MW algorithm for $\mathfrak m_i$ are precisely the segments $\Delta_{c,i+1}, \ldots, \Delta_{a_{i+1},i+1}$. In particular,
\[  \left\{ \Delta_{c,1}, \ldots, \Delta_{a_1,1}, \ldots,  \Delta_{c,r}, \ldots, \Delta_{a_r,r} \right\} 
\]
is a submultisegment of $\mathfrak m$.
\item[(ii)] Let $[a,c]_\rho$ be the first segment produced in the MW algorithm for $\mathfrak{m}$. For $a \leq k \leq c$, define $MW_k(\mathfrak m)=\left\{ \Delta_{k,1}, \ldots, \Delta_{k, x_k} \right\}$, where $x_k$ is the largest integer such that $\Delta_{k,x_k}$ is defined. Then
\begin{enumerate}
\item[(a)] $x_c \geq \ldots \geq x_a$; 
\item[(b)] $\Delta_{k,1}, \ldots, \Delta_{k,x_k}$ are in the increasing order;
\item[(c)] for $a\leq k \leq c-1$, $MW_k(\mathfrak m)$ and $MW_{k+1}(\mathfrak m)$ are minimally linked in $\mathfrak m$.
\end{enumerate}
\end{enumerate}
\end{lemma}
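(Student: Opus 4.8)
The plan is to prove (i) and (ii) together by induction on $i$, tracking simultaneously how the MW algorithm behaves on the successive multisegments $\mathfrak m_i$. The key structural observation is that the MW algorithm, when applied to $(\mathcal D^{\mathrm{MW}})^{i-1}(\mathfrak m)$, picks out a chain of \emph{shortest} segments $\Delta_{c,i}\succ \Delta_{c-1,i}\succ\cdots\succ\Delta_{a_i,i}$ in decreasing endpoint, and passing from $(\mathcal D^{\mathrm{MW}})^{i-1}(\mathfrak m)$ to $(\mathcal D^{\mathrm{MW}})^{i}(\mathfrak m)$ replaces each $\Delta_{k,i}$ by $\Delta_{k,i}^-$. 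For part (i), I would compare the multiset $\mathfrak m_i$ (obtained by literally \emph{deleting} the participating segments) with the multisegment $(\mathcal D^{\mathrm{MW}})^i(\mathfrak m)$ (obtained by \emph{shortening} them): the segments of $\mathfrak m_i$ ending at $\nu^k\rho$ are exactly those of $(\mathcal D^{\mathrm{MW}})^i(\mathfrak m)\langle k\rangle$ that are \emph{not} of the form $\Delta_{k,i}^-$, i.e.\ the segments that were never touched in the first $i$ rounds. Since the MW algorithm at each stage only looks at the relative-length ordering of segments with each fixed endpoint and the precedence relations between consecutive endpoints, and since the segments $\Delta_{k,i}^-$ that were removed to form $(\mathcal D^{\mathrm{MW}})^i(\mathfrak m)$ are precisely the shortest available ones in each of those rounds, the shortest-segment selection in the MW algorithm for $\mathfrak m_i$ reproduces exactly the shortest-segment selection for $(\mathcal D^{\mathrm{MW}})^i(\mathfrak m)$ restricted to the untouched segments — which is what round $i+1$ of the MW recursion picks. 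This gives the identification of the segments participating in the MW algorithm for $\mathfrak m_i$ with $\Delta_{c,i+1},\ldots,\Delta_{a_{i+1},i+1}$, and the ``submultisegment of $\mathfrak m$'' claim then follows by telescoping $\mathfrak m_i = \mathfrak m_{i-1}-\Delta_{c,i}-\cdots-\Delta_{a_i,i}$.

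For part (ii), the increasing-order statement (b) is the cleanest: within a fixed endpoint $k$, the segment $\Delta_{k,j}$ is, by construction, the shortest segment with endpoint $k$ in $\mathfrak m_{j-1}$ (subject to the precedence constraint from $\Delta_{k+1,j}$), and since $\mathfrak m_{j}$ is obtained from $\mathfrak m_{j-1}$ by deleting $\Delta_{k,j}$ (among others), the next choice $\Delta_{k,j+1}$ is the shortest remaining one, hence $\Delta_{k,j}\subseteq\Delta_{k,j+1}$; one must also check that the precedence constraint from $\Delta_{k+1,j+1}$ does not force a shorter choice, which is where the minimal-linkedness will feed back in. For (a), the monotonicity $x_c\geq\cdots\geq x_a$ reflects that the MW chain in round $j$ reaches down to endpoint $a_j$, and the $a_j$ are nondecreasing in $j$ — if round $j+1$ could extend its chain below the level that round $j$ reached, the shortest-segment/precedence bookkeeping from round $j$ would have allowed round $j$ to extend too; I would phrase this as: $x_k = \#\{\,j : a_j\le k\,\}$ and the sequence $(a_j)$ is nondecreasing. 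The real content is (c): I claim $MW_k(\mathfrak m)$ and $MW_{k+1}(\mathfrak m)$ are minimally linked in $\mathfrak m$. That $MW_k$ and $MW_{k+1}$ are linked by mapping is immediate from $\Delta_{k,j}\prec\Delta_{k+1,j}$ (send $\Delta_{k,j}\mapsto\Delta_{k+1,j}$ for $j\le x_k$, using $x_k\le x_{k+1}$). Maximality of $|MW_k|$ — condition (b) of Definition \ref{def minimal linked}(ii) — says no larger submultisegment of $\mathfrak m\langle k\rangle$ can be injectively mapped with precedence into $MW_{k+1}(\mathfrak m)$; this is exactly the statement that the MW algorithm, which greedily extends the chain downward from endpoint $k+1$ to endpoint $k$ in each round and uses up one element of $\mathfrak m\langle k\rangle$ per round that can continue the chain, runs out precisely when no further element of $\mathfrak m\langle k\rangle$ precedes the appropriate element of $MW_{k+1}$. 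And condition (c), the minimality in the $\succ$-refinement order, says the MW algorithm at each level picks the \emph{shortest} qualifying segment, i.e.\ the one with the largest starting point, which directly forces $s(\Delta_{k,j})$ to be maximal among all valid choices — hence $\mathfrak n_1' < MW_k(\mathfrak m)$ would contradict the greedy shortest-segment rule.

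The main obstacle I anticipate is condition (b) of minimal linkedness (the maximality of $|MW_k(\mathfrak m)|$): one must show that no \emph{clever} choice of which segments of $\mathfrak m\langle k\rangle$ to match, possibly different from the greedy MW choices, can match more of them into $MW_{k+1}(\mathfrak m)$. This is a Hall/matching-type argument: I would set it up by a deletion-and-exchange argument, showing that if some injective precedence-respecting $f:\mathfrak n_1'\to MW_{k+1}(\mathfrak m)$ exists with $|\mathfrak n_1'|>|MW_k(\mathfrak m)|$, then one can reorder the MW rounds to have produced a longer chain at level $k$ — contradicting that $x_k$ was defined as the largest integer for which $\Delta_{k,x_k}$ exists. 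Concretely, an exchange lemma saying ``if $\Delta,\Delta'\in\mathfrak m\langle k\rangle$ with $\Delta\subseteq\Delta'$ and both precede some common $\Delta''\in\mathfrak m\langle k+1\rangle$, then swapping $\Delta'$ for $\Delta$ in a matching preserves injectivity'' will let me assume any competing matching uses the shortest available segments, reducing to the greedy case. The bookkeeping across the $r$ rounds — making sure the segments removed in rounds $1,\ldots,i$ do not retroactively change which segments are ``shortest'' at later rounds — is the routine-but-delicate part, and I would isolate it in a short sublemma about how $\mathcal D^{\mathrm{MW}}$ interacts with the relative-length ordering within each fixed endpoint.
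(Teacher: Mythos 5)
Your plan for part (ii) and the easy comparison step are fine, but the argument for part (i) assumes exactly what has to be proved. The nontrivial content of (i) is that the MW algorithm applied to $(\mathcal D^{\mathrm{MW}})^{i}(\mathfrak m)$ never selects one of the truncated segments $\Delta_{k+1,s}^-$ created in rounds $s\leq i$: these segments lie in $(\mathcal D^{\mathrm{MW}})^{i}(\mathfrak m)\langle k\rangle$ but not in $\mathfrak m_i\langle k\rangle$, having migrated down from endpoint $k+1$ to endpoint $k$. Your justification --- that the segments removed in earlier rounds were the shortest available --- points the wrong way: $\Delta_{k+1,s}^-$ is strictly shorter than its original, so the greedy shortest-segment rule would \emph{prefer} it whenever it satisfies the precedence constraint. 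The phrase ``which is what round $i+1$ of the MW recursion picks'' is therefore an assertion of the lemma, not a consequence of the bookkeeping you defer to a sublemma; and that sublemma, as you describe it (``relative-length ordering within each fixed endpoint''), misses the actual difficulty, which is the cross-endpoint migration of truncated segments, not a reshuffling of lengths within one endpoint. (There is also a small notational slip: the truncated segments appearing at level $k$ are of the form $\Delta_{k+1,p}^-$, not $\Delta_{k,i}^-$.)

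What is needed, and what the paper supplies, is a contradiction argument: take the largest level $k^*$ at which some round $i$ selects a truncated segment $\Delta_{k^*,i}=\Delta_{k^*+1,s^*}^-$ with $s^*<i$. At $k^*=c$ this is impossible because the maximality of $\nu^c\rho$ means no segment of the form $\Delta^-$ ends at $\nu^c\rho$. For $k^*<c$, the relation $\Delta_{k^*+1,s^*}^-\prec\Delta_{k^*+1,i}$ forces $s(\Delta_{k^*+1,i})>s(\Delta_{k^*+1,s^*})$, contradicting the monotonicity $s(\Delta_{k^*+1,1})\geq\ldots\geq s(\Delta_{k^*+1,x_{k^*+1}})$ at level $k^*+1$, i.e.\ part (ii)(b) one level up, which is available there by the choice of $k^*$ and is proved simultaneously by induction together with the shortest-choice rule. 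So (i) and (ii)(b) must be run as a joint induction, with the precedence constraint (not shortness) doing the work of excluding truncated segments. Once this exclusion is in place, your comparison of the MW selections on $\mathfrak m_i$ and on $(\mathcal D^{\mathrm{MW}})^{i}(\mathfrak m)$ goes through, and your greedy/exchange sketches for (ii)(a) and (ii)(c) are consistent with the paper, which itself treats (ii) briefly; but as written your proposal omits the one step that constitutes the proof of (i).
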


\begin{proof}
When $i=1$, it follows from the definition of the MW algorithm. We consider $i \geq 2$. Then, inductively, we have:
\[ (\mathcal{D}^{\mathrm{MW}})^{i-1}(\mathfrak m) = \mathfrak m-\sum_{p=1}^{i-1}\sum_{k=a_p}^{c} \Delta_{k, p}+\sum_{p=1}^{i-1}\sum_{k=a_p}^{c}\Delta_{ k,p}^- .\]

To prove (i), one has to show that for all $k$, $\Delta_{k,i}\neq \Delta_{k+1,s}^-$ for all $1\leq s \leq i-1$. One way to show is an induction on $k$ for both (i) and (ii) together. The argument is straightforward (while not completely immediate) from the minimal choices from the MW algorithm. 

\end{proof}

Lemma \ref{lem structure of segments from MW} with the uniqueness of the minimal linkedness gives a characterization of segments participating in the MW algorithms. Below, we shall use this characterization to show the compatibility with the segments produced in Algorithm \ref{alg:der_Zel} (see Proposition \ref{prop fomrula for multiple mw} in the next section).

\subsection{MW algorithm and removal upward sequences of maximal linked segments} \label{ss prove claim 1 prop der} 

Our goal is to compute $\mathcal D^{\mathrm{Zel}}_{[b,c]_{\rho}}(\mathfrak m)$. For this, we first compute a more involved term $(\mathcal D^{\mathrm{MW}})^r\circ \mathcal D^{\mathrm{Zel}}_{[b,c]_{\rho}}(\mathfrak m)$ and show the term is equal to $(\mathcal D^{\mathrm{MW}})^{r+1}(\mathfrak m)$ in Propostion \ref{prop fomrula for multiple mw} below (see Lemma \ref{lem structure of w minus max} for more notations). We already have a combinatorial description of $(\mathcal D^{\mathrm{MW}})^{r+1}(\mathfrak m)$ in Lemma \ref{lem structure of segments from MW}, and we are going to analyse the combinatorial structure arising for algorithms in $(\mathcal D^{\mathrm{MW}})^r\circ \mathcal D^{\mathrm{Zel}}_{[b,c]_{\rho}}(\mathfrak m)$.

\subsubsection{Compare $\varepsilon^{MW}$ and the number of removal upward sequences of maximal linked segments}

\begin{lemma} \label{lem mw number=no max link zel}
Let $\mathfrak m \in \mathrm{Mult}_{\rho}$ and let $\Delta(\mathfrak m)=[a,c]_{\rho}$ be the first segment produced in the MW algorithm. Then, for $a \leq b \leq c$, the number of removal upward sequences of maximal linked segments in neighbors in $\mathfrak m$ ranging from $b-1$ to $c$ is equal to 
\[  \varepsilon^{\mathrm{MW}}_{[a,c]_{\rho}}(\mathfrak m)+\ldots +\varepsilon^{\mathrm{MW}}_{[b-1,c]_{\rho}}(\mathfrak m) .\]
(If $b=a$, then the number is equal to zero.)
\end{lemma}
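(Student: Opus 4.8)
The plan is to reduce both sides of the asserted equality to the single integer $x_{b-1}:=\bigl\lvert MW_{b-1}(\mathfrak m)\bigr\rvert$ coming from Lemma \ref{lem structure of segments from MW}(ii) (with the convention that $x_{b-1}=0$ when $b-1<a$), and then to induct on $\lvert\mathfrak m\rvert$.

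For the right-hand side, iterate \eqref{eq:MW}: writing $r=\lvert\mathfrak m\langle c\rangle\rvert$ for the multiplicity of the maximal cuspidal support $\nu^{c}\rho$, one gets
\[ \mathfrak m^{\#}=\sum_{i=1}^{r}\Delta\bigl((\mathcal D^{\mathrm{MW}})^{i-1}(\mathfrak m)\bigr)+\mathfrak m_{<c}, \]
where $\mathfrak m_{<c}$ has no segment ending at $\nu^{c}\rho$ and $\Delta\bigl((\mathcal D^{\mathrm{MW}})^{i-1}(\mathfrak m)\bigr)=[a_i,c]_{\rho}$. By Lemma \ref{lem structure of segments from MW}(ii)(a) we have $a_1\le\cdots\le a_r$ and $a_i\le k$ exactly when $i\le x_k$, so
\[ \varepsilon^{\mathrm{MW}}_{[a,c]_{\rho}}(\mathfrak m)+\cdots+\varepsilon^{\mathrm{MW}}_{[b-1,c]_{\rho}}(\mathfrak m)=\bigl\lvert\{\,i:a_i\le b-1\,\}\bigr\rvert=x_{b-1}. \]
Thus it suffices to prove that the removal process of Algorithm \ref{alg:der_Zel}, applied to upward sequences of maximally linked segments in neighbours ranging from $b-1$ to $c$, terminates after exactly $x_{b-1}$ steps.

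I will use a completion criterion: for a multisegment $\mathfrak n$ with maximal cuspidal support $\nu^{c}\rho$, the following are equivalent: (i) $\mathfrak n$ contains segments $\Gamma^{b-1}\prec\Gamma^{b}\prec\cdots\prec\Gamma^{c}$ with $e(\Gamma^{k})=k$; (ii) the upward sequence of maximally linked segments in neighbours ranging from $b-1$ to $c$ constructed as in Step 1 of Algorithm \ref{alg:der_Zel} reaches level $c$; (iii) $s(\Delta(\mathfrak n))\le b-1$, equivalently $x_{b-1}(\mathfrak n)\ge1$. Here (ii)$\Rightarrow$(i) is immediate, (iii)$\Rightarrow$(i) follows by restricting the first MW chain to the levels $b-1,\dots,c$ using Lemma \ref{lem structure of segments from MW}(i), and (i)$\Rightarrow$(ii) and (i)$\Rightarrow$(iii) are each proved by a monotonicity induction along the levels. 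Concretely, for (i)$\Rightarrow$(ii) one shows that the greedily chosen longest segment $\Lambda^{k}\in\mathfrak n\langle k\rangle$ satisfies $s(\Lambda^{k})\le s(\Gamma^{k})$, because $\Lambda^{k-1}\prec\Gamma^{k}$ as soon as $s(\Lambda^{k-1})\le s(\Gamma^{k-1})$, so the construction never stalls before level $c$; for (i)$\Rightarrow$(iii) the symmetric argument with the shortest choices of the MW algorithm gives $s(\Delta_{k,1})\ge s(\Gamma^{k})$, so the first MW chain descends at least to level $b-1$.

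Now induct on $\lvert\mathfrak m\rvert$. If $b=a$ then $x_{b-1}=0$, and by the criterion $\mathfrak m$ contains no chain ranging from $a-1$ to $c$, so no removal upward sequence exists and both sides vanish. If $b>a$ then $x_{b-1}\ge1$, so by the criterion the process removes at least one sequence $\Lambda=(\Lambda^{b-1},\dots,\Lambda^{c})$; put $\mathfrak m'=\mathfrak m-\Lambda^{b-1}-\cdots-\Lambda^{c}$. If $r=1$, then $\mathfrak m'$ loses $\nu^{c}\rho$ from its support, the process stops after this single removal, and $x_{b-1}=1$, so we are done. If $r\ge2$, then $\nu^{c}\rho$ is still the maximal support of $\mathfrak m'$, and applying the inductive hypothesis to $\mathfrak m'$ reduces everything to the identity $x_{b-1}(\mathfrak m')=x_{b-1}(\mathfrak m)-1$. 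This last identity is the main obstacle: it cannot be read off the MW data termwise, because the greedy segment $\Lambda^{b-1}$ need not belong to $MW_{b-1}(\mathfrak m)$ (the greedy sequence uses longest segments, the MW algorithm uses shortest). The plan is to invoke Lemma \ref{lem structure of segments from MW} together with the uniqueness of minimal linkedness and to compare the MW algorithm run on $\mathfrak m$ and on $\mathfrak m'$ level by level downward from $c$: one shows that the shortest choices for $\mathfrak m'$ are obtained from those for $\mathfrak m$ by deleting exactly one segment from each of $MW_{c}(\mathfrak m),\dots,MW_{b-1}(\mathfrak m)$, that the minimal-linkedness conditions of Lemma \ref{lem structure of segments from MW}(ii)(c) survive the deletion, and that in particular no new segment can enter $MW_{b-1}(\mathfrak m')$; the key input is that $\Lambda^{k}$ is the longest segment of $\mathfrak m\langle k\rangle$ reachable upward from $\Lambda^{b-1}$. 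Carrying out this bookkeeping is where the real work lies.
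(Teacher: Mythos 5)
Your reduction of the right-hand side to $x_{b-1}=|MW_{b-1}(\mathfrak m)|$ is fine, and your ``completion criterion'' (equivalence of: existence of one chain $\Gamma^{b-1}\prec\cdots\prec\Gamma^{c}$; the greedy longest-choice sequence reaching level $c$; the first MW chain reaching level $b-1$) is correct and is proved by exactly the kind of monotone comparison one expects. But this only settles the dichotomy ``zero versus at least one'' on each side. The quantitative heart of the lemma is your inductive identity $x_{b-1}(\mathfrak m-\Lambda)=x_{b-1}(\mathfrak m)-1$, where $\Lambda$ is the first removal upward sequence, and for this you only state a plan (``the plan is to invoke \ldots'', ``carrying out this bookkeeping is where the real work lies''). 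That identity is essentially equivalent to the lemma itself, so as written the proposal has a genuine gap precisely at the step that carries all the content. Moreover, the plan as stated is delicate: $\Lambda$ is built from \emph{longest} choices while the MW chains use \emph{shortest} choices, so the deleted segments $\Lambda^{k}$ need not belong to $MW_k(\mathfrak m)$, and the assertion that the MW data of $\mathfrak m-\Lambda$ at levels $b-1,\ldots,c$ is obtained by ``deleting exactly one segment from each $MW_k(\mathfrak m)$'' is not evidently true as literally phrased; controlling this interaction between maximally linked removal sequences and minimally linked MW chains is exactly the nontrivial analysis the paper carries out separately (compare Lemma \ref{lem structure of w minus max} and Section \ref{ss prove **}), so it cannot be waved through.

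For comparison, the paper's proof of Lemma \ref{lem mw number=no max link zel} avoids induction on $|\mathfrak m|$ altogether and proves the two inequalities $r\le r'$ and $r'\le r$ by direct conversions: starting from the MW data $MW_{b-1}(\mathfrak m),\ldots,MW_c(\mathfrak m)$ (all of size $r$ by Lemma \ref{lem structure of segments from MW}), it replaces them level by level by maximally linked submultisegments to manufacture $r$ removal upward sequences; conversely, it takes the $r'$ removal sequences of Algorithm \ref{alg:der_Zel}, replaces the top level by the $r'$ shortest segments in $\mathfrak m\langle c\rangle$, and descends by minimal linkedness, so that the uniqueness of minimally linked submultisegments identifies the result with the MW data and forces $r'\le r$. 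If you want to keep your inductive route, you must actually prove the one-step comparison of MW chains for $\mathfrak m$ and $\mathfrak m-\Lambda$ (or replace it by an argument of the paper's two-sided type); until then the proof is incomplete.
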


\begin{proof}
Let $r_0=\varepsilon^{\mathrm{MW}}_{[a,c]_{\rho}}(\mathfrak m)+\ldots +\varepsilon^{\mathrm{MW}}_{[b-1,c]_{\rho}}(\mathfrak m)-1$. To prove the above lemma, one constructs from a collection, say $\mathfrak p$, of segments in removal upward sequences of maximal linked segments (in neighbors in $\mathfrak m$ ranging from $b-1$ to $c$) to a collection, say $\mathfrak q$, of all segments participating in the MW-algorithm (for $\mathfrak m, \mathcal D^\mathrm{MW}(\mathfrak m), \ldots, (\mathcal D^\mathrm{MW})^{r_0-1}(\mathfrak m)$), and vice versa. For the segments participating in the multiple MW-algorithms, one uses the description in Lemma \ref{lem structure of segments from MW}. 

Such constructions are elementary, and we give an example to illustrate the idea. Let 
\[  \mathfrak m=\left\{ [-4,0], [-2,1], [-3,1], [-1,2], [2,3] , [0,3], [3,4], [1,4]  \right\}
\]
Then, in this example with $a=0$, $b-1=2$ and $c=4$, we have $\varepsilon^{\mathrm{MW}}_{[a,c]_{\rho}}(\mathfrak m)=1$, $r_0=0$,
\[\mathfrak p=\left\{  [-1,2], [0,3], [1,4]\right\}, \text{ and } \mathfrak q=\left\{ [-4,0], [-2,1], [-1,2], [2,3], [3,4]\right\}.\]
Now, one can construct from $\mathfrak p$ to $\mathfrak q$ by first replacing $[1,4]$ with $[3,4]$, then replacing $[0,3]$ by $[2,3]$,  then keeping $[-1,2]$, and finally adding the segments $[-4,0]$ and $[-2,1]$. To construct from $\mathfrak q$ to $\mathfrak p$, one reverses the process.
\end{proof}

\subsubsection{Overlap between segments from MW algorithms and from the removal upward sequences} \label{sss overlap mw and removal}

\begin{lemma} \label{lem structure of w minus max}
Let $\mathfrak m \in \mathrm{Mult}_{\rho}$. Let $[a,c]_{\rho}$ be the first segment produced in the MW algorithm. Suppose, furthermore, the first segment produced in the MW-algorithm $(\mathcal D^{\mathrm{MW}})^r(\mathfrak m)$ is $[b,c]_{\rho}$. Set
\[  r= \varepsilon^{\mathrm{MW}}_{[a,c]_{\rho}}(\mathfrak m)+\ldots +\varepsilon^{\mathrm{MW}}_{[b-1,c]_{\rho}}(\mathfrak m) .
\]
For $i=1,\ldots, r+1$, let $\Delta_{c,i}, \ldots, \Delta_{a_i,i}$  be all the ordered segments participating in the MW algorithm for $(\mathcal D^{\mathrm{MW}})^{i-1}(\mathfrak m)$.
For $b\leq k \leq c$, let $MW_k(\mathfrak m)=\left\{ \Delta_{k,1}, \ldots, \Delta_{k,r+1} \right\}$. Let $\mathfrak r$ be the set of all segments in the removal upward sequences of maximal linked segments in neighbors in $\mathfrak m$ ranging from $b-1$ to $c$.

Define $p_k^*$ to be the least integer such that $\Delta_{k,p_k^*} \notin \mathfrak{r}\langle k\rangle$. (The existence of such integer is guaranteed by $|\mathfrak{r}\langle k \rangle|<|MW_k(\mathfrak m)|$, see Lemma \ref{lem mw number=no max link zel}.) Then the following conditions hold:
\begin{enumerate}
    \item[(a)] $p^*_c\leq \ldots \leq p^*_{b}$;
    \item[(b)] For $b\leq k \leq c$, $\Delta_{k, 1}, \ldots, \Delta_{k,p_k^*-1}$ are in $\mathfrak r\langle k \rangle$, and moreover they are the first $p_k^*-1$ segments in the increasing order of $\mathfrak{r}\langle k\rangle$;
    \item[(c)] For $b\leq k \leq c-1$, $MW_k(\mathfrak m)-\Delta_{k,p_k^*}$ and $MW_{k+1}(\mathfrak m)-\Delta_{k+1,p_{k+1}^*}$ are minimally linked in $\mathfrak m-\Delta_{c,p_c^*}-\ldots -\Delta_{b, p_b^*}$; 
    \item[(d)] $\Delta_{c,p^*_c}$ is the shortest segment in $\mathfrak m\langle c \rangle -\mathfrak{r}\langle c\rangle$, and for $b\leq k \leq c-1$, $\Delta_{k,p^*_k}$ is the shortest segment in $\mathfrak m\langle k \rangle -\mathfrak{r}\langle k\rangle$ that is linked to $\Delta_{k+1, p^*_{k+1}}$.
\end{enumerate}
\end{lemma}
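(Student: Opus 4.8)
Since the assertion at level $k$ refers to level $k+1$, the plan is to prove (a)--(d) simultaneously by downward induction on $k$, running from $k=c$ down to $k=b$. The two engines are Lemma~\ref{lem structure of segments from MW} — which gives that $MW_k(\mathfrak m)=\{\Delta_{k,1},\ldots,\Delta_{k,r+1}\}$ is listed in increasing order and that $MW_k(\mathfrak m)$ and $MW_{k+1}(\mathfrak m)$ are minimally linked in $\mathfrak m$ — together with the maximal-linkedness reformulation of the removal upward sequences from the proof of Lemma~\ref{lem mw number=no max link zel} and the equality $|\mathfrak r\langle k\rangle|=r$ established there.

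For the base case $k=c$ I would first observe that the ``lowering'' part of $\mathcal D^{\mathrm{MW}}$ never touches the segments ending at $c$: as $\nu^c\rho$ is the maximal cuspidal support of $\mathfrak m$ and stays so through $(\mathcal D^{\mathrm{MW}})^r(\mathfrak m)$, one gets by induction that $(\mathcal D^{\mathrm{MW}})^i(\mathfrak m)\langle c\rangle=\mathfrak m\langle c\rangle-\Delta_{c,1}-\cdots-\Delta_{c,i}$ for $0\le i\le r$, so $\Delta_{c,i+1}$ is just the $(i+1)$-st shortest segment of $\mathfrak m\langle c\rangle$ and $MW_c(\mathfrak m)$ is exactly the multiset of the $r+1$ shortest segments of $\mathfrak m\langle c\rangle$, in increasing order. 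Since $|\mathfrak r\langle c\rangle|=r<r+1$, the index $p_c^{*}$ exists; $\Delta_{c,1},\ldots,\Delta_{c,p_c^{*}-1}$ lie in $\mathfrak r\langle c\rangle$ by minimality of $p_c^{*}$, and, being the globally shortest $p_c^{*}-1$ segments of $\mathfrak m\langle c\rangle$, they are automatically the first $p_c^{*}-1$ in the increasing order of $\mathfrak r\langle c\rangle$ — this is (b); moreover every segment of $\mathfrak m\langle c\rangle$ strictly shorter than $\Delta_{c,p_c^{*}}$ belongs to that set and so is deleted, whence $\Delta_{c,p_c^{*}}$ is a shortest element of $\mathfrak m\langle c\rangle-\mathfrak r\langle c\rangle$ — this is (d). Assertions (a) and (c) are empty at $k=c$.

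For the inductive step, assuming (a)--(d) at levels $>k$, I would track the $k$-ending part under iterated $\mathcal D^{\mathrm{MW}}$, obtaining $(\mathcal D^{\mathrm{MW}})^i(\mathfrak m)\langle k\rangle$ by adjoining $\Delta_{k+1,1}^-,\ldots,\Delta_{k+1,i}^-$ to $\mathfrak m\langle k\rangle$ and deleting $\Delta_{k,1},\ldots,\Delta_{k,i}$; by the MW rule $\Delta_{k,i+1}$ is then the shortest segment of this multisegment preceding $\Delta_{k+1,i+1}$, and combined with the inductive description of $MW_{k+1}(\mathfrak m)$ and the minimal-linkedness of $MW_k(\mathfrak m)$ with $MW_{k+1}(\mathfrak m)$ this determines $MW_k(\mathfrak m)$. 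On the removal side, $\mathfrak r\langle k\rangle$ injects into $\mathfrak r\langle k+1\rangle$ through a linking map which, by the reformulation in the proof of Lemma~\ref{lem mw number=no max link zel}, may be taken maximal. I would then run an augmenting-path exchange argument, parallel to that lemma, feeding in the inductive hypothesis (b)--(d) at level $k+1$, to show that this maximal linking pairs $\Delta_{k+1,j}$ for $j\ne p_{k+1}^{*}$ with $\Delta_{k,\sigma(j)}$ for an index-shift $\sigma$ skipping exactly one value $p_k^{*}\ge p_{k+1}^{*}$, so that $\Delta_{k,1},\ldots,\Delta_{k,p_k^{*}-1}$ are the shortest $p_k^{*}-1$ segments of $\mathfrak r\langle k\rangle$ — this gives (a) and (b) at level $k$. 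Then (d) is the re-reading that $\Delta_{k,p_k^{*}}$, the one $k$-ending MW-segment left unpaired with $\mathfrak r\langle k+1\rangle$, is the shortest $k$-ending segment of $\mathfrak m\langle k\rangle-\mathfrak r\langle k\rangle$ linked to $\Delta_{k+1,p_{k+1}^{*}}$; and (c) follows by feeding (d) and (c) at level $k+1$ into the minimal-linkedness of $MW_k(\mathfrak m)$ with $MW_{k+1}(\mathfrak m)$, after checking that the chain $\Delta_{b,p_b^{*}}\prec\cdots\prec\Delta_{c,p_c^{*}}$ is removed as a compatible block.

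The hard part will be this last exchange argument reconciling the two opposite greedy rules across the junction of levels $k$ and $k+1$: the removal upward sequences pick longest first (and, in Algorithm~\ref{alg:der_Zel}, are generated from the bottom level upward, opposite to the direction of the induction), whereas the MW algorithm picks shortest first. Making this precise forces one through the maximal-linkedness reformulation in order to recover $\mathfrak r\langle k\rangle$ from $\mathfrak r\langle k+1\rangle$, and then to rule out any short segment outside $MW_k(\mathfrak m)$ entering $\mathfrak r\langle k\rangle$ — which needs the minimal-linkedness structure together with a careful augmenting-path swap. I expect the bookkeeping of equal-length segments (ties in ``longest'' and ``shortest'') to be the fiddliest point, though it should be routine once the monotonicity $p_c^{*}\le\cdots\le p_b^{*}$ is in hand.
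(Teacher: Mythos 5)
Your proposal tracks the paper's own proof closely: both argue by downward induction on $k$ from $c$ to $b$, establish the base case at $k=c$ by noting that $MW_c(\mathfrak m)$ consists of the $r+1$ shortest segments of $\mathfrak m\langle c\rangle$, and handle the inductive step via an exchange (``augmenting-path'') argument that reconciles the shortest-first greedy rule of the MW algorithm with the longest-first greedy rule of the removal upward sequences. One small inaccuracy in your sketch: you propose to derive (c) from (d) at level $k+1$, whereas the paper explicitly isolates (c) in a separate subsection and uses only (a) and (b) --- not (d) --- together with the induction hypothesis on (c); this does not create a circularity given the downward induction, but it is a cleaner dependency structure worth matching when you fill in the details.
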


\begin{example}
Let $\mathfrak m=\{[-4,2]_{\rho}, [-2,2]_{\rho}, [-4,3]_{\rho}, [-3,3]_{\rho},  [-1,3]_{\rho},[-5,4]_{\rho}, [-2,4]_{\rho}, [-1,4]_{\rho}, [2,4]_{\rho},$ $ [-1,5]_{\rho}, [3,5]_{\rho}, [4,5]_{\rho} \}.$ Then, $\Delta(\mathfrak{m})=[a,c]_\rho=[2,5]_\rho$.
We consider $b=3$. In such case,  $r=\varepsilon^{\mathrm{MW}}_{[2,5]_{\rho}}(\mathfrak m)=2$. Then in the notation of Lemma \ref{lem structure of w minus max}, for $k=2,3,5$,
\[ MW_k(\mathfrak m)= \mathfrak m\langle k \rangle  ,
\]
and
\[  MW_4(\mathfrak m)=\mathfrak m\langle 4\rangle- [-5,4]_{\rho}  .
\]
On the other hand,
\[ \mathfrak r\langle 5 \rangle =\left\{ [-1,5]_{\rho}, [3,5]_{\rho} \right\}, \mathfrak r\langle 4\rangle =\left\{ [-2,4]_{\rho}, [2,4]_{\rho} \right\} ,
\]
\[  \mathfrak r\langle 3 \rangle=\left\{ [-3,3]_{\rho}, [-1,3]_{\rho}\right\}, \text{ and } \mathfrak r\langle 2 \rangle= \mathfrak m \langle 2 \rangle .
\]
Then $p_5^*=1$, $p_4=2$, $p_3^*=3$, and $\Delta_{5,1}=[4,5]_{\rho}$, $\Delta_{4, 2}=[-1,4]_{\rho}$ and $\Delta_{3,3}=[-4,3]_{\rho}$. 
\end{example}

Before proving Lemma \ref{lem structure of w minus max}, we shall prove the following useful simple counting lemma:
\begin{lemma} \label{lem back to minimal link}
We shall use all the notations in Lemma \ref{lem structure of w minus max}. Let $b\leq k \leq c-1$ and $\mathfrak n$ be a submultisegment of $\mathfrak r\langle k \rangle$. Then $\mathfrak n$ and $\left\{ \Delta_{k+1,1},\ldots, \Delta_{k+1,|\mathfrak n|}\right\}$ are linked by mapping.
\end{lemma}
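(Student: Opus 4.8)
The plan is to convert ``linked by mapping'' into inequalities between the starting points of segments, to reduce the whole statement to the single case $\mathfrak n=\mathfrak r\langle k\rangle$, and then to verify the required inequalities by a downward induction on the level, comparing the segments picked out by the removal upward sequences with those picked out by the successive MW algorithms.

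First I would record an elementary reformulation of ``linked by mapping''. If $\mathcal A$ is a multiset of segments all having right endpoint $k$ and $\mathcal B$ a multiset of segments all having right endpoint $k+1$, with $|\mathcal A|\le |\mathcal B|$ and listed in increasing order as $\mathcal A=\{A_1\subseteq\cdots\subseteq A_p\}$ and $\mathcal B=\{B_1\subseteq\cdots\subseteq B_q\}$, then $\mathcal A$ and $\mathcal B$ are linked by mapping if and only if $s(A_t)<s(B_t)$ for all $1\le t\le p$. Indeed, for segments of this shape $\Delta\prec\Delta'$ simply means $s(\Delta)<s(\Delta')$, so by Hall's theorem the only subsets of $\mathcal A$ that could obstruct a matching are the prefixes $\{A_1,\dots,A_t\}$, and the matching criterion for those is exactly $s(A_t)<s(B_t)$. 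Since $\Delta_{k+1,1}\subseteq\cdots\subseteq\Delta_{k+1,r+1}$ by Lemma \ref{lem structure of segments from MW}(ii)(b), this reformulation also shows it suffices to treat $\mathfrak n=\mathfrak r\langle k\rangle$: for any submultisegment $\mathfrak n\subseteq\mathfrak r\langle k\rangle$ the $t$-th shortest segment of $\mathfrak n$ is at least as long as the $t$-th shortest segment of $\mathfrak r\langle k\rangle$, so the inequalities needed for $\mathfrak n$ follow from those for $\mathfrak r\langle k\rangle$.

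Next I would isolate the key comparison: for $b\le m\le c$,
\[
C(m):\qquad s\bigl((\mathfrak r\langle m\rangle)_t\bigr)\le s(\Delta_{m,t})\quad\text{for }1\le t\le r ,
\]
with both multisets listed in increasing order. Granting $C(k)$ the lemma follows at once: $MW_k(\mathfrak m)$ and $MW_{k+1}(\mathfrak m)$ are minimally linked (Lemma \ref{lem structure of segments from MW}(ii)(c)), hence linked by mapping, so $s(\Delta_{k,t})<s(\Delta_{k+1,t})$ by the reformulation; combining this with $C(k)$ gives $s((\mathfrak r\langle k\rangle)_t)<s(\Delta_{k+1,t})$, which by the reformulation says $\mathfrak r\langle k\rangle$ is linked by mapping to $\{\Delta_{k+1,1},\dots,\Delta_{k+1,r}\}$. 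I would prove $C(m)$ by downward induction on $m$. For $m=c$ the maximal cuspidal support imposes no linking constraint, so $\Delta_{c,1},\dots,\Delta_{c,r+1}$ are just the $r+1$ shortest segments of $\mathfrak m\langle c\rangle$; as $\mathfrak r\langle c\rangle\subseteq\mathfrak m\langle c\rangle$, the $t$-th largest start of $\mathfrak r\langle c\rangle$ is at most the $t$-th largest start of $\mathfrak m\langle c\rangle$, which is $C(c)$. For the inductive step, assume $C(m+1)$. From the removal upward sequences one has $\Delta^m_i\prec\Delta^{m+1}_i$ for each $i$, so $\mathfrak r\langle m\rangle$ is linked by mapping to $\mathfrak r\langle m+1\rangle$; feeding $C(m+1)$ into the reformulation, $\mathfrak r\langle m\rangle$ is then linked by mapping to $\{\Delta_{m+1,1},\dots,\Delta_{m+1,r}\}$ and hence to $MW_{m+1}(\mathfrak m)$. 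Since $MW_m(\mathfrak m)$ attains the maximal possible size $r+1$ among submultisegments of $\mathfrak m\langle m\rangle$ linked by mapping to $MW_{m+1}(\mathfrak m)$ (Definition \ref{def minimal linked}(ii)(b)), an augmenting-path argument enlarges $\mathfrak r\langle m\rangle$ by one segment $G$ to a size-$(r+1)$ submultisegment still linked by mapping to $MW_{m+1}(\mathfrak m)$. Finally, by uniqueness of the minimally linked submultisegment, $MW_m(\mathfrak m)$ has the coordinatewise shortest segments among all size-$(r+1)$ submultisegments of $\mathfrak m\langle m\rangle$ linked by mapping to $MW_{m+1}(\mathfrak m)$, so $s(\Delta_{m,t})\ge s\bigl((\mathfrak r\langle m\rangle+G)_t\bigr)\ge s\bigl((\mathfrak r\langle m\rangle)_t\bigr)$ for $t\le r$, which is $C(m)$.

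The main obstacle is precisely this inductive step for $C(m)$: it says that the greedily longest segments produced by the removal upward sequences are dominated, level by level and position by position, by the greedily shortest segments produced by the MW algorithm, and proving it forces one to use the full extremal content of ``minimally linked'' --- maximal size, coordinatewise minimality, and uniqueness --- including the small but genuine point that uniqueness upgrades ``minimal'' to ``the minimum''. The augmenting-path enlargement of $\mathfrak r\langle m\rangle$ is routine but also depends on the maximal-size clause; the rest is bookkeeping with the increasing orders.
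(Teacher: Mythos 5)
Your proof is correct, and at bottom it runs along the same lines as the paper's: both arguments descend from the maximal cuspidal support level $c$, where the segments $\Delta_{c,1},\ldots,\Delta_{c,r+1}$ are simply the shortest segments of $\mathfrak m\langle c\rangle$, and transfer linkage down one level at a time using the minimal linkedness of $MW_x(\mathfrak m)$ and $MW_{x+1}(\mathfrak m)$. The difference is organizational: the paper lifts the given $\mathfrak n$ to chains $\mathfrak n_x$ ($x=k+1,\ldots,c$) supplied by the removal upward sequences and then replaces each $\mathfrak n_x$ from the top down by $\{\Delta_{x,1},\ldots,\Delta_{x,|\mathfrak n|}\}$, whereas you reduce to $\mathfrak n=\mathfrak r\langle k\rangle$ and prove the uniform coordinatewise comparison $C(m)$ between $\mathfrak r\langle m\rangle$ and $MW_m(\mathfrak m)$ by downward induction. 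Your version spells out the extremal input that the paper's replacement step leaves implicit --- the Hall-type reformulation by sorted starts, the exchange step enlarging $\mathfrak r\langle m\rangle$ to size $r+1$, and the observation that a dominance-minimal size-$(r+1)$ linked choice satisfies all three clauses of the definition of minimal linkedness, so the uniqueness noted after Definition \ref{def minimal linked} upgrades ``minimal'' to ``minimum'' --- and this is a legitimate use of facts the paper itself records, so I would view it as a more detailed rendering of the same route rather than a new one.
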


\begin{proof}
As $\mathfrak n$ is in $\mathfrak r\langle k \rangle$, this guarantees that there exist submultisegments $\mathfrak n_x\subset \mathfrak m\langle x \rangle$ $(x=k+1,\ldots, c)$ such that for all $x=k+1, \ldots, c$ 
\begin{enumerate}
\item $|\mathfrak n_x|=|\mathfrak n|$;
\item $\mathfrak n_{x-1}$ and $\mathfrak n_{x}$ are linked by mapping. (Here, $\mathfrak n_k=\mathfrak n$.)
\end{enumerate}

We can replace $\mathfrak n_{c}$  by $\widetilde{\mathfrak n}_c:=\left\{ \Delta_{c,1}, \ldots, \Delta_{c,|\mathfrak n|} \right\}$ so that $\mathfrak n_{c-1}$ and $\widetilde{\mathfrak n}_c$ are linked by mapping. Now, by using $MW_x(\mathfrak m)$ and $MW_{x+1}(\mathfrak m)$ are minimally linked, we inductively replace $\mathfrak n_x$ (where $k+1\leq x\leq c-1$) by $\widetilde{\mathfrak n}_x:=\left\{ \Delta_{x,1}, \ldots, \Delta_{x,|\mathfrak n|} \right\}$ such that $\mathfrak n_{x-1}$ and $\widetilde{\mathfrak n}_x$ are still linked by mapping.
So eventually, we also have
 $\mathfrak n$ and $\widetilde{\mathfrak n}_{k+1}$ is also linked by mapping, as desired.
\end{proof}

\noindent
{\it Proof of Lemma \ref{lem structure of w minus max}:} 
For $b \leq k \leq c$, let $\mathfrak{r}\langle k\rangle=\left\{\Delta_{k,1}', \ldots, \Delta_{k, r}'\right\}$ written in the increasing order, where the number of segments follows from Lemma \ref{lem mw number=no max link zel}. When $k=c$, it is clear that $\Delta_{c,1}, \ldots, \Delta_{c,p_c^*-1}$ are in $\mathfrak r$. This gives (b) and (d), and there is nothing to prove for (c).

We now assume $b\leq k<c$ and separately consider each condition:

\noindent
{\it Prove condition (a) and first part of (b):} We have to show that $\Delta_{k, 1}, \ldots, \Delta_{k, p^*_{k+1}-1}$ are in $\mathfrak r\langle k \rangle$. Suppose not, that means $\Delta_{k, j}$ is not in $\mathfrak r\langle k \rangle$ for some $1 \leq j \leq p_{k+1}^*-1$. Then Lemma \ref{lem back to minimal link} and the minimally linked condition imply that we must have $s(\Delta_{k, p_{k+1}^*-1}')\leq s(\Delta_{k, p_{k+1}^*})$. On the other hand, since $\Delta_{k,p_{k+1}^*} \prec \Delta_{k+1,p_{k+1}^*}$, $s(\Delta_{k,p_{k+1}^*})<s(\Delta_{k+1, p_{k+1}^*})$. Combining the two inequalities, we have $s(\Delta_{k,p_{k+1}^*-1}')< s(\Delta_{k+1,p_{k+1}^*})$. This implies 
\begin{align} \label{eqn linked replace condition (a)}
\Delta'_{k,p_{k+1}^*-1}\prec \Delta_{k+1,p_{k+1}^*}.
\end{align}
On the other hand, since $\mathfrak{r}\langle k\rangle$ and $\mathfrak{r}\langle k+1\rangle$ are linked by mapping, we can define a map $f: \mathfrak r\langle k \rangle \rightarrow \mathfrak r\langle k+1 \rangle$ satisfying the following properties: for all $1\leq j \leq r+1$
\begin{enumerate}
    \item[(i)] $f(\Delta_{k,j}')=\Delta_{k+1,j}'$
    \item[(ii)] $\Delta'_{k,j}\prec f(\Delta'_{k, j})$.
\end{enumerate}
Then, using the induction case of condition (b), we must have that 
\[ \Delta_{k+1, p_{k+1}^*-1}'=\Delta_{k+1, p_{k+1}^*-1}
\]
Now, combining above discussions, we can define another map $\widetilde{f}:\mathfrak r\langle k \rangle \rightarrow \mathfrak r\langle k+1 \rangle-\Delta_{k+1,p_{k+1}^*-1}+\Delta_{k+1,p_{k+1}^*}$ such that $\widetilde{f}(\Delta_{k,j}')=f(\Delta_{k+1,j}')$ if $j\neq p_{k}^*-1$ and $\widetilde{f}(\Delta_{k,p_{k+1}^*-1}')=\Delta_{k+1,p_{k+1}^*}$. Hence, $f$ and (\ref{eqn linked replace condition (a)}) show that $\mathfrak r\langle k \rangle$ and $\mathfrak r\langle k+1 \rangle-\Delta_{k+1,p_{k+1}^*-1}+\Delta_{k+1,p_{k+1}^*}$ are linked by mapping. This contradicts the longest choices in $\mathfrak r\langle k \rangle$. This shows (a).\\

\noindent
{\it Prove latter part of condition (b):} By Lemma \ref{lem back to minimal link}, $\left\{ \Delta_{k,1}', \ldots, \Delta_{k,p_k^*-1}'\right\}$ and $\left\{ \Delta_{k+1,1}, \ldots, \Delta_{k+1,p_{k}^*-1}\right\}$ are linked by mapping. On the other hand, by Lemma \ref{lem structure of segments from MW}, $\left\{ \Delta_{k,1}, \ldots, \Delta_{k, p_{k-1}^*-1}\right\}$ is minimally linked to $\left\{ \Delta_{k+1,1},\ldots, \Delta_{k+1,p_{k}^*-1}\right\}$. Now the first part of (b) forces the second assertion holds. \\

\noindent{\it Prove condition (c):} The proof is slightly long, so we separate it into the next section. We only need (a) and (b) (but not (d)) to prove (c). \\

\noindent
{\it Prove condition (d):} $\Delta_{c,p^*_c}$ is the shortest segment in $\mathfrak m\langle c \rangle -\mathfrak r\langle c \rangle$ follows from assertion (b). By (a) and (c), we then have that $\left\{ \Delta_{k,1}, \ldots,\Delta_{k,p_k^*-1} \right\} \cap (MW_{k}(\mathfrak m)-\Delta_{k,p_k^*})$, and $\left\{ \Delta_{k+1,1}, \ldots,\Delta_{k+1,p_k^*} \right\} \cap (MW_{k+1}(\mathfrak m)-\Delta_{k+1,p_{k+1}^*}),$ are also minimally linked, where the former (resp. latter) set is simply the first $p_k^*-1$ shortest segments of $MW_{k}(\mathfrak m)-\Delta_{k,p_k^*}$ (resp. $MW_{k+1}(\mathfrak m)-\Delta_{k+1,p_k^*}$). Similarly, we have
\[ \left\{ \Delta_{k,1}, \ldots,\Delta_{k,p_k^*} \right\} \cap MW_k(\mathfrak m), \mbox{and} \left\{ \Delta_{k+1,1}, \ldots,\Delta_{k+1,p_k^*} \right\} \cap MW_{k+1}(\mathfrak m)
\]
is minimally linked. The uniqueness of minimal linkedness then implies (d). \qed

\subsubsection{Proof of Condition (c) in Lemma \ref{lem structure of w minus max}} \label{ss prove **}

Recall that we are assuming $b\leq k< c$. Let 
\[  \mathfrak m'=\mathfrak m-\Delta_{c,p_c^*}-\ldots -\Delta_{b,p_b^*} .
\]  

{\it Step 1: Show $MW_{k}(\mathfrak m)-\Delta_{k,p_k^*}$ and $MW_{k+1}(\mathfrak m)-\Delta_{k+1,p_{k+1}^*}$ are linked by mapping.} Define an injective map
 \[  f: MW_{k}(\mathfrak m)-\Delta_{k,p_k^*} \longrightarrow MW_{k+1}(\mathfrak m)-\Delta_{k+1, p_{k+1}^*}
 \]
 as follows:
  \begin{itemize}
  \item for $1\leq j \leq p_{k+1}^*-1$ and $p_{k}^*+1\leq j \leq r+1$, define $f(\Delta_{k,j})=\Delta_{k+1,j}$. It follows from the minimal linkedness between $MW_k(\mathfrak m)$ and $MW_{k+1}(\mathfrak m)$, we also have $\Delta_{k,j} \prec f(\Delta_{k,j})$.
  \item for $p_{k+1}^*\leq j \leq p_k^*-1$, define $f(\Delta_{k,j})=\Delta_{k+1,j+1}$. By condition (b) in Lemma \ref{lem structure of w minus max}, $\Delta_{k,1}, \ldots, \Delta_{k, p^*_k-1}$ are in $\mathfrak r$. As $\mathfrak r\subset \mathfrak m'$ and the induction assumption gives that $MW_x(\mathfrak m)-\Delta_{x,p_x^*}$ and $MW_{x+1}(\mathfrak m)-\Delta_{x+1,p_{x+1}^*}$ are minimally linked in $\mathfrak m'$ ($x=k+1,\ldots, c-1$), one applies a similar argument of the proof of Lemma \ref{lem back to minimal link} to show that 
  \[ \left\{ \Delta_{k,1}, \ldots, \Delta_{k,p^*_k-1}\right\} \quad \mbox{and} \quad \left\{\Delta_{k+1,1}, \ldots, \Delta_{k+1, p^*_k}\right\}-\Delta_{k+1, p^*_{k+1}} 
  \]
  are linked by mapping. This verifies that $\Delta_{k,j}\prec f(\Delta_{k,j})$.
  %This guarantees that $\left\{\Delta_{k,1}, \ldots, \Delta_{k,p^*_k-1}\right\}$ and some subset $\mathfrak n_{k+1}$ of $\mathfrak m\langle k+1\rangle-\Delta_{k+1,p_{k+1}^*}$ are linked by mapping, and also there exist subsets $\mathfrak n_{x} \subset \mathfrak m\langle x \rangle-\Delta_{x,p_x^*}$ ($x=k+2, \ldots, c$) such that $\mathfrak n_x$ and $\mathfrak n_{x+1}$ are linked by mapping. However, now inductively (using $MW'_{x}(\mathfrak m)$ and $MW'_{x+1}(\mathfrak m)$ are minimally linked) we can replace $\mathfrak n_{x}$, for $x$ starting from $c$ and then downward to $k+1$, by $\widetilde{\mathfrak n}_x:=\left\{ \Delta_{x,1}, \ldots, \Delta_{x,p^*_k-1} \right\}-\Delta_{x,p_x^*}$ so that 
  %\begin{itemize}
  %\item  for $x=k+1, \ldots, c-1$, $\widetilde{\mathfrak n}_x$ and $\widetilde{\mathfrak n}_{x+1}$ is linked by mapping; and
  %\item $\left\{ \Delta_{k,1}, \ldots, \Delta_{k,p_k^*-1} \right\}$ and $\widetilde{\mathfrak n}_{k+1}$ is also linked by mapping. 
%\end{itemize}
%  Thus, the linkedness by mapping verifies that $\Delta_{k,j}\prec f(\Delta_{k,j})$. 
\end{itemize}

Therefore, the map $f$ shows that $MW_k(\mathfrak m)-\Delta_{k,p_k^*}$ and $MW_{k+1}(\mathfrak m)-\Delta_{k+1,p_{k+1}^*}$ are linked by mapping. 

{\it Step 2: Check minimal linkedness.}
Suppose $MW_k(\mathfrak m)-\Delta_{k,p_k^*}$ and $MW_{k+1}(\mathfrak m)-\Delta_{k+1, p_{k+1}^*}$ are not minimally linked in $\mathfrak m'$. By the condition (a) in Lemma \ref{lem structure of w minus max} (which has been shown before), we have $\Delta_{k, p_k^*} \prec \Delta_{k+1, p_{k+1}^*}$ again, and hence,  $(MW_k(\mathfrak m)-\Delta_{k,p_k^*})+\Delta_{k, p_k^*}$ and $(MW_{k+1}(\mathfrak m)-\Delta_{k+1,p_{k+1}^*})+\Delta_{k+1, p_{k+1}^*}$ are linked by mapping but not minimally linked in $\mathfrak m$. This contradicts that $MW_k(\mathfrak m)$ and $MW_{k+1}(\mathfrak m)$ are minimally linked in $\mathfrak m$.

\subsubsection{Minimal linkedness between $MW_{b-1}(\mathfrak m)$ and $MW_b(\mathfrak m)-\Delta_{b,p_b^*}$}

\begin{lemma}
We use the notations in Lemma \ref{lem structure of w minus max}, and similarly, for $a \leq k \leq b-1$, we define 
\[MW_{k}(\mathfrak m)=\left\{ \Delta_{k,1},\ldots, \Delta_{k,r}\right\}.\]
Then $MW_{b-1}(\mathfrak m)$ and $MW_b(\mathfrak m)-\Delta_{b,p_b^*}$ are minimally linked in $\mathfrak m-\Delta_{c,p_c^*}-\ldots-\Delta_{b,p_b^*}$.
\end{lemma}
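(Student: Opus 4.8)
The plan is to re-run the proof of condition (c) of Lemma~\ref{lem structure of w minus max} (see Section~\ref{ss prove **}) at the boundary level $k=b-1$, with $MW_{b-1}(\mathfrak m)$ playing the role of ``$MW_k(\mathfrak m)-\Delta_{k,p_k^*}$'' but with nothing removed; formally one may set $p_{b-1}^*=r+1$, which is consistent with $|MW_{b-1}(\mathfrak m)|=r$. Write $\mathfrak m'=\mathfrak m-\Delta_{c,p_c^*}-\ldots-\Delta_{b,p_b^*}$. Since every removed segment ends at an integer $\geq b$, we have $\mathfrak m'\langle b-1\rangle=\mathfrak m\langle b-1\rangle$ and $\mathfrak m'\langle b\rangle=\mathfrak m\langle b\rangle-\Delta_{b,p_b^*}$, so $MW_{b-1}(\mathfrak m)$ and $MW_b(\mathfrak m)-\Delta_{b,p_b^*}$ really are submultisegments of $\mathfrak m'$ at levels $b-1$ and $b$. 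Also $|MW_b(\mathfrak m)-\Delta_{b,p_b^*}|=r=|MW_{b-1}(\mathfrak m)|$, so clause (b) of Definition~\ref{def minimal linked}(ii) holds automatically; it remains to prove linkedness by mapping and clause (c).

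\textbf{Linked by mapping.} I would take $f\colon MW_{b-1}(\mathfrak m)\to MW_b(\mathfrak m)-\Delta_{b,p_b^*}$ to be the shift map of Section~\ref{ss prove **}: $f(\Delta_{b-1,j})=\Delta_{b,j}$ for $1\leq j<p_b^*$ and $f(\Delta_{b-1,j})=\Delta_{b,j+1}$ for $p_b^*\leq j\leq r$. For $j<p_b^*$ the inequality $\Delta_{b-1,j}\prec\Delta_{b,j}$ is immediate, as $\Delta_{b-1,j}$ and $\Delta_{b,j}$ are consecutive segments of the $j$-th MW algorithm. For $j\geq p_b^*$ one needs $\Delta_{b-1,j}\prec\Delta_{b,j+1}$, which is established exactly as in Section~\ref{ss prove **}: by Lemma~\ref{lem structure of w minus max}(b) the segments $\Delta_{b,1},\ldots,\Delta_{b,p_b^*-1}$ are the first $p_b^*-1$ segments (in increasing order) of $\mathfrak r\langle b\rangle$, and Lemma~\ref{lem structure of w minus max}(c), iterated over the levels $b,\ldots,c-1$, gives that the multisegments $MW_x(\mathfrak m)-\Delta_{x,p_x^*}$ form a chain whose consecutive members are minimally linked inside $\mathfrak m'$; feeding this chain into the counting argument of Lemma~\ref{lem back to minimal link}, run from level $b-1$ with $MW_{b-1}(\mathfrak m)$ in place of a submultisegment of $\mathfrak r$, yields that $MW_{b-1}(\mathfrak m)$ and $MW_b(\mathfrak m)-\Delta_{b,p_b^*}$ are linked by mapping, with $f$ as a witness.

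\textbf{Clause (c).} This is in fact easier at the boundary than in the interior. If it failed, there would be $\mathfrak n'\subseteq\mathfrak m'\langle b-1\rangle=\mathfrak m\langle b-1\rangle$ with $|\mathfrak n'|=r$, $\mathfrak n'<MW_{b-1}(\mathfrak m)$, and $\mathfrak n'$ linked by mapping to $MW_b(\mathfrak m)-\Delta_{b,p_b^*}$; since $MW_b(\mathfrak m)-\Delta_{b,p_b^*}\subseteq MW_b(\mathfrak m)$, the multisegment $\mathfrak n'$ would then also be linked by mapping to $MW_b(\mathfrak m)$, contradicting the fact (Lemma~\ref{lem structure of segments from MW}(ii)(c)) that $MW_{b-1}(\mathfrak m)$ and $MW_b(\mathfrak m)$ are minimally linked in $\mathfrak m$. (No segment needs to be adjoined on the source side here, in contrast with Section~\ref{ss prove **}.)

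\textbf{Where the work lies.} The only delicate point is the verification of $\Delta_{b-1,j}\prec\Delta_{b,j+1}$ for the shifted indices $j\geq p_b^*$ in the first step. Unlike the interior levels, $MW_{b-1}(\mathfrak m)$ need not be contained in the removal set $\mathfrak r$ --- its segments are chosen as short as possible by the MW algorithm, whereas $\mathfrak r\langle b-1\rangle$ consists of the longest segments at level $b-1$ --- so Lemma~\ref{lem back to minimal link} cannot simply be quoted; its counting argument must instead be carried out afresh from $MW_{b-1}(\mathfrak m)$, using that each $\Delta_{b-1,j}$ is the shortest segment at level $b-1$ preceding $\Delta_{b,j}$ in the $j$-th MW algorithm together with the minimal-linkedness chain of Lemma~\ref{lem structure of w minus max}(c).
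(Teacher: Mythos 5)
Your treatment of the easy parts is fine: the size count making clause (b) of Definition \ref{def minimal linked}(ii) automatic, the unshifted pairings $\Delta_{b-1,j}\prec\Delta_{b,j}$ for $j<p_b^*$, and the deduction of the minimality clause from the minimal linkedness of $MW_{b-1}(\mathfrak m)$ and $MW_b(\mathfrak m)$ in $\mathfrak m$ (composing with the inclusion $MW_b(\mathfrak m)-\Delta_{b,p_b^*}\subseteq MW_b(\mathfrak m)$) — this last point is exactly how the paper concludes. The gap is at the heart of the lemma: the shifted precedences $\Delta_{b-1,j}\prec\Delta_{b,j+1}$ for $p_b^*\leq j\leq r$. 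You first claim these are established "exactly as in Section \ref{ss prove **}", but the argument there for the shifted window $p_{k+1}^*\leq j\leq p_k^*-1$ hinges on Lemma \ref{lem structure of w minus max}(b): the shifted \emph{source} segments $\Delta_{k,j}$ lie in $\mathfrak r\langle k\rangle$, which is what makes a Lemma \ref{lem back to minimal link}-type chain through the levels $k+1,\ldots,c$ available. At the boundary the shifted source segments are $\Delta_{b-1,j}$ with $j\geq p_b^*$, and, as you concede in your last paragraph, these need not lie in $\mathfrak r\langle b-1\rangle$ (the MW algorithm takes shortest segments, $\mathfrak r$ takes longest), so that argument does not transplant. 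Your fallback — "carry out the counting afresh from $MW_{b-1}(\mathfrak m)$, using the shortest-choice property and the chain from condition (c)" — is a statement of intent, not a proof: condition (c) only constrains levels $b,\ldots,c$, and the shortest-choice property only relates $\Delta_{b-1,j}$ to $\Delta_{b,j}$, not to $\Delta_{b,j+1}$; nothing in these ingredients excludes the bad configuration $s(\Delta_{b,j+1})\leq s(\Delta_{b-1,j})$, which is only ruled out by how the MW segments interlace with $\mathfrak r$ at the two boundary levels.

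That interlacing is precisely the input the paper uses and your proposal omits: one first shows, by the same kind of argument as for condition (a) of Lemma \ref{lem structure of w minus max} in Section \ref{ss prove claim 1 prop der}, that $\Delta_{b-1,1},\ldots,\Delta_{b-1,p_b^*-1}$ \emph{do} lie in $\mathfrak r\langle b-1\rangle$, and then combines this with the defining fact $\Delta_{b,p_b^*}\notin\mathfrak r\langle b\rangle$ to force $\Delta_{b-1,p_b^*}\prec\Delta_{b,p_b^*+1},\ldots,\Delta_{b-1,r}\prec\Delta_{b,r+1}$. Neither of these two inputs appears anywhere in your argument, so the key step of the lemma is missing; to repair the proposal you would need to add the $\mathfrak r$-membership claim at level $b-1$ (with its condition-(a)-style proof) and then derive the shifted precedences from it together with $\Delta_{b,p_b^*}\notin\mathfrak r\langle b\rangle$, rather than from the condition (c) chain alone.
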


\begin{proof}
One can define an injective map from $MW_{b-1}(\mathfrak m)$ to $MW_b(\mathfrak m)-\Delta_{b,p_b^*}$ by similar arguments as in Section \ref{ss prove **}. We first argue that $\Delta_{b-1,1},\ldots, \Delta_{b-1,p_b^*-1}$ are in $\mathfrak{r}\langle{b-1}\rangle$ by using a similar argument in proving condition (a) of Lemma \ref{lem structure of w minus max} in Section \ref{sss overlap mw and removal}. By the fact that $\Delta_{b,p_b^*}$ is not in $\mathfrak{r}\langle{b}\rangle$, one can observe  
\[   \Delta_{b-1,p_b^*} \prec \Delta_{b, p_b^*+1},  \ldots ,\quad \Delta_{b-1, r} \prec \Delta_{b, r+1}  .
\]
 Now the minimal linkedness between $MW_{b-1}(\mathfrak m)$ and $MW_b(\mathfrak m)-\Delta_{b,p_b^*}$ follows from the minimal linkedness between $MW_{b-1}(\mathfrak m)$ and $MW_{b}(\mathfrak m)$ in $\mathfrak m$.
\end{proof}

\subsubsection{Segments participating in the MW algorithms for  $\mathcal D^{\mathrm{Zel}}_{[b,c]_{\rho}}(\mathfrak m)$}

\begin{lemma}\label{lem zel under const mw}
We use the notations in Lemma \ref{lem structure of w minus max}. Then, we have 
\begin{align} \label{eqn zel under minimal MW}
      \mathcal D^{\mathrm{Zel}}_{[b,c]_{\rho}}(\mathfrak m)=\mathfrak m-\sum_{k=b}^c\Delta_{k,p_k^*}+\sum_{k=b}^c\Delta_{k,p_k^*}^- .
\end{align}
In particular, $\mathcal D^{\mathrm{Zel}}_{[b,c]_{\rho}}(\mathfrak m)\neq \infty$.
\end{lemma}

\begin{proof}
This follows from Condition (d) of Lemma \ref{lem structure of w minus max}. 
\end{proof}

We are now going to find segments participating in the MW-algorithms for $(\mathcal D^{\mathrm{MW}})^{i-1}(\mathcal D^{\mathrm{Zel}}_{[b,c]_{\rho}}(\mathfrak m))$. In view of the formula (\ref{eqn zel under minimal MW}), the answer is almost given by Lemmas \ref{lem structure of segments from MW} and \ref{lem structure of w minus max}(c), but we still have to take care the possible contributions of those terms $\Delta_{k,p_k^*}^-$ in (\ref{eqn zel under minimal MW}). This will be done in the following lemma:

\begin{lemma} \label{lem mw after zel}
We use the notations in Lemma \ref{lem structure of w minus max}.  For $1\leq i \leq r$, the segments participating in the MW algorithm for $(\mathcal D^{\mathrm{MW}})^{i-1}(\mathcal D^{\mathrm{Zel}}_{[b,c]_{\rho}}(\mathfrak m))$ lie in either $MW_k(\mathfrak m)-\Delta_{k, p_k^*}$ for some $b \leq k\leq c$ or $MW_k(\mathfrak m)$ for some $a \leq k \leq b-1$.
\end{lemma}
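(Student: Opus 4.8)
\emph{Plan.} The plan is as follows. If $r=0$ there is nothing to prove, so assume $r\ge 1$. By Lemma~\ref{lem zel under const mw},
\[ \mathcal D^{\mathrm{Zel}}_{[b,c]_{\rho}}(\mathfrak m)=\mathfrak m-\sum_{k=b}^{c}\Delta_{k,p_k^*}+\sum_{k=b}^{c}\Delta_{k,p_k^*}^- , \]
so passing from $\mathfrak m$ to $\mathcal D^{\mathrm{Zel}}_{[b,c]_{\rho}}(\mathfrak m)$ deletes the single segment $\Delta_{c,p_c^*}$ ending at $\nu^c\rho$ and adjoins segments ending at $\nu^{b-1}\rho,\dots,\nu^{c-1}\rho$; hence $\nu^c\rho$ is still the maximal cuspidal support and one may iterate the MW algorithm. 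I would set
\[ T:=\sum_{k=b}^{c}\bigl(MW_k(\mathfrak m)-\Delta_{k,p_k^*}\bigr)+\sum_{k=a}^{b-1}MW_k(\mathfrak m), \]
a submultisegment of $\mathcal D^{\mathrm{Zel}}_{[b,c]_{\rho}}(\mathfrak m)$, and prove the sharper claim that the first $r$ rounds of the MW algorithm on $\mathcal D^{\mathrm{Zel}}_{[b,c]_{\rho}}(\mathfrak m)$ extract exactly the segments of $T$, which gives the lemma at once.

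For this, I would use Lemma~\ref{lem structure of segments from MW}: the segments participating in the MW algorithms for $\mathcal D^{\mathrm{Zel}}_{[b,c]_{\rho}}(\mathfrak m),\mathcal D^{\mathrm{MW}}(\mathcal D^{\mathrm{Zel}}_{[b,c]_{\rho}}(\mathfrak m)),\dots$ are determined level by level --- at level $c$ as the successive shortest segments, and at lower levels by minimal linkedness with the level above in the relevant residual multisegment --- so it is enough to match these level-by-level data with $T$. At level $c$ this is clear: the segments $\Delta_{c,1}\subseteq\dots\subseteq\Delta_{c,r+1}$ are the $r+1$ shortest segments of $\mathfrak m\langle c\rangle$, being the successive shortest choices in the MW algorithms for $\mathfrak m,\mathcal D^{\mathrm{MW}}(\mathfrak m),\dots$, and $(\mathcal D^{\mathrm{Zel}}_{[b,c]_{\rho}}(\mathfrak m))\langle c\rangle=\mathfrak m\langle c\rangle-\Delta_{c,p_c^*}$, so its $r$ shortest segments are exactly $MW_c(\mathfrak m)-\Delta_{c,p_c^*}$. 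To propagate downwards I would invoke conditions (a), (c), (d) of Lemma~\ref{lem structure of w minus max} --- and the lemma immediately preceding Lemma~\ref{lem zel under const mw} for the transition across level $b-1$ --- to see that in each residual multisegment the level-$k$ component of $T$ is minimally linked to the level-$(k+1)$ component of $T$; then, by uniqueness of the minimally linked submultisegment, a downward induction on the level identifies the extracted data at every level with $T$.

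\emph{Main obstacle.} The only point that is not routine is that the adjoined segments $\Delta_{k,p_k^*}^-$, which sit at level $k-1$ and are typically short, might a priori be selected by the MW algorithm in place of a segment of $T$. Ruling this out is where the precise minimal choices of Lemma~\ref{lem structure of w minus max} enter: by condition (a) one has $p_c^*\le\dots\le p_b^*$, and a short computation with the precedence relation $\prec$ shows that whenever $\Delta_{k,p_k^*}^-$ precedes the (reindexed) segment of $T$ picked at level $k$ in a given round, the segment of $T$ picked at level $k-1$ starts no later and is no longer than $\Delta_{k,p_k^*}^-$, so the shortest-linked rule does not favour $\Delta_{k,p_k^*}^-$; the remaining ties are excluded because selecting $\Delta_{k,p_k^*}^-$ would force the chain under construction to terminate prematurely, contradicting the count of exactly $r$ full chains given by Lemma~\ref{lem mw number=no max link zel}. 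Organising this verification as an induction on the round index $i$ with a nested downward induction on the level $k$ is, I expect, the technical heart of the argument; once it is in place, the first $r$ rounds of the MW algorithm extract precisely $T$, and in particular the segments participating in the MW algorithm for $(\mathcal D^{\mathrm{MW}})^{i-1}(\mathcal D^{\mathrm{Zel}}_{[b,c]_{\rho}}(\mathfrak m))$ for $1\le i\le r$ lie in $T$, as claimed.
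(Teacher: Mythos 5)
Your overall plan --- identify the segments extracted in each of the first $r$ MW rounds on $\mathcal D^{\mathrm{Zel}}_{[b,c]_{\rho}}(\mathfrak m)$, propagate downwards via minimal linkedness and its uniqueness, and isolate as the main obstacle the possibility that one of the newly created segments $\Delta_{k,p_k^*}^-$ gets selected --- is essentially the paper's. The gap is in how you dispose of that obstacle. For the levels where $p_{k}^*=1$ the exclusion is indeed a one-line containment argument ($\Delta_{k,1}^-\subset\Delta_{k,2}$ is not linked to the segment chosen one level up), but at the lower levels, where $p_{k+1}^*\geq 2$ and the segment chosen at level $k+1$ is $\Delta_{k+1,1}$, the dangerous configuration is $s(\Delta_{k,1})<s(\Delta_{k+1,p_{k+1}^*})<s(\Delta_{k+1,1})$, i.e.\ the truncated segment is a candidate that is \emph{strictly shorter} than the intended choice $\Delta_{k,1}$. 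Neither condition (a) of Lemma \ref{lem structure of w minus max} nor any local computation with $\prec$ rules this out; in particular condition (c) is a statement about $\mathfrak m-\Delta_{c,p_c^*}-\ldots-\Delta_{b,p_b^*}$, which does not contain the added segments, so it gives no information about them. The paper excludes this configuration by a genuinely global argument: from $\Delta_{k,1}\prec\Delta_{k+1,p_{k+1}^*}$ one forms $\mathfrak r'=\mathfrak r-\Delta_{k+1,1}+\Delta_{k+1,p_{k+1}^*}$ and an injection showing $\mathfrak r'\langle k\rangle$ and $\mathfrak r'\langle k+1\rangle$ are linked by mapping, contradicting the longest (maximal) choices defining the removal upward sequences $\mathfrak r$. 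Your proposal never invokes this maximality, and your claimed ``short computation'' is exactly where it would have to enter (note also that ``starts no later and is no longer'' is self-contradictory for two segments with the same right end unless they coincide, a sign the computation has not actually been carried out).

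Your fallback for the remaining cases is also not available: Lemma \ref{lem mw number=no max link zel} compares removal upward sequences in $\mathfrak m$ with $\varepsilon^{\mathrm{MW}}(\mathfrak m)$; it says nothing about the MW chains of $\mathcal D^{\mathrm{Zel}}_{[b,c]_{\rho}}(\mathfrak m)$, and there is no a priori count forcing those chains to be ``full'', so no contradiction arises if a truncated segment were selected and the chain stopped early. (Exact ties are in fact harmless --- two segments with the same endpoints are equal as members of the multisegment --- but that does not address the strict-inequality case above, which is the real issue.) To repair the argument you must import the maximality of $\mathfrak r$ as in the paper's treatment, and then rerun the same mechanism for the rounds $i\geq 2$, where truncations produced by earlier MW rounds are controlled by (the proof of) Lemma \ref{lem structure of segments from MW} and the level-$c$ and downward-propagation steps you already describe, starting from the explicit description of $\mathcal D^{\mathrm{Zel}}_{[b,c]_{\rho}}(\mathfrak m)$ in Lemma \ref{lem zel under const mw}.
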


\begin{proof}

Note that $\mathcal D^{\mathrm{Zel}}_{[b,c]_{\rho}}(\mathfrak m)$ is described in Lemma \ref{lem zel under const mw}. We pick the segments participating in the MW algorithm for $\mathcal D^{\mathrm{Zel}}_{[b,c]_{\rho}}(\mathfrak m)$ as follows. The first segment is the shortest segment in $MW_c(\mathfrak m)-\Delta_{c,p_c^*}$, that is $\Delta_{c,1}$ if $p_c^*\neq 1$ and $\Delta_{c,2}$ if $p_c^*=1$. 

Let $k^*$ be the largest integer such that $p_{k^*}^*\neq 1$. If such an integer does not exist, set $k^*=b-1$. In general, the segments participating in the MW algorithm are:
\[   \Delta_{c,2}, \ldots, \Delta_{k^*+1,2}, \Delta_{k^*,1}, \ldots,  \Delta_{a,1} ,
\]
in which each consecutive segments are linked by Lemma \ref{lem structure of w minus max}(c). By condition (a) of Lemma \ref{lem structure of w minus max}, the above choices are well-defined. We now justify that the above choices are the shortest ones:

\begin{enumerate}
\item Case 1. $k^*+1\leq k < c$: Note that $\Delta_{k+1,1}^-$ is not linked to $\Delta_{k+1,2}$. Hence, we can only find the shortest one in $\mathcal D^{\mathrm{Zel}}_{[b,c]_{\rho}}(\mathfrak m)\langle k\rangle-\Delta_{k+1,1}^-$. By Lemma \ref{lem structure of w minus max} Condition (c) and Lemma \ref{lem structure of segments from MW}(ii)(c), $\Delta_{k,2}$ is the shortest choice.
\item Case 2. $k =k^*$: Similar reasoning as above, $\Delta_{k^*+1,1}^-$ cannot be a choice, and $\Delta_{k^*,1}$ is the shortest choice. 
\item Case 3. $k \leq k^*-1$: If $\Delta_{k+1,p_{k+1}^*}^-$ is a choice, then $s(\Delta_{k+1, p_{k+1}^*})> s(\Delta_{k,1})$ and so $\Delta_{k,1} \prec \Delta_{k+1, p_{k+1}^*}$.

Now, one considers $\mathfrak r'=\mathfrak r-\Delta_{k+1,1}+\Delta_{k+1, p_{k+1}^*}$. By Lemma \ref{lem structure of w minus max}(b), $ \Delta_{k,1}$ are in $\mathfrak r$. Now, by using Condition (b) in Lemma \ref{lem structure of w minus max}, we have that $\Delta_{k+1,1}$ (resp. $\Delta_{k,1}$) is the first segment in the increasing order of $\mathfrak r\langle k+1\rangle$ (resp. $\mathfrak{r}\langle k\rangle$). Now one can define an injective map $f$ from $\mathfrak{r}\langle k\rangle$ to $\mathfrak{r}\langle k+1\rangle$ satisfying $f(\Delta_{k,1})=\Delta_{k+1,1}$ and $\Delta\prec f(\Delta)$ for all $\Delta \in \mathfrak r\langle k \rangle$.

Now one defines $\widetilde{f}:\mathfrak r'\langle k \rangle \rightarrow \mathfrak r'\langle k+1\rangle$ by $\widetilde{f}(\Delta_{k,1})=\Delta_{k+1,p_{k+1}^*}$ and $\widetilde{f}(\Delta)=f(\Delta)$ for $\Delta\neq \Delta_{k,1}$, which also determines that $\mathfrak r'\langle k\rangle$ and $\mathfrak r'\langle k+1\rangle$ are linked by mapping. This contradicts the maximal choice of the removal upward sequences of maximal linked segments in neighbors on $\mathfrak m$ ranging from $b-1$ to $c$. Hence, one cannot choose $\Delta_{k^*+1,1}^-$, and so $\Delta_{k^*,1}$ is the shortest choice again by Lemma \ref{lem structure of w minus max}(c).

\end{enumerate}

One can now proceed to find segments participating in the MW algorithm for $\mathcal (\mathcal D^{\mathrm{MW}})^i(\mathcal D_{[b,c]_{\rho}}^{\mathrm{Zel}}(\mathfrak m))$ in a similar manner for $i \geq 1$, and see they lie in $MW_k(\mathfrak m)-\Delta_{k,p_k^*}$ (for $b\leq k\leq c$) or $MW_k(\mathfrak m)$ (for $a\leq k \leq b-1)$. We omit the details.
\end{proof}

\begin{proposition} \label{prop fomrula for multiple mw}
We use the notations in Lemma \ref{lem structure of w minus max}. Recall that $r=\varepsilon_{[a,c]_{\rho}}^{\mathrm{MW}}(\mathfrak m)+\ldots +\varepsilon_{[b-1,c]_{\rho}}^{\mathrm{MW}}(\mathfrak m)$. Suppose $[b,c]_{\rho}$ is the first segment produced in the MW algorithm for $(\mathcal D^{\mathrm{MW}})^r(\mathfrak m)$. Then
\[  \left(\mathcal D^{\mathrm{MW}}\right)^{r+1}(\mathfrak m) = \left(\mathcal D^{\mathrm{MW}}\right)^r \left(\mathcal D^{\mathrm{Zel}}_{[b,c]_{\rho}}(\mathfrak m)\right) \neq \infty.
\]
\end{proposition}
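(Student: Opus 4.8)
The plan is to compare the two sides segment-by-segment using the structural results already established. The key observation is that, by Lemma \ref{lem zel under const mw}, we have the explicit description
\[ \mathcal D^{\mathrm{Zel}}_{[b,c]_{\rho}}(\mathfrak m)=\mathfrak m-\sum_{k=b}^c\Delta_{k,p_k^*}+\sum_{k=b}^c\Delta_{k,p_k^*}^- , \]
and by Lemma \ref{lem mw after zel}, the segments that participate in the first $r$ rounds of the MW algorithm applied to $\mathcal D^{\mathrm{Zel}}_{[b,c]_{\rho}}(\mathfrak m)$ all lie inside the collection $\bigl\{\,MW_k(\mathfrak m)-\Delta_{k,p_k^*} : b\leq k\leq c\,\bigr\}\cup\bigl\{\,MW_k(\mathfrak m) : a\leq k\leq b-1\,\bigr\}$. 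I would first prove the stronger claim that, for each $0\leq i\leq r-1$, the ordered segments participating in the MW algorithm for $(\mathcal D^{\mathrm{MW}})^{i}(\mathcal D^{\mathrm{Zel}}_{[b,c]_{\rho}}(\mathfrak m))$ are \emph{exactly} the same (as a tuple of segments, after applying the $(-)^-$ operation to those already consumed) as those participating in the MW algorithm for $(\mathcal D^{\mathrm{MW}})^{i}(\mathfrak m)$ — i.e. the $i$-th MW round picks out $MW_c(\mathfrak m)[\text{index}],\ldots$ from the appropriate layer. This is essentially the content of the case analysis in the proof of Lemma \ref{lem mw after zel}, extended by induction on $i$; the conditions (a)--(d) of Lemma \ref{lem structure of w minus max} together with Lemma \ref{lem structure of segments from MW} guarantee that the shortest-segment choices of the MW algorithm are forced and coincide on both sides.

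Granting this, I would compute both sides of the desired identity directly. On one hand,
\[ (\mathcal D^{\mathrm{MW}})^{r+1}(\mathfrak m)=\mathfrak m-\sum_{p=1}^{r+1}\sum_{k=a_p}^{c}\Delta_{k,p}+\sum_{p=1}^{r+1}\sum_{k=a_p}^{c}\Delta_{k,p}^- \]
by iterating the defining recursion for $\mathcal D^{\mathrm{MW}}$ and using Lemma \ref{lem structure of segments from MW}(i). On the other hand, starting from $\mathcal D^{\mathrm{Zel}}_{[b,c]_{\rho}}(\mathfrak m)$ and applying $\mathcal D^{\mathrm{MW}}$ a total of $r$ times, the above claim tells us precisely which segments are removed and replaced: for the layers $a\leq k\leq b-1$ we remove $\Delta_{k,1},\ldots,\Delta_{k,x_k}$ and add their $(-)^-$; for the layers $b\leq k\leq c$ we remove the segments of $MW_k(\mathfrak m)-\Delta_{k,p_k^*}$ and add their $(-)^-$. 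Adding back the contribution $-\Delta_{k,p_k^*}+\Delta_{k,p_k^*}^-$ already present in $\mathcal D^{\mathrm{Zel}}_{[b,c]_{\rho}}(\mathfrak m)$, the net effect on the layers $b\leq k\leq c$ is to remove all of $MW_k(\mathfrak m)$ and add all of $MW_k(\mathfrak m)^-$; for layers $a\leq k\leq b-1$ the net effect is to remove $MW_k(\mathfrak m)$ and add $MW_k(\mathfrak m)^-$. Since $MW_k(\mathfrak m)=\{\Delta_{k,1},\ldots,\Delta_{k,x_k}\}$ where $x_k$ equals $r+1$ for $b\leq k\leq c$ and equals $r$ (or is the relevant count) for $a\leq k\leq b-1$ — matching the indexing in the first computation via $\Delta_{k,p}$ being defined iff $k\geq a_p$ — the two expressions coincide.

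The main obstacle is the inductive claim that the MW rounds on $\mathcal D^{\mathrm{Zel}}_{[b,c]_{\rho}}(\mathfrak m)$ reproduce exactly the segments $MW_k(\mathfrak m)$ (minus the $p_k^*$-th ones, for $b\leq k\leq c$). The base case $i=0$ is handled in the proof of Lemma \ref{lem mw after zel}, but the inductive step requires re-verifying that after deleting $MW_k(\mathfrak m)-\Delta_{k,p_k^*}$ (and replacing by $(-)^-$), the residual minimal-linkedness structure (Lemma \ref{lem structure of w minus max}(c)) persists so that the next MW round again makes the forced shortest choices. This is the analogue, one layer deeper, of the "Case 1/Case 2/Case 3" analysis already carried out, and I would organize it as a single induction that simultaneously tracks: (i) which segments have been consumed, (ii) that the remaining segments in each layer are still in increasing order, and (iii) that consecutive layers are still minimally linked in the current multisegment. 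Once this bookkeeping is set up, each MW round is determined exactly as in the base case, and the two final multisegments agree term by term.
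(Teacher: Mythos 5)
Your overall approach is the same as the paper's: the paper's proof consists literally of citing Lemmas~\ref{lem structure of segments from MW}, \ref{lem zel under const mw}, and \ref{lem mw after zel}, and your bookkeeping in the second paragraph is exactly the computation one must carry out to combine them. The final identification — that the net removals at layers $b \leq k \leq c$ amount to all of $MW_k(\mathfrak m)$ (the $\Delta_{k,p_k^*}$ from the $\mathcal D^{\mathrm{Zel}}$ step plus the rest from the $r$ MW rounds), and at layers $a \leq k \leq b-1$ amount to $MW_k(\mathfrak m)$ — is correct, and together with Lemma~\ref{lem structure of segments from MW}(i) it gives the equality term by term.

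However, the ``stronger claim'' you want to prove first is not literally true as stated: the segments participating in the $i$-th MW round for $\mathcal D^{\mathrm{Zel}}_{[b,c]_\rho}(\mathfrak m)$ are in general \emph{not} the same as those for the $i$-th round on $\mathfrak m$. For instance, if $p_c^*=1$, the first round on $\mathfrak m$ begins with $\Delta_{c,1}$, whereas the first round on $\mathcal D^{\mathrm{Zel}}_{[b,c]_\rho}(\mathfrak m)$ begins with $\Delta_{c,2}$ (this is explicit in the proof of Lemma~\ref{lem mw after zel}); similar index shifts occur at lower layers depending on where $p_k^*$ sits. The caveat ``after applying $(-)^-$ to those already consumed'' does not repair this, since $\Delta_{k,p_k^*}^-$ lands in layer $k-1$, not layer $k$, and does not participate. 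The statement you should (and, in the end, do) use is the aggregate one: over all $r$ rounds the participating segments at layer $k$ exhaust $MW_k(\mathfrak m)-\Delta_{k,p_k^*}$ for $b \leq k \leq c$ and exhaust $MW_k(\mathfrak m)$ for $a\leq k \leq b-1$. Note that Lemma~\ref{lem mw after zel} as stated only gives ``lie in,'' not ``exhaust''; the exhaustion follows from a cardinality count (there are exactly $r$ rounds, each removing one segment at each touched layer, and $|MW_k(\mathfrak m)-\Delta_{k,p_k^*}|=r$), together with the persistence of the minimal-linkedness structure in Lemma~\ref{lem structure of w minus max}(c), which you correctly identify as the main thing to track inductively.
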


\begin{proof}
It follows from Lemmas \ref{lem structure of segments from MW}, \ref{lem zel under const mw}, and \ref{lem mw after zel}.
\end{proof}

\begin{lemma} \label{lem zel non infty to mw}
We use the notations in Lemma \ref{lem structure of w minus max}. If  $\mathcal D^{\mathrm{Zel}}_{[b,c]_{\rho}}(\mathfrak m)\neq \infty$, then $[b,c]_{\rho}$ is the first segment produced in the MW algorithm for $(\mathcal D^{\mathrm{MW}})^r(\mathfrak m)$.
\end{lemma}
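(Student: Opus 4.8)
The plan is to read off, from the combinatorial structure established in Lemmas \ref{lem structure of segments from MW}, \ref{lem mw number=no max link zel} and \ref{lem structure of w minus max}, that the success of the final selection (Step 4) of Algorithm \ref{alg:der_Zel} applied to $\mathcal D^{\mathrm{Zel}}_{[b,c]_{\rho}}(\mathfrak m)$ forces the first segment produced by the MW algorithm for $(\mathcal D^{\mathrm{MW}})^{r}(\mathfrak m)$ to be $[b,c]_{\rho}$. Set up as follows: by Lemma \ref{lem mw number=no max link zel}, $r$ equals the number of removable upward sequences of maximally linked segments in neighbours on $\mathfrak m$ ranging from $b-1$ to $c$, hence equals the number of times Steps 1--3 of Algorithm \ref{alg:der_Zel} are carried out for $\mathcal D^{\mathrm{Zel}}_{[b,c]_{\rho}}(\mathfrak m)$; write $\mathfrak m_r$ for the multisegment obtained after Step 3. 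The hypothesis $\mathcal D^{\mathrm{Zel}}_{[b,c]_{\rho}}(\mathfrak m)\neq\infty$ supplies a downward sequence of minimal linked segments $\widetilde\Delta_b\prec\widetilde\Delta_{b+1}\prec\cdots\prec\widetilde\Delta_c$ with $\widetilde\Delta_k\in\mathfrak m_r\langle k\rangle$ for $b\le k\le c$, where $\widetilde\Delta_c$ is a shortest segment of $\mathfrak m_r\langle c\rangle$ and each $\widetilde\Delta_k$ is a shortest segment of $\mathfrak m_r\langle k\rangle$ preceding $\widetilde\Delta_{k+1}$.

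I would next locate the first MW segment of $(\mathcal D^{\mathrm{MW}})^{r}(\mathfrak m)$. Each removable upward sequence deletes exactly one segment ending at $c$, and one step of $\mathcal D^{\mathrm{MW}}$ lowers $|\mathfrak m\langle c\rangle|$ by exactly one as long as $\nu^c\rho$ remains the maximal cuspidal support; consequently $|\mathfrak m_r\langle c\rangle| = |(\mathcal D^{\mathrm{MW}})^{r}(\mathfrak m)\langle c\rangle| = |\mathfrak m\langle c\rangle|-r$, and this is $\ge 1$ since $\widetilde\Delta_c$ exists. Hence $\nu^c\rho$ is still the maximal cuspidal support of $(\mathcal D^{\mathrm{MW}})^{r}(\mathfrak m)$ and its first MW segment is $[b^*,c]_{\rho}$ for some $b^*$. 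By the inequalities $x_c\ge\cdots\ge x_a$ of Lemma \ref{lem structure of segments from MW}(ii)(a), the starting points of the successive first MW segments of $\mathfrak m, \mathcal D^{\mathrm{MW}}(\mathfrak m),\ldots$ that end at $c$ are non-decreasing, the first being $a$; since $r=\varepsilon^{\mathrm{MW}}_{[a,c]_{\rho}}(\mathfrak m)+\cdots+\varepsilon^{\mathrm{MW}}_{[b-1,c]_{\rho}}(\mathfrak m)$ counts exactly those of starting point $<b$, we get $b^*\ge b$. It remains to show $b^*\le b$.

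For this, I would compare $\mathfrak m_r$ with $(\mathcal D^{\mathrm{MW}})^{r}(\mathfrak m)$ on the levels $b\le k\le c$. One may apply Lemmas \ref{lem structure of w minus max} and \ref{lem zel under const mw} with the genuine value $b^*$ in place of $b$ (legitimate since $\varepsilon^{\mathrm{MW}}_{[k,c]_{\rho}}(\mathfrak m)=0$ for $b\le k<b^*$, so the integer attached to $b^*$ is again $r$), and combine them with Lemma \ref{lem mw after zel} and Lemma \ref{lem structure of segments from MW} to check that for each $b\le k\le c$ the multiset $\mathfrak m_r\langle k\rangle$ coincides with $(\mathcal D^{\mathrm{MW}})^{r}(\mathfrak m)\langle k\rangle$ --- morally, the maximal segments deleted at level $k$ by the upward sequences are exactly balanced by the truncated segments $\Delta^-$ re-entering level $k$ in the first $r$ steps of $\mathcal D^{\mathrm{MW}}$. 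Since both Step 4 of Algorithm \ref{alg:der_Zel} and the MW algorithm proceed by picking shortest segments, this identification shows that the truncation step reaches level $b$ in $\mathfrak m_r$ (which holds by hypothesis) precisely when the MW chain of $(\mathcal D^{\mathrm{MW}})^{r}(\mathfrak m)$ reaches level $b$; the latter says that its first MW segment has relative length $\ge c-b+1$, i.e.\ $b^*\le b$. With $b^*\ge b$ this gives $b^*=b$, which is the assertion.

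The crux is the level-by-level comparison in the last paragraph: $\mathfrak m_r$ is built by \emph{deleting} maximal upward sequences while $(\mathcal D^{\mathrm{MW}})^{r}(\mathfrak m)$ is built by repeatedly \emph{shortening} MW chains, and one must use the minimal-linkedness bookkeeping of Lemma \ref{lem structure of w minus max}(a),(b),(d) and the description in Lemma \ref{lem mw after zel} to see these have the same effect on segments ending in $[b,c]_{\rho}$. An alternative is induction on $\ell_{rel}(\mathfrak m)$: by $\mathfrak m^{\#}=\Delta(\mathfrak m)+(\mathcal D^{\mathrm{MW}}(\mathfrak m))^{\#}$, the integer attached to $\mathcal D^{\mathrm{MW}}(\mathfrak m)$ with the same $b$ is $r-1$ and $(\mathcal D^{\mathrm{MW}})^{r}(\mathfrak m)=(\mathcal D^{\mathrm{MW}})^{r-1}(\mathcal D^{\mathrm{MW}}(\mathfrak m))$, so the statement for $\mathcal D^{\mathrm{MW}}(\mathfrak m)$ would finish it, once one establishes the compatibility $\mathcal D^{\mathrm{Zel}}_{[b,c]_{\rho}}(\mathfrak m)\neq\infty\Rightarrow\mathcal D^{\mathrm{Zel}}_{[b,c]_{\rho}}(\mathcal D^{\mathrm{MW}}(\mathfrak m))\neq\infty$; the obstacle is then this compatibility, proved by tracking how the removable upward sequences and the downward minimal sequence of Algorithm \ref{alg:der_Zel} transform under one step of $\mathcal D^{\mathrm{MW}}$, with the base case $b=a$ (where $r=0$ and $\Delta(\mathfrak m)=[b,c]_{\rho}$) trivial.
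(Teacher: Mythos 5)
Your argument is structured around the two inequalities $b^*\geq b$ and $b^*\leq b$, where $[b^*,c]_\rho$ denotes the first segment produced by the MW algorithm for $(\mathcal D^{\mathrm{MW}})^r(\mathfrak m)$; the first half is sound, but the second rests on a false premise. You assert that $\mathfrak m_r\langle k\rangle$ and $(\mathcal D^{\mathrm{MW}})^r(\mathfrak m)\langle k\rangle$ coincide for $b\leq k\leq c$. For $b\leq k\leq c-1$ and $r\geq 1$ the two multisets do not even have the same size: each of the $r$ removal upward sequences deletes exactly one segment ending at $k$, so $|\mathfrak m_r\langle k\rangle|=|\mathfrak m\langle k\rangle|-r$, whereas each of the first $r$ MW iterations deletes one segment ending at $k$ but simultaneously re-creates one (the shortening $\Delta^-$ of a segment from level $k+1$), so $|(\mathcal D^{\mathrm{MW}})^r(\mathfrak m)\langle k\rangle|=|\mathfrak m\langle k\rangle|$. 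Thus the level-by-level identification, and with it the deduction that the truncation step reaches level $b$ precisely when the MW chain of $(\mathcal D^{\mathrm{MW}})^r(\mathfrak m)$ does, has no support.

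The argument the paper intends (sketched as ``a similar construction'' to the $r'\leq r$ half of the proof of Lemma \ref{lem mw number=no max link zel}) avoids any such comparison. The $r$ removal upward sequences $\Delta^{b-1}_{i}\prec\cdots\prec\Delta^{c}_{i}$, together with the downward sequence $\widetilde\Delta_{b}\prec\cdots\prec\widetilde\Delta_{c}$ supplied by $\mathcal D^{\mathrm{Zel}}_{[b,c]_\rho}(\mathfrak m)\neq\infty$, exhibit, for each $b\leq k\leq c-1$, an $(r+1)$-element submultisegment of $\mathfrak m\langle k\rangle$ linked by mapping to an $(r+1)$-element submultisegment of $\mathfrak m\langle k+1\rangle$. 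One then straightens as in that proof and in Lemma \ref{lem back to minimal link}: replace level $c$ by the $r+1$ shortest segments $\{\Delta_{c,1},\ldots,\Delta_{c,r+1}\}$ of $\mathfrak m\langle c\rangle$ (these exist because $\widetilde\Delta_c\in\mathfrak m_r\langle c\rangle$ forces $|\mathfrak m\langle c\rangle|\geq r+1$) and descend via the minimal linkedness of $MW_k(\mathfrak m)$ and $MW_{k+1}(\mathfrak m)$ from Lemma \ref{lem structure of segments from MW}(ii)(c); this shows $|MW_k(\mathfrak m)|\geq r+1$ for every $b\leq k\leq c$, hence $x_b\geq r+1$, i.e.\ the first $r+1$ MW chains all reach level $b$, giving $b^*=a_{r+1}\leq b$. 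Note also that your fallback of applying Lemmas \ref{lem structure of w minus max} and \ref{lem zel under const mw} with $b^*$ in place of $b$ merely recovers $\mathcal D^{\mathrm{Zel}}_{[b^*,c]_\rho}(\mathfrak m)\neq\infty$, which is compatible with $b^*>b$; and the induction route hinges on the unproved implication $\mathcal D^{\mathrm{Zel}}_{[b,c]_\rho}(\mathfrak m)\neq\infty\Rightarrow\mathcal D^{\mathrm{Zel}}_{[b,c]_\rho}(\mathcal D^{\mathrm{MW}}(\mathfrak m))\neq\infty$.
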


\begin{proof}
  By Lemma \ref{lem mw number=no max link zel}, one obtains $r$ removal upward sequences of maximal linked segments in neighbors on $\mathfrak m$ ranging from $b-1$ to $c$, and then one downward sequence of minimal linked segments in neighbors from $c$ to $b$. From here, one can do a similar construction in the proof of Lemma \ref{lem mw number=no max link zel} to obtain segments participating in the MW algorithm. Since the construction and details are again elementary and are similar to the proof of Lemma \ref{lem mw number=no max link zel}, we omit further details.
\end{proof}

\begin{corollary}\label{prop:derivative}
    Let $\Delta_0 \in \mathrm{Seg}_\rho$ and $\mathfrak{m} \in \mathrm{Mult}_\rho$ such that $e(\Delta_0)$ is the maximal cuspidal support in $\mathfrak{m}$. If $\mathcal{D}^\mathrm{Zel}_{\Delta_0}(\mathfrak{m}) \neq \infty$, then
$\mathrm{D}^\mathrm{R}_{\Delta_0}(Z(\mathfrak{m})) \cong Z\left(\mathcal{D}^\mathrm{Zel}_{\Delta_0}(\mathfrak{m})\right)$.   
\end{corollary}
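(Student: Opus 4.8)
The plan is to bootstrap from the single-step MW derivative (Proposition~\ref{prop:MW_der}) together with the combinatorial identity of Proposition~\ref{prop fomrula for multiple mw}, and then to use the invertibility of $\mathrm{St}$-derivatives via integrals to peel off the auxiliary derivatives. Write $\Delta_0=[b,c]_\rho$, so that $\nu^c\rho$ is the maximal cuspidal support of $\mathfrak m$; let $[a,c]_\rho$ be the first segment produced by the MW algorithm for $\mathfrak m$, and set $r=\varepsilon^{\mathrm{MW}}_{[a,c]_\rho}(\mathfrak m)+\dots+\varepsilon^{\mathrm{MW}}_{[b-1,c]_\rho}(\mathfrak m)$ (so $r=0$ when $b=a$, in which case the statement is already Proposition~\ref{prop:MW_der} applied to $\Delta(\mathfrak m)=[b,c]_\rho$). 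The key preliminary observation is that, by Lemma~\ref{lem zel non infty to mw}, the hypothesis $\mathcal D^{\mathrm{Zel}}_{[b,c]_\rho}(\mathfrak m)\neq\infty$ forces $[b,c]_\rho$ to be the first segment produced by the MW algorithm for $(\mathcal D^{\mathrm{MW}})^{r}(\mathfrak m)$.

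First I would iterate Proposition~\ref{prop:MW_der}. Let $\Lambda_i$ denote the first MW segment of $(\mathcal D^{\mathrm{MW}})^{i-1}(\mathfrak m)$; then each $\mathrm D^{\mathrm R}_{\Lambda_i}$ carries $Z((\mathcal D^{\mathrm{MW}})^{i-1}(\mathfrak m))$ isomorphically onto $Z((\mathcal D^{\mathrm{MW}})^{i}(\mathfrak m))\neq 0$, and by the previous paragraph the $(r{+}1)$st step is $\mathrm D^{\mathrm R}_{[b,c]_\rho}$, so that, writing $\mathcal F=\mathrm D^{\mathrm R}_{\Lambda_r}\circ\cdots\circ\mathrm D^{\mathrm R}_{\Lambda_1}$,
\[
\mathrm D^{\mathrm R}_{[b,c]_\rho}\bigl(\mathcal F(Z(\mathfrak m))\bigr)\;\cong\;Z\bigl((\mathcal D^{\mathrm{MW}})^{r+1}(\mathfrak m)\bigr).
\]
Running the same procedure with $\mathfrak n:=\mathcal D^{\mathrm{Zel}}_{[b,c]_\rho}(\mathfrak m)$ in place of $\mathfrak m$, and writing $\mathcal F'=\mathrm D^{\mathrm R}_{\Lambda'_r}\circ\cdots\circ\mathrm D^{\mathrm R}_{\Lambda'_1}$ for the composite of the first $r$ MW-derivatives of $\mathfrak n$, Proposition~\ref{prop fomrula for multiple mw} gives $(\mathcal D^{\mathrm{MW}})^{r+1}(\mathfrak m)=(\mathcal D^{\mathrm{MW}})^{r}(\mathfrak n)$, hence
\[
\mathcal F'(Z(\mathfrak n))\;\cong\;Z\bigl((\mathcal D^{\mathrm{MW}})^{r}(\mathfrak n)\bigr)=Z\bigl((\mathcal D^{\mathrm{MW}})^{r+1}(\mathfrak m)\bigr)\cong\mathrm D^{\mathrm R}_{[b,c]_\rho}\bigl(\mathcal F(Z(\mathfrak m))\bigr).
\]

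The heart of the matter is then to identify $\mathcal F$ with $\mathcal F'$ and to commute $\mathrm D^{\mathrm R}_{[b,c]_\rho}$ past them. Using Lemmas~\ref{lem structure of segments from MW} and~\ref{lem mw after zel}, the segments $\Lambda'_1,\dots,\Lambda'_r$ participating in the successive MW algorithms for $\mathfrak n$ are drawn from the very same chains $MW_k(\mathfrak m)$ as $\Lambda_1,\dots,\Lambda_r$, with the segments $\Delta_{k,p_k^*}$ ($b\le k\le c$) deleted by the Zelevinsky algorithm exactly balanced by the replacements $\Delta_{k,p_k^*}^-$, which never participate; this should force $\Lambda'_i=\Lambda_i$ for $1\le i\le r$. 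Since $[b,c]_\rho$ is contained in (hence unlinked to) each $\Lambda_i$, one then obtains $\mathrm D^{\mathrm R}_{[b,c]_\rho}\circ\mathcal F=\mathcal F\circ\mathrm D^{\mathrm R}_{[b,c]_\rho}$, and substituting into the displays above gives $\mathcal F\bigl(\mathrm D^{\mathrm R}_{[b,c]_\rho}(Z(\mathfrak m))\bigr)\cong\mathcal F(Z(\mathfrak n))\neq 0$. As $\mathcal F$ is a composite of $\mathrm{St}$-derivatives each of which is nonzero at the relevant stage, applying the corresponding integrals $\mathrm I^{\mathrm R}_{\Lambda_i}$ one at a time peels $\mathcal F$ off and yields $\mathrm D^{\mathrm R}_{[b,c]_\rho}(Z(\mathfrak m))\cong Z(\mathfrak n)=Z(\mathcal D^{\mathrm{Zel}}_{[b,c]_\rho}(\mathfrak m))$.

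The main obstacle I expect is the combinatorial bookkeeping in the last paragraph: verifying $\Lambda'_i=\Lambda_i$ and the commutation $\mathrm D^{\mathrm R}_{[b,c]_\rho}\circ\mathcal F=\mathcal F\circ\mathrm D^{\mathrm R}_{[b,c]_\rho}$. An alternative route that sidesteps the commutation is to prove instead the purely combinatorial identity $\bigl(\mathcal D^{\mathrm{Zel}}_{[b,c]_\rho}(\mathfrak m)\bigr)^{\#}=\mathfrak m^{\#}-[b,c]_\rho$ — which follows from the same comparison of MW chains, by iterating~\eqref{eq:MW} on both sides of Proposition~\ref{prop fomrula for multiple mw}; granting this, one uses that $[b,c]_\rho$, having the maximal endpoint $\nu^c\rho$, precedes no segment of $\mathfrak m^{\#}$, so that $Z(\mathfrak m)=L(\mathfrak m^{\#})\hookrightarrow\lambda(\mathfrak m^{\#}-[b,c]_\rho)\times\mathrm{St}([b,c]_\rho)$, this map factors through $Z(\mathcal D^{\mathrm{Zel}}_{[b,c]_\rho}(\mathfrak m))\times\mathrm{St}([b,c]_\rho)$ by simplicity of the socle, and applying $\mathrm D^{\mathrm R}_{[b,c]_\rho}$ to the resulting identity $\mathrm I^{\mathrm R}_{[b,c]_\rho}\bigl(Z(\mathcal D^{\mathrm{Zel}}_{[b,c]_\rho}(\mathfrak m))\bigr)\cong Z(\mathfrak m)$ gives the corollary.
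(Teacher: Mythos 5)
Your main argument is essentially the paper's own proof: it iterates Proposition \ref{prop:MW_der}, uses Lemma \ref{lem zel non infty to mw} and Proposition \ref{prop fomrula for multiple mw} (whose inputs, Lemmas \ref{lem structure of segments from MW} and \ref{lem mw after zel}, are exactly what pins down the identification $\Lambda_i'=\Lambda_i$ that you flag as the remaining bookkeeping), commutes $\mathrm D^{\mathrm R}_{[b,c]_\rho}$ past the unlinked derivatives, and then cancels the auxiliary derivatives with integrals. Your alternative endgame via $\bigl(\mathcal D^{\mathrm{Zel}}_{[b,c]_\rho}(\mathfrak m)\bigr)^{\#}=\mathfrak m^{\#}-[b,c]_\rho$ and the socle argument is also sound, but it rests on the same combinatorial comparison of MW chains and merely repackages the conclusion in the style of the paper's proof of Proposition \ref{prop:MW_der}.
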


\begin{proof}
Let $e(\Delta_0)=\nu^c\rho$ for some $c$. Let $\Delta_0=[b,c]_{\rho}$. Let $\Delta(\mathfrak{m})=[a,c]_{\rho}$ be the first segment produced in the MW algorithm for $\mathfrak{m}$. Let
\[  r=\varepsilon^{\mathrm{MW}}_{[a,c]_{\rho}}(\mathfrak m)+\ldots +\varepsilon^{\mathrm{MW}}_{[b-1,c]_{\rho}}(\mathfrak m) .
\]
As $\mathcal D^{\mathrm{Zel}}_{\Delta_0}(\mathfrak m)\neq \infty$, one observes the proof of Lemma \ref{lem mw number=no max link zel} also gives $a \leq b$. By Lemma \ref{lem mw after zel},
\[  (\mathcal D^{\mathrm{MW}})^{r+1}(\mathfrak m) =(\mathcal D^{\mathrm{MW}})^r(\mathcal D^{\mathrm{Zel}}_{[b,c]_{\rho}}(\mathfrak m))) .
\]
By Lemma \ref{lem zel non infty to mw}, the first segment produced in the MW algorithm for $(\mathcal D^{\mathrm{MW}})^r(\mathfrak m)$ is $[b,c]_{\rho}$. Then, by applying Proposition \ref{prop:MW_der} multiple times on above equation, we have:
\begin{align*} 
 & \mathrm D^{\mathrm R}_{[b,c]_{\rho}}\circ (\mathrm D^{\mathrm R}_{[b-1,c]_{\rho}})^{\varepsilon^{\mathrm{MW}}_{[b-1,c]}(\mathfrak m)}\circ \ldots \circ (\mathrm D^{\mathrm R}_{[a,c]_{\rho}})^{\varepsilon^{\mathrm{MW}}_{[a,c]}(\mathfrak m)}(Z(\mathfrak m)) \\
\cong & (\mathrm{D}^{\mathrm R}_{[b-1,c]_{\rho}})^{\varepsilon^{\mathrm{MW}}_{[b-1,c]_{\rho}}(\mathfrak m)}\circ \ldots \circ (\mathrm{D}^\mathrm{R}_{[a,c]_{\rho}})^{\varepsilon^{\mathrm{MW}}_{[a,c]}(\mathfrak m)} \circ  \mathrm D^{\mathrm R}_{[b,c]_{\rho}} (Z(\mathcal D^{\mathrm{Zel}}_{[b,c]_{\rho}}(\mathfrak m))) .
\end{align*}
By the commutativity of derivatives for unlinked segments (see e.g. \cite[Lemma 4.4]{{Cha_csq}}), we have 
\begin{align*} 
 & (\mathrm D^{\mathrm R}_{[b-1,c]_{\rho}})^{\varepsilon^{\mathrm{MW}}_{[b-1,c]}(\mathfrak m)}\circ \ldots \circ (\mathrm D^{\mathrm R}_{[a,c]_{\rho}})^{\varepsilon^{\mathrm{MW}}_{[a,c]}(\mathfrak m)} \circ  \mathrm D^{\mathrm R}_{[b,c]_{\rho}}(Z(\mathfrak m)) \\
\cong & (\mathrm{D}^{\mathrm R}_{[b-1,c]_{\rho}})^{\varepsilon^{\mathrm{MW}}_{[b-1,c]_{\rho}}(\mathfrak m)}\circ \ldots \circ (\mathrm{D}^{\mathrm{R}}_{[a,c]_{\rho}})^{\varepsilon^{\mathrm{MW}}_{[a,c]}(\mathfrak m)} (Z(\mathcal D^{\mathrm{Zel}}_{[b,c]_{\rho}}(\mathfrak m))) .
\end{align*}
Now, applying suitable integrals multiple $r$-times, we can cancel the first $r$ derivatives on both sides, we then have
$\mathrm{D}^\mathrm{R}_{[b,c]_{\rho}}(Z(\mathfrak m)) \cong Z(\mathcal D^{\mathrm{Zel}}_{[b,c]_{\rho}}(\mathfrak m))$.
\end{proof}

\subsection{Reduction to maximal cuspidal support case} \label{ss reduction to max cus}

For $\mathfrak{m} \in \mathrm{Mult}_\rho$ and $x \in \mathbb{Z} $, we denote $\mathfrak{m}^{\leq x}=\{[a^\prime,b^\prime]_\rho \in \mathfrak{m} \mid b^\prime \leq x\}$ and $\mathfrak m^{>x}=\left\{[a^{\prime},b^{\prime}]_{\rho} \in \mathfrak m\mid b^{\prime}>x \right\}$.

%Then the $\rho$-derivative in the Zelevinsky classification gives the following Lemma \ref{lem:Zel_der=0_A} and \ref{lem:Zel_der=0_B}.

%%%%%%%%%%%%%%%%
%\begin{lemma}\label{lem:Zel_der=0_A}
%	Fix $\mathfrak{m} \in \mathrm{Mult}_\rho$. For any $c^\prime \geq c$, we have \[\mathrm{D}_{[c]_\rho}\left(Z\left(\mathfrak{m}^{\leq c^\prime}\right)\right)=0 \Longleftrightarrow \mathrm{D}_{[c]_\rho}(Z(\mathfrak{m}))=0.\]   
%\end{lemma}

%\begin{lemma}\label{lem:Zel_der=0_B}
%	Let $\mathfrak{m}, \mathfrak{n} \in \mathrm{Mult}_\rho$ such that $Z(\mathfrak{n})=\mathrm{D}_{[c]_\rho}(Z(\mathfrak{m})) \neq 0$. Then, for any $c^\prime \geq c$, \[\mathrm{D}_{[c]_\rho}\left(Z\left(\mathfrak{m}^{\leq c^\prime}\right)\right)= Z\left({\mathfrak{n}}^{\leq c^\prime}\right).\]   
%\end{lemma}
%%%%%%%%%%
\begin{proposition}\label{prop:Zel_der=0}
	Let $[a,c]_\rho \in \mathrm{Seg}_\rho$ and $\mathfrak{m} \in \mathrm{Mult}_\rho$. Then, 
    \begin{enumerate}
        \item[(i)] For any $c^\prime \geq c$, $\mathrm{D}^\mathrm{R}_{[a,c]_\rho}\left(Z\left(\mathfrak{m}^{\leq c^\prime}\right)\right) = 0$ if and only if $\mathrm{D}^\mathrm{R}_{[a,c]_\rho}(Z(\mathfrak{m})) = 0$.    
    \item[(ii)] Suppose $\mathrm D^\mathrm{R}_{[a,c]_{\rho}}(Z(\mathfrak m))\neq 0$. Let $\mathfrak p\in \mathrm{Mult}_{\rho}$ such that $Z(\mathfrak p)\cong \mathrm D^\mathrm{R}_{[a,c]_{\rho}}(Z(\mathfrak m^{\leq c}))$. Then, 
    \[   \mathrm{D}^\mathrm{R}_{[a,c]_{\rho}}(Z(\mathfrak m)) \cong Z(\mathfrak m^{>c}+\mathfrak p) .
    \]
    \end{enumerate}
\end{proposition}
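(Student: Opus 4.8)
The plan is to reduce the general case to the maximal cuspidal support case handled in Corollary \ref{prop:derivative} by ``peeling off'' the segments whose right endpoint is $\nu^c\rho$ from those lying strictly below, and then showing the derivative only affects the part $\mathfrak m^{\leq c}$.

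\textbf{Setup and strategy.} Write $\mathfrak m=\mathfrak m^{>c}+\mathfrak m^{\leq c}$. Every segment in $\mathfrak m^{>c}$ ends at $\nu^{c'}\rho$ for some $c'>c$, so in particular $\nu^c\rho$ is the maximal cuspidal support of $\mathfrak m^{\leq c}$ but not of $\mathfrak m$ in general. The key representation-theoretic input is the geometric lemma / Zelevinsky product structure: for any ordering of segments respecting the $\nprec$ condition, $Z(\mathfrak m)$ embeds into $\zeta(\mathfrak m)$, and one can arrange an ordering so that $Z(\mathfrak m)\hookrightarrow Z(\mathfrak m^{>c})\times Z(\mathfrak m^{\leq c})$, since no segment in $\mathfrak m^{\leq c}$ precedes any segment in $\mathfrak m^{>c}$ (a segment $\Delta$ with $e(\Delta)\leq\nu^c\rho$ cannot precede a segment $\Delta'$ with $e(\Delta')>\nu^c\rho$ unless they are linked in the forbidden direction — I should check this: $\Delta\prec\Delta'$ requires $e(\Delta)<e(\Delta')$, which is allowed; so the correct statement is that we can order $\mathfrak m^{\leq c}$ first then $\mathfrak m^{>c}$, i.e. $\Delta_i\nprec\Delta_j$ for $\Delta_i\in\mathfrak m^{>c}$, $\Delta_j\in\mathfrak m^{\leq c}$, which holds because $e(\Delta_i)>\nu^c\rho\geq e(\Delta_j)$). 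Thus $Z(\mathfrak m)\hookrightarrow Z(\mathfrak m^{\leq c})\times Z(\mathfrak m^{>c})$.

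\textbf{Main steps.} First I would establish (i). Applying $\mathrm{D}^{\mathrm R}_{[a,c]_\rho}$ and using that derivatives are computed from Jacquet modules together with the Leibniz-type rule for $\rho'$-derivatives with $\rho'=\nu^c\rho$ applied iteratively along the segment $[a,c]_\rho$ (or more directly, using that $\mathrm{D}^{\mathrm R}_{[a,c]_\rho}$ is a composite of $\rho$-derivatives by Lemma \ref{lem:D[ab]=D[a+1,b]_D[a]_pi} after suitable reductions), one sees that the derivative ``sees'' only segments ending at $\nu^c\rho$ or extending through it; since every segment of $\mathfrak m^{>c}$ has right endpoint strictly above $c$, the cuspidal $\nu^c\rho$ appears in it only in interior position, and taking the $[a,c]_\rho$-derivative of a product $\pi\times\tau$ where $\tau=Z(\mathfrak m^{>c})$ has no $\nu^c\rho$ at the ``end'' will not interact with $\tau$. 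Concretely, I would argue via the highest derivative formalism of Section \ref{ss hd removal}: by Lemma \ref{lem non-zero der hd}, $\mathrm{D}^{\mathrm R}_{[a,c]_\rho}(Z(\mathfrak m))\neq 0$ iff $\mathfrak{hd}(Z(\mathfrak m))$ contains a segment $[a,d]_\rho$ with $d\geq c$, and one checks (using the product structure and that $\mathfrak{hd}$ of a product is controlled by the factors, or via the Zelevinsky-MW correspondence $\varepsilon^{\mathrm{MW}}=\varepsilon^{\mathrm R}$) that such a segment with $a$ as left endpoint and length reaching $c$ is detected already in $\mathfrak m^{\leq c'}$ for any $c'\geq c$; in fact restricting to $\mathfrak m^{\leq c'}$ versus $\mathfrak m$ does not change the relevant count because the extra segments all have right endpoint $>c'\geq c$. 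Taking $c'=\infty$ recovers the full $\mathfrak m$; taking $c'=c$ handles part (ii)'s hypothesis.

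\textbf{Proving (ii).} Assume $\mathrm{D}^{\mathrm R}_{[a,c]_\rho}(Z(\mathfrak m))\neq 0$, so by (i) also $\mathrm{D}^{\mathrm R}_{[a,c]_\rho}(Z(\mathfrak m^{\leq c}))\neq 0$, and set $Z(\mathfrak p)=\mathrm{D}^{\mathrm R}_{[a,c]_\rho}(Z(\mathfrak m^{\leq c}))$. From the embedding $Z(\mathfrak m)\hookrightarrow Z(\mathfrak m^{\leq c})\times Z(\mathfrak m^{>c})$ and exactness/left-exactness properties of the Jacquet functor, together with the fact that $\mathrm{St}([a,c]_\rho)$ can only be ``split off'' from the $Z(\mathfrak m^{\leq c})$-factor (because $Z(\mathfrak m^{>c})$, having maximal cuspidal support $\nu^{c'}\rho$ with $c'>c$, admits no $[a,c]_\rho$-derivative interacting at the relevant filtration step — here I would invoke a geometric-lemma computation of $\mathrm{Jac}$ of a product, as in \cite{Zel}, showing that the subquotient $\tau\boxtimes\mathrm{St}([a,c]_\rho)$ forcing the derivative comes from the first factor), one obtains $\mathrm{D}^{\mathrm R}_{[a,c]_\rho}(Z(\mathfrak m))\hookrightarrow Z(\mathfrak p)\times Z(\mathfrak m^{>c})$. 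The socle of the right-hand side is $Z(\mathfrak p+\mathfrak m^{>c})$ provided the relevant ordering condition holds, which it does since every segment of $\mathfrak m^{>c}$ ends above $c$ while every segment of $\mathfrak p$ ends at most at $c$ (the derivative only shortens, replacing each $\widetilde\Delta_i$ by $\widetilde\Delta_i^-$, so right endpoints in $\mathfrak p$ stay $\leq c$). Since $\mathrm{D}^{\mathrm R}_{[a,c]_\rho}(Z(\mathfrak m))$ is irreducible and embeds into this induced module, it must equal the socle $Z(\mathfrak m^{>c}+\mathfrak p)$.

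\textbf{Main obstacle.} The delicate point is justifying that the $[a,c]_\rho$-derivative of the product $Z(\mathfrak m^{\leq c})\times Z(\mathfrak m^{>c})$ equals $\mathrm{D}^{\mathrm R}_{[a,c]_\rho}(Z(\mathfrak m^{\leq c}))\times Z(\mathfrak m^{>c})$ — i.e. a ``Leibniz rule with vanishing cross terms.'' For $\rho$-derivatives ($[a,a]_\rho$) this is the standard Leibniz rule plus the observation that $\mathrm{D}^{\mathrm R}_{\nu^{c}\rho}(Z(\mathfrak m^{>c}))$ may be nonzero in general, so one cannot naively iterate; the correct argument must use that we derive along the \emph{whole} segment $[a,c]_\rho$ and that $\nu^c\rho$ sits at the \emph{top} of this segment, so any contribution from the $\mathfrak m^{>c}$-factor would require that factor to have a quotient of its Jacquet module of the form $(-)\boxtimes\mathrm{St}([a,c]_\rho)$ or a compatible piece, which is impossible because $\mathrm{St}([a,c]_\rho)$ has cuspidal support ending at $\nu^c\rho$ whereas the cuspidal support of $Z(\mathfrak m^{>c})$, read from the right, ends at $\nu^{c'}\rho$, $c'>c$. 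I expect this to require either a careful Geometric Lemma bookkeeping or an appeal to the square-irreducibility of $\mathrm{St}([a,c]_\rho)$ and known results on when $\pi\times\sigma$ has $\mathrm{St}$-derivative localized in one factor; cleanest might be to phrase everything through $\varepsilon^{\mathrm{MW}}$ and reduce (ii) to a purely combinatorial statement about the MW algorithm applied to $\mathfrak m$ versus $\mathfrak m^{\leq c}$, where the segments ending above $c$ are simply spectators in the first $r+1$ rounds of the algorithm.
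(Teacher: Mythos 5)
You have the right germ of an idea — split $\mathfrak m$ into its part with right endpoint above $c'$ and its part below, embed $Z(\mathfrak m)$ into a product, and run Frobenius reciprocity for right derivatives. That is exactly how the paper argues. But the execution has a concrete error and two material gaps.

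First, the factor order is wrong. You correctly observe that $\Delta_i\nprec\Delta_j$ holds when $\Delta_i\in\mathfrak m^{>c}$ and $\Delta_j\in\mathfrak m^{\leq c}$ (because $e(\Delta_i)>c\geq e(\Delta_j)$), but this is precisely the condition for ordering $\mathfrak m^{>c}$ \emph{first}, yielding $Z(\mathfrak m)\hookrightarrow Z(\mathfrak m^{>c'})\times Z(\mathfrak m^{\leq c'})$ — not $Z(\mathfrak m)\hookrightarrow Z(\mathfrak m^{\leq c})\times Z(\mathfrak m^{>c})$ as you conclude. The second embedding can fail: take $[1,c]\in\mathfrak m^{\leq c}$ and $[2,c+1]\in\mathfrak m^{>c}$; the first precedes the second. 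The order matters, because with the correct order the chain $Z(\mathfrak m)\hookrightarrow Z(\mathfrak m^{>c'})\times Z(\mathfrak m^{\leq c'})\hookrightarrow Z(\mathfrak m^{>c'})\times \mathrm D^{\mathrm R}_{[a,c]_\rho}(Z(\mathfrak m^{\leq c'}))\times\mathrm{St}([a,c]_\rho)$ places $\mathrm{St}([a,c]_\rho)$ on the far right, so Frobenius reciprocity for the parabolic with $\mathrm{GL}_{\ell}$ at the bottom-right block immediately gives $\mathrm D^{\mathrm R}_{[a,c]_\rho}(Z(\mathfrak m))\neq 0$; in your ordering the $\mathrm{St}$ lands in the middle and you would need an extra (nontrivial) commutation past $Z(\mathfrak m^{>c})$.

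Second, your argument for the converse direction of (i) — that $\mathrm D^{\mathrm R}_{[a,c]_\rho}(Z(\mathfrak m))\neq 0$ forces $\mathrm D^{\mathrm R}_{[a,c]_\rho}(Z(\mathfrak m^{\leq c'}))\neq 0$ — is essentially not given; you gesture at highest derivative multisegments and the MW correspondence, but neither Lemma \ref{lem non-zero der hd} nor anything else in the paper says that $\mathfrak{hd}$ of a socle of a product decomposes factor by factor. The paper instead sets $Z(\mathfrak n)=\mathrm D^{\mathrm R}_{[a,c]_\rho}(Z(\mathfrak m))$, embeds $Z(\mathfrak m)\hookrightarrow Z(\mathfrak n)\times\mathrm{St}(\Delta)\hookrightarrow Z(\mathfrak n^{>c'})\times Z(\mathfrak n^{\leq c'})\times\mathrm{St}(\Delta)$, and then invokes the standard arguments of \cite[Lemma 4.13]{LM16} to deduce $\mathfrak m^{>c'}=\mathfrak n^{>c'}$ and $Z(\mathfrak m^{\leq c'})\hookrightarrow Z(\mathfrak n^{\leq c'})\times\mathrm{St}(\Delta)$, after which Frobenius reciprocity finishes (i) and (ii) in one stroke. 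Notice this decomposes the \emph{derivative} $\mathfrak n$, not only $\mathfrak m$ — that is the key move you are missing. Relatedly, the ``Leibniz rule with vanishing cross terms'' you flag as the main obstacle is not used and is not needed: all the work is done by the two product embeddings and Frobenius reciprocity, with no Jacquet-module filtration bookkeeping.
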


\begin{proof}
Let $\Delta=[a,c]_{\rho}$. Suppose $\mathrm D_{[a,c]_{\rho}}^{\mathrm R}(Z(\mathfrak m^{\leq c'}))\neq 0$. Then $Z(\mathfrak m^{\leq c'})\hookrightarrow \mathrm{D}^{\mathrm R}_{[a,c]_{\rho}}(Z(\mathfrak m^{\leq c'}))\times \mathrm{St}([a,c']_{\rho})$, and so
\[  Z(\mathfrak m)\hookrightarrow Z(\mathfrak m^{>c'})\times Z(\mathfrak m^{\leq c'}) \hookrightarrow Z(\mathfrak m^{>c'})\times \mathrm{D}^{\mathrm R}_{[a,c]_{\rho}}(Z(\mathfrak m^{\leq c'}))\times \mathrm{St}([a,c]_{\rho}).
\]
Thus, $Z(\mathfrak m) \hookrightarrow \tau' \times \mathrm{St}([a,c']_{\rho})$ for some irreducible composition factor $\tau'$ in $Z(\mathfrak m^{>c'})\times \mathrm{D}^{\mathrm R}_{[a,c]_{\rho}}(Z(\mathfrak m^{\leq c'}))$. By Frobenius reciprocity, we have $\mathrm D^{\mathrm R}_{[a,c]_{\rho}}(Z(\mathfrak m))\neq 0$.

Suppose $\mathrm{D}^\mathrm{R}_{[a,c]_{\rho}}(Z(\mathfrak m))\neq 0$ and we shall show that $\mathrm{D}^\mathrm{R}_{[a,c]_{\rho}}(Z(\mathfrak m^{\leq c'}))\neq 0$. Let $\mathfrak n \in \mathrm{Mult}_{\rho}$ such that $Z(\mathfrak n)=\mathrm D^\mathrm{R}_{[a,c]_{\rho}}(Z(\mathfrak m))$. Then we have embeddings:
\[  Z(\mathfrak m) \hookrightarrow Z(\mathfrak n) \times \mathrm{St}(\Delta)  \hookrightarrow Z(\mathfrak n^{> c'})\times Z(\mathfrak n^{\leq c'}) \times \mathrm{St}(\Delta) ,
\]
where the first one follows from Frobenius reciprocity and the second one follows from e.g. \cite[Proposition 3.6]{LM16}. Now, by standard arguments, see e.g. \cite[Lemma 4.13]{LM16}, one has that $\mathfrak m^{> c'}=\mathfrak n^{> c'}$ and
\[  Z(\mathfrak m^{\leq c'}) \hookrightarrow Z(\mathfrak n^{\leq c'})\times \mathrm{St}(\Delta) .
\]
Now, applying Frobenius reciprocity, one has 
\[ \mathrm D^{\mathrm R}_{[a,c]_{\rho}}(Z(\mathfrak m^{\leq c'}))\neq 0 \quad \mbox{and} \quad Z(\mathfrak n^{\leq c'})\cong \mathrm D^{\mathrm R}_{[a,c]_{\rho}}(Z(\mathfrak m^{\leq c'})).\]
This gives (ii) and the only if direction of (i).
\end{proof}

\subsection{Main result}

\begin{theorem} \label{thm alg der zel}
Let $\mathfrak m \in \mathrm{Mult}_{\rho}$ and let $\Delta \in \mathrm{Seg}_{\rho}$. Then
\begin{enumerate}
\item[(i)] $\mathrm{D}^{\mathrm R}_{\Delta}(Z(\mathfrak m))\neq 0$ if and only if $\mathcal D^{\mathrm{Zel}}_{\Delta}(\mathfrak m)\neq \infty$.
\item[(ii)] If $\mathcal D^{\mathrm{Zel}}_{\Delta}(\mathfrak m) \neq \infty$, we have $\mathrm{D}^{\mathrm R}_{\Delta}(Z(\mathfrak m)) \cong Z\left(\mathcal D^{\mathrm{Zel}}_{\Delta}(\mathfrak m)\right)$.
\end{enumerate}
\end{theorem}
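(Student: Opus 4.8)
The plan is to reduce the general statement to the maximal cuspidal support case already handled in Corollary \ref{prop:derivative}, using the reduction result Proposition \ref{prop:Zel_der=0}. Write $\Delta=[a,c]_{\rho}$. First I would observe that by Proposition \ref{prop:Zel_der=0}(i), the non-vanishing of $\mathrm{D}^{\mathrm{R}}_{[a,c]_{\rho}}(Z(\mathfrak m))$ is equivalent to the non-vanishing of $\mathrm{D}^{\mathrm{R}}_{[a,c]_{\rho}}(Z(\mathfrak m^{\leq c}))$, and in the multisegment $\mathfrak m^{\leq c}$ the element $\nu^c\rho$ is (weakly) the maximal cuspidal support; moreover, the steps of Algorithm \ref{alg:der_Zel} applied to $\mathfrak m$ and to $\mathfrak m^{\leq c}$ clearly produce the same segments $\widetilde{\Delta}_i$, since all upward and downward sequences involved only use segments ending in $\nu^{a-1}\rho,\ldots,\nu^c\rho$, which are all contained in $\mathfrak m^{\leq c}$; hence $\mathcal D^{\mathrm{Zel}}_{[a,c]_{\rho}}(\mathfrak m)\neq\infty$ if and only if $\mathcal D^{\mathrm{Zel}}_{[a,c]_{\rho}}(\mathfrak m^{\leq c})\neq\infty$, and when they are finite, $\mathcal D^{\mathrm{Zel}}_{[a,c]_{\rho}}(\mathfrak m)=\mathfrak m^{>c}+\mathcal D^{\mathrm{Zel}}_{[a,c]_{\rho}}(\mathfrak m^{\leq c})$ (as multisegments, since the segments removed and re-inserted all lie in $\mathfrak m^{\leq c}$).

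With that reduction in place, the core case is $\mathfrak m=\mathfrak m^{\leq c}$, i.e. $\nu^c\rho$ is the maximal cuspidal support. Here Corollary \ref{prop:derivative} gives directly that if $\mathcal D^{\mathrm{Zel}}_{[a,c]_{\rho}}(\mathfrak m)\neq\infty$ then $\mathrm{D}^{\mathrm{R}}_{[a,c]_{\rho}}(Z(\mathfrak m))\cong Z(\mathcal D^{\mathrm{Zel}}_{[a,c]_{\rho}}(\mathfrak m))$, which in particular is nonzero. For the converse direction of (i) in this case — that $\mathrm{D}^{\mathrm{R}}_{[a,c]_{\rho}}(Z(\mathfrak m))\neq 0$ implies $\mathcal D^{\mathrm{Zel}}_{[a,c]_{\rho}}(\mathfrak m)\neq\infty$ — I would argue contrapositively: if $\mathcal D^{\mathrm{Zel}}_{[a,c]_{\rho}}(\mathfrak m)=\infty$, then by Step 5' some $\widetilde\Delta_i$ fails to exist, meaning that after removing all the maximal upward sequences the residual multisegment $\mathfrak m_k$ lacks a full downward chain of minimal linked segments from $c$ to $a$. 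By Lemma \ref{lem mw number=no max link zel}, the number of those maximal upward sequences equals $\varepsilon^{\mathrm{MW}}_{[a,c]_{\rho}}(\mathfrak m)+\cdots+\varepsilon^{\mathrm{MW}}_{[b-1,c]_{\rho}}(\mathfrak m)$ with $b=a$, i.e. simply $0$ when... more carefully, one uses Lemma \ref{lem zel non infty to mw} in the contrapositive: if $[b,c]_{\rho}$ with $b=a$ were the first segment of the MW algorithm for $(\mathcal D^{\mathrm{MW}})^r(\mathfrak m)$, then tracing back through Lemmas \ref{lem structure of w minus max} and \ref{lem zel under const mw} would construct the missing downward sequence, a contradiction. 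Equivalently one can invoke Lemma \ref{lem non-zero der hd} together with the fact that $\varepsilon^{\mathrm{MW}}_{[a',c]_{\rho}}(\mathfrak m)=\varepsilon^{\mathrm{R}}_{[a',c]_{\rho}}(Z(\mathfrak m))$ to detect non-vanishing of the derivative combinatorially, and match it against the success of Algorithm \ref{alg:der_Zel}.

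Finally, I would assemble the pieces: statement (ii) follows for general $\mathfrak m$ from the maximal-support case via $\mathrm{D}^{\mathrm{R}}_{[a,c]_{\rho}}(Z(\mathfrak m))\cong Z(\mathfrak m^{>c}+\mathfrak p)$ with $Z(\mathfrak p)\cong\mathrm{D}^{\mathrm{R}}_{[a,c]_{\rho}}(Z(\mathfrak m^{\leq c}))$ (Proposition \ref{prop:Zel_der=0}(ii)), combined with $\mathfrak p=\mathcal D^{\mathrm{Zel}}_{[a,c]_{\rho}}(\mathfrak m^{\leq c})$ from Corollary \ref{prop:derivative} and the identity $\mathcal D^{\mathrm{Zel}}_{[a,c]_{\rho}}(\mathfrak m)=\mathfrak m^{>c}+\mathcal D^{\mathrm{Zel}}_{[a,c]_{\rho}}(\mathfrak m^{\leq c})$ from the reduction step; statement (i) follows by combining Proposition \ref{prop:Zel_der=0}(i), the reduction identity for the algorithm, and the equivalence in the maximal-support case. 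The main obstacle I anticipate is the converse direction of (i) in the maximal-support case — precisely showing that a genuine non-zero St-derivative forces Algorithm \ref{alg:der_Zel} to run to completion; this is where the minimal-linkedness bookkeeping of Lemmas \ref{lem structure of segments from MW}--\ref{lem zel non infty to mw} does the real work, and one must check carefully that the iterated MW algorithm and the upward/downward-sequence procedure of Algorithm \ref{alg:der_Zel} are genuinely dual descriptions of the same combinatorics (Proposition \ref{prop fomrula for multiple mw} and Lemma \ref{lem zel non infty to mw}), so that no obstruction is overlooked.
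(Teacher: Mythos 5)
Your overall decomposition matches the paper's: reduce to $\mathfrak m^{\leq c}$ via Proposition~\ref{prop:Zel_der=0}, note that Algorithm~\ref{alg:der_Zel} only sees segments ending in $\nu^{a-1}\rho,\ldots,\nu^c\rho$ so $\mathcal D^{\mathrm{Zel}}_{\Delta}(\mathfrak m)=\mathfrak m^{>c}+\mathcal D^{\mathrm{Zel}}_{\Delta}(\mathfrak m^{\leq c})$, and then invoke Corollary~\ref{prop:derivative} for the maximal-support case. That reduction, the ``if'' half of (i), and part (ii) are all correct and essentially the same as what the paper does.

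The genuine gap is in the ``only if'' half of (i) in the maximal-support case, which you yourself flag as ``the main obstacle.'' Your proposed contradiction argument is circular: you assume $\mathcal D^{\mathrm{Zel}}_{\Delta}(\mathfrak m)=\infty$, then say ``\emph{if} $[b,c]_{\rho}$ were the first segment produced by the MW algorithm for $(\mathcal D^{\mathrm{MW}})^r(\mathfrak m)$, then Lemma~\ref{lem zel under const mw} would give $\mathcal D^{\mathrm{Zel}}_{\Delta}(\mathfrak m)\neq\infty$, a contradiction.'' But you never establish that hypothesis; the derivative hypothesis $\mathrm D^{\mathrm R}_{\Delta}(Z(\mathfrak m))\neq 0$ plays no role in the argument, so no contradiction with the intended conclusion ($\mathrm D^{\mathrm R}_{\Delta}(Z(\mathfrak m))=0$) is reached. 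There is also a notational slip earlier: in Lemma~\ref{lem mw number=no max link zel}, $a$ denotes the left end of the first MW segment, not of your target segment, so setting ``$b=a$'' does not collapse the sum to zero.

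What is actually missing is the bridge from the representation-theoretic non-vanishing $\mathrm D^{\mathrm R}_{[b,c]_{\rho}}(Z(\mathfrak m^{\leq c}))\neq 0$ to the combinatorial statement that $[b,c]_{\rho}$ is the first segment of the MW algorithm for $(\mathcal D^{\mathrm{MW}})^r(\mathfrak m^{\leq c})$, where $r=\varepsilon^{\mathrm{MW}}_{[a,c]_{\rho}}(\mathfrak m^{\leq c})+\cdots+\varepsilon^{\mathrm{MW}}_{[b-1,c]_{\rho}}(\mathfrak m^{\leq c})$. The paper closes this via Proposition~\ref{prop:MW_der} together with \cite[Proposition 9.3(2)]{Cha_csq}, which controls how the invariants $\varepsilon^{\mathrm R}_{[\cdot,c]_{\rho}}$ change under the iterated MW derivatives, showing $\varepsilon^{\mathrm R}_{[b,c]_{\rho}}$ stays positive and $\varepsilon^{\mathrm R}_{[a',c]_{\rho}}$ vanishes for $a'\leq b-1$ after those $r$ steps; only then does Proposition~\ref{prop fomrula for multiple mw} apply to give $\mathcal D^{\mathrm{Zel}}_{\Delta}(\mathfrak m^{\leq c})\neq\infty$. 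Your alternative suggestion to invoke $\varepsilon^{\mathrm{MW}}_{[a',c]_{\rho}}(\mathfrak m)=\varepsilon^{\mathrm R}_{[a',c]_{\rho}}(Z(\mathfrak m))$ would also work, but that identity is only asserted in the paper (``it can be shown''), and justifying it rigorously would itself require the same $\varepsilon$-bookkeeping you are trying to avoid. Until this step is supplied, the ``only if'' direction of (i) is not proven.
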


\begin{proof}
Suppose $\mathrm D^{\mathrm{R}}_{\Delta}(Z(\mathfrak m))\neq 0$. Write $\Delta=[b,c]_{\rho}$. By Proposition \ref{prop:Zel_der=0}, we have 
\begin{align} \label{eqn non zero derivative red}
\mathrm D^{\mathrm R}_{[b,c]_{\rho}}(Z(\mathfrak m^{\leq c})) \neq 0 .
\end{align}
Let $[a,c]_\rho$ be the first segment produced in the MW algorithm for $\mathcal D^{\mathrm{MW}}(\mathfrak m^{\leq c})$.

Let $r=\varepsilon^{\mathrm{MW}}_{[a,c]_{\rho}}(\mathfrak m^{\leq c})+\ldots+\varepsilon^{\mathrm{MW}}_{[b-1,c]_{\rho}}(\mathfrak m^{\leq c})$. Then, by Proposition \ref{prop:MW_der}, 
\[  (\mathrm{D}^{\mathrm{R}}_{[b-1,c]_{\rho}})^{\varepsilon^{\mathrm{MW}}_{[b-1,c]_{\rho}}(\mathfrak m^{\leq c})}\circ \ldots \circ  (\mathrm D^{\mathrm R}_{[a,c]_{\rho}})^{\varepsilon^{\mathrm{MW}}_{[a,c]_{\rho}}(\mathfrak m^{\leq c})}(Z(\mathfrak m^{\leq c})) \neq 0
\]

By the third bullet of \cite[Proposition 9.3(2)]{Cha_csq} and (\ref{eqn non zero derivative red}), one has:
\begin{align} \label{eqn epsilon mw 2}
 \varepsilon^{\mathrm R}_{[b,c]_{\rho}}((\mathrm D^{\mathrm R}_{[b-1,c]_{\rho}})^{\varepsilon^{\mathrm{MW}}_{[b-1,c]_\rho}(\mathfrak m^{\leq c})}\circ \ldots \circ (\mathrm D^{\mathrm R}_{[a,c]_{\rho}})^{\varepsilon^{\mathrm{MW}}_{[a,c]_\rho}(\mathfrak m^{\leq c})} (Z(\mathfrak m^{\leq c})) =\varepsilon^{\mathrm R}_{[b,c]_{\rho}}(Z(\mathfrak m^{\leq c})) \neq 0
\end{align}
and, for $ a'\leq b-1$,
\begin{align} \label{eqn epsilon mw 3}
 \varepsilon^{\mathrm R}_{[a',c]_{\rho}}(\mathrm D^{\mathrm R}_{[b-1,c]_{\rho}})^{\varepsilon^{\mathrm{MW}}_{[a',c]_\rho}(\mathfrak m^{\leq c})}\circ \ldots \circ (\mathrm D^{\mathrm R}_{[a,c]_{\rho}})^{\varepsilon^{\mathrm{MW}}_{[a,c]_\rho}(\mathfrak m^{\leq c})} (Z(\mathfrak m^{\leq c})) = 0.
\end{align}

Hence, by Proposition \ref{prop:MW_der} and (\ref{eqn epsilon mw 2}) and (\ref{eqn epsilon mw 3}), the first segment produced the MW algorithm for $(\mathcal D^{\mathrm{MW}})^{r}(\mathfrak m^{\leq c})$ is $[b,c]_{\rho}$. Now Proposition \ref{prop fomrula for multiple mw} implies $\mathcal D^{\mathrm{Zel}}_{\Delta}(\mathfrak m^{\leq c})\neq \infty$. It is clear from Algorithm \ref{alg:der_Zel} that we then have $\mathcal D^{\mathrm{Zel}}_{\Delta}(\mathfrak m)\neq \infty$. This proves the only if direction of (i).

Suppose $\mathcal D^{\mathrm{Zel}}_{\Delta}(\mathfrak m)\neq \infty$. Write $\Delta=[b,c]_{\rho}$. Then, $\nu^c\rho\in \mathrm{supp}(\mathfrak m)$ and so $\mathcal D^{\mathrm{Zel}}_{\Delta}(\mathfrak m^{\leq c})\neq \infty$. By Proposition \ref{prop fomrula for multiple mw},
\[  (\mathcal D^{\mathrm{MW}})^{r+1}(\mathfrak m^{\leq c})\neq \infty,
\]
where $r$ is defined as above. By Lemma \ref{lem zel non infty to mw}, the first segment produced for $(\mathcal D^{\mathrm{MW}})^r(\mathfrak m)$ is $[b,c]_{\rho}$. Thus, now by Lemma \ref{lem structure of segments from MW}(i), we have
\[ \mathrm D_{\Delta}^{\mathrm R}\circ \left(\mathrm D^{\mathrm R}_{[b-1,c]_{\rho}}\right)^{\varepsilon^\mathrm{MW}_{[b-1,c]_{\rho}}(\mathfrak m^{\leq c})}\circ \ldots \circ  \left(\mathrm D^{\mathrm R}_{[a,c]_{\rho}}\right)^{\varepsilon^\mathrm{MW}_{[a,c]_{\rho}}(\mathfrak m^{\leq c})}(Z(\mathfrak m^{\leq c}))\neq 0 .
\]
By the commutativity of derivatives for unlinked segments, we then have:
\[  \mathrm D^{\mathrm R}_{\Delta}(Z(\mathfrak m^{\leq c}))\neq 0 .
\]
Now, Proposition \ref{prop:Zel_der=0}(i) implies the if direction of (i) of this theorem. The assertion (ii) now follows from Proposition \ref{prop:Zel_der=0} and Corollary \ref{prop:derivative}.
\end{proof}

\subsection{Left derivative algorithm}

For $\mathfrak m \in \mathrm{Mult}_{\rho}$ and $[a,b]_{\rho} \in \mathrm{Seg}_\rho$, define 
\[ \mathcal D^{\mathrm{Zel}, \mathrm L}_{[a,b]_{\rho}}(\mathfrak m)=\Theta\left(\mathcal D^{\mathrm{Zel}}_{[-b,-a]_{\rho^{\vee}}}(\Theta(\mathfrak m))\right) .
\]
Now, with Theorem \ref{thm alg der zel} and discussions in Section \ref{sss gk invol}, we have:

\begin{theorem}
Let $\mathfrak{m} \in \mathrm{Mult}_\rho$ and $\Delta \in \mathrm{Seg}_\rho$. Then, the following hold:
\begin{enumerate}
\item $\mathcal{D}_\Delta^{\mathrm{Zel}, \mathrm{L}}(\mathfrak{m}) \neq \infty$ if and only if $\mathrm D^{\mathrm L}_{\Delta}(Z(\mathfrak m))\neq 0$; and
\item if $\mathcal{D}_\Delta^{\mathrm{Zel}, \mathrm{L}}(\mathfrak{m}) \neq \infty$, we have $\mathrm{D}^\mathrm{L}_\Delta \left(L(\mathfrak{m}) \right) \cong Z \left( \mathcal{D}_\Delta^{\mathrm{Zel}, \mathrm{L}}(\mathfrak{m})  \right)$.
\end{enumerate}
\end{theorem}

%%%%%%%%%%%%%%%%%%%%%%% Algorithm for Integrals %%%%%%%%%%%%%%%%%%%%%%%%%%%%%%%%%%%%%%%%%%%%%%%%%%%%%%
%
\section{Integrals in Langlands classification}\label{sec:int}
%Let $\pi$ be an irreducible smooth representation of $\mathrm{GL}_n(F)$ and $\sigma$ be an essentially square-integrable representation of $\mathrm{GL}_n(F)$. Each $\pi$ and $\sigma$ has combinatorial description in the form of $\pi=L(\mathfrak{m})$ for some multisegment $\mathfrak{m}\in \mathrm{Mult}$ and for some segment $\Delta \in \mathrm{Seg}$.

In this section, for $\mathfrak m \in \mathrm{Mult}_{\rho}$ and $\Delta \in \mathrm{Seg}_{\rho}$, we give an algorithm to compute the integral $\mathrm{I}^\mathrm{R}_{\Delta}(L(\mathfrak m))$. The basic strategy is to reduce to $\rho$-integrals, but we shall compare with Algorithm \ref{alg:der:Lang} and transfer some properties in Lemma \ref{lem:Lang_comm_I[ab]_I[a]}.

\subsection{Algorithm for $\rho$-integral} We are now going to present an algorithm for calculating the $\rho$-integrals of irreducible representations in the Langlands classification. 

\subsubsection*{$\mathfrak{tus}$-process:} This process involves the removal of two linked segments from a multisegment $\mathfrak{n} \in \mathrm{Mult}_\rho$ for a fixed integer $c$. The steps are as follows:
 \begin{itemize}
     \item[(i)] First, pick the shortest segment $\Delta^\prime \in \mathfrak{n}[c]$.
     \item[(ii)] Choose the shortest segment $\Delta^{\prime \prime} \in \mathfrak{n}[c+1]$ such that $\Delta^{\prime} \prec \Delta^{\prime  \prime}$.
     \item[(iii)] If both $\Delta^{\prime}$ and $\Delta^{\prime \prime}$  exist, remove them to define a new multisegment as \[\mathfrak{tus}(\mathfrak{n}, c)= \mathfrak{n}-\Delta^{\prime} - \Delta^{\prime \prime}.\]
 \end{itemize}

\begin{algorithm}\label{alg:rho_int:Lang}
Let $\mathfrak{n} \in \mathrm{Mult}_{\rho}$ and $c \in \mathbb{Z}$. We now define a new multisegment $\mathcal{I}^\mathrm{Lan}_{[c]_{\rho}} (\mathfrak{n})$ by the following algorithm:

Step 1. Set $\mathfrak{n}_0=\mathfrak{n}$, and recursively for an integer $i>0$, define $\mathfrak{n}_{i}= \mathfrak{tus}(\mathfrak{n}_{i-1}, c)$ until the process terminates. Suppose the $\mathfrak{tus}(-, c)$ process on $\mathfrak{n}$ terminates after $\ell$ times and the final remaining multisegment is $\mathfrak{n}_\ell$.

Step 2. Choose the longest segment (if it exists) ${\Delta}_* \in \mathfrak{n}_\ell[c+1]$ and define the multisegment \[ \mathcal{I}^\mathrm{Lan}_{[c]_{\rho}} (\mathfrak{n}) :=\mathfrak{n}- {\Delta}_* + {^+}{\Delta}_*.\]
If such segment ${\Delta}_*$ does not exist, we write $\mathcal{I}^\mathrm{Lan}_{[c]_{\rho}} (\mathfrak{n}) :=\mathfrak{n} + [c]_\rho.$
\end{algorithm}

\begin{example}
(i) Let $\mathfrak{m}=\left\{ [0,4]_\rho, [0,2]_\rho, [1,5]_\rho, [1,4]_\rho, [1,3]_\rho, [1]_\rho \right\}$ and $c=0$. Then, $\mathfrak{m}_1=\mathfrak{tus}(\mathfrak{m}, c)=\mathfrak{m}-[0,2]_\rho-[1,3]_\rho$ and $\mathfrak{m}_2=\mathfrak{tus}(\mathfrak{m}_1, c)=\mathfrak{m}_1-[0,4]_\rho-[1,5]_\rho$. The $\mathfrak{tus}(-,c)$-process terminates on $\mathfrak{m}_2=\left\{[1,4]_\rho, [1]_\rho \right\}$ since $\mathfrak{m}_2[0]=\emptyset$. As $[1,4]_\rho$ is the longest in $\mathfrak{m}_2[1]$, we have
\begin{align*}
	\mathcal{I}^\mathrm{Lan}_{[0]_{\rho}} (\mathfrak{m})=\mathfrak{m} - [1,4]_\rho + {^+}[1,4]_\rho 
	= \left\{ [0,4]_\rho,[0,2]_\rho, [1,5]_\rho, [0,4]_\rho, [1,3]_\rho, [1]_\rho\right\}.
\end{align*}

(ii) Let $\mathfrak{m}=\left\{ [0,2]_\rho, [1,3]_\rho, [1]_\rho, [2,3]_\rho \right\}$ and $c=1$. Then, $\mathfrak{m}_1=\mathfrak{tus}(\mathfrak{m}, 1)=\mathfrak{m}-[1]_\rho-[2,3]_\rho$ and the $\mathfrak{tus}(-,1)$ process terminates on $\mathfrak{m}_1$ as $\mathfrak{m}_1[2]=\emptyset$. Therefore,
\[\mathcal{I}^\mathrm{Lan}_{[1]_\rho} (\mathfrak{m})=\mathfrak{m}+[1]_\rho =\left\{ [0,2]_\rho, [1,3]_\rho, [1]_\rho, [1]_\rho, [2,3]_\rho \right\}.\]
\end{example} 
Like $\rho$-derivative, the $\rho$-integral seems to be better understood in the literature, for example, see \cite[Proposition 5.1 and 5.11]{LM16} and references therein. 

%For interested readers, we provide details of the proof of the following result by showing $\mathcal{D}_{[a]_\rho}^\mathrm{Lan}\left( \mathcal{I}_{[a]_\rho}^\mathrm{Lan}(\mathfrak{n})\right)=\mathfrak{n}$ (cf. Lemma \ref{int:lem:Lang:length=1}) in the Appendix.
\begin{proposition}[Jantzen, M\'inguez, and Lapid-M\'inguez] (see \cite[Theorem 5.11]{LM16})\label{int:Lang:length=1}
 For $\mathfrak{n} \in \mathrm{Mult}_\rho$ and $a \in \mathbb{Z}$, we have
\[\mathrm{I}^\mathrm{R}_{[a]_\rho}(L(\mathfrak{n})) \cong  L \left( \mathcal{I}_{[a]_\rho}^\mathrm{Lan}(\mathfrak{n})\right).\] 
\end{proposition}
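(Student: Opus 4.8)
The plan is to reduce the statement to the $\rho$-derivative algorithm of Theorem \ref{thm:der:Lang:length=1} together with a purely combinatorial identity, thereby bypassing a direct analysis of $\mathrm{soc}(L(\mathfrak n)\times \nu^a\rho)$ (which is the route of \cite{LM16}). Write $\mathrm I^{\mathrm R}_{[a]_\rho}(L(\mathfrak n))=L(\mathfrak m)$ for some $\mathfrak m\in\mathrm{Mult}_\rho$, which is possible since the integral is irreducible. The relation $\mathrm D^{\mathrm R}_{[a]_\rho}\circ \mathrm I^{\mathrm R}_{[a]_\rho}(L(\mathfrak n))\cong L(\mathfrak n)$ recalled in the introduction shows $\mathrm D^{\mathrm R}_{[a]_\rho}(L(\mathfrak m))\cong L(\mathfrak n)\neq 0$, so Theorem \ref{thm:der:Lang:length=1} gives $\mathcal D^{\mathrm{Lan}}_{[a]_\rho}(\mathfrak m)=\mathfrak n\neq\infty$. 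Hence, to conclude $\mathfrak m=\mathcal I^{\mathrm{Lan}}_{[a]_\rho}(\mathfrak n)$ it suffices to prove the combinatorial identity
\[ \mathcal D^{\mathrm{Lan}}_{[a]_\rho}\bigl(\mathcal I^{\mathrm{Lan}}_{[a]_\rho}(\mathfrak n)\bigr)=\mathfrak n \qquad\text{for all }\mathfrak n\in\mathrm{Mult}_\rho, \]
and then invoke the injectivity of $\mathfrak m'\mapsto\mathcal D^{\mathrm{Lan}}_{[a]_\rho}(\mathfrak m')$ on the locus where it is not $\infty$: if two such multisegments have equal $\mathcal D^{\mathrm{Lan}}_{[a]_\rho}$, then by Theorem \ref{thm:der:Lang:length=1} the corresponding irreducible representations have isomorphic nonzero $[a]_\rho$-derivatives, hence are isomorphic by Lemma \ref{lem:iso_der=>iso_rep}.

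It remains to establish the combinatorial identity, which is the heart of the matter. Only the segments of $\mathfrak n$ of the form $[a,d]_\rho$ and $[a+1,d']_\rho$ are relevant, and for these two types linking collapses to the single inequality $[a,d]_\rho\prec[a+1,d']_\rho\iff d<d'$; all other segments are untouched by both algorithms. So the statement becomes one about two finite multisets of integers, $X=\{d:[a,d]_\rho\in\mathfrak n\}$ and $Y=\{d':[a+1,d']_\rho\in\mathfrak n\}$: the $\mathfrak{tus}$-process greedily matches the smallest available element of $X$ with the smallest strictly larger available element of $Y$, and Step 2 then either moves the largest leftover element of $Y$ into $X$, or, if $Y$ is exhausted first, appends the element $a$ to $X$; dually, the $\mathfrak{tds}$-process greedily matches the largest available element of $Y$ with the largest strictly smaller available element of $X$, and Step 2 moves the smallest leftover element of $X$ back into $Y$ (deleting it if it equals $a$). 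I would prove that running $\mathfrak{tds}$ on the output of $\mathfrak{tus}$ consumes exactly the $\mathfrak{tus}$-matched pairs (possibly with partners re-assigned, but with the \emph{multisets} of deleted $X$- and $Y$-endpoints agreeing), and that the two Step 2 adjustments are mutually inverse; the natural bookkeeping is induction on $|X|+|Y|$, peeling off the pair matched first by $\mathfrak{tus}$ (equivalently last by $\mathfrak{tds}$), treating the two cases of Step 2 separately.

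The main obstacle is precisely this last combinatorial lemma: a ``shortest-first'' greedy matching and a ``longest-first'' greedy matching need not use the same partner assignments, so the argument must be carried out at the level of which endpoints get consumed, and one must check that the residual single-segment modification introduced by Step 2 is undone exactly — the delicate edge case being when the extra $[a]_\rho$ that $\mathcal I^{\mathrm{Lan}}_{[a]_\rho}$ may have appended can itself be caught up in the $\mathfrak{tds}$-process. Should the direct induction become unwieldy, an alternative is to cite \cite[Theorem 5.11]{LM16} and merely translate its ``best matching function'' description into the $\mathfrak{tus}$-language used here, which amounts to a change of terminology since both formalisms encode the same greedy matching; I would present the self-contained combinatorial proof as the main line and record the comparison with \cite{LM16} as a remark.
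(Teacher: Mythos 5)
Your reduction — write $\mathrm I^{\mathrm R}_{[a]_\rho}(L(\mathfrak n))=L(\mathfrak m)$, use $\mathrm D^{\mathrm R}_{[a]_\rho}\circ\mathrm I^{\mathrm R}_{[a]_\rho}(L(\mathfrak n))\cong L(\mathfrak n)$ and Theorem \ref{thm:der:Lang:length=1} to get $\mathcal D^{\mathrm{Lan}}_{[a]_\rho}(\mathfrak m)=\mathfrak n$, then conclude by injectivity of $\mathcal D^{\mathrm{Lan}}_{[a]_\rho}$ once the combinatorial identity $\mathcal D^{\mathrm{Lan}}_{[a]_\rho}\circ\mathcal I^{\mathrm{Lan}}_{[a]_\rho}=\mathrm{id}$ is known — is a sound framework, and the injectivity argument via Lemma \ref{lem:iso_der=>iso_rep} is correct. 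This is a genuine alternative to the paper, which supplies no proof and simply attributes the statement to \cite[Theorem 5.11]{LM16}; your route replaces the representation-theoretic input of \cite{LM16} by the $\rho$-derivative algorithm together with a self-contained combinatorial inversion.

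The gap is in the claimed mechanism for proving that combinatorial identity. You assert that the $\mathfrak{tds}$-process run on $\mathcal I^{\mathrm{Lan}}_{[a]_\rho}(\mathfrak n)$ consumes the same multisets of endpoints as the original $\mathfrak{tus}$-process, and that the first $\mathfrak{tus}$-matched pair equals the last $\mathfrak{tds}$-matched pair. Both statements fail: with $a=0$ and $\mathfrak n=\{[0,1]_\rho,[0,3]_\rho,[1,4]_\rho\}$, the $\mathfrak{tus}(-,0)$-process matches $[0,1]_\rho$ with $[1,4]_\rho$, then stops; Step 2 appends $[0]_\rho$. The $\mathfrak{tds}(-,0)$-process on $\{[0]_\rho,[0,1]_\rho,[0,3]_\rho,[1,4]_\rho\}$ matches $[0,3]_\rho$ with $[1,4]_\rho$ instead. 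The consumed segments starting at $\nu^0\rho$ are $\{[0,1]_\rho\}$ versus $\{[0,3]_\rho\}$, so the multisets disagree. The end result is nevertheless $\mathfrak n$ (the shortest surviving segment at $\nu^0\rho$ is $[0]_\rho$, which is then removed), but an induction anchored on multiset agreement would fail at its first step. What must actually be shown is that the segment produced by Step 2 of $\mathcal I^{\mathrm{Lan}}_{[a]_\rho}$ (either $[a]_\rho$ itself, or the extension of the longest surviving segment at $\nu^{a+1}\rho$) is never removed by the subsequent $\mathfrak{tds}$-process and is the shortest surviving segment at $\nu^a\rho$ afterwards; the useful structural input is that neither greedy process leaves behind an unmatched pair $[a,d]_\rho$, $[a+1,d']_\rho$ with $d<d'$. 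This is provable, but it is a different argument than the one you outline. Your fallback of quoting \cite[Theorem 5.11]{LM16} directly is, of course, exactly the route the paper takes.
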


\subsection{Algorithm for $\mathrm{St}$-integral}Let $\mathfrak{m}$ be a multisegment and $\Delta$ be a segment. We want to define a new multisegment $\mathcal{I}^\mathrm{Lang}_{\Delta}(\mathfrak{m})$ by the following algorithm so that the right integral of $L(\mathfrak{m})$ under $\Delta$ is given by $L \left(\mathcal{I}^\mathrm{Lang}_{\Delta}(\mathfrak{m})\right)$.

\subsubsection*{Downward sequence $\underline{\mathfrak{Ds}}$:} Let $\mathfrak{n}$ be a non-void multisegment in $\mathrm{Mult}_\rho$. We define the downward sequence of minimal linked segments with the largest starting as follows: find the largest number $a_{1}$ such that $\mathfrak{n}[a_{1}] \neq \emptyset$. Pick a shortest segment $\Delta_{1}=[a_{1},b_{1}]_\rho$ in $\mathfrak{n}[a_{1}]$. For $q \geq 2$, one recursively find largest number $a_{q}$ (if it exists) such that $a_{q} < a_{q-1}$ and there exists a segment in $\mathfrak{m}[a_{q}]$ which precedes $[a_{q-1},b_{q-1}]_\rho$. Then, we pick a shortest segment $\Delta_{q}=[a_{q},b_{q}]_\rho$ in $\mathfrak{n}[a_{q}]$. This process terminates after some finite steps, say $r$, and let $\Delta_{1}, \Delta_{2}, \cdots,\Delta_{r}$ be all the segments found in this process. We define 
	\[\underline{\mathfrak{Ds}}(\mathfrak{n})=\left\{\Delta_{1} , \Delta_{2}, \cdots ,\Delta_{r}\right\}.\]

\begin{algorithm}\label{alg:int:Lang}
Let $\mathfrak{m} \in \mathrm{Mult}_\rho$ and $\Delta=[a,b]_\rho \in \mathrm{Seg}_\rho$. Define the following multisegment \[\mathfrak{m}_1=\mathfrak{m}_{[a,b]}=\{[a^\prime, b^\prime]_\rho \in \mathfrak{m} \mid a \leq a^\prime \leq b+1 \leq b^\prime+1\}.\] 

Step 1. (Arrange downward sequences): Let $\underline{\mathfrak{Ds}}(\mathfrak{m}_1)=\left\{\Delta_{1,1} , \Delta_{1,2}, \cdots ,\Delta_{1,r_1}\right\}$ with $\Delta_{1,q} \prec \Delta_{1,q-1}$. Recursively for $2 \leq p \leq k$, we define, $\mathfrak{m}_{p}= \mathfrak{m}_{p-1} -\underline{\mathfrak{Ds}}(\mathfrak{m}_{p-1})$ and the corresponding downward sequence \[\underline{\mathfrak{Ds}}(\mathfrak{m}_p)= \left\{\Delta_{p,1}, \Delta_{p,2},...,\Delta_{p,r_p}\right\}, \text{ where } \Delta_{p,r_p} \prec \cdots \prec \Delta_{p,2} \prec \Delta_{p,1},\]	
such that $k$ is the smallest integer for which $\mathfrak{m}_{k+1}=\emptyset$. 

Step 2. (Addable free points): Set $\Delta_{p,q}=[a_{p,q},b_{p,q}]_\rho$. We define the `addable free points' set for the segment $\Delta_{p,q}$ for each $1 \leq p \leq k$ by:
	\[\mathfrak{af}\left(\Delta_{p,q} \right)= \begin{cases}
		\left\{\left[a_{p,q+1}+1 \right]_\rho,...,\left[a_{p,q}-1 \right]_\rho\right\} &\mbox{ if } q < r_p \text{ and } a_{p,q+1} \leq a_{p,q}-2,\\
		\left\{[a]_\rho,[a+1]_\rho,...,\left[a_{p,q}-1\right]_\rho\right\} &\mbox{ if } q=r_p \text{ and } a < a_{p,q},
	\end{cases}\]
otherwise, we write $\mathfrak{af}\left(\Delta_{p,q}\right)=\emptyset$. 

Step 3. (Selection): We now perform the following algorithm by picking the addable free points: find the largest index $p_1$ such that $[a]_\rho \in \mathfrak{af}\left(\Delta_{p_1,q_1} \right)$ for some $1 \leq q_1 \leq r_{p_1}$. Recursively for $t \geq 2$, we find the largest index $p_t < p_{t-1}$ such that $[a_{p_{t-1},q_{t-1}}]_\rho \in \mathfrak{af}\left(\Delta_{p_t,q_t} \right)$ for some $1 \leq q_t \leq r_{p_t}$. This process terminates after finite times, say $\ell$ times. 

Step 4. (Expand and replace): We define new extended segments as follows:
	\begin{align*}
		\Delta_{p_1, q_1}^\mathrm{ex} &= \left[a,  b_{p_{1}, q_{1}}  \right]_\rho;\\
		\Delta_{p_t, q_t}^\mathrm{ex} &= \left[a_{p_{t-1}, q_{t-1}}, ~ b_{p_{t}, q_{t}} \right]_\rho~ \text{ for } 2 \leq t \leq \ell;\\ 
		\Delta_{p_{\ell +1}, q_{\ell +1}}^\mathrm{ex} &= \left[a_{p_{\ell}, q_{\ell}}, b \right]_\rho    
	\end{align*}

As convention, $[c, c-1]_\rho = \emptyset$. Finally, we define the right integral multisegment by
 \begin{equation}\label{eq:int_Lang_[a,b]}
 \mathcal{I}^\mathrm{Lang}_{[a,b]_\rho}(\mathfrak{m}) := \mathfrak{m} - \sum\limits_{t=1}^{\ell} \Delta_{p_t,q_t} + \sum\limits_{t=1}^{\ell+1} \Delta^\mathrm{ex}_{p_t,q_t}.    
 \end{equation}   
 We shall say that $\Delta_{p_1,q_1}, \ldots, \Delta_{p_{\ell},q_{\ell}}$ participate in the extension process for $\mathcal I^{\mathrm{Lang}}_{[a,b]_{\rho}}(\mathfrak m)$.
\end{algorithm}
\begin{remark}
    It can be easily observed that $\mathcal{I}^\mathrm{Lang}_{[a,b]_\rho}(\mathfrak{m})=\mathcal{I}^\mathrm{Lan}_{[a]_\rho}(\mathfrak{m})$ when $b=a$. For the rest of the article, we use this fact without mentioning it further.
\end{remark}

\begin{remark} \label{rmk lang alg matching}
One may view that finding downward sequences of maximally linked segments in Algorithm \ref{alg:int:Lang} above is to look for the matching in the sense of Lapid-M\'inguez \cite[Secton 5.13]{LM16} i.e. for given $\mathfrak m\in \mathrm{Mult}_{\rho}$ and $[a,b]_{\rho} \in \mathrm{Seg}_{\rho}$, look for an injective function 
\[ f:\left\{ [a',b']_{\rho} \in \mathfrak m: a<a'\leq b+1 \leq b'+1 \right\}\rightarrow \left\{ [a',b']_{\rho} \in \mathfrak m: a\leq a'<b+1<b'+1 \right\} \] 
such that $f(\Delta) \prec \Delta$.

A new input of our algorithm is the notion of addable free points to tell precisely which segments have to be expanded in (\ref{eq:int_Lang_[a,b]}) for the general case in computing $\mathrm{I}^{\mathrm R}_{[a,b]_{\rho}}(\mathfrak m)$. If $\nu^a\rho$ is not an addable free point for any segment $\Delta_{p,q}$ (notations in Algorithm \ref{alg:int:Lang}), then such matching function exists.

\end{remark}

\begin{example}
	Let $\mathfrak{m}=\left\{ [1]_\rho, [1,2]_\rho, [2,4]_\rho, [4,6]_\rho \right\}$. We have the following $\mathcal{I}^\mathrm{Lang}_{\Delta}(\mathfrak{m})$:
	\begin{itemize}
		\item[(i)] Let $\Delta=[1,2]_\rho$. Then, $\mathfrak{m}_1=\left\{[1,2]_\rho, [2,4]_\rho\right\}$ and there is no segment in $\mathfrak{m}_1$ contributing the free point $[1]_\rho$. Therefore, $\mathcal{I}^\mathrm{Lang}_{[1,2]_\rho}(\mathfrak{m}) = \mathfrak{m} + [1,2]_\rho.$
		\item[(ii)] Let $\Delta=[1,3]_\rho$. Then, $\mathfrak{m}_1=\left\{[2,4]_\rho, [4,6]_\rho\right\}$. In $\mathfrak{m}_1$, the segment contributing the free point $[1]_\rho$ is $[2,4]_\rho$, and there is no segment in $\mathfrak{m}_1$ contributing the free point $[2]_\rho$. Therefore, $\mathcal{I}^\mathrm{Lang}_{[1,3]_\rho}(\mathfrak{m}) = \mathfrak{m} - [2,4]_\rho + [1,4]_\rho + [2,3]_\rho.$ \qed
	\end{itemize} 
\end{example}

We first have two useful properties of segments from the above algorithm:

\begin{lemma} \label{lem refined relation in alg}
We use the notations in Algorithm \ref{alg:int:Lang}.  Let $\Delta \in \underline{\mathfrak{Ds}}(\mathfrak m_p)$ and let $\Delta' \in \underline{\mathfrak{Ds}}(\mathfrak m_{p'})$. If $p<p'$, it cannot happen that $\Delta'\subsetneq \Delta$.
\end{lemma}

\begin{proof}
Recall that $\Delta_{p,1}, \ldots,   \Delta_{p,r_p}$ be the segments in $\underline{\mathfrak{Ds}}(\mathfrak m_p)$. Then $\Delta=\Delta_{p,k}$ for some $k=1,\ldots, r_p$. Suppose $\Delta' \subsetneq \Delta$. Now, since $e(\Delta_{p,y})>e(\Delta_{p,k})\geq e(\Delta')$ for $y<k$, we can only have that $\Delta' \prec \Delta_{p,y}$ or $ \Delta' \subset \Delta_{p,y}$. However, the former case is not possible from the choices of the algorithm. Thus, we must also have that $\Delta' \subset \Delta_{p,1}$. However, we then should choose $\Delta'$ first in the algorithm before picking $\Delta_{p,1}$, and hence we arrive at a contradiction.
\end{proof}

\begin{lemma} \label{lem segment stucture}
We use the notations in Algorithm \ref{alg:int:Lang}. Let $a\leq c \leq b$ and let $p, p' \in \left\{ 1, \ldots, k \right\}$. Suppose $\underline{\mathfrak{Ds}}(\mathfrak m_p)[c]\neq \emptyset$ and $\underline{\mathfrak{Ds}}(\mathfrak m_p)[c]\neq \emptyset$. Let $\Delta$ and $\Delta'$ be the unique segments in $\underline{\mathfrak{Ds}}(\mathfrak m_p)[c]$ and $\underline{\mathfrak{Ds}}(\mathfrak m_{p'})[c]$ respectively. If $p<p'$, then $\Delta \subset \Delta'$.
\end{lemma}

\begin{proof}
This is a reformulation of Lemma \ref{lem refined relation in alg}.
%This follows from the shortest choices of segments in downward sequences, and Lemma \ref{lem either prec or overlap}.
\end{proof}

\subsection{Reduction to $\mathfrak m_{[a,b]}$} For $\mathfrak m \in \mathrm{Mult}_{\rho}$,  and $[a,b]_{\rho}\in \mathrm{Seg}_{\rho}$, we recall \[\mathfrak m_{[a,b]}=\left\{ [a',b']_{\rho} \in \mathfrak m\ |\ a \leq a' \leq b+1\leq b'+1 \right\}.\]

\begin{lemma} \label{lem reduction to m1}
Let $\mathfrak m \in \mathrm{Mult}_{\rho}$ and $[a,b]_{\rho}\in \mathrm{Seg}_{\rho}$. Suppose $L(\mathcal I^{\mathrm{Lang}}_{[a,b]_{\rho}}(\mathfrak m_{[a,b]}))=\mathrm I^{\mathrm R}_{[a,b]_{\rho}}(L(\mathfrak m_{[a,b]}))$. Then \[L(\mathcal I^{\mathrm{Lang}}_{[a,b]_{\rho}}(\mathfrak m))= \mathrm I^{\mathrm R}_{[a,b]_{\rho}}(L(\mathfrak m)).\]
\end{lemma}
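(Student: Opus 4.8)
The plan is to reduce the computation of $\mathrm I^{\mathrm R}_{[a,b]_{\rho}}(L(\mathfrak m))$ to that of $\mathrm I^{\mathrm R}_{[a,b]_{\rho}}(L(\mathfrak m_{[a,b]}))$ by splitting off the segments of $\mathfrak m$ that cannot interact with $[a,b]_{\rho}$ under the integral. Write $\mathfrak m = \mathfrak m_{[a,b]} + \mathfrak m'$, where $\mathfrak m'$ consists of the segments $[a',b']_{\rho}$ with either $a' < a$, or $a' = b+1$ with $b' = b$ (so $[a',b']_\rho = [b+1]_\rho$), or $b' < b$; that is, the segments which are not ``linkable on the right'' to $[a,b]_{\rho}$ in the way that participates in Algorithm \ref{alg:int:Lang}. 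The key point is that such segments contribute neither addable free points covering $[a,b]_\rho$ nor segments that get extended, so $\mathcal I^{\mathrm{Lang}}_{[a,b]_{\rho}}(\mathfrak m) = \mathcal I^{\mathrm{Lang}}_{[a,b]_{\rho}}(\mathfrak m_{[a,b]}) + \mathfrak m'$ directly from the definition of the algorithm (the downward sequences $\underline{\mathfrak{Ds}}(\mathfrak m_i)$ only ever see segments of $\mathfrak m_{[a,b]}$).

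On the representation-theoretic side, the first step is to produce an embedding of $L(\mathfrak m)$ that isolates $L(\mathfrak m_{[a,b]})$. Using the Langlands classification and a standard ordering argument (choosing the order of the $\mathrm{St}(\Delta_i)$ so that the segments of $\mathfrak m'$ come on the outside), one gets $L(\mathfrak m) \hookrightarrow L(\mathfrak m') \times L(\mathfrak m_{[a,b]})$ or, after reordering, an embedding $L(\mathfrak m)\hookrightarrow L(\mathfrak m_{[a,b]})\times \tau'$ where $\tau'$ is built from $\mathfrak m'$; the precise placement is dictated by which segments of $\mathfrak m'$ precede $[a,b]_\rho$ and which do not. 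Then one composes with $\mathrm I^{\mathrm R}_{[a,b]_{\rho}}(L(\mathfrak m_{[a,b]}))\hookrightarrow L(\mathfrak m_{[a,b]})\times \mathrm{St}([a,b]_\rho)$, using that the integral is the socle, together with the hypothesis that $\mathrm I^{\mathrm R}_{[a,b]_{\rho}}(L(\mathfrak m_{[a,b]})) = L(\mathcal I^{\mathrm{Lang}}_{[a,b]_{\rho}}(\mathfrak m_{[a,b]}))$. This yields an embedding $L(\mathfrak m)\hookrightarrow \pi_0 \times \mathrm{St}([a,b]_\rho)$ for some irreducible $\pi_0$ appearing in $L(\mathfrak m') \times \mathrm I^{\mathrm R}_{[a,b]_\rho}(L(\mathfrak m_{[a,b]}))$; by Frobenius reciprocity $\mathrm D^{\mathrm R}_{[a,b]_\rho}(L(\mathfrak m))\neq 0$ and $\mathrm I^{\mathrm R}_{[a,b]_\rho}(L(\mathfrak m))$ is a subquotient of that induced product.

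To pin down $\pi_0$ as exactly $L(\mathcal I^{\mathrm{Lang}}_{[a,b]_\rho}(\mathfrak m_{[a,b]}) + \mathfrak m')$, I would argue on the level of multisegments: apply $\mathrm D^{\mathrm R}_{[a,b]_\rho}$ to $L(\mathcal I^{\mathrm{Lang}}_{[a,b]_\rho}(\mathfrak m_{[a,b]}) + \mathfrak m')$ and show it recovers $L(\mathfrak m)$. This uses the derivative--integral adjunction recalled in the introduction ($\mathrm D^{\mathrm R}_\Delta \circ \mathrm I^{\mathrm R}_\Delta(\pi)\cong \pi$), the combinatorial identity $\mathcal D^{\mathrm{Lang}}_{[a,b]_\rho}(\mathcal I^{\mathrm{Lang}}_{[a,b]_\rho}(\mathfrak m_{[a,b]}))=\mathfrak m_{[a,b]}$ (which one should extract from Theorems \ref{thm:zero_der_Lang}, \ref{thm:der:Lang} and Proposition \ref{int:Lang:length=1} in the $\mathfrak m_{[a,b]}$ case), and the compatibility of $\mathcal D^{\mathrm{Lang}}_{[a,b]_\rho}$ with adding the ``inert'' part $\mathfrak m'$ — namely $\mathcal D^{\mathrm{Lang}}_{[a,b]_\rho}(\mathfrak n + \mathfrak m') = \mathcal D^{\mathrm{Lang}}_{[a,b]_\rho}(\mathfrak n) + \mathfrak m'$ whenever the segments added in $\mathfrak n$ relative to $\mathfrak m_{[a,b]}$ are the only ones touched, which follows from the same inspection of Algorithm \ref{alg:der:Lang} used for the integral algorithm. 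Combining, $\mathrm D^{\mathrm R}_{[a,b]_\rho}(L(\mathcal I^{\mathrm{Lang}}_{[a,b]_\rho}(\mathfrak m_{[a,b]})+\mathfrak m'))\cong L(\mathfrak m_{[a,b]}+\mathfrak m')=L(\mathfrak m)$, and then Lemma \ref{lem:iso_der=>iso_rep}-type reasoning (or directly $\mathrm I^{\mathrm R}_{[a,b]_\rho}\circ \mathrm D^{\mathrm R}_{[a,b]_\rho}$ being the identity when the derivative is nonzero) forces $\mathrm I^{\mathrm R}_{[a,b]_\rho}(L(\mathfrak m)) = L(\mathcal I^{\mathrm{Lang}}_{[a,b]_\rho}(\mathfrak m_{[a,b]})+\mathfrak m') = L(\mathcal I^{\mathrm{Lang}}_{[a,b]_\rho}(\mathfrak m))$.

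The main obstacle I anticipate is the careful bookkeeping in the first splitting step: one must choose the right Langlands ordering so that $L(\mathfrak m)$ genuinely embeds into a product with $L(\mathfrak m_{[a,b]})$ adjacent to the $\mathrm{St}([a,b]_\rho)$ factor, and simultaneously verify — purely combinatorially, by unwinding the definitions of $\underline{\mathfrak{Ds}}$, addable free points, and the selection/extension steps — that none of the segments of $\mathfrak m'$ are ever selected or extended by Algorithm \ref{alg:int:Lang}, nor affect which segments of $\mathfrak m_{[a,b]}$ are. The subtle case is a segment $[a',b']_\rho \in \mathfrak m'$ with $a < a' \leq b$ but $b' < b$: one has to check such a segment, while lying ``inside'' the range of starting indices, still cannot precede or be preceded in the relevant way, so it contributes no addable free point in $[a,b]_\rho$; this is exactly the point where the defining inequality $b+1 \leq b'+1$ in $\mathfrak m_{[a,b]}$ does the work, and it should be isolated as a short sub-lemma before assembling the final argument.
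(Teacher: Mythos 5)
Your final paragraph is essentially the paper's proof: show combinatorially that $\mathfrak m'=\mathfrak m-\mathfrak m_{[a,b]}$ is inert for both Algorithm \ref{alg:int:Lang} and Algorithm \ref{alg:der:Lang} (immediate, since both algorithms only ever operate on the subset $\mathfrak m_{[a,b]}$, and $\mathcal I^{\mathrm{Lang}}_{[a,b]_{\rho}}(\mathfrak m_{[a,b]})$ stays in good range), deduce $\mathcal D^{\mathrm{Lang}}_{[a,b]_{\rho}}\circ\mathcal I^{\mathrm{Lang}}_{[a,b]_{\rho}}(\mathfrak m)=\mathfrak m$, and conclude via Theorem \ref{thm:der:Lang} together with the derivative--integral relation; the embedding argument in your second paragraph is an unnecessary detour. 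One correction: the identity $\mathcal D^{\mathrm{Lang}}_{[a,b]_{\rho}}\circ\mathcal I^{\mathrm{Lang}}_{[a,b]_{\rho}}(\mathfrak m_{[a,b]})=\mathfrak m_{[a,b]}$ is not something you can extract from Proposition \ref{int:Lang:length=1} (which only treats $a=b$) plus the derivative theorems -- it is exactly where the lemma's hypothesis enters: apply $\mathrm D^{\mathrm R}_{[a,b]_{\rho}}$ to $L(\mathcal I^{\mathrm{Lang}}_{[a,b]_{\rho}}(\mathfrak m_{[a,b]}))=\mathrm I^{\mathrm R}_{[a,b]_{\rho}}(L(\mathfrak m_{[a,b]}))$ and then invoke Theorems \ref{thm:zero_der_Lang} and \ref{thm:der:Lang}, as the paper does.
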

\begin{proof}
Set $\mathfrak m_1=\mathfrak m_{[a,b]}$. By Theorem \ref{thm:der:Lang}, it suffices to show that $\mathcal D^{\mathrm{Lang}}_{[a,b]_{\rho}}\circ \mathcal I^{\mathrm{Lang}}_{[a,b]_{\rho}}(\mathfrak m)=\mathfrak m$. On the other hand, the assumption $L(\mathcal I^{\mathrm{Lang}}_{[a,b]_{\rho}}(\mathfrak m_1))=\mathrm I^{\mathrm R}_{[a,b]_{\rho}}(L(\mathfrak m_1))$ implies that $\mathrm D^{\mathrm R}_{[a,b]_{\rho}}(L(\mathcal I^{\mathrm{Lang}}_{[a,b]_{\rho}}(\mathfrak m_1)))=L(\mathfrak m_1)$, and so by Theorem \ref{thm:der:Lang}, $\mathcal D^{\mathrm{Lang}}_{[a,b]_{\rho}}\circ \mathcal I^{\mathrm{Lang}}_{[a,b]_{\rho}}(\mathfrak m_1)=\mathfrak m_1$.

Let $\mathfrak m'=\mathfrak m-\mathfrak m_1$. It follows from Algorithm \ref{alg:int:Lang} that 
$\mathcal I^{\mathrm{Lang}}_{[a,b]_{\rho}}(\mathfrak m) = \mathcal I^{\mathrm{Lang}}_{[a,b]_{\rho}}(\mathfrak m_1)+\mathfrak m' $.
But it follows from Algorithm \ref{alg:der:Lang}, $\mathfrak m'$ also plays no role in that algorithm. Thus, 
\[  \mathcal D^{\mathrm{Lang}}_{[a,b]_{\rho}}( \mathcal I^{\mathrm{Lang}}_{[a,b]_{\rho}}(\mathfrak m_1)+\mathfrak m')=\mathcal D^{\mathrm{Lang}}_{[a,b]_{\rho}} \circ \mathcal I^{\mathrm{Lang}}_{[a,b]_{\rho}}(\mathfrak m_1)+\mathfrak m' =\mathfrak m_1+\mathfrak m'.
\]
Now the lemma follows from the above discussions.
%Note that for $\Delta \in \mathfrak m$ and $\Delta' \in \mathfrak m'$, if $\Delta \prec \Delta'$. This implies that for an upward sequence found by Algorithm \ref{alg:der:Lang} for $\mathcal I^{\mathrm{Lang}}_{[a,b]_{\rho}}(\mathfrak m)$, with segments lablled by
%\[  \Delta_1 \prec \ldots \prec \Delta_k ,
%\]
%\begin{enumerate}
%\item  if $\Delta_i \in \mathfrak n$, then for any $j<i$, $\Delta_j \in \mathfrak n$; and
%\item  if $\Delta_i \in \mathfrak m_1$, then for any $i<j$, $\Delta_j \in \mathfrak m_1$.
%\end{enumerate}
\end{proof}

\subsection{Transfer between integrals and derivatives by exotic duality} \label{ss transfer exotic duality}

Let $[a,b]_{\rho} \in \mathrm{Seg}_{\rho}$. A multisegment $\mathfrak m \in \mathrm{Mult}_{\rho}$ is said to be in good range for $[a,b]_{\rho}$ if $\mathfrak m=\mathfrak m_{[a,b]}$ that means for any $\Delta \in \mathfrak m$,
\[      a \leq s(\Delta) \leq b+1 \leq e(\Delta)+1 .
\]
For any $\mathfrak m \in \mathrm{Mult}_{\rho}$ in good range for $[a,b]_{\rho}$ and $r \in \mathbb Z_{>0}$, we define
\[ \mathbb D_r(\mathfrak m)= \left\{ [-r+b'+1, a'-1]_{\rho}\ |\  [a',b']_{\rho} \in \mathfrak m \right\}, \quad \mathbb D_{r}^{[a,b]_{\rho}}(\mathfrak m) = \mathbb D_r(\mathfrak m) + [b-r+1, b]_{\rho} .
\]

\begin{example}
\begin{enumerate}
\item Let $\mathfrak m=\left\{ [2,4]_{\rho}, [1,7]_{\rho} \right\}$. Then, $\mathbb D_{10}(\mathfrak m)=\left\{ [-5,1]_{\rho}, [-2,0]_{\rho}\right\}$ and 
\[ \mathbb D^{[0,1]_{\rho}}_{10}(\mathfrak m)=\left\{ [-5,1]_{\rho}, [-2,0]_{\rho}\right\}+[-8,1]_{\rho} .
\]
\item Let $\mathfrak m=\left\{ [2,6]_{\rho}, [1,5]_{\rho}\right\}$. Then, $\mathbb D_{15}(\mathfrak m)=\left\{[-8,1]_{\rho}, [-9,0]_{\rho} \right\}$ and 
\[  \mathbb D^{[1,4]_\rho}_{15}(\mathfrak m)=\left\{[-8,1]_{\rho}, [-9,0]_{\rho}, [-10,4]_{\rho} \right\} .
\]
\end{enumerate}
\end{example}

\begin{proposition} \label{prop dual derivative integral}
Let $[a,b]_{\rho}\in \mathrm{Seg}_{\rho}$. Let $\mathfrak m \in \mathrm{Mult}_{\rho}$ be in good range for $[a,b]_{\rho}$. Then, for sufficiently large $r \in \mathbb Z_{>0}$,
\begin{enumerate}
\item[(i)] if $|\mathcal I^{\mathrm{Lang}}_{[a,b]_{\rho}}(\mathfrak m)|=|\mathfrak m|$, then $\infty \neq \mathcal D^{\mathrm{Lang}, \mathrm{L}}_{[a,b]_{\rho}}(\mathbb D_r(\mathfrak m)) =\mathbb D_r(\mathcal I^{\mathrm{Lang}}_{[a,b]_{\rho}}(\mathfrak m))$.
\item[(ii)] if $|\mathcal I^{\mathrm{Lang}}_{[a,b]_{\rho}}(\mathfrak m)|>|\mathfrak m|$, then $\infty \neq \mathcal D^{\mathrm{Lang}, \mathrm{L}}_{[a,b]_{\rho}}(\mathbb D^{[a,b]_{\rho}}_r(\mathfrak m)) =\mathbb D_r(\mathcal I^{\mathrm{Lang}}_{[a,b]_{\rho}}(\mathfrak m))$.
\item[(iii)] $|\mathcal I^{\mathrm{Lang}}_{[a,b]_{\rho}}(\mathfrak m)|=|\mathfrak m|$ if and only if $\mathcal D^{\mathrm{Lang}, \mathrm{L}}_{[a,b]_{\rho}}(\mathbb D_r(\mathfrak m))\neq \infty$.
\end{enumerate}
\end{proposition}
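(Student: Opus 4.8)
The plan is to set up an explicit dictionary between the combinatorics of Algorithm \ref{alg:int:Lang} (the downward sequences $\underline{\mathfrak{Ds}}$, addable free points, and the extension process) applied to $\mathfrak m$ and the combinatorics of the left-derivative algorithm (equivalently, by definition, $\Theta$ followed by Algorithm \ref{alg:der:Lang} followed by $\Theta$) applied to $\mathbb D_r(\mathfrak m)$. The map $[a',b']_\rho \mapsto [-r+b'+1,\,a'-1]_\rho$ reverses inclusions and reverses the precedence relation $\prec$, turning "segments starting high" into "segments ending low"; since $\mathfrak m$ is in good range for $[a,b]_\rho$, the parameter $r$ being large guarantees all segments of $\mathbb D_r(\mathfrak m)$ are long enough that no spurious links or containments appear among them beyond those forced by $\mathfrak m$. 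First I would record this: choosing $r > \ell_{rel}(\mathfrak m) + (b-a) + 1$ (or any explicit bound making every segment in $\mathbb D_r(\mathfrak m)$ longer than the span of its support) ensures that two segments of $\mathbb D_r(\mathfrak m)$ are linked if and only if the corresponding segments of $\mathfrak m$ are linked, and that a downward sequence of minimally linked segments in $\mathfrak m$ corresponds bijectively, under $\mathbb D_r$, to an upward sequence of maximally linked segments in $\mathbb D_r(\mathfrak m)$ read in the opposite order. This identification of $\underline{\mathfrak{Ds}}$-sequences with $\underline{\mathfrak{Us}}$-sequences is the backbone of the argument.

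Next I would match the "free point" data. Under $\mathbb D_r$, the addable free point set $\mathfrak{af}(\Delta_{p,q})$ of Algorithm \ref{alg:int:Lang}, which sits \emph{between} consecutive starting points in a downward sequence, corresponds precisely to the removable free section $\mathfrak{rf}$ of Algorithm \ref{alg:der:Lang} (after applying $\Theta$), which sits between consecutive starting points of an upward sequence; the shift by $1$ in $[y]_\rho$ being addable versus removable matches the $-2$ versus $+1$ offsets in the two definitions once one accounts for the $a'\mapsto a'-1$ in $\mathbb D_r$. Granting this, the Selection step (Step 3) of the integral algorithm — choosing segments whose addable free points cover $[a,b]_\rho$, working from largest index down — becomes, verbatim, the Selection step (Step 3) of the derivative algorithm for the left truncation covering $[-b,-a]_{\rho^\vee}$. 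The extra segment $[b-r+1,b]_\rho$ appended in $\mathbb D_r^{[a,b]_\rho}$ is exactly the image of a hypothetical "extra starting point at $\nu^a\rho$" that the integral algorithm would have to create when $\nu^a\rho$ is \emph{not} an addable free point of any $\Delta_{p,q}$ — i.e. precisely the case $|\mathcal I^{\mathrm{Lang}}_{[a,b]_\rho}(\mathfrak m)| > |\mathfrak m|$, when the integral produces one more segment than it consumes. This explains why (i) uses $\mathbb D_r$ and (ii) uses $\mathbb D_r^{[a,b]_\rho}$.

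With the dictionary in place, parts (i) and (ii) follow by comparing the two Truncation/Expansion steps: the extended segments $\Delta^{\mathrm{ex}}_{p_t,q_t}$ of (\ref{eq:int_Lang_[a,b]}) map under $\mathbb D_r$ to the left-truncated segments of Step 4 of Algorithm \ref{alg:der:Lang}, and the bookkeeping $\mathfrak m - \sum \Delta_{p_t,q_t} + \sum \Delta^{\mathrm{ex}}_{p_t,q_t}$ transforms into $\mathbb D_r(\mathfrak m) - \sum \Delta_{i_t,j_t} + \sum \Delta^{\mathrm{trc}}_{i_t,j_t}$ — here the telescoping of starting points $a_{p_{t-1},q_{t-1}}$ in the integral matches the telescoping of $a_{i_{t-1},j_{t-1}}$ in the derivative. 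Part (iii) is then immediate: the derivative algorithm returns $\infty$ exactly when Step 4' triggers, i.e. when the selected segments fail to reach the left endpoint, which under the dictionary is exactly the failure of $\nu^a\rho$ to be covered by addable free points without creating a new segment, i.e. $|\mathcal I^{\mathrm{Lang}}_{[a,b]_\rho}(\mathfrak m)| > |\mathfrak m|$; so $|\mathcal I^{\mathrm{Lang}}_{[a,b]_\rho}(\mathfrak m)| = |\mathfrak m|$ iff $\mathcal D^{\mathrm{Lang},\mathrm L}_{[a,b]_\rho}(\mathbb D_r(\mathfrak m)) \neq \infty$, and in that case (i) also gives the displayed equality. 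I expect the main obstacle to be the careful verification that "sufficiently large $r$" genuinely decouples the combinatorics — one must check that no two distinct segments of $\mathfrak m$ can, after applying $\mathbb D_r$, become unexpectedly equal, nested, or linked in a way that alters which segment is "longest" or "shortest" in the relevant $\mathfrak m[c]$ or $\mathfrak m\langle c \rangle$; making the bound on $r$ explicit and showing the $\underline{\mathfrak{Ds}}\leftrightarrow\underline{\mathfrak{Us}}$ correspondence is stable at every stage of the iterative removal (not just at the first step) is the delicate point, since the algorithms remove segments successively and one needs the correspondence to persist on each $\mathfrak m_p$ and its image.
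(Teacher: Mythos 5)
Your overall strategy is the same as the paper's: the dictionary $[a',b']_\rho\mapsto[-r+b'+1,a'-1]_\rho$ (composed with $\Theta$, which is built into $\mathcal D^{\mathrm{Lang},\mathrm L}$), matching downward sequences with upward sequences, addable free points with removable free sections, and the extension step with the truncation step. However, one of your translation claims is wrong, and it is precisely the one on which part (iii) and the (i)/(ii) dichotomy hinge. You assert that $|\mathcal I^{\mathrm{Lang}}_{[a,b]_\rho}(\mathfrak m)|>|\mathfrak m|$ happens exactly "when $\nu^a\rho$ is not an addable free point of any $\Delta_{p,q}$". That is only the degenerate subcase $\ell=0$ of Algorithm \ref{alg:int:Lang}; in general $|\mathcal I^{\mathrm{Lang}}_{[a,b]_\rho}(\mathfrak m)|>|\mathfrak m|$ iff the last selected segment satisfies $s(\Delta_{p_\ell,q_\ell})<b+1$, i.e.\ the chain of addable free points fails to reach a segment starting at $\nu^{b+1}\rho$, so that the final added segment $[a_{p_\ell,q_\ell},b]_\rho$ is nonempty. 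Concretely, take $a<b$ and $\mathfrak m=\{[a+1,b]_\rho\}$ (which is in good range): then $\nu^a\rho$ \emph{is} an addable free point of $[a+1,b]_\rho$, yet $\mathcal I^{\mathrm{Lang}}_{[a,b]_\rho}(\mathfrak m)=\{[a,b]_\rho,[a+1,b]_\rho\}$ has one more segment than $\mathfrak m$, and on the other side $\Theta(\mathbb D_r(\mathfrak m))=\{[-a,r-b-1]_{\rho^\vee}\}$ gives $\mathcal D^{\mathrm{Lang},\mathrm L}_{[a,b]_\rho}(\mathbb D_r(\mathfrak m))=\infty$ because the unique segment starts at $-a\neq -b$. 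So with your criterion, (iii) would come out false. The root of the slip is an endpoint mix-up in the dictionary: the failure of Step 4' of Algorithm \ref{alg:der:Lang} (the truncation chain not reaching the \emph{left} endpoint $\nu^{-b}\rho^\vee$) corresponds to the integral chain not reaching starting point $b+1$, not to anything happening at $\nu^a\rho$, which corresponds to the \emph{right} endpoint $\nu^{-a}\rho^\vee$ where the selection begins. Once corrected, the role of the appended segment $[b-r+1,b]_\rho$ in (ii) is that, after $\Theta$, it is the unique segment starting at $-b$, forming its own first upward sequence with $\mathfrak{rf}$ equal to itself, and it is forced to complete the truncation chain exactly when the chain from $\mathbb D_r(\mathfrak m)$ stops short of $-b$; this is how the paper argues.

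A secondary imprecision: the map $[a',b']_\rho\mapsto[-r+b'+1,a'-1]_\rho$ does not reverse $\prec$; for $r$ large it preserves $\prec$ (while reversing inclusions and constancy of $\ell_{rel}(\Delta)+\ell_{rel}(\Psi(\Delta))$ is what exchanges shortest and longest choices), and precedence is only reversed after composing with $\Theta$. Consequently a downward sequence $\underline{\mathfrak{Ds}}$ in $\mathfrak m$ matches an upward sequence $\underline{\mathfrak{Us}}$, as defined in Algorithm \ref{alg:der:Lang} via starting points, only in $\Theta(\mathbb D_r(\mathfrak m))$, not in $\mathbb D_r(\mathfrak m)$ itself as your first paragraph states. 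You do acknowledge the $\Theta$ elsewhere, so this is repairable, but as written the claimed correspondence "under $\mathbb D_r$" is not literally the one the algorithm uses, and combined with the endpoint error above it leaves the key bookkeeping of the proof unverified.
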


It is quite straightforward to prove Proposition \ref{prop dual derivative integral}. The key is to translate objects between the algorithms under $\mathbb D_r$ (see Table \ref{Tab:duality alg} for a summary). We refer the interested reader to Appendix \ref{appendix prop duality} for a detailed check of Proposition \ref{prop dual derivative integral}.

\begin{table}
\begin{tabular}{|c|c|}
\hline
  Algorithm $\ref{alg:der:Lang}$ & Algorithm \ref{alg:int:Lang} \\
\hline
Upward sequences & Downward sequences \\
 \hline 
 Removable free points & Addable free points \\
 \hline 
 Truncations & Extensions  \\
 \hline 
\end{tabular}
\caption{Correspondences under exotic duality $\mathbb D_r$}
\label{Tab:duality alg}
\end{table}

When we later write $\mathbb D_r$, we shall assume $r$ is any sufficiently large integer.

\begin{lemma} \label{lem zero and integral cuspidal}
Let $[a,b]_{\rho}\in \mathrm{Seg}_{\rho}$ with $b>a$ and let $\mathfrak m \in \mathrm{Mult}_{\rho}$ be in good range for $[a,b]_{\rho}$. Then the following statements are equivalent:
\begin{enumerate}
    \item[(i)] $|\mathcal I^{\mathrm{Lang}}_{[a]_{\rho}}(\mathfrak m)|=|\mathfrak m|$;
    \item[(ii)] $\mathcal D^{\mathrm{Lang,L}}_{[a]_{\rho}}(\mathbb D_r(\mathfrak m))\neq \infty$; 
    \item[(iii)] $\mathcal D^{\mathrm{Lang,L}}_{[a]_{\rho}}(\mathbb D^{[a,b]_{\rho}}_r(\mathfrak m))\neq \infty$.
\end{enumerate}
\end{lemma}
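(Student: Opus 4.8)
The statement to prove, Lemma \ref{lem zero and integral cuspidal}, asserts the equivalence of three conditions for a multisegment $\mathfrak m$ in good range for a segment $[a,b]_\rho$ with $b>a$: that the $\rho$-integral algorithm does not increase the number of segments, that the left $\rho$-derivative algorithm applied to $\mathbb D_r(\mathfrak m)$ is not $\infty$, and that the same applied to $\mathbb D^{[a,b]_\rho}_r(\mathfrak m)$ is not $\infty$.

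\medskip

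\textbf{Plan.} The equivalence of (i) and (ii) is essentially immediate from Proposition \ref{prop dual derivative integral}(iii), applied with the segment $[a]_\rho$ in place of $[a,b]_\rho$ — one only needs to observe that a singleton segment $[a]_\rho$ is a legitimate input to that proposition and that $\mathfrak m$ being in good range for $[a,b]_\rho$ with $b>a$ implies $\mathfrak m$ is also in good range for $[a]_\rho$ (indeed, $a\leq s(\Delta)\leq b+1\leq e(\Delta)+1$ forces $s(\Delta)\leq e(\Delta)+1$, which is automatic, but more importantly each $\Delta$ has $e(\Delta)\geq b>a$, so certainly $a\leq s(\Delta)$ and $a\leq e(\Delta)+1$; hence $\mathfrak m=\mathfrak m_{[a]}$). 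So the first task is just to quote Proposition \ref{prop dual derivative integral}(iii) verbatim.

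\medskip

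\textbf{The remaining point: (ii) $\Leftrightarrow$ (iii).} The only real content is comparing the left $\rho$-derivative algorithm on $\mathbb D_r(\mathfrak m)$ with that on $\mathbb D^{[a,b]_\rho}_r(\mathfrak m) = \mathbb D_r(\mathfrak m) + [b-r+1,b]_\rho$. Passing through the Gelfand--Kazhdan involution $\Theta$, this is a comparison of $\mathcal D^{\mathrm{Lang}}_{[-b,-a]_{\rho^\vee}}$, i.e.\ of $\mathcal D^{\mathrm{Lan}}_{[-b]_{\rho^\vee}}$ (the $\rho$-derivative algorithm, since the target segment is the singleton $[a]_\rho$, which becomes $[-a]_{\rho^\vee}$; wait — here the derivative segment is $[a]_\rho$, becoming $[-a]_{\rho^\vee}$) applied to $\mathfrak n := \Theta(\mathbb D_r(\mathfrak m))$ versus to $\mathfrak n + \Theta([b-r+1,b]_\rho)$. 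I would argue that the extra segment $\Delta_0 := \Theta([b-r+1,b]_\rho) = [-b, r-b-1]_{\rho^\vee}$ plays no role in the $\mathfrak{tds}(-,-a)$ process of Algorithm \ref{alg:rho_der:Lang} for this value: indeed $\Delta_0$ does not start at $\nu^{-a}\rho^\vee$ (its start is $\nu^{-b}\rho^\vee$ with $-b < -a$ since $b>a$) and it does not start at $\nu^{-a+1}\rho^\vee$ either, so it is never selected as $\Delta'$ or $\Delta''$ in any $\mathfrak{tds}(-,-a)$ step; likewise it is never the shortest segment $\Delta_*$ in $\mathfrak m_k[-a]$ since it does not lie in $\cdot[-a]$. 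Therefore the algorithm $\mathcal D^{\mathrm{Lang}}_{[-a]_{\rho^\vee}}$ produces exactly the same output on $\mathfrak n$ and on $\mathfrak n + \Delta_0$ (up to the harmless extra summand $\Delta_0$), and in particular one is $\infty$ iff the other is. Equivalently, and perhaps more cleanly, I would invoke Lemma \ref{lem reduction to m1}-style reasoning: the segment $[b-r+1,b]_\rho$ is not in $(\mathbb D_r^{[a,b]_\rho}(\mathfrak m))_{[-a]}$ after applying $\Theta$... actually the cleanest is to note directly from Step 1--2 of Algorithm \ref{alg:rho_der:Lang} that only segments starting at $\nu^{-a}\rho^\vee$ or $\nu^{-a+1}\rho^\vee$ are touched.

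\medskip

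\textbf{Anticipated obstacle.} The main thing to be careful about is bookkeeping with the $\Theta$-duality and the sign conventions: under $\Theta$, the target segment $[a]_\rho$ for the left derivative becomes $[-a]_{\rho^\vee}$ (per the displayed formula in Section \ref{sss gk invol}: $\mathrm D^{\mathrm L}_{[a,b]_\rho} \cong \theta(\mathrm D^{\mathrm R}_{[-b,-a]_{\rho^\vee}})$, here with $a=b$ giving $[-a,-a]_{\rho^\vee}=[-a]_{\rho^\vee}$), and I must check that the extra segment $[b-r+1,b]_\rho$ (before applying $\Theta$) has start $\nu^{b-r+1}\rho$, which for large $r$ is very negative, hence certainly $\neq \nu^a\rho$ and $\neq\nu^{a+1}\rho$; after $\Theta$ it is $[-b, r-b-1]_{\rho^\vee}$ with start $\nu^{-b}\rho^\vee$, and since $-b<-a$ (as $b>a$), it is not $\nu^{-a}\rho^\vee$ nor $\nu^{-a+1}\rho^\vee$ — the latter needs $-b \neq -a+1$, i.e.\ $b\neq a-1$, which holds since $b>a$. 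So no segment of $\mathbb D_r^{[a,b]_\rho}(\mathfrak m)$ arising from the extra piece interacts with the $\rho$-derivative at $[-a]_{\rho^\vee}$, and (ii) $\Leftrightarrow$ (iii) follows. I do not expect genuine difficulty here; it is a short verification once the conventions are pinned down, so in the writeup I would present (i)$\Leftrightarrow$(ii) as a citation of Proposition \ref{prop dual derivative integral}(iii) and (ii)$\Leftrightarrow$(iii) as this one-paragraph observation about which segments the $\rho$-derivative algorithm can possibly touch.
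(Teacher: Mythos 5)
Your treatment of (ii)$\Leftrightarrow$(iii) is correct and matches the paper's proof: the extra segment $[-b,\,r-b-1]_{\rho^\vee}$ starts at $\nu^{-b}\rho^\vee$ with $-b < -a < -a+1$, so it never enters the $\mathfrak{tds}(-,-a)$ process and cannot be the selected segment $\Delta_*$, hence the $\rho$-derivative algorithm sees $\Theta(\mathbb D_r(\mathfrak m))$ and $\Theta(\mathbb D_r^{[a,b]_\rho}(\mathfrak m))$ as identical for this purpose.

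However, there is a genuine gap in your argument for (i)$\Leftrightarrow$(ii). You claim that $\mathfrak m$ in good range for $[a,b]_\rho$ with $b>a$ forces $\mathfrak m=\mathfrak m_{[a]}$, so that Proposition \ref{prop dual derivative integral}(iii) applies verbatim with $[a]_\rho$ in place of $[a,b]_\rho$. But membership in $\mathfrak m_{[a]}$ also requires $s(\Delta)\leq a+1$, which you never verify and which is false in general: a segment $\Delta=[a+2,b']_\rho$ with $b'\geq b$ satisfies the good-range conditions $a\leq s(\Delta)\leq b+1\leq e(\Delta)+1$ for $[a,b]_\rho$ (since $b>a$ gives $a+2\leq b+1$) yet lies outside $\mathfrak m_{[a]}$ because $s(\Delta)=a+2>a+1$. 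So the hypothesis of Proposition \ref{prop dual derivative integral}(iii) is not met and you cannot cite it verbatim. The paper itself avoids this by asserting only that (i)$\Leftrightarrow$(ii) "follows from a similar argument" to (and is simpler than) Proposition \ref{prop dual derivative integral}(iii). The correct route is to first observe that both sides of (i)$\Leftrightarrow$(ii) depend only on $\mathfrak m_{[a]}$: Algorithm \ref{alg:rho_int:Lang} touches only segments of $\mathfrak m$ starting at $\nu^a\rho$ or $\nu^{a+1}\rho$ (which, since every segment in $\mathfrak m$ has $e(\Delta)\geq b>a$, is exactly $\mathfrak m_{[a]}$), and under $\Theta\circ\Psi$ the segment $[a',b']_\rho$ becomes $[-a'+1,\,r-b'-1]_{\rho^\vee}$, which starts at $\nu^{-a}\rho^\vee$ or $\nu^{-a+1}\rho^\vee$ iff $a'\in\{a,a+1\}$, so the $\mathfrak{tds}(-,-a)$ process on $\Theta(\mathbb D_r(\mathfrak m))$ likewise sees only the image of $\mathfrak m_{[a]}$. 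Only after this reduction is the proposition (or rather the $\Psi$-duality argument behind it, now in the simpler singleton case) applicable to $\mathfrak m_{[a]}$, which genuinely is in good range for $[a]_\rho$.
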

%{\color{blue} (i)$\Leftrightarrow$(ii) follows from Proposition \ref{prop dual derivative integral}(iii) and  (ii)$\Leftrightarrow$(iii) since $-a > -b$ and $[-b,r-b-1]_{\rho^\vee}$ can not participate in the $\mathfrak{tds}(-,-a)$ process on $\Theta \left(\mathbb D_r(\mathfrak m) + [b-r+1,b]_\rho \right)$.}
\begin{proof}
Note that (i)$\Leftrightarrow$(ii) follows from a similar argument in Proposition \ref{prop dual derivative integral}(iii) and is simpler. (ii)$\Leftrightarrow$(iii) since $-a > -b$ and $[-b,r-b-1]_{\rho^\vee}$ can not participate in the $\mathfrak{tds}(-,-a)$ process on $\Theta \left(\mathbb D_r(\mathfrak m) + [b-r+1,b]_\rho \right)$.

%(iii)$\Rightarrow$(i): Suppose $|\mathcal I^{\mathrm{Lang}}_{[a]_{\rho}}(\mathfrak m)|\neq |\mathfrak m|$. Then, by Proposition \ref{int:Lang:length=1}, there exists an injective map  $f: \mathfrak m[a+1]\rightarrow \mathfrak m[a]$ such that, for any $\Delta \in \mathfrak m[a+1]$, $f(\Delta) \prec \Delta$.

%Translating under the map $\Psi: \mathfrak m \rightarrow \mathbb D_r(\mathfrak m)$ given by $\Psi([a',b']_{\rho})=[b'-r+1, a^\prime -1]_{\rho}$, we obtain an injective map $\widetilde{f}$ from $\mathbb D^{[a,b]_{\rho}}_r(\mathfrak m)\langle a\rangle$ to $\mathbb D^{[a,b]_{\rho}}_r(\mathfrak m)\langle a-1\rangle$ such that $\widetilde{f}(\Delta)  \prec \Delta$. This implies $\mathcal D^{\mathrm{Lang,L}}_{[a]_{\rho}}(\mathfrak m) = \infty$ by the left version of Theorem \ref{thm:der:Lang:length=1}. This proves (iii)$\Rightarrow$(i).

%Note that the segment $[b-r+1 ,b]_{\rho}$ plays no role in defining above injective map $\widetilde{f}$, and one can reverse the above argument to show the (i)$\Rightarrow$(iii). Same argument also shows (i)$\Leftrightarrow$(ii).
\end{proof}

\begin{lemma} \label{lem duality under cuspidal derivative}
Let $[a,b]_{\rho}\in \mathrm{Seg}_{\rho}$ with $b>a$ and let $\mathfrak m \in \mathrm{Mult}_{\rho}$ be in good range.  If $|\mathcal I^{\mathrm{Lang}}_{[a]_{\rho}}(\mathfrak m)|=|\mathfrak m|$ or $\mathcal D^{\mathrm{Lang}, \mathrm{L}}_{[a]_{\rho}}(\mathbb D_r(\mathfrak m))\neq \infty$, then 
\[ \mathcal D_{[a]_{\rho}}^{\mathrm{Lang},\mathrm{L}}(\mathbb D_r(\mathfrak m))=\mathbb D_r(\mathcal I^{\mathrm{Lang}}_{[a]_{\rho}}(\mathfrak m)), \quad \mathcal D^{\mathrm{Lang}, \mathrm{L}}_{[a]_{\rho}}(\mathbb D^{[a,b]_{\rho}}_r(\mathfrak m)) =\mathbb D^{[a,b]_{\rho}}_r(\mathcal I^{\mathrm{Lang}}_{[a]_{\rho}}(\mathfrak m)) .
\]
\end{lemma}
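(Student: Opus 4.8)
The plan is to derive this statement as a special case of the machinery already built in Proposition \ref{prop dual derivative integral}, with only a minor extra bookkeeping argument for the second equality. First I would observe that the $\rho$-integral $\mathcal I^{\mathrm{Lang}}_{[a]_\rho}$ is exactly $\mathcal I^{\mathrm{Lang}}_{[a,b]_\rho}$ when $b=a$, but here $b>a$, so one must be slightly careful: in this lemma the \emph{segment defining the derivative} is the singleton $[a]_\rho$ (equivalently $[-a,-a]_{\rho^\vee}$ after applying $\Theta$), whereas the \emph{good-range condition} and the modification $\mathbb D_r^{[a,b]_\rho}$ both refer to the longer segment $[a,b]_\rho$. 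So I would re-run the natural bijection $\Psi:\mathfrak m\to\mathbb D_r(\mathfrak m)$, $\Psi([a',b']_\rho)=[-r+b'+1,a'-1]_\rho$, together with the observations (a), (b) recorded in the proof of Proposition \ref{prop dual derivative integral}: linkedness is preserved, and $\ell_{rel}(\Delta)+\ell_{rel}(\Psi(\Delta))=r$ is constant. These two facts translate ``shortest segment in $\mathfrak m[a]$ preceding something'' into ``longest segment in $\mathbb D_r(\mathfrak m)\langle a-1\rangle$ preceded by something,'' which is precisely the dictionary between the $\mathfrak{tus}(-,a)$ process of Algorithm \ref{alg:rho_int:Lang} and the $\mathfrak{tds}(-,-a)$ process of Algorithm \ref{alg:rho_der:Lang} applied to $\Theta(\mathbb D_r(\mathfrak m))$.

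Next I would match the two algorithms step by step. Running Algorithm \ref{alg:rho_int:Lang} for $\mathcal I^{\mathrm{Lang}}_{[a]_\rho}(\mathfrak m)$: the $\mathfrak{tus}(-,a)$ process pairs off a shortest segment of $\mathfrak m[a]$ with a shortest linked segment of $\mathfrak m[a+1]$; under $\Theta\circ\Psi$ this becomes the $\mathfrak{tds}(-,-a)$ process pairing a longest segment of $\Theta(\mathbb D_r(\mathfrak m))[-a-1]$ (the image of $\mathfrak m\langle a+1\rangle$... wait, more carefully: $\Psi$ sends $\mathfrak m[a+1]$ to $\mathbb D_r(\mathfrak m)\langle a\rangle$, and then $\theta$ sends starting-points to ending-points, so these land where the $\mathfrak{tds}$-process looks) with a longest linked segment of the appropriate neighboring slice. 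After the terminating multisegments $\mathfrak n_\ell$ and its image agree, Step 2 of Algorithm \ref{alg:rho_int:Lang} picks the \emph{longest} segment $\Delta_*\in\mathfrak n_\ell[a+1]$ and replaces it by ${}^+\Delta_*$, while Step 2 of Algorithm \ref{alg:rho_der:Lang} applied to the image picks the \emph{shortest} segment $\Delta_*^\dagger$ in the corresponding slice and replaces it by ${}^-\Delta_*^\dagger$; the length-complementarity (b) forces $\Psi(\Delta_*)$ to be exactly that shortest segment, and $\Psi({}^+\Delta_*)={}^-\Psi(\Delta_*)$ (the extra point $\nu^a\rho$ added on the left of $\Delta_*$ becomes the point $\nu^{-a-1}\rho$ removed on the left after $\theta$, since ${}^+[a',b']$ has the same $\Psi$-image as $[a'-1,b']$... one checks $\Psi({}^+[a',b'])=[-r+b'+1,a'-2]={}^-\Psi([a',b'])$). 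Hence $\mathcal D^{\mathrm{Lang}}_{[-a]_{\rho^\vee}}(\Theta(\mathbb D_r(\mathfrak m)))=\Theta(\mathbb D_r(\mathcal I^{\mathrm{Lang}}_{[a]_\rho}(\mathfrak m)))$, which after applying $\Theta$ is the first claimed equality. The case $|\mathcal I^{\mathrm{Lang}}_{[a]_\rho}(\mathfrak m)|>|\mathfrak m|$ (when $\mathfrak n_\ell[a+1]=\emptyset$ and the integral merely appends $[a]_\rho$) corresponds, via the non-infinity hypothesis $\mathcal D^{\mathrm{Lang,L}}_{[a]_\rho}(\mathbb D_r(\mathfrak m))\ne\infty$ of Lemma \ref{lem zero and integral cuspidal}, precisely to the situation where the $\mathfrak{tds}$-process terminates leaving a segment starting at the right point; since by Lemma \ref{lem zero and integral cuspidal}(i)$\Leftrightarrow$(ii) the hypothesis forces $|\mathcal I^{\mathrm{Lang}}_{[a]_\rho}(\mathfrak m)|=|\mathfrak m|$ anyway, this degenerate branch does not actually occur here.

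For the second equality I would note that $\mathbb D_r^{[a,b]_\rho}(\mathfrak m)=\mathbb D_r(\mathfrak m)+[b-r+1,b]_\rho$, so $\Theta(\mathbb D_r^{[a,b]_\rho}(\mathfrak m))=\Theta(\mathbb D_r(\mathfrak m))+[-b,r-b-1]_{\rho^\vee}$, and the extra segment $[-b,r-b-1]_{\rho^\vee}$ ends at $\nu^{r-b-1}\rho^\vee$ and \emph{starts} at $\nu^{-b}\rho^\vee$; since $b>a$, we have $-b<-a$, so $[-b,r-b-1]_{\rho^\vee}$ does not lie in slice $[-a]_{\rho^\vee}$ and, because $r$ is large, it is longer than anything it could link to in the $\mathfrak{tds}(-,-a)$ process — this is exactly the argument invoked in the proof of Lemma \ref{lem zero and integral cuspidal} for (ii)$\Leftrightarrow$(iii). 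Therefore this extra segment is a passive spectator in Algorithm \ref{alg:rho_der:Lang} applied at the point $-a$, and $\mathcal D^{\mathrm{Lang}}_{[-a]_{\rho^\vee}}$ acts on $\Theta(\mathbb D_r(\mathfrak m))$ inside $\Theta(\mathbb D_r^{[a,b]_\rho}(\mathfrak m))$ just as it does on $\Theta(\mathbb D_r(\mathfrak m))$ alone, carrying the spectator segment along unchanged. Thus $\mathcal D^{\mathrm{Lang,L}}_{[a]_\rho}(\mathbb D_r^{[a,b]_\rho}(\mathfrak m))=\mathcal D^{\mathrm{Lang,L}}_{[a]_\rho}(\mathbb D_r(\mathfrak m))+[b-r+1,b]_\rho=\mathbb D_r(\mathcal I^{\mathrm{Lang}}_{[a]_\rho}(\mathfrak m))+[b-r+1,b]_\rho=\mathbb D_r^{[a,b]_\rho}(\mathcal I^{\mathrm{Lang}}_{[a]_\rho}(\mathfrak m))$, using the first equality. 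The main obstacle I anticipate is purely notational rather than conceptual: keeping straight the three-way composition $\theta\circ$(reversal)$\circ\Psi$ when tracking which slice $\mathfrak m[a]$, $\mathfrak m\langle a\rangle$, $\Theta(\cdots)[-a]$ maps to which, and verifying that ``append $[a]_\rho$'' versus ``shorten a segment'' are interchanged correctly by the length-complement $r$; the safest route is to phrase everything through the single bijection $\Theta\circ\Psi$ and the two bullet observations already established in Proposition \ref{prop dual derivative integral}, so that the lemma reduces to checking compatibility at the single extra step (Step 2) where the longest/shortest choice flips, which the constancy of $\ell_{rel}(\Delta)+\ell_{rel}(\Psi(\Delta))$ handles automatically.
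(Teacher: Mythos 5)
Your proposal is correct and follows essentially the same route the paper intends (the paper omits the details, citing only the bijection $\Psi$ and Proposition \ref{prop dual derivative integral}): you match the $\mathfrak{tus}(-,a)$ process and Step 2 of Algorithm \ref{alg:rho_int:Lang} with the $\mathfrak{tds}(-,-a)$ process and Step 2 of Algorithm \ref{alg:rho_der:Lang} via $\Theta\circ\Psi$ using the length-complement and precedence-reversal observations, rule out the degenerate ``append $[a]_{\rho}$'' branch by Lemma \ref{lem zero and integral cuspidal}, and treat the extra segment of $\mathbb D_r^{[a,b]_{\rho}}(\mathfrak m)$ as a spectator since its starting point $-b$ lies in neither slice used at the point $-a$. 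One small notational correction: $\Psi({}^+[a',b']_{\rho})=[-r+b'+1,a'-2]_{\rho}$ equals $\bigl(\Psi([a',b']_{\rho})\bigr)^-$ (ending point removed), not ${}^-\Psi([a',b']_{\rho})$; the left-removal ${}^-$ only appears after applying $\Theta$, which is exactly what your computation at the $\Theta\circ\Psi$ level shows, so the argument stands.
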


\begin{proof}
A proof is similar to the one of Proposition \ref{prop dual derivative integral} (see Appendix \ref{appendix prop duality}) and is much simpler. We omit the details.
\end{proof}

\subsection{More on commutation relation of derivatives}

\begin{lemma} \label{lem derivative commut revise}
Let $\mathfrak m \in \mathrm{Mult}_{\rho}$. Let $[a,b]_{\rho} \in \mathrm{Seg}_{\rho}$. Let $a<c \leq b$. Then 
\begin{enumerate}
    \item[(i)] Suppose $\mathcal D^{\mathrm{Lang}}_{[a,b]_{\rho}}(\mathfrak m)\neq \infty$ and $\mathcal D^{\mathrm{Lang}}_{[c]_{\rho}}(\mathfrak m)\neq \infty$. Then 
    \[ \mathcal D^{\mathrm{Lang}}_{[c]_{\rho}}\circ \mathcal D^{\mathrm{Lang}}_{[a,b]_{\rho}}(\mathfrak m)=\mathcal D^{\mathrm{Lang}}_{[a,b]_{\rho}}\circ \mathcal D^{\mathrm{Lang}}_{[c]_{\rho}}(\mathfrak m)\neq \infty.\]
    \item[(ii)] If $\mathcal D^{\mathrm{Lang}}_{[a,b]_{\rho}}\circ \mathcal D^{\mathrm{Lang}}_{[c]_{\rho}}(\mathfrak m)\neq \infty$, then $\mathcal D^{\mathrm{Lang}}_{[a,b]_{\rho}}(\mathfrak m)\neq \infty$ and $\mathcal D^{\mathrm{Lang}}_{[c]_{\rho}}(\mathfrak m)\neq \infty$.
\end{enumerate}
\end{lemma}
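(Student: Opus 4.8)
The plan is to prove both parts by combining the commutation results already established in this section (Lemmas \ref{lem:D[ab]=D[a+1,b]_D[a]}, \ref{lem:comm_D[ab]_D[a]}, and \ref{lem:comm_D_ab:D_a+1}) with the representation-theoretic translation provided by Theorems \ref{thm:zero_der_Lang} and \ref{thm:der:Lang}. The key observation is that since $a<c\leq b$, the singleton $[c]_{\rho}$ and the segment $[a,b]_{\rho}$ are \emph{unlinked} (indeed $[c]_\rho\subset[a,b]_\rho$), so on the representation side, the operators $\mathrm D^{\mathrm R}_{[c]_\rho}$ and $\mathrm D^{\mathrm R}_{[a,b]_\rho}$ commute by \cite[Lemma 4.4]{Cha_csq}, and one can transport this commutativity back to the combinatorial side. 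Concretely, for (i), suppose $\mathcal D^{\mathrm{Lang}}_{[a,b]_{\rho}}(\mathfrak m)\neq \infty$ and $\mathcal D^{\mathrm{Lang}}_{[c]_{\rho}}(\mathfrak m)\neq \infty$. By Theorem \ref{thm:zero_der_Lang}, this gives $\mathrm D^{\mathrm R}_{[a,b]_{\rho}}(L(\mathfrak m))\neq 0$ and $\mathrm D^{\mathrm R}_{[c]_{\rho}}(L(\mathfrak m))\neq 0$. One would then want to conclude $\mathrm D^{\mathrm R}_{[c]_\rho}\circ\mathrm D^{\mathrm R}_{[a,b]_\rho}(L(\mathfrak m))\cong \mathrm D^{\mathrm R}_{[a,b]_\rho}\circ\mathrm D^{\mathrm R}_{[c]_\rho}(L(\mathfrak m))\neq 0$, apply Theorem \ref{thm:der:Lang} to both orders of composition, and read off the equality of the two combinatorial expressions $\mathcal D^{\mathrm{Lang}}_{[c]_\rho}\circ\mathcal D^{\mathrm{Lang}}_{[a,b]_\rho}(\mathfrak m)$ and $\mathcal D^{\mathrm{Lang}}_{[a,b]_\rho}\circ\mathcal D^{\mathrm{Lang}}_{[c]_\rho}(\mathfrak m)$ as Langlands data of the same irreducible representation, hence equal.

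The subtlety — and the main obstacle — is the \emph{nonvanishing propagation}: knowing $\mathrm D^{\mathrm R}_{[a,b]_\rho}(L(\mathfrak m))\neq 0$ and $\mathrm D^{\mathrm R}_{[c]_\rho}(L(\mathfrak m))\neq 0$ separately does not immediately give that the composition $\mathrm D^{\mathrm R}_{[c]_\rho}\circ\mathrm D^{\mathrm R}_{[a,b]_\rho}(L(\mathfrak m))$ is nonzero. To handle this I would argue via the highest-derivative-multisegment machinery of Section \ref{ss hd removal}: by Lemma \ref{lem non-zero der hd}, $\mathrm D^{\mathrm R}_{[a,b]_\rho}(L(\mathfrak m))\neq 0$ means $\mathfrak{hd}(L(\mathfrak m))$ contains a segment $[a,d]_\rho$ with $d\geq b$, and $\mathrm D^{\mathrm R}_{[c]_\rho}(L(\mathfrak m))\neq 0$ means $\mathfrak{hd}(L(\mathfrak m))[c]\neq \emptyset$. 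Using Lemma \ref{lem derivative removal} (that $\mathfrak{hd}(\mathrm D^{\mathrm R}_{[a,b]_\rho}(\pi))[c']=\mathfrak r([a,b]_\rho,\pi)[c']$ for $c'\geq a$) together with the description of the removal process, one should show that $\mathfrak{hd}(\mathrm D^{\mathrm R}_{[a,b]_\rho}(L(\mathfrak m)))[c]\neq\emptyset$; since $c>a$, removing a segment starting at $a$ and replacing it by one starting at $a+1$ does not affect the presence of segments starting at $c$, so this is essentially immediate. By Lemma \ref{lem non-zero der hd} again this yields $\mathrm D^{\mathrm R}_{[c]_\rho}\circ\mathrm D^{\mathrm R}_{[a,b]_\rho}(L(\mathfrak m))\neq 0$. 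Commutativity via \cite[Lemma 4.4]{Cha_csq} then also gives the other order is nonzero, and Theorem \ref{thm:der:Lang} finishes (i) via the uniqueness in the Langlands classification.

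For (ii), suppose $\mathcal D^{\mathrm{Lang}}_{[a,b]_\rho}\circ\mathcal D^{\mathrm{Lang}}_{[c]_\rho}(\mathfrak m)\neq\infty$. Then in particular $\mathcal D^{\mathrm{Lang}}_{[c]_\rho}(\mathfrak m)\neq\infty$, so $\mathrm D^{\mathrm R}_{[c]_\rho}(L(\mathfrak m))\neq 0$ by Theorem \ref{thm:zero_der_Lang}; write $\pi'=\mathrm D^{\mathrm R}_{[c]_\rho}(L(\mathfrak m))\cong L(\mathcal D^{\mathrm{Lang}}_{[c]_\rho}(\mathfrak m))$ by Theorem \ref{thm:der:Lang}. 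Applying Theorem \ref{thm:zero_der_Lang} to $\pi'$ gives $\mathrm D^{\mathrm R}_{[a,b]_\rho}(\pi')\neq 0$, i.e. $\mathrm D^{\mathrm R}_{[a,b]_\rho}\circ\mathrm D^{\mathrm R}_{[c]_\rho}(L(\mathfrak m))\neq 0$. Now I need to deduce $\mathrm D^{\mathrm R}_{[a,b]_\rho}(L(\mathfrak m))\neq 0$; this again goes through the $\mathfrak{hd}$-formalism: from $\mathfrak{hd}(\pi')[a']=\mathfrak r([c]_\rho,L(\mathfrak m))[a']$ for $a'\geq c$ (Lemma \ref{lem derivative removal}) — actually one wants information at $a<c$, so instead use Lemma \ref{lem removal process singleton}(ii), which gives $\mathfrak r([c]_\rho,L(\mathfrak m))=\mathfrak{hd}(L(\mathfrak m))-\Upsilon([c]_\rho,L(\mathfrak m))+{}^-\Upsilon([c]_\rho,L(\mathfrak m))$, a modification that only touches segments starting at $c>a$; hence $\mathfrak{hd}(\pi')[a]=\mathfrak{hd}(L(\mathfrak m))[a]$ and the presence of a segment $[a,d]_\rho$ with $d\geq b$ in $\mathfrak{hd}(\pi')$ forces the same in $\mathfrak{hd}(L(\mathfrak m))$. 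By Lemma \ref{lem non-zero der hd}, $\mathrm D^{\mathrm R}_{[a,b]_\rho}(L(\mathfrak m))\neq 0$, and by Theorem \ref{thm:zero_der_Lang} this is $\mathcal D^{\mathrm{Lang}}_{[a,b]_\rho}(\mathfrak m)\neq\infty$, completing (ii). The only genuinely delicate point throughout is checking that $\mathfrak{hd}(\mathrm D^{\mathrm R}_{[c]_\rho}(\pi))$ and $\mathfrak{hd}(\pi)$ agree in the relevant range $\leq c$, which follows from the explicit removal-process formulas in Section \ref{ss hd removal}; alternatively, one could bypass $\mathfrak{hd}$ entirely and mimic the case-by-case argument in the proof of Theorem \ref{thm:zero_der_Lang}, but the $\mathfrak{hd}$-route is cleaner here since $[c]_\rho$ is a singleton.
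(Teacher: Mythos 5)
Your overall skeleton is the same as the paper's: translate the hypotheses to the representation side via Theorem \ref{thm:zero_der_Lang}, establish nonvanishing of the composed derivatives, commute them via \cite[Lemma 4.4]{Cha_csq}, and pull back through Theorem \ref{thm:der:Lang} and uniqueness of Langlands data. The problem is that the two nonvanishing-propagation steps are the only real content of the lemma, and the micro-arguments you give for them are incorrect.

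For (i), you justify $\mathfrak{hd}(\mathrm D^{\mathrm R}_{[a,b]_{\rho}}(L(\mathfrak m)))[c]\neq\emptyset$ by saying that the removal process "removes a segment starting at $a$ and replaces it by one starting at $a+1$", hence does not affect segments starting at $c$. That describes $\mathfrak r([a]_{\rho},\pi)$ (Lemma \ref{lem removal process singleton}), not $\mathfrak r([a,b]_{\rho},\pi)$: the removal process for $[a,b]_{\rho}$ removes the whole string of points $\nu^a\rho,\ldots,\nu^b\rho$ and in general involves several segments of $\mathfrak{hd}(\pi)$, possibly including the one(s) starting at $c$. Concretely, $\pi=Z(\left\{[-2,1]_{\rho},[-1,0]_{\rho},[0,0]_{\rho},[0,1]_{\rho},[1,2]_{\rho}\right\})$ has $\mathfrak{hd}(\pi)=\left\{[0,0]_{\rho},[0,2]_{\rho},[1,1]_{\rho}\right\}$ (Algorithm \ref{alg hd der}), and for $[a,b]_{\rho}=[0,1]_{\rho}$, $c=1$, one computes via Algorithm \ref{alg:der_Zel} that $\mathfrak{hd}(\mathrm D^{\mathrm R}_{[0,1]_{\rho}}(\pi))=\left\{[-1,0]_{\rho},[0,0]_{\rho},[1,2]_{\rho}\right\}$: the unique segment of $\mathfrak{hd}(\pi)$ starting at $c$ is consumed and a different one appears. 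So the needed nonemptiness at $c$ does hold, but it is not "immediate"; it requires the actual structure of the removal sequence for $[a,b]_{\rho}$ (this is what the paper extracts from Lemma \ref{lem derivative removal} and \cite[Theorem 9.3]{Cha_csq}). For (ii), your key claim $\mathfrak{hd}(\mathrm D^{\mathrm R}_{[c]_{\rho}}(\pi))[a]=\mathfrak{hd}(\pi)[a]$ for $a<c$ is simply false, and it cannot follow from Lemmas \ref{lem removal process singleton} and \ref{lem derivative removal}, since the latter only computes $\mathfrak{hd}$ of the derivative at points $\geq c$: already $\pi=\langle[0,1]_{\rho}\rangle$ has $\mathfrak{hd}(\pi)=\left\{[1,1]_{\rho}\right\}$ while $\mathrm D^{\mathrm R}_{[1]_{\rho}}(\pi)\cong\nu^0\rho$ has $\mathfrak{hd}=\left\{[0,0]_{\rho}\right\}$, so the data at points below $c$ can change. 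The implication you actually need, $\mathrm D^{\mathrm R}_{[a,b]_{\rho}}\circ\mathrm D^{\mathrm R}_{[c]_{\rho}}(\pi)\neq0\Rightarrow \mathrm D^{\mathrm R}_{[a,b]_{\rho}}(\pi)\neq0$, is a genuine result that the paper imports from \cite[Proposition 9.3(2)]{Cha_csq} (compare the proof of Lemma \ref{lem:comm_D_ab:D_a+1_pi}); replace your $\mathfrak{hd}$-agreement argument by that citation (or a proof of it), and likewise base the nonvanishing step in (i) on those $\varepsilon$-invariant results rather than on untouchedness of segments at $c$.
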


\begin{proof}
For (i), by Theorem \ref{thm:zero_der_Lang}, $\mathrm{D}^{\mathrm R}_{[a,b]_{\rho}}(L(\mathfrak m))\neq 0$ and $\mathrm{D}^{\mathrm R}_{[c]_{\rho}}(L(\mathfrak m))\neq 0$. This follows from Lemma \ref{lem derivative removal} that $\mathrm D^{\mathrm R}_{[c]_{\rho}}\circ \mathrm D^{\mathrm R}_{[a,b]_{\rho}}(\mathfrak m)\neq 0$ and so we have the commutativity (see e.g. \cite[Lemma 4.4]{Cha_csq}). Now, one applies Theorem \ref{thm:zero_der_Lang} to obtain (i). 

For (ii), Theorem \ref{thm:zero_der_Lang} implies $\mathrm{D}^{\mathrm{R}}_{[a,b]_{\rho}}\circ \mathrm{D}^{\mathrm R}_{[c]_{\rho}}(L(\mathfrak m))\neq 0$. Hence, $\mathrm D^{\mathrm{R}}_{[c]_{\rho}}(L(\mathfrak m))\neq 0$, and by \cite[Proposition 9.3(2)]{Cha_csq}, $\mathrm{D}^{\mathrm{R}}_{[a,b]_{\rho}}(L(\mathfrak m))\neq 0$. Now, one applies  Theorem \ref{thm:zero_der_Lang} to obtain statements for $\mathcal D^{\mathrm{Lang},L}_{[a,b]_{\rho}}$.
\end{proof}

We shall need the following left version of Lemma \ref{lem derivative commut revise}:

\begin{corollary} \label{cor derivative revise left}
Let $\mathfrak m \in \mathrm{Mult}_{\rho}$. Let $[a,b]_{\rho} \in \mathrm{Seg}_{\rho}$. Let $a\leq c < b$. Then 
\begin{enumerate}
    \item[(i)] Suppose $\mathcal D^{\mathrm{Lang}, \mathrm L}_{[a,b]_{\rho}}(\mathfrak m)\neq \infty$ and $\mathcal D^{\mathrm{Lang}, \mathrm L}_{[c]_{\rho}}(\mathfrak m)\neq \infty$. Then 
    \[ \mathcal D^{\mathrm{Lang}, \mathrm L}_{[c]_{\rho}}\circ \mathcal D^{\mathrm{Lang,L}}_{[a,b]_{\rho}}(\mathfrak m)=\mathcal D^{\mathrm{Lang}, \mathrm L}_{[a,b]_{\rho}}\circ \mathcal D^{\mathrm{Lang}, \mathrm L}_{[c]_{\rho}}(\mathfrak m)\neq \infty.\]
    \item[(ii)] If $\mathcal D^{\mathrm{Lang}, \mathrm L}_{[a,b]_{\rho}}\circ \mathcal D^{\mathrm{Lang}, \mathrm L}_{[c]_{\rho}}(\mathfrak m)\neq \infty$, then $\mathcal D^{\mathrm{Lang}, \mathrm L}_{[a,b]_{\rho}}(\mathfrak m)\neq \infty$ and $\mathcal D^{\mathrm{Lang}, \mathrm{L}}_{[c]_{\rho}}(\mathfrak m)\neq \infty$..
\end{enumerate}
\end{corollary}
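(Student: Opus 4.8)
The plan is to deduce Corollary~\ref{cor derivative revise left} from Lemma~\ref{lem derivative commut revise} by applying the Gelfand-Kazhdan involution $\Theta$, exactly as the left derivative algorithm was set up in the paper. Recall that $\mathcal D^{\mathrm{Lang},\mathrm L}_{[a,b]_{\rho}}(\mathfrak m)=\Theta(\mathcal D^{\mathrm{Lang}}_{[-b,-a]_{\rho^{\vee}}}(\Theta(\mathfrak m)))$, and since $\Theta$ is an involution on $\mathrm{Mult}_{\rho}$ (with $\Theta^2=\mathrm{id}$), composing two left-derivative operators translates directly into composing the corresponding right-derivative operators after applying $\Theta$. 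Concretely, I would first record the identity
\[
\mathcal D^{\mathrm{Lang},\mathrm L}_{[c]_{\rho}}\circ \mathcal D^{\mathrm{Lang},\mathrm L}_{[a,b]_{\rho}}(\mathfrak m)=\Theta\left(\mathcal D^{\mathrm{Lang}}_{[-c]_{\rho^{\vee}}}\circ \mathcal D^{\mathrm{Lang}}_{[-b,-a]_{\rho^{\vee}}}(\Theta(\mathfrak m))\right),
\]
and the analogous identity with the two operators swapped; here I use that the inner $\Theta$ in the first operator cancels the outer $\Theta$ produced by the second. One also uses that $\mathcal D^{\mathrm{Lang},\mathrm L}_{\Delta}(\mathfrak m)\neq \infty$ if and only if $\mathcal D^{\mathrm{Lang}}_{\theta(\Delta)}(\Theta(\mathfrak m))\neq \infty$, which is immediate from the definition since $\Theta$ never outputs $\infty$ from a finite input.

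The second step is to check that the hypothesis $a\leq c<b$ is exactly what is needed to apply Lemma~\ref{lem derivative commut revise} to $\Theta(\mathfrak m)$ with the segment $[-b,-a]_{\rho^{\vee}}$ and the integer $-c$. Writing $[-b,-a]_{\rho^{\vee}}$ as $[A,B]_{\rho^{\vee}}$ with $A=-b$, $B=-a$, the condition $A<-c\leq B$ in Lemma~\ref{lem derivative commut revise} becomes $-b<-c\leq -a$, i.e. $a\leq c<b$, which is precisely our assumption. So part~(i): given $\mathcal D^{\mathrm{Lang},\mathrm L}_{[a,b]_{\rho}}(\mathfrak m)\neq\infty$ and $\mathcal D^{\mathrm{Lang},\mathrm L}_{[c]_{\rho}}(\mathfrak m)\neq\infty$, translate via $\Theta$ to get $\mathcal D^{\mathrm{Lang}}_{[-b,-a]_{\rho^{\vee}}}(\Theta(\mathfrak m))\neq\infty$ and $\mathcal D^{\mathrm{Lang}}_{[-c]_{\rho^{\vee}}}(\Theta(\mathfrak m))\neq\infty$, apply Lemma~\ref{lem derivative commut revise}(i) to obtain commutativity and non-vanishing over $\rho^{\vee}$, then apply $\Theta$ back. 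Part~(ii) is the same mechanism with Lemma~\ref{lem derivative commut revise}(ii).

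I do not expect a genuine obstacle here: the only thing to be careful about is the bookkeeping of how $\theta$ acts on a singleton segment, $\theta([c]_{\rho})=[-c]_{\rho^{\vee}}$, and the matching of the inequality ranges, which is a one-line check as above. It is also worth noting that $\Theta$ is a bijection on each $\mathrm{Mult}_{\rho}$ (sending it to $\mathrm{Mult}_{\rho^{\vee}}$) and that applying $\Theta$ to both sides of an equality of multisegments is harmless. If one prefers to avoid the combinatorial translation entirely, an alternative is to repeat the proof of Lemma~\ref{lem derivative commut revise} verbatim on the representation side using the left versions of the cited results (\cite[Lemma 4.4]{Cha_csq} and \cite[Proposition 9.3(2)]{Cha_csq} have evident left analogues via $\theta$), together with the left-derivative version of Theorem~\ref{thm:zero_der_Lang} already established in the excerpt; but the $\Theta$-transfer is shorter and is the approach I would write up.

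\medskip

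\emph{Proof.} Recall from the definition of the left derivative algorithm that for any $\Delta=[x,y]_{\rho}\in\mathrm{Seg}_{\rho}$ and $\mathfrak m\in\mathrm{Mult}_{\rho}$,
\[
\mathcal D^{\mathrm{Lang},\mathrm L}_{[x,y]_{\rho}}(\mathfrak m)=\Theta\left(\mathcal D^{\mathrm{Lang}}_{[-y,-x]_{\rho^{\vee}}}(\Theta(\mathfrak m))\right),
\]
and that $\Theta$ is an involution on the set of multisegments (in particular $\Theta^2=\mathrm{id}$ and $\Theta$ never produces $\infty$ from a finite multisegment). Consequently, $\mathcal D^{\mathrm{Lang},\mathrm L}_{[x,y]_{\rho}}(\mathfrak m)\neq\infty$ if and only if $\mathcal D^{\mathrm{Lang}}_{[-y,-x]_{\rho^{\vee}}}(\Theta(\mathfrak m))\neq\infty$, and for a singleton $\theta([c]_{\rho})=[-c]_{\rho^{\vee}}$.

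Set $\mathfrak m'=\Theta(\mathfrak m)\in\mathrm{Mult}_{\rho^{\vee}}$, $A=-b$, $B=-a$, so that $[A,B]_{\rho^{\vee}}=[-b,-a]_{\rho^{\vee}}$. The hypothesis $a\leq c<b$ is equivalent to $A<-c\leq B$, which is the range hypothesis required in Lemma~\ref{lem derivative commut revise} applied to $\mathfrak m'$, the segment $[A,B]_{\rho^{\vee}}$, and the integer $-c$.

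(i) Suppose $\mathcal D^{\mathrm{Lang},\mathrm L}_{[a,b]_{\rho}}(\mathfrak m)\neq\infty$ and $\mathcal D^{\mathrm{Lang},\mathrm L}_{[c]_{\rho}}(\mathfrak m)\neq\infty$. By the equivalence above, $\mathcal D^{\mathrm{Lang}}_{[A,B]_{\rho^{\vee}}}(\mathfrak m')\neq\infty$ and $\mathcal D^{\mathrm{Lang}}_{[-c]_{\rho^{\vee}}}(\mathfrak m')\neq\infty$. By Lemma~\ref{lem derivative commut revise}(i),
\[
\mathcal D^{\mathrm{Lang}}_{[-c]_{\rho^{\vee}}}\circ\mathcal D^{\mathrm{Lang}}_{[A,B]_{\rho^{\vee}}}(\mathfrak m')=\mathcal D^{\mathrm{Lang}}_{[A,B]_{\rho^{\vee}}}\circ\mathcal D^{\mathrm{Lang}}_{[-c]_{\rho^{\vee}}}(\mathfrak m')\neq\infty.
\]
Applying $\Theta$ to both sides and using $\Theta^2=\mathrm{id}$ to cancel the intermediate occurrences of $\Theta$, the left-hand side becomes $\mathcal D^{\mathrm{Lang},\mathrm L}_{[c]_{\rho}}\circ\mathcal D^{\mathrm{Lang},\mathrm L}_{[a,b]_{\rho}}(\mathfrak m)$ and the right-hand side becomes $\mathcal D^{\mathrm{Lang},\mathrm L}_{[a,b]_{\rho}}\circ\mathcal D^{\mathrm{Lang},\mathrm L}_{[c]_{\rho}}(\mathfrak m)$, and both are $\neq\infty$. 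This proves (i).

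(ii) Suppose $\mathcal D^{\mathrm{Lang},\mathrm L}_{[a,b]_{\rho}}\circ\mathcal D^{\mathrm{Lang},\mathrm L}_{[c]_{\rho}}(\mathfrak m)\neq\infty$. Applying $\Theta$ and cancelling as above, this says $\mathcal D^{\mathrm{Lang}}_{[A,B]_{\rho^{\vee}}}\circ\mathcal D^{\mathrm{Lang}}_{[-c]_{\rho^{\vee}}}(\mathfrak m')\neq\infty$. By Lemma~\ref{lem derivative commut revise}(ii), $\mathcal D^{\mathrm{Lang}}_{[A,B]_{\rho^{\vee}}}(\mathfrak m')\neq\infty$ and $\mathcal D^{\mathrm{Lang}}_{[-c]_{\rho^{\vee}}}(\mathfrak m')\neq\infty$. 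Translating back via the equivalence above yields $\mathcal D^{\mathrm{Lang},\mathrm L}_{[a,b]_{\rho}}(\mathfrak m)\neq\infty$ and $\mathcal D^{\mathrm{Lang},\mathrm L}_{[c]_{\rho}}(\mathfrak m)\neq\infty$, as claimed. $\qed$
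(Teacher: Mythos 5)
Your proof is correct and uses exactly the transfer that the paper intends: the Corollary is stated without a written proof, with the implicit understanding (set up in Section~\ref{sss gk invol} and the definition of $\mathcal D^{\mathrm{Lang},\mathrm L}$) that it follows from Lemma~\ref{lem derivative commut revise} by conjugating with $\Theta$ and matching the inequality ranges. Your bookkeeping of $\Theta^2=\mathrm{id}$, $\theta([c]_\rho)=[-c]_{\rho^\vee}$, and the translation $a<c\leq b \leftrightarrow a\leq c<b$ is all sound.
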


\subsection{Commutation of $\mathcal I_{[a]_{\rho}}^{\mathrm{Lang}}$ and $\mathcal I_{[a,b]_{\rho}}^{\mathrm{Lang}}$}

\begin{lemma}\label{lem:Lang_comm_I[ab]_I[a]}
	Let $[a,b]_\rho \in \mathrm{Seg}_\rho$ with $b>a$ and $\mathfrak{m} \in \mathrm{Mult}_\rho$ be in good range for $[a,b]_{\rho}$. Then,  we have:
	\[\mathcal{I}^\mathrm{Lang}_{[a,b]_\rho}\circ \mathcal{I}^\mathrm{Lang}_{[a]_\rho}(\mathfrak{m})=\mathcal{I}^\mathrm{Lang}_{[a]_\rho}\circ \mathcal{I}^\mathrm{Lang}_{[a,b]_\rho}(\mathfrak{m}).\]
\end{lemma}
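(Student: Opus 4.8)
The plan is to transport the identity to the left-derivative side via the exotic duality of Section~\ref{ss transfer exotic duality}, where the corresponding commutativity is already available. Fix a sufficiently large $r$. Since $\mathfrak m$ is in good range for $[a,b]_\rho$, and $\mathbb D_r$ and $\mathbb D_r^{[a,b]_\rho}$ are induced by a bijection at the level of segments (hence injective on multisegments in good range), it suffices to apply $\mathbb D_r$ to both sides and compare. Under $\mathbb D_r$ the operator $\mathcal I^{\mathrm{Lang}}_{[a]_\rho}$ matches $\mathcal D^{\mathrm{Lang}, \mathrm L}_{[a]_\rho}$ by Lemma~\ref{lem duality under cuspidal derivative}, and $\mathcal I^{\mathrm{Lang}}_{[a,b]_\rho}$ matches $\mathcal D^{\mathrm{Lang}, \mathrm L}_{[a,b]_\rho}$ by Proposition~\ref{prop dual derivative integral}, while on the derivative side Corollary~\ref{cor derivative revise left}(i), applied with $c=a<b$, gives $\mathcal D^{\mathrm{Lang}, \mathrm L}_{[a]_\rho}\circ \mathcal D^{\mathrm{Lang}, \mathrm L}_{[a,b]_\rho}=\mathcal D^{\mathrm{Lang}, \mathrm L}_{[a,b]_\rho}\circ \mathcal D^{\mathrm{Lang}, \mathrm L}_{[a]_\rho}$ whenever both inner derivatives are $\neq\infty$. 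A direct combinatorial argument mimicking the proof of Lemma~\ref{lem:comm_D[ab]_D[a]} is also possible, but the duality route reuses the machinery just set up.

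The execution splits according to whether $\mathcal I^{\mathrm{Lang}}_{[a]_\rho}$ left-extends a segment of $\mathfrak m$ (so $|\mathcal I^{\mathrm{Lang}}_{[a]_\rho}(\mathfrak m)|=|\mathfrak m|$) or appends the singleton $[a]_\rho$, and likewise for $\mathcal I^{\mathrm{Lang}}_{[a,b]_\rho}$. When $|\mathcal I^{\mathrm{Lang}}_{[a]_\rho}(\mathfrak m)|=|\mathfrak m|$, both $\mathcal I^{\mathrm{Lang}}_{[a]_\rho}(\mathfrak m)$ and $\mathcal I^{\mathrm{Lang}}_{[a,b]_\rho}(\mathfrak m)$ remain in good range for $[a,b]_\rho$, and I would translate each of the two composites to the derivative side using Proposition~\ref{prop dual derivative integral}(i)--(iii), Lemma~\ref{lem zero and integral cuspidal} and Lemma~\ref{lem duality under cuspidal derivative}, invoking the $\mathbb D_r^{[a,b]_\rho}$-variant exactly when the middle integral enlarges the multisegment, and then conclude by Corollary~\ref{cor derivative revise left}(i). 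When $\mathcal I^{\mathrm{Lang}}_{[a]_\rho}(\mathfrak m)=\mathfrak m+[a]_\rho$, the appended segment $[a]_\rho$ has endpoint $a<b$, hence lies outside the range restriction defining $\mathfrak m_{[a,b]}$ in Algorithm~\ref{alg:int:Lang}, so $\mathcal I^{\mathrm{Lang}}_{[a,b]_\rho}(\mathfrak m+[a]_\rho)=\mathcal I^{\mathrm{Lang}}_{[a,b]_\rho}(\mathfrak m)+[a]_\rho$ and the left-hand side equals $\mathcal I^{\mathrm{Lang}}_{[a,b]_\rho}(\mathfrak m)+[a]_\rho$; it then remains to see that $\mathcal I^{\mathrm{Lang}}_{[a]_\rho}$ also appends $[a]_\rho$ to $\mathcal I^{\mathrm{Lang}}_{[a,b]_\rho}(\mathfrak m)$. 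For this, $\mathcal D^{\mathrm{Lang}, \mathrm L}_{[a]_\rho}(\mathbb D_r(\mathfrak m))=\infty$ (the dual form, via Lemma~\ref{lem zero and integral cuspidal}, of ``$\mathcal I^{\mathrm{Lang}}_{[a]_\rho}$ appends a singleton'') forces $\mathcal D^{\mathrm{Lang}, \mathrm L}_{[a]_\rho}\circ\mathcal D^{\mathrm{Lang}, \mathrm L}_{[a,b]_\rho}(\mathbb D_r(\mathfrak m))=\infty$ by the contrapositive of Corollary~\ref{cor derivative revise left}(ii), and transporting this back through Lemma~\ref{lem zero and integral cuspidal} and Proposition~\ref{prop dual derivative integral} yields $|\mathcal I^{\mathrm{Lang}}_{[a]_\rho}(\mathcal I^{\mathrm{Lang}}_{[a,b]_\rho}(\mathfrak m))|>|\mathcal I^{\mathrm{Lang}}_{[a,b]_\rho}(\mathfrak m)|$, as needed.

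The commutativity statement itself is handed to us by Corollary~\ref{cor derivative revise left}, so the real work — and the part most prone to slips — is the bookkeeping that keeps the cardinalities aligned across both orders of composition: one must pass between $\mathbb D_r$ and $\mathbb D_r^{[a,b]_\rho}$ precisely when an intermediate integral adds a segment, and one must check that this ``adds a segment'' behaviour is symmetric in the two orders. This is exactly what the \emph{if and only if} clauses of Proposition~\ref{prop dual derivative integral}(iii) and Lemma~\ref{lem zero and integral cuspidal} are designed to supply, so the argument reduces to a careful but mechanical chase through these equivalences; the only genuinely new small observation needed is that an appended singleton $[a]_\rho$ is invisible to $\mathcal I^{\mathrm{Lang}}_{[a,b]_\rho}$ by the passage to $\mathfrak m_{[a,b]}$.
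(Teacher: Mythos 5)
Your plan is essentially the paper's own proof: split on whether $\mathcal I^{\mathrm{Lang}}_{[a]_\rho}$ enlarges $\mathfrak m$, in the ``good'' case transport both composites to the left-derivative side via $\mathbb D_r$ or $\mathbb D_r^{[a,b]_\rho}$ (Proposition~\ref{prop dual derivative integral}, Lemma~\ref{lem duality under cuspidal derivative}, Lemma~\ref{lem zero and integral cuspidal}) and invoke Corollary~\ref{cor derivative revise left}, and in the appending case note that $[a]_\rho$ is invisible to $\mathcal I^{\mathrm{Lang}}_{[a,b]_\rho}$ and close by a cardinality/contradiction argument through the same duality.

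One citation deserves a second look. In your Case~2 you want $\mathcal D^{\mathrm{Lang},\mathrm L}_{[a]_\rho}(\mathbb D_r(\mathfrak m))=\infty$ to force $\mathcal D^{\mathrm{Lang},\mathrm L}_{[a]_\rho}\circ\mathcal D^{\mathrm{Lang},\mathrm L}_{[a,b]_\rho}(\mathbb D_r(\mathfrak m))=\infty$, and you attribute this to the contrapositive of Corollary~\ref{cor derivative revise left}(ii). But that contrapositive only yields vanishing of the composite in the \emph{other} order, $\mathcal D^{\mathrm{Lang},\mathrm L}_{[a,b]_\rho}\circ\mathcal D^{\mathrm{Lang},\mathrm L}_{[a]_\rho}$, which is trivial once the inner factor is $\infty$; the statement with $\mathcal D^{\mathrm{Lang},\mathrm L}_{[a,b]_\rho}$ applied first is the nontrivial one and is not literally what the corollary records. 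The implication you need is nevertheless true: $\mathcal D^{\mathrm{Lang},\mathrm L}_{[a]_\rho}\circ\mathcal D^{\mathrm{Lang},\mathrm L}_{[a,b]_\rho}(\mathfrak n)\neq\infty$ gives $\mathcal D^{\mathrm{Lang},\mathrm L}_{[a,b]_\rho}(\mathfrak n)\neq\infty$, hence $\mathrm D^{\mathrm L}_{[a,b]_\rho}(L(\mathfrak n))\neq 0$ by (the left form of) Theorem~\ref{thm:zero_der_Lang}, which by (the left form of) Lemma~\ref{lem non-zero der hd} already forces $\mathrm D^{\mathrm L}_{[a]_\rho}(L(\mathfrak n))\neq 0$ and therefore $\mathcal D^{\mathrm{Lang},\mathrm L}_{[a]_\rho}(\mathfrak n)\neq\infty$. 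The published proof cites Corollary~\ref{cor derivative revise left}(ii) at the same spot with the same mild composition-order mismatch, so you have reproduced not only the argument but also its notational wrinkle; tightening the citation as above would make both proofs airtight.
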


\begin{proof}
By Algorithm \ref{alg:int:Lang}, we must have $|\mathcal I^{\mathrm{Lang}}_{[a]_{\rho}}(\mathfrak m)|\geq|\mathfrak m|$. We shall divide it into two cases:
\begin{enumerate}
\item Case 1: $|\mathcal I^{\mathrm{Lang}}_{[a]_{\rho}}(\mathfrak m)|=|\mathfrak m|$. We only show the steps for $|\mathcal I^{\mathrm{Lang}}_{[a,b]_{\rho}}\circ \mathcal I^{\mathrm{Lang}}_{[a]_{\rho}}(\mathfrak m)|>|\mathcal I^{\mathrm{Lang}}_{[a]_{\rho}}(\mathfrak m)|$, and the other case is similar.
\begin{align*}
   \mathbb D_r( \mathcal{I}^\mathrm{Lang}_{[a,b]_\rho}\circ \mathcal{I}^\mathrm{Lang}_{[a]_\rho}(\mathfrak{m}))&=    \mathcal{D}^\mathrm{Lang, \mathrm{L}}_{[a,b]_\rho}( \mathbb D^{[a,b]_{\rho}}_r(\mathcal{I}^\mathrm{Lang}_{[a]_\rho}(\mathfrak{m})))\quad \mbox{(by Proposition \ref{prop dual derivative integral})} \\
     &=\mathcal D^{\mathrm{Lang}, \mathrm{L}}_{[a,b]_{\rho}}\circ \mathcal D^{\mathrm{Lang}, \mathrm{L}}_{[a]_{\rho}}(\mathbb D^{[a,b]_{\rho}}_r(\mathfrak m)) \quad \mbox{(by Lemma \ref{lem duality under cuspidal derivative})} \\
     &= \mathcal D^{\mathrm{Lang}, \mathrm L}_{[a]_{\rho}}\circ \mathcal D^{\mathrm{Lang}, \mathrm{L}}_{[a,b]_{\rho}}(\mathbb D^{[a,b]_{\rho}}_r(\mathfrak m)) \quad \mbox{(by Corollary \ref{cor derivative revise left}(ii) and (i))}\\
     &= \mathcal D^{\mathrm{Lang}, \mathrm L}_{[a]_{\rho}}(\mathbb D_r(\mathcal I_{[a,b]_{\rho}}^{\mathrm{Lang}}(\mathfrak m)) \quad \mbox{(by Proposition \ref{prop dual derivative integral}(ii))} \\
     &= \mathbb D_r(\mathcal I^{\mathrm{Lang}}_{[a]_{\rho}}\circ \mathcal I_{[a,b]_{\rho}}^{\mathrm{Lang}}(\mathfrak m)) \quad \mbox{(by Lemma \ref{lem duality under cuspidal derivative})}
\end{align*}
%{\color{blue} As $|\mathcal I^{\mathrm{Lang}}_{[a]_{\rho}}(\mathfrak m)|=|\mathfrak m|$ and $|\mathcal I^{\mathrm{Lang}}_{[a,b]_{\rho}}\circ \mathcal I^{\mathrm{Lang}}_{[a]_{\rho}}(\mathfrak m)|>|\mathcal I^{\mathrm{Lang}}_{[a]_{\rho}}(\mathfrak m)|$, one can easily observe that $|\mathcal I^{\mathrm{Lang}}_{[a,b]_{\rho}}(\mathfrak m)|>|\mathfrak m|$. For third equality, the required conditions to apply Corollary \ref{cor derivative revise left} are $\mathcal D^{\mathrm{Lang}, \mathrm L}_{[a]_{\rho}}\left(\mathbb D^{[a,b]_{\rho}}_r(\mathfrak m) \right) \neq \infty$ and $\mathcal D^{\mathrm{Lang}, \mathrm L}_{[a,b]_{\rho}}\left(\mathbb D^{[a,b]_{\rho}}_r(\mathfrak m) \right) \neq \infty$, those follow from Proposition \ref{lem zero and integral cuspidal} and \ref{prop dual derivative integral}(ii) by applying $|\mathcal I^{\mathrm{Lang}}_{[a]_{\rho}}(\mathfrak m)|=|\mathfrak m|$ and $|\mathcal I^{\mathrm{Lang}}_{[a,b]_{\rho}}(\mathfrak m)|>|\mathfrak m|$ respectively.}

%({\color{green}To give a proof of $|\mathcal I^{\mathrm{Lang}}_{[a,b]_{\rho}}(\mathfrak m)|>|\mathfrak m|$, we can write the following:} 

 For the fourth equality, we can show the condition $|\mathcal I^{\mathrm{Lang}}_{[a,b]_{\rho}}(\mathfrak m)|>|\mathfrak m|$ as follows: One sees from the previous expressions that the segment $[-r+b+1,b]_{\rho}$ has to be truncated for $\mathcal D^{\mathrm{Lang},\mathrm{L}}_{[a,b]_{\rho}}(\mathbb D^{[a,b]_{\rho}}_r(\mathfrak m))$, and this implies $\mathcal D^{\mathrm{Lang}, \mathrm{L}}_{[a,b]_{\rho}}(\mathbb D_r(\mathfrak m))=\infty$, which implies  $|\mathcal I^{\mathrm{Lang}}_{[a,b]_{\rho}}(\mathfrak m)|>|\mathfrak m|$ by Proposition \ref{prop dual derivative integral}(iii).

\item Case 2: $|\mathcal I^{\mathrm{Lang}}_{[a]_{\rho}}(\mathfrak m)|>|\mathfrak m|$. We consider $|\mathcal I^{\mathrm{Lang}}_{[a,b]_{\rho}}(\mathfrak m)|=|\mathfrak m|$ and the other case only needs some notation changes. Then, $ \mathcal I^{\mathrm{Lang}}_{[a]_{\rho}}(\mathfrak m)=\mathfrak m+[a]_{\rho}$. Note that $[a]_{\rho}$ has no role in running Algorithm \ref{alg:int:Lang} for $\mathcal I^{\mathrm{Lang}}_{[a,b]_{\rho}}( \mathcal I^{\mathrm{Lang}}_{[a]_{\rho}}(\mathfrak m))$. Hence, 
\begin{align*}
 \mathcal I^{\mathrm{Lang}}_{[a,b]_{\rho}}\circ \mathcal I^{\mathrm{Lang}}_{[a]_{\rho}}(\mathfrak m) & =\mathcal I^{\mathrm{Lang}}_{[a,b]_{\rho}}(\mathfrak m)+[a]_{\rho} .
\end{align*}
Now, it suffices to show $|\mathcal I^{\mathrm{Lang}}_{[a]_{\rho}}\circ \mathcal I^{\mathrm{Lang}}_{[a,b]_{\rho}}(\mathfrak m)|>|\mathcal I^{\mathrm{Lang}}_{[a,b]_{\rho}}(\mathfrak m)|$. Otherwise, $|\mathcal I^{\mathrm{Lang}}_{[a]_{\rho}}\circ \mathcal I^{\mathrm{Lang}}_{[a,b]_{\rho}}(\mathfrak m)|=|\mathcal I^{\mathrm{Lang}}_{[a,b]_{\rho}}(\mathfrak m)|$ and so by Lemma \ref{lem duality under cuspidal derivative},
\[  \mathbb D_r(\mathcal I^{\mathrm{Lang}}_{[a]_{\rho}}\circ \mathcal I^{\mathrm{Lang}}_{[a,b]_{\rho}}(\mathfrak m))=\mathcal D^{\mathrm{Lang}, \mathrm{L}}_{[a]_{\rho}} \left( \mathbb D_r\left(\mathcal I^{\mathrm{Lang}}_{[a,b]_{\rho}}(\mathfrak m) \right) \right) \neq \infty.
\]
Therefore, by Proposition \ref{prop dual derivative integral} and the equality $|\mathcal I^{\mathrm{Lang}}_{[a,b]_{\rho}}(\mathfrak m)|=|\mathfrak m|$, we have:
\[   \infty\neq  \mathcal D^{\mathrm{Lang}, \mathrm{L}}_{[a]_{\rho}} \left( \mathbb D_r\left(\mathcal I^{\mathrm{Lang}}_{[a,b]_{\rho}}(\mathfrak m) \right) \right) =\mathcal D^{\mathrm{Lang}, \mathrm{L}}_{[a]_{\rho}}\circ \mathcal D^{\mathrm{Lang}, \mathrm L}_{[a,b]_{\rho}}(\mathbb D_r(\mathfrak m)) .
\]
So $\mathcal D^{\mathrm{Lang},\mathrm{L}}_{[a]_{\rho}}\left(\mathbb D_r(\mathfrak m)\right) \neq \infty$ by Corollary \ref{cor derivative revise left}(ii). However, the given condition is $|\mathcal I^{\mathrm{Lang}}_{[a]_{\rho}}(\mathfrak m)|>|\mathfrak m|$ and so it contradicts to Lemma \ref{lem zero and integral cuspidal}. 
\end{enumerate}
\end{proof}

\begin{lemma}\label{lem:comm_I[ab]_I[a]_pi}
	Let $\pi \in \mathrm{Irr}_{\rho} $ and $[a,b]_\rho \in \mathrm{Seg}_\rho$. Then, $\mathrm{I}^\mathrm{R}_{[a,b]_\rho}\circ \mathrm{I}^\mathrm{R}_{[a]_\rho}(\pi) \cong \mathrm{I}^\mathrm{R}_{[a]_\rho}\circ \mathrm{I}^\mathrm{R}_{[a,b]_\rho}(\pi).$
\end{lemma}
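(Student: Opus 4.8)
The plan is to exhibit both iterated integrals as the socle of a single representation. The key elementary observation is that the segments $[a]_{\rho}$ and $[a,b]_{\rho}$ are unlinked, because $[a]_{\rho}\subseteq [a,b]_{\rho}$; hence the generalized Steinberg representations commute, $\mathrm{St}([a]_{\rho})\times \mathrm{St}([a,b]_{\rho})\cong \mathrm{St}([a,b]_{\rho})\times \mathrm{St}([a]_{\rho})=:\Sigma$. The case $b=a$ is trivial, so assume $b>a$. I will first show that $\pi\times \Sigma$ has irreducible socle, and then show that both $\mathrm{I}^{\mathrm R}_{[a,b]_{\rho}}\circ \mathrm{I}^{\mathrm R}_{[a]_{\rho}}(\pi)$ and $\mathrm{I}^{\mathrm R}_{[a]_{\rho}}\circ \mathrm{I}^{\mathrm R}_{[a,b]_{\rho}}(\pi)$ embed into $\pi\times \Sigma$, so that both must coincide with $\mathrm{soc}(\pi\times\Sigma)$ and therefore with each other.

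\textbf{Irreducibility of $\mathrm{soc}(\pi\times\Sigma)$.} Here the point is that $\Sigma$ is $\square$-irreducible, i.e. $\Sigma\times\Sigma$ is irreducible. Indeed, after permuting the pairwise unlinked factors, $\Sigma\times\Sigma\cong \mathrm{St}([a]_{\rho})\times \mathrm{St}([a]_{\rho})\times \mathrm{St}([a,b]_{\rho})\times \mathrm{St}([a,b]_{\rho})$, and the four segments appearing here all lie in $\{[a]_{\rho},[a,b]_{\rho}\}$ and are pairwise unlinked; a product of generalized Steinberg representations attached to pairwise unlinked segments is irreducible \cite{Zel} (for instance via the involution $\imath$ of Section~\ref{ss zel invol} applied to Zelevinsky's irreducibility criterion). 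Then the socle‑irreducibility theorem of Lapid--M\'inguez for parabolic inductions having a $\square$-irreducible factor \cite{LM19} (see also \cite{LM_inven}) shows that $\pi\times \Sigma$ has irreducible socle. The same theorem, applied to the $\square$-irreducible representation $\mathrm{St}(\Delta)$, gives that for any irreducible $\tau$ and any $\Delta\in\mathrm{Seg}_{\rho}$ the module $\tau\times\mathrm{St}(\Delta)$ has irreducible socle, which therefore equals $\mathrm{I}^{\mathrm R}_{\Delta}(\tau)$; this is the form of the definition of the integral I will use below.

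\textbf{Assembly.} From $\mathrm{I}^{\mathrm R}_{[a,b]_{\rho}}(\pi)\hookrightarrow \pi\times \mathrm{St}([a,b]_{\rho})$ and exactness of normalized parabolic induction we obtain
\[ \mathrm{I}^{\mathrm R}_{[a,b]_{\rho}}(\pi)\times \mathrm{St}([a]_{\rho}) \hookrightarrow \pi\times \mathrm{St}([a,b]_{\rho})\times \mathrm{St}([a]_{\rho}) \cong \pi\times \Sigma, \]
so the irreducible representation $\mathrm{I}^{\mathrm R}_{[a]_{\rho}}\circ \mathrm{I}^{\mathrm R}_{[a,b]_{\rho}}(\pi)=\mathrm{soc}\bigl(\mathrm{I}^{\mathrm R}_{[a,b]_{\rho}}(\pi)\times \mathrm{St}([a]_{\rho})\bigr)$ is a submodule of $\pi\times\Sigma$, hence lies in $\mathrm{soc}(\pi\times\Sigma)$, hence is isomorphic to it. Symmetrically, from $\mathrm{I}^{\mathrm R}_{[a]_{\rho}}(\pi)\hookrightarrow \pi\times \mathrm{St}([a]_{\rho})$ we get $\mathrm{I}^{\mathrm R}_{[a]_{\rho}}(\pi)\times \mathrm{St}([a,b]_{\rho})\hookrightarrow \pi\times \mathrm{St}([a]_{\rho})\times \mathrm{St}([a,b]_{\rho})=\pi\times\Sigma$, whence $\mathrm{I}^{\mathrm R}_{[a,b]_{\rho}}\circ \mathrm{I}^{\mathrm R}_{[a]_{\rho}}(\pi)\cong \mathrm{soc}(\pi\times\Sigma)$ as well. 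Comparing the two yields the claimed isomorphism. The only non‑formal ingredient is the Lapid--M\'inguez socle‑irreducibility theorem; everything else (exactness of induction, commutation of inductions of unlinked segments, and the identification of the ``unique simple submodule'' in the definition of $\mathrm{I}^{\mathrm R}_{\Delta}$ with the socle) is routine bookkeeping. I expect the verification that $\Sigma$ is $\square$-irreducible — elementary here — to be the only place where one must be slightly careful to package the statement so that the Lapid--M\'inguez machinery applies cleanly.
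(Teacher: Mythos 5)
Your proof is correct and follows essentially the same route as the paper's: the paper also invokes the unlinked-segment commutativity $\mathrm{St}([a]_\rho)\times\mathrm{St}([a,b]_\rho)\cong\mathrm{St}([a,b]_\rho)\times\mathrm{St}([a]_\rho)$ together with the Lapid--M\'inguez socle machinery (cited as \cite[Corollary 6.11]{LM16}), and your argument is precisely an unpacking of that corollary, identifying both iterated integrals with $\mathrm{soc}(\pi\times\Sigma)$.
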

\begin{proof}
	The proof follows from $\mathrm{St}\left([a]_\rho\right) \times \mathrm{St}\left([a,b]_\rho\right) \cong  \mathrm{St}\left([a,b]_\rho\right) \times \mathrm{St}\left([a]_\rho\right)$ and also \cite[Corollary 6.11]{LM16}. %as  $[a]_\rho,$ and $[a,b]_\rho$ are unlinked. 
\end{proof}

\subsection{Composition of integrals $\mathcal I^{\mathrm{Lang}}_{[a]_{\rho}}$ and $\mathcal I^{\mathrm{Lang}}_{[a+1,b]_{\rho}}$}

\begin{lemma}\label{lem:Lang_I[ab]=I[a]_I[a+1,b]}
	Let $\mathfrak{m} \in \mathrm{Mult}_\rho$ and $[a,b]_\rho \in \mathrm{Seg}_\rho$. If  $\mathcal{D}^\mathrm{Lang}_{[a]_\rho}(\mathfrak{m}) = \infty$, we then have
	\[\mathcal{I}^\mathrm{Lang}_{[a,b]_\rho}(\mathfrak{m})=\mathcal{I}^\mathrm{Lang}_{[a]_\rho}\circ \mathcal{I}^\mathrm{Lang}_{[a+1,b]_\rho}(\mathfrak{m})\]
\end{lemma}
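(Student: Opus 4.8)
## Proof Plan for Lemma~\ref{lem:Lang_I[ab]=I[a]_I[a+1,b]}

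The plan is to run Algorithm~\ref{alg:int:Lang} for $\mathcal I^{\mathrm{Lang}}_{[a,b]_\rho}(\mathfrak m)$ and for $\mathcal I^{\mathrm{Lang}}_{[a+1,b]_\rho}(\mathfrak m)$ side by side, and to extract the extra effect of the outer $\mathcal I^{\mathrm{Lang}}_{[a]_\rho}$. First I would unpack the hypothesis $\mathcal D^{\mathrm{Lang}}_{[a]_\rho}(\mathfrak m)=\infty$ using the Remark identifying $\mathcal D^{\mathrm{Lang}}_{[a]_\rho}$ with $\mathcal D^{\mathrm{Lan}}_{[a]_\rho}$ and Theorem~\ref{thm:der:Lang:length=1} (equivalently Theorem~\ref{thm:zero_der_Lang}): this says $\mathrm D^{\mathrm R}_{[a]_\rho}(L(\mathfrak m))=0$, i.e. $\nu^a\rho$ occurs in no removable-free section produced by the $\mathfrak{tds}(-,a)$ process on $\mathfrak m$. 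Dualizing (or arguing directly via the integral analogue), this is exactly the statement that in Algorithm~\ref{alg:int:Lang} applied to compute $\mathcal I^{\mathrm{Lang}}_{[a]_\rho}(\mathfrak m)$ Step~1–2 we end with $\mathfrak n_\ell[a+1]=\emptyset$ after the $\mathfrak{tus}(-,a)$-process, so the longest segment $\Delta_\ast\in\mathfrak n_\ell[a+1]$ does not exist and $\mathcal I^{\mathrm{Lang}}_{[a]_\rho}(\mathfrak m)=\mathfrak m+[a]_\rho$. More to the point: this same "no segment starting at $\nu^{a+1}\rho$ is reachable" phenomenon implies that in the downward-sequence bookkeeping of Algorithm~\ref{alg:int:Lang}, $\nu^a\rho$ is itself an addable free point of some segment $\Delta_{p,q}$ (the minimal one in the last downward sequence of some $\mathfrak m_p$) — this is precisely the condition that makes $|\mathcal I^{\mathrm{Lang}}_{[a]_\rho}(\mathfrak m)|=|\mathfrak m|$ fail and forces the $+[a]_\rho$ outcome.

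The core of the argument is the following comparison, which I would state as the key step. Run Algorithm~\ref{alg:int:Lang} for $\Delta=[a,b]_\rho$ on $\mathfrak m$, producing downward sequences $\underline{\mathfrak{Ds}}(\mathfrak m_p)$, addable free sets $\mathfrak{af}(\Delta_{p,q})$, selected segments $\Delta_{p_1,q_1}\succ\cdots\succ$ (in the index sense) $\Delta_{p_\ell,q_\ell}$, and extended segments $\Delta^{\mathrm{ex}}_{p_t,q_t}$, with the trailing segment $\Delta^{\mathrm{ex}}_{p_{\ell+1},q_{\ell+1}}=[a_{p_\ell,q_\ell},b]_\rho$. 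I claim: because $\mathcal D^{\mathrm{Lang}}_{[a]_\rho}(\mathfrak m)=\infty$, the selection process necessarily reaches all the way down to $a_{p_\ell,q_\ell}=a$ is \emph{not} what happens — instead the last selected segment has $a_{p_\ell,q_\ell}=a+1$, and $\nu^a\rho\in\mathfrak{af}(\Delta_{p_\ell,q_\ell})$, so that splitting off the single point $[a]_\rho$ from the expansion is exactly the action of the outer $\mathcal I^{\mathrm{Lang}}_{[a]_\rho}$. Concretely, I would show: (1) the two runs of Algorithm~\ref{alg:int:Lang} for $[a,b]_\rho$ on $\mathfrak m$ and for $[a+1,b]_\rho$ on $\mathfrak m$ produce the same downward sequences and the same addable-free data except for the last-in-each-sequence correction terms shifting $a\mapsto a+1$; (2) the selected segments $\Delta_{p_1,q_1},\ldots,\Delta_{p_\ell,q_\ell}$ agree, and the extended segments agree except that $\Delta^{\mathrm{ex}}_{p_1,q_1}=[a,b_{p_1,q_1}]_\rho$ for $[a,b]_\rho$ becomes $[a+1,b_{p_1,q_1}]_\rho$ for $[a+1,b]_\rho$, and the trailing term becomes $[a_{p_\ell,q_\ell},b]_\rho$ in both cases; (3) applying $\mathcal I^{\mathrm{Lang}}_{[a]_\rho}$ to $\mathcal I^{\mathrm{Lang}}_{[a+1,b]_\rho}(\mathfrak m)$ — which, since $\mathcal D^{\mathrm{Lang}}_{[a]_\rho}$ is still $\infty$ on that multisegment (the newly created segment $[a+1,b_{p_1,q_1}]_\rho$ does not produce a $\mathfrak{tds}(-,a)$-reachable section at $\nu^a\rho$ either), either lengthens the segment $[a+1,b_{p_1,q_1}]_\rho$ on its left to $[a,b_{p_1,q_1}]_\rho$ or adds $[a]_\rho$ — and checking this matches $\mathcal I^{\mathrm{Lang}}_{[a,b]_\rho}(\mathfrak m)$ on the nose.

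In order, the steps are: (i) translate the hypothesis $\mathcal D^{\mathrm{Lang}}_{[a]_\rho}(\mathfrak m)=\infty$ into the combinatorial statement about unreachability of $\nu^a\rho$ and into $\mathcal I^{\mathrm{Lang}}_{[a]_\rho}(\mathfrak m)=\mathfrak m+[a]_\rho$; (ii) show the hypothesis propagates, i.e. $\mathcal D^{\mathrm{Lang}}_{[a]_\rho}(\mathcal I^{\mathrm{Lang}}_{[a+1,b]_\rho}(\mathfrak m))=\infty$ — this should follow because the only segments altered in passing to $\mathcal I^{\mathrm{Lang}}_{[a+1,b]_\rho}(\mathfrak m)$ start at $\geq a+1$ and extending/creating such segments cannot create a new $\mathfrak{tds}(-,a)$-linked pair with a segment starting at $\nu^a\rho$; (iii) carry out the side-by-side comparison of the two Algorithm~\ref{alg:int:Lang} runs described above; (iv) compute $\mathcal I^{\mathrm{Lang}}_{[a]_\rho}$ of the result and match. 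The main obstacle I anticipate is step~(iii): verifying that the downward sequences $\underline{\mathfrak{Ds}}(\mathfrak m_p)$ and the selection indices $p_1>\cdots>p_\ell$ literally coincide between the $[a,b]_\rho$ and $[a+1,b]_\rho$ runs requires care about the boundary cases where $a_{p_\ell,q_\ell}=a+1$ versus $a_{p_\ell,q_\ell}>a+1$, and about whether the segment selected first (index $p_1$) is the same — precisely here the hypothesis $\mathcal D^{\mathrm{Lang}}_{[a]_\rho}(\mathfrak m)=\infty$ is doing the work, guaranteeing that $\nu^a\rho$ lies in $\mathfrak{af}(\Delta_{p,q})$ for the \emph{bottom} segment $q=r_p$ of some sequence rather than inside a gap $\mathfrak{af}(\Delta_{p,q})$ with $q<r_p$, which is what keeps the $\nu^{a}\rho$-correction "free" and additive. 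I would isolate this as a short sublemma about where $\nu^a\rho$ can sit among the addable free points under the assumption $\mathcal D^{\mathrm{Lang}}_{[a]_\rho}(\mathfrak m)=\infty$, and then the rest is bookkeeping.
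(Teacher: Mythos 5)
Your plan has a real gap at the very first translation step. You assert that $\mathcal D^{\mathrm{Lang}}_{[a]_\rho}(\mathfrak m)=\infty$ forces $\mathcal I^{\mathrm{Lang}}_{[a]_\rho}(\mathfrak m)=\mathfrak m+[a]_\rho$, i.e.\ that after the $\mathfrak{tus}(-,a)$-process $\mathfrak m_\ell[a+1]=\emptyset$. This is false: $\mathcal D^{\mathrm{Lang}}_{[a]_\rho}(\mathfrak m)=\infty$ says the $\mathfrak{tds}(-,a)$-process exhausts $\mathfrak m[a]$ (every segment starting at $\nu^a\rho$ can be matched into $\mathfrak m[a+1]$), which does \emph{not} say that the $\mathfrak{tus}(-,a)$-process exhausts $\mathfrak m[a+1]$. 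For $\mathfrak m=\{[a,c]_\rho,[a+1,c']_\rho,[a+1,c'']_\rho\}$ with $c<c'<c''$, one checks $\mathcal D^{\mathrm{Lang}}_{[a]_\rho}(\mathfrak m)=\infty$ while $\mathcal I^{\mathrm{Lang}}_{[a]_\rho}(\mathfrak m)=\mathfrak m-[a+1,c'']_\rho+[a,c'']_\rho\neq \mathfrak m+[a]_\rho$. This same flawed equivalence underlies your step (iii)/(iv): asserting that $\mathcal D^{\mathrm{Lang}}_{[a]_\rho}$ being $\infty$ on $\mathcal I^{\mathrm{Lang}}_{[a+1,b]_\rho}(\mathfrak m)$ ``forces'' the outer $\mathcal I^{\mathrm{Lang}}_{[a]_\rho}$ to act in a prescribed way does not follow. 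You also mislocate the relevant boundary: in Algorithm \ref{alg:int:Lang} it is $\Delta_{p_1,q_1}$ (the largest-index selection, the one extended to start at $\nu^a\rho$), not $\Delta_{p_\ell,q_\ell}$, whose starting point matters, and even for $\Delta_{p_1,q_1}$ the hypothesis only gives $a_{p_1,q_1}\geq a+1$, not equality — the paper's proof explicitly splits on $a_{p_1,q_1}=a+1$ versus $a_{p_1,q_1}>a+1$, and separately handles $\ell=0$. Your claim that the $[a,b]$ and $[a+1,b]$ runs produce ``the same downward sequences'' is also not right: equation (\ref{eq:int_ds_for_compos}) in the paper shows each $\underline{\mathfrak{Ds}}(\mathfrak m_p)$ with $a_{p,r_p}=a$ loses its last segment when passing to $\mathfrak m_{[a+1,b]}$, which shifts the addable-free data.

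Beyond the errors, your plan misses the structural trick the paper uses to tame exactly these subtleties: rather than computing $\mathcal I^{\mathrm{Lang}}_{[a]_\rho}\circ\mathcal I^{\mathrm{Lang}}_{[a+1,b]_\rho}(\mathfrak m)$ directly (which requires controlling the $\mathfrak{tus}$ process on a new multisegment, precisely where your step (i) went wrong), the paper computes $\mathcal D^{\mathrm{Lang}}_{[a]_\rho}\bigl(\mathcal I^{\mathrm{Lang}}_{[a,b]_\rho}(\mathfrak m)\bigr)$ and shows it equals $\mathcal I^{\mathrm{Lang}}_{[a+1,b]_\rho}(\mathfrak m)$, then invokes $\mathcal I^{\mathrm{Lang}}_{[a]_\rho}\circ\mathcal D^{\mathrm{Lang}}_{[a]_\rho}=\mathrm{id}$ on the image of $\mathcal I^{\mathrm{Lang}}_{[a]_\rho}$. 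Since $\mathfrak n=\mathcal I^{\mathrm{Lang}}_{[a,b]_\rho}(\mathfrak m)$ always has a segment starting at $\nu^a\rho$ that survives the $\mathfrak{tds}(-,a)$-process (namely $\Delta^{\mathrm{ex}}_{p_1,q_1}=[a,b_{p_1,q_1}]_\rho$, or $[a,b]_\rho$ if $\ell=0$), the derivative $\mathcal D^{\mathrm{Lang}}_{[a]_\rho}(\mathfrak n)$ is determinate and its computation uses only the matching structure guaranteed by the hypothesis. I would recommend reorganizing your argument around this observation rather than trying to control $\mathcal I^{\mathrm{Lang}}_{[a]_\rho}$ directly, and then handling the $\ell\geq 1$ and $\ell=0$ cases separately as the paper does.
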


\begin{remark}
Let $\mathfrak m=\left\{ [1,2]_{\rho}, [-1,0]_{\rho} \right\}$. In this case $\mathcal D^{\mathrm{Lan}}_{[1]_{\rho}}(\mathfrak m)\neq \emptyset$ and 
\[ \mathcal I_{[-1,0]_{\rho}}^{\mathrm{Lang}}(\mathfrak m)= \left\{ [-1,0]_{\rho}, [-1,0]_{\rho}, [1,2]_{\rho} \right\} \neq \mathcal I^{\mathrm{Lang}}_{[-1]_{\rho}}\circ I_{[0]_{\rho}}^{\mathrm{Lang}}(\mathfrak m) =\left\{ [-1]_{\rho}, [-1,0]_{\rho}, [0,2]_{\rho} \right\} . 
\]
This shows the infinity condition in Lemma \ref{lem:Lang_I[ab]=I[a]_I[a+1,b]} cannot be dropped.
\end{remark}

\begin{remark}
Note that one may also want to use the exotic duality to prove the following Lemma \ref{lem:Lang_I[ab]=I[a]_I[a+1,b]}. In order to do so, one still needs to translate the condition $\mathcal D^{\mathrm{Lang}}_{[a]_{\rho}}(\mathfrak m)=\infty$ under the duality $\mathbb D_r$ (the translations in Lemma \ref{lem zero and integral cuspidal} are not so useful here). We briefly explain such translation. We shall also use $\Theta$ in Section \ref{sss gk invol} to translate from left derivatives to right derivatives. Let $\mathfrak m \in \mathrm{Mult}_{\rho}$ and let $a \in \mathbb Z$. Let $r$ be a sufficiently large integer and let $b'=-a$. Let $\mathfrak m'=\Theta(\mathbb D_r(\mathfrak m)))$. Then $\mathcal D^{\mathrm{Lang}}_{[a]_{\rho}}(\mathfrak m)\neq \infty$ if and only if for any $a'< b'$ with $\mathcal D^{\mathrm{Lang}}_{[a',b'-1]_{\rho^{\vee}}}(\mathfrak m')\neq \infty$,
\[ \varepsilon^\mathrm{R}_{[b']_{\rho^{\vee}}}(\mathcal D^{\mathrm{Lang}}_{[a',b'-1]_{\rho^{\vee}}}(\mathfrak m'))=\varepsilon^\mathrm{R}_{[b']_{\rho^{\vee}}}(\mathfrak m')+1 .
\]
This translation combined with the machinery of highest derivative multisegments in Section \ref{ss hd removal} could also give a proof of Lemma \ref{lem:Lang_I[ab]=I[a]_I[a+1,b]}. However, proving such translation also takes some work, and we opt to use a more direct approach to show Lemma \ref{lem:Lang_I[ab]=I[a]_I[a+1,b]} below.
\end{remark}

%The proof does not make much simpler via the duality and so we shall simply give a more direct proof for the following lemma, which is given in Appendix \ref{s appendix lemma commute} for the interested reader. 

We now prove Lemma \ref{lem:Lang_I[ab]=I[a]_I[a+1,b]}. We use all the notations in Algorithm \ref{alg:int:Lang} for $\mathcal I^{\mathrm{Lang}}_{[a,b]_{\rho}}(\mathfrak m)$. In particular, $\underline{\mathfrak{Ds}}(\mathfrak m_p)$ is a downward sequence for the integral algorihtm for computing $\mathcal I^{\mathrm{Lang}}_{[a,b]_{\rho}}(\mathfrak m)$.

\begin{lemma} 
Each downward sequences for the integral algorithm for $\mathcal I^{\mathrm{Lang}}_{[a+1,b]_{\rho}}(\mathfrak m)$  is either
\begin{enumerate}
\item $\underline{\mathfrak{Ds}}(\mathfrak m_p)$ if $\underline{\mathfrak{Ds}}(\mathfrak m_p)[a]=\emptyset $ (i.e. $a_{p,r_p}>a$).
\item $\underline{\mathfrak{Ds}}(\mathfrak m_p)-\Delta_{p, r_p}$ if $\underline{\mathfrak{Ds}}(\mathfrak m_p)[a]\neq \emptyset$ (i.e. $a_{p,r_p}=a$).
\end{enumerate}
\end{lemma}

\begin{proof}
We have 
\[ \mathfrak m_{[a,b]}=\mathfrak m_{[a+1,b]}+\sum_{b'\geq b, [a,b']_{\rho}\in \mathfrak m} [a,b']_{\rho}  .
\]
Note that to obtain a downward sequence for $\mathcal I^{\mathrm{Lang}}_{[a,b]_{\rho}}(\mathfrak m)$, the segments $[a,b']_{\rho}$ ($b' \geq b$) are added after the last segments of downward sequences for $\mathcal I^{\mathrm{Lang}}_{[a+1,b]_{\rho}}(\mathfrak m)$, whenever possible. 
\end{proof}

Suppose $\mathcal D^{\mathrm{Lang}}_{[a]_{\rho}}(\mathfrak m)=\infty$ for the remaining of the proof.

\begin{lemma} \label{lem later add free}
 Suppose $\underline{\mathfrak{Ds}}(\mathfrak m_p)[a]\neq \emptyset$ and $\underline{\mathfrak{Ds}}(\mathfrak m_p)[a+1]=\emptyset$ i.e. $a_{p,r_p}=a$ and $a_{p,r_p-1}\neq a+1$. Then there exists $p'>p$ such that $a_{p',r_{p'}}=a+1$. In particular, the segment $\Delta_{p',r_{p'}}$ has $\nu^a\rho$ as an addable free point. 
\end{lemma}

\begin{proof}
We briefly explain this. Suppose it does not lead to a contradiction. We first observe the following two facts:
\begin{itemize}
\item For any $p''>p$ such that $\underline{\mathfrak{Ds}}(\mathfrak m_{p''})[a+1]\neq \emptyset$, we then have $\underline{\mathfrak{Ds}}(\mathfrak m_{p''})[a]\neq \emptyset$ i.e. $a_{p'', r_{p''}}=a$. This follows from what we are assuming.
\item For any $p''<p$, if $a_{p'',r_{p''}}=a+1$, then the segment $[a+1, b_{p'', r_{p''}}]_{\rho}$ is not linked to $[a, b_{p, r_p}]_{\rho}$. This follows from choices in a downward sequence.
\end{itemize}

Now one carries out Step 1 of Algorithm \ref{alg:rho_der:Lang}. However, by the above two bullets, one uses Lemma \ref{lem segment stucture} to deduce that there is a segment left in $\mathfrak m[a]$ after a sequence of removal steps of $\mathfrak{tds}(.,a)$-process. This contradicts that $\mathcal D^{\mathrm{Lang}}_{[a]_{\rho}}(\mathfrak m)=\infty$ by Algorithm \ref{alg:rho_der:Lang}. 
\end{proof}

With the above lemmas, one sees that 
\[ \mathcal I^{\mathrm{Lang}}_{[a+1,b]_{\rho}}(\mathfrak m)=\mathcal I^{\mathrm{Lang}}_{[a,b]_{\rho}}(\mathfrak m)-[a,b_{p_1,q_1}]+[a+1,b_{p_1,q_1}] .
\]
Note that in the case that $a_{p_1, q_1}=a+1$, one has $p_1=y_s$ and Lemma \ref{lem later add free} is useful in such case.

It remains to apply $\mathcal I^{\mathrm{Lang}}_{[a]_{\rho}}$ on $\mathcal I^{\mathrm{Lang}}_{[a+1,b]_{\rho}}(\mathfrak m)$. \ref{lem later add free}. We now need to understand how the segments in $\mathfrak m[a]$ and $\mathfrak m[a+1]$ distribute in the downward sequences $\underline{\mathfrak{Ds}}(\mathfrak m_p)$. We need to investigate $\mathfrak{tus}(\mathcal I^{\mathrm{Lang}}_{[a+1,b]_{\rho}}(\mathfrak m), a)$-process in the following lemma:

\begin{lemma} \label{lem exists l}
Let $x_1<x_2<\ldots <x_r$ be all the indices such that $\underline{\mathfrak{Ds}}(\mathfrak m_{x_k})[a]\neq \emptyset$ (whose segment is denoted by $\Delta_{x_k}'=\Delta_{x_k, r_{x_k}}$) and $\underline{\mathfrak{Ds}}(\mathfrak m_{x_k})[a+1]=\emptyset$. Let $y_1<y_2<\ldots<y_s$ be all the indices such that $\underline{\mathfrak{Ds}}(\mathfrak m_{y_k})[a]=\emptyset$ and $\underline{\mathfrak{Ds}}(\mathfrak m_{y_k})[a+1]\neq \emptyset$ (whose segment is denoted by $\Delta_{y_k}''=\Delta_{y_k, r_{y_k}}$). Then, 
\[   \Delta_{x_1}'\subset \Delta_{x_2}'\subset \ldots \subset \Delta_{x_r}',  \text{ and } \Delta_{y_1}''\subset \Delta_{y_2}''\subset \ldots \subset \Delta_{y_s}'' .
\]
Moreover, there exists an injective map 
\[ f: \left\{ x_1, \ldots, x_r \right\}  \rightarrow  \left\{  y_1, \ldots,y_s \right\}  .
\]
such that $f(x_1)<\ldots <f(x_r)$ and $\Delta_{x_k}'\prec \Delta_{f(x_k)}''$ for all $x_k$. 
\end{lemma}

\begin{proof}
The first assertion follows from Lemma \ref{lem segment stucture}. The second assertion follows from a straightforward check from the condition that $\mathcal D^{\mathrm{Lang}}_{[a]_{\rho}}(\mathfrak m)=\infty$.
\end{proof}

\begin{lemma} \label{lem higher index refomrulate}
 Use the notations in Lemma \ref{lem exists l}. If $a_{p_1, q_1}=a+1$, then $p_1=y_s>x_r$.
\end{lemma}

\begin{proof}
This follows from Lemma \ref{lem later add free}.
\end{proof}

\begin{lemma} \label{lem two segment in largest x}
 Use the notations in Lemma \ref{lem exists l}. There exists a segment in $\underline{\mathfrak{Ds}}(\mathfrak m_{x_r})$, but not in $\mathfrak m[a]$ (i.e. not with the starting point $\nu^a\rho$). In particular, there exist at least two segments in $\underline{\mathfrak{Ds}}(\mathfrak m_{x_r})$.
\end{lemma}

\begin{proof}
We consider the following set:
\[  \mathfrak n=\left\{ \Delta \in \mathfrak m[a+1] :  \Delta_{x_r}' \prec  \Delta \right\} .
\]
Suppose $\mathfrak{Ds}(\mathfrak m_{x_r})$ has only one segment. Then all those segments in $\mathfrak n$ must appear in $\underline{\mathfrak{Ds}}(\mathfrak m_p)$ for some $p\leq x_r$. However, one then uses Lemma \ref{lem segment stucture} to deduce that it is impossible to have $\mathcal D^{\mathrm{Lang}}_{[a]_{\rho}}(\mathfrak m)=\infty$ from Algorithm \ref{alg:rho_der:Lang}.
\end{proof}

\begin{lemma} \label{lem a+1 linked case} 
 Use the notations in Lemma \ref{lem exists l}. If $a_{p_{1}, q_{1}}=a+1$, then $[a+1, b_{p_2,q_2}]_{\rho}$ is linked to $\Delta'_{x_r}$. 
\end{lemma}

\begin{proof}
By Lemma \ref{lem two segment in largest x}, we may and shall consider the second last segment $\widetilde{\Delta}=\Delta_{x_r,r_{x_r}-1}$ picked in the upward sequence. By Lemma \ref{lem higher index refomrulate}, the segment $\widetilde{\Delta}$ gives a possible choice on the extension process. If it is not chosen, one has to choose a segment $[a_{p_2,q_2}, b_{p_2,q_2}]_{\rho}$ such that $p_2 >y_s$. Now Lemmas \ref{lem either prec or overlap} and \ref{lem refined relation in alg} boil down to three possibilities: (1) $\widetilde{\Delta} \subset [a_{p_2,q_2} , b_{p_2, q_2}]_{\rho}$; (2) $\widetilde{\Delta} \prec [a_{p_2,q_2}, b_{p_2,q_2}]_{\rho}$; or (3) $[a_{p_2,q_2}, b_{p_2,q_2}]_{\rho}\prec \widetilde{\Delta}$. However, the last one (3) is not possible from the choices of segments in $\underline{\mathfrak{Ds}}(\mathfrak m_{x_r})$. In the former two case, we must then have $[a+1, b_{p_2, q_2}]_{\rho}$ is linked to $\Delta'_{x_r}$ since $\widetilde{\Delta}$ is linked to $\Delta'_{x_r}$.
\end{proof}

\begin{lemma} \label{lem latter part matching}
Let $p >y_s$. If $\underline{\mathfrak{Ds}}(\mathfrak m_p)[a]\neq \emptyset$ (i.e. $a_{p,r_p}=a$), then $\underline{\mathfrak{Ds}}(\mathfrak m_p)[a+1] \neq \emptyset$ (i.e. $a_{p,r_p-1}=a+1$).  Moreover, the unique segment $[a_{p,r_p-1}, b_{p, r_{p}-1}]_{\rho}$ in $\underline{\mathfrak{Ds}}(\mathfrak m_p)[a+1]$ is not linked to $\Delta''_{y_s}$.  
\end{lemma}

\begin{proof}
The first assertion is a reformulation of Lemma \ref{lem later add free}, and the second assertion follows from the choices of segments in the algorithm.
\end{proof}

We shall consider Lemma \ref{lem latter part matching 2} in the case that $a_{p_1,r_{p_1}}= a+1$.

\begin{lemma} \label{lem latter part matching 2}
We use the same notations in Lemma \ref{lem latter part matching}. Let $p<p'<y_s$. Then the segment $[a_{p,r_p-1}, b_{p,r_p-1}]_{\rho}$ in $\underline{\mathfrak{Ds}}(\mathfrak m_p)[a+1]$ is not linked to $\Delta_{p',r_{p'}}$. 
\end{lemma}
We shall consider Lemma \ref{lem latter part matching 2} in the case that $a_{p_1,r_{p_1}}\neq a+1$ and $p'=p_1$ later.

\begin{lemma} \label{lem replace choice}
\begin{enumerate}
\item  If $a_{p_{1}, q_{1}}=a+1$, then $[a+1, b_{p_2, q_2}]_{\rho} \subset \Delta''_{y_s}$.  
\item If $a_{p_{1}, q_{1}}>a+1$, then $\Delta''_{y_S}\subset [a+1, b_{p_1, q_1}]_{\rho}$.
\end{enumerate}
\end{lemma}

\begin{proof}
We briefly explain this. For the first bullet, we must have $p_{2}<q_s$. Then from the algorithm, one sees that $\Delta_{q_s}''$ cannot be linked to $[a_{p_2,q_2}, b_{p_2, q_2}]_{\rho}$ and so we must have $[a_{i_2,j_2}, b_{p_2, q_2}]_{\rho}$ by Lemma \ref{lem either prec or overlap}.. For the second bullet, we must have $p_{1}> y_s$. Then form the algorithm, one sees that $[a_{p_1, q_1}, b_{p_1, q_1}]_{\rho}$ cannot be a subset of $\Delta_{y_s}''$ and so we must have $\Delta_{y_s}''\prec [a_{p_1, q_1}, b_{p_1, q_1}]_{\rho}$. 
\end{proof}

Now, one applies Lemmas \ref{lem a+1 linked case}, \ref{lem latter part matching}, and \ref{lem replace choice} to find the segments in the removal steps for the $\mathfrak{tus}(\mathcal I^{\mathrm{Lang}}_{[a+1,b]_{\rho}}, a)$. One sees that if $a_{p_1, q_1}=a+1$ (resp. $a_{p_1, q_1}>a+1$), the remaining segments left in $\mathcal I^{\mathrm{Lang}}_{[a+1,b]_{\rho}}(\mathfrak m)$ after the removal steps for  $\mathfrak{tus}(\mathcal I^{\mathrm{Lang}}_{[a+1,b]_{\rho}}, a)$ contain $\Delta''_{y_s}$ (resp.$[a+1, b_{p_1, q_1}]_{\rho}$). Now one applies Lemma \ref{lem replace choice} to obtain that
\[  \mathcal I^{\mathrm{Lang}}_{[a]_{\rho}}\circ \mathcal I^{\mathrm{Lang}}_{[a+1,b]_{\rho}}(\mathfrak m)=\mathcal I^{\mathrm{Lang}}_{[a,b]_{\rho}}(\mathfrak m).
\]
This completes the proof of Lemma \ref{lem:Lang_I[ab]=I[a]_I[a+1,b]}.

\begin{example}
\begin{enumerate}
\item Let $\mathfrak m=\left\{ [1,5]_{\rho}, [4,7]_{\rho}, [2,9]_{\rho}, [3,8]_{\rho}\right\}$ with $a=1$ and $b=3$. This is a case of $a_{i_1,j_1}=a+1=2$ in above discussions. For $\mathcal I^{\mathrm{Lang}}_{[a,b]_{\rho}}(\mathfrak m)$, 
\[ \underline{\mathfrak{Ds}}(\mathfrak m_1)=\left\{ [4,7]_{\rho}, [1,5]_{\rho} \right\}, \quad \underline{\mathfrak{Ds}}(\mathfrak m_2)=\left\{ [3,8]_{\rho}\right\}, \quad \underline{\mathfrak{Ds}}(\mathfrak m_3)=\left\{ [2,9]_{\rho}\right\} .
\]
Note that $\mathcal I^{\mathrm{Lang}}_{[2,3]_{\rho}}(\mathfrak m)=\left\{ [1,5]_{\rho}, [3,7]_{\rho}, [2,9]_{\rho}, [2,8]_{\rho} \right\}$. The removal step in the $\mathfrak{tus}(\mathcal I^{\mathrm{Lang}}_{[2,3]_{\rho}}(\mathfrak m), 1)$-process takes away $[1,5]_{\rho}$ and so 
\[ \mathcal I^{\mathrm{Lang}}_{[1]_{\rho}}\circ \mathcal I^{\mathrm{Lang}}_{[2,3]_{\rho}}(\mathfrak m)=\left\{[1,5]_{\rho}, [3,7]_{\rho}, [1,9]_{\rho}, [2,8]_{\rho}\right\} .\]
\item Let $\mathfrak m=\left\{ [1,5]_{\rho}, [1,14]_{\rho}, [2,9]_{\rho}, [2,15]_{\rho}, [3,13]_{\rho}, [4,7]_{\rho}, [4,12]_{\rho}  \right\}$ with $a=1$ and $b=3$. This is a case $a_{i_1,j_1}>a+1=2$. For $\mathcal I^{\mathrm{Lang}}_{[a,b]_{\rho}}(\mathfrak m)$, 
\[ \underline{\mathfrak{Ds}}(\mathfrak m_1)=\left\{ [4,7]_{\rho}, [1,5]_{\rho}\right\}, \quad \underline{\mathfrak{Ds}}(\mathfrak m_2)=\left\{ [4,12]_{\rho} , [2,9]_{\rho} \right\}, \]
\[ \underline{\mathfrak{Ds}}(\mathfrak m_3)=\left\{ [3,13]_{\rho}\right\}, \quad  \underline{\mathfrak{Ds}}(\mathfrak m_4)=\left\{ [1,14]_{\rho}, [2,15]_{\rho}\right\} .
\]
Note that $\mathcal I^{\mathrm{Lang}}_{[2,3]_{\rho}}(\mathfrak m)=\left\{[1,5]_{\rho}, [1,14]_{\rho}, [2,9]_{\rho}, [2,15]_{\rho}, [2,13]_{\rho}, [4,7]_{\rho}, [3,12]_{\rho}  \right\}$. The removal step in the $\mathfrak{tus}(\mathcal I^{\mathrm{Lang}}_{[2,3]_{\rho}}(\mathfrak m), 1)$-process takes away the segments $[1,5]_{\rho}, [2,9]_{\rho}, [1,14]_{\rho}, [2,15]_{\rho}$, and so
\[  \mathcal I^{\mathrm{Lang}}_{[1]_{\rho}}\circ \mathcal I^{\mathrm{Lang}}_{[2,3]_{\rho}}(\mathfrak m)=  \left\{[1,5]_{\rho}, [1,14]_{\rho}, [2,9]_{\rho}, [2,15]_{\rho}, [1,13]_{\rho}, [4,7]_{\rho}, [3,12]_{\rho}  \right\}.
\]
\end{enumerate}
\end{example}

\subsection{Composition of $\mathrm I^{\mathrm{R}}_{[a+1,b]_{\rho}}$ and $\mathrm I^{\mathrm{R}}_{[a]_{\rho}}$}

\begin{lemma}\label{lem:I[ab]=I[a]_I[a+1,b]_pi}
	Let $\pi \in \mathrm{Irr}_\rho$ and $[a,b]_\rho \in \mathrm{Seg}_\rho$. If  $\varepsilon^\mathrm{R}_{[a]_\rho}(\pi) = 0$, we then have
	\[\mathrm{I}^\mathrm{R}_{[a,b]_\rho}(\pi) \cong \mathrm{I}^\mathrm{R}_{[a]_\rho}\circ \mathrm{I}^\mathrm{R}_{[a+1,b]_\rho}(\pi).\]
\end{lemma}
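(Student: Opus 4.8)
The plan is to deduce the statement from the combinatorial identity in Lemma~\ref{lem:Lang_I[ab]=I[a]_I[a+1,b]} together with the interpretation of the integral algorithm provided by the results already established in this section. First I would reduce to the good range case: by Lemma~\ref{lem reduction to m1}, it suffices to prove the representation-theoretic identity when $\mathfrak m=\mathfrak m_{[a,b]}$, i.e.\ $\mathfrak m$ is in good range for $[a,b]_{\rho}$, so that Lemma~\ref{lem:Lang_I[ab]=I[a]_I[a+1,b]} applies verbatim. Write $\pi=L(\mathfrak m)$. The hypothesis $\varepsilon^\mathrm{R}_{[a]_\rho}(\pi)=0$ says $\mathrm D^\mathrm{R}_{[a]_\rho}(\pi)=0$, which by Theorem~\ref{thm:der:Lang:length=1} (equivalently Theorem~\ref{thm:zero_der_Lang}) is exactly the combinatorial condition $\mathcal D^{\mathrm{Lang}}_{[a]_\rho}(\mathfrak m)=\infty$ needed to invoke Lemma~\ref{lem:Lang_I[ab]=I[a]_I[a+1,b]}.

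Next I would pass from the combinatorial identity to the representation-theoretic one using the (not yet proved, but presumably available in this section as the main integral theorem, or else provable inductively alongside this lemma) fact that $\mathrm I^\mathrm{R}_{\Delta}(L(\mathfrak m))\cong L(\mathcal I^{\mathrm{Lang}}_{\Delta}(\mathfrak m))$. Concretely: apply this to $[a+1,b]_\rho$ and $\mathfrak m$ to get $\mathrm I^\mathrm{R}_{[a+1,b]_\rho}(\pi)\cong L(\mathcal I^{\mathrm{Lang}}_{[a+1,b]_\rho}(\mathfrak m))$; then apply it again with $\Delta=[a]_\rho$ (this case is Proposition~\ref{int:Lang:length=1}, so it is unconditionally available) to the multisegment $\mathcal I^{\mathrm{Lang}}_{[a+1,b]_\rho}(\mathfrak m)$, obtaining
\[
\mathrm I^\mathrm{R}_{[a]_\rho}\circ \mathrm I^\mathrm{R}_{[a+1,b]_\rho}(\pi)\cong L\!\left(\mathcal I^{\mathrm{Lang}}_{[a]_\rho}\circ \mathcal I^{\mathrm{Lang}}_{[a+1,b]_\rho}(\mathfrak m)\right).
\]
By Lemma~\ref{lem:Lang_I[ab]=I[a]_I[a+1,b]} the right-hand multisegment equals $\mathcal I^{\mathrm{Lang}}_{[a,b]_\rho}(\mathfrak m)$, and one more application of the integral theorem identifies $L(\mathcal I^{\mathrm{Lang}}_{[a,b]_\rho}(\mathfrak m))\cong \mathrm I^\mathrm{R}_{[a,b]_\rho}(\pi)$, finishing the proof.

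The main subtlety is the logical ordering/circularity: the clean proof above presumes $\mathrm I^\mathrm{R}_{\Delta}(L(\mathfrak m))\cong L(\mathcal I^{\mathrm{Lang}}_{\Delta}(\mathfrak m))$ for segments longer than a point, which is the headline result being built up in this section. If that theorem is proved by an induction on $\ell_{rel}(\Delta)$ (as in the derivative section), then the present lemma should be phrased and proved as one of the inductive ingredients, and I would instead argue as follows within that induction: on $\mathrm I^\mathrm{R}_{[a,b]_\rho}(\pi)$ apply the derivative $\mathrm D^\mathrm{R}_{[a,b]_\rho}$ and use the adjunction relation $\mathrm D^\mathrm{R}_\Delta\circ \mathrm I^\mathrm{R}_\Delta\cong \mathrm{id}$ to reduce the claimed isomorphism $\mathrm I^\mathrm{R}_{[a,b]_\rho}(\pi)\cong \mathrm I^\mathrm{R}_{[a]_\rho}\circ \mathrm I^\mathrm{R}_{[a+1,b]_\rho}(\pi)$ — both sides are simple submodules of $\pi\times \mathrm{St}([a,b]_\rho)$, so matching them amounts to checking the right-hand side embeds there, which follows from $\mathrm{St}([a,b]_\rho)\hookrightarrow \mathrm{St}([a+1,b]_\rho)\times \mathrm{St}([a]_\rho)$ being surjective onto the relevant quotient and the definition of the integrals as socles. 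Alternatively, and perhaps most cleanly, one can mirror the exotic-duality strategy of Lemma~\ref{lem:Lang_comm_I[ab]_I[a]}: the purely combinatorial Lemma~\ref{lem:Lang_I[ab]=I[a]_I[a+1,b]} plus Lemma~\ref{lem:comm_I[ab]_I[a]_pi} (genuine commutation of integrals at the representation level) plus the already-known cuspidal case Proposition~\ref{int:Lang:length=1} is enough to propagate the identity, so I would present the proof in that order to keep it independent of the yet-unproved general integral theorem.
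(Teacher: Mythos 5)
Your primary argument is circular — you correctly flag this yourself: it invokes the general identity $\mathrm I^\mathrm{R}_{\Delta}(L(\mathfrak m))\cong L(\mathcal I^{\mathrm{Lang}}_{\Delta}(\mathfrak m))$, i.e.\ Theorem~\ref{thm:integral_Lang}, but that theorem is precisely what Lemma~\ref{lem:I[ab]=I[a]_I[a+1,b]_pi} is an ingredient for (it supplies Case~2 of the induction). Your fallbacks do not close the gap. The socle argument is incomplete: you would need to know that the simple submodule $\mathrm I^\mathrm{R}_{[a]_\rho}\circ\mathrm I^\mathrm{R}_{[a+1,b]_\rho}(\pi)$ of $\pi\times\mathrm{St}([a+1,b]_\rho)\times\mathrm{St}([a]_\rho)$ actually lands inside the subrepresentation $\pi\times\mathrm{St}([a,b]_\rho)$, and this is exactly where the hypothesis $\varepsilon^\mathrm{R}_{[a]_\rho}(\pi)=0$ must enter; your sketch never uses it, so the argument cannot be right as written. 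The third alternative (``mirror the exotic-duality strategy'') simply restates the induction scheme of Theorem~\ref{thm:integral_Lang} without actually producing the needed representation-theoretic input.

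The idea you are missing is to transport the statement to derivatives, where the analogous result is already proved. The paper sets $\tau:=\mathrm I^\mathrm{R}_{[a,b]_\rho}(\pi)$, so $\mathrm D^\mathrm{R}_{[a,b]_\rho}(\tau)\cong\pi\neq 0$. Since $\varepsilon^\mathrm{R}_{[a]_\rho}(\pi)=0$, one gets $\varepsilon^\mathrm{R}_{[a]_\rho}(\tau)=1$ (it is $\geq 1$ because $\mathrm D^\mathrm{R}_{[a,b]_\rho}(\tau)\neq 0$, and $\geq 2$ would force $\varepsilon^\mathrm{R}_{[a]_\rho}(\pi)\geq 1$ via Lemma~\ref{lem:comm_D[ab]_D[a]_pi}). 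Now Lemma~\ref{lem:D[ab]=D[a+1,b]_D[a]_pi}(i), already established, gives $\mathrm D^\mathrm{R}_{[a+1,b]_\rho}\circ\mathrm D^\mathrm{R}_{[a]_\rho}(\tau)\cong\mathrm D^\mathrm{R}_{[a,b]_\rho}(\tau)\cong\pi$, and applying $\mathrm I^\mathrm{R}_{[a]_\rho}\circ\mathrm I^\mathrm{R}_{[a+1,b]_\rho}$ to both sides recovers $\tau$ on the left (by $\mathrm I^\mathrm{R}_\Delta\circ\mathrm D^\mathrm{R}_\Delta\cong\mathrm{id}$ on nonzero derivatives). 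This gives $\mathrm I^\mathrm{R}_{[a,b]_\rho}(\pi)\cong\mathrm I^\mathrm{R}_{[a]_\rho}\circ\mathrm I^\mathrm{R}_{[a+1,b]_\rho}(\pi)$ with no appeal to the general integral theorem.
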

\begin{proof}
	Take $\tau=\mathrm{I}^\mathrm{R}_{[a,b]_\rho}(\pi)$. Then, $\mathrm{D}^\mathrm{R}_{[a,b]_\rho}(\tau)\cong \pi\neq 0$. As $\varepsilon^\mathrm{R}_{[a]_\rho}(\pi) = 0$, we have $\varepsilon^\mathrm{R}_{[a]_\rho}(\tau)=1$. By Lemma \ref{lem:D[ab]=D[a+1,b]_D[a]_pi}, we get $\mathrm{D}^\mathrm{R}_{[a+1,b]_\rho} \circ \mathrm{D}_{[a]_\rho}^\mathrm{R}(\tau)  \cong   \mathrm{D}_{[a,b]_\rho}^\mathrm{R}(\tau).$ Hence the result follows.
    \end{proof}

\subsection{Main result}

\begin{theorem}\label{thm:integral_Lang}
	Let $\Delta \in \mathrm{Seg}_\rho$ and $\mathfrak{m} \in \mathrm{Mult}_\rho$. Then, 
	$\mathrm{I}^\mathrm{R}_\Delta(L(\mathfrak{m})) \cong  L \left(\mathcal{I}^\mathrm{Lang}_\Delta(\mathfrak{m})\right).$
\end{theorem}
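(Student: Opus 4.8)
The plan is to follow the same inductive scheme used for derivatives in Theorem \ref{thm:der:Lang}, running a double induction on $\ell_{rel}(\Delta)$ and $\ell_{rel}(\mathfrak m)$, with the $\rho$-integral case (Proposition \ref{int:Lang:length=1}) as the base. By Lemma \ref{lem reduction to m1} we may first reduce to the case where $\mathfrak m=\mathfrak m_{[a,b]}$ is in good range for $[a,b]_\rho$; this is crucial because it puts us in the setting where the exotic duality $\mathbb D_r$ of Section \ref{ss transfer exotic duality} applies. So fix $\Delta=[a,b]_\rho$ with $b>a$ and $\mathfrak m$ in good range, and assume the statement holds for all shorter segments and all shorter multisegments.

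The main dichotomy is whether $\varepsilon^{\mathrm R}_{[a]_\rho}(L(\mathfrak m))=0$ or not, equivalently (by Theorem \ref{thm:der:Lang:length=1}) whether $\mathcal D^{\mathrm{Lang}}_{[a]_\rho}(\mathfrak m)=\infty$ or not. First I would treat the case $\varepsilon^{\mathrm R}_{[a]_\rho}(\pi)=0$: here Lemma \ref{lem:I[ab]=I[a]_I[a+1,b]_pi} gives $\mathrm I^{\mathrm R}_{[a,b]_\rho}(\pi)\cong \mathrm I^{\mathrm R}_{[a]_\rho}\circ \mathrm I^{\mathrm R}_{[a+1,b]_\rho}(\pi)$, while Lemma \ref{lem:Lang_I[ab]=I[a]_I[a+1,b]} gives the matching identity $\mathcal I^{\mathrm{Lang}}_{[a,b]_\rho}(\mathfrak m)=\mathcal I^{\mathrm{Lang}}_{[a]_\rho}\circ \mathcal I^{\mathrm{Lang}}_{[a+1,b]_\rho}(\mathfrak m)$ on the combinatorial side; since $\ell_{rel}([a+1,b]_\rho)<\ell_{rel}([a,b]_\rho)$, the inductive hypothesis applied to $[a+1,b]_\rho$ and then Proposition \ref{int:Lang:length=1} for the $\rho$-integral $\mathrm I^{\mathrm R}_{[a]_\rho}$ finishes this case by stringing the isomorphisms together. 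In the complementary case $\varepsilon^{\mathrm R}_{[a]_\rho}(\pi)\neq 0$, I would use Lemma \ref{lem:Lang_comm_I[ab]_I[a]} (commutation of $\mathcal I^{\mathrm{Lang}}_{[a]_\rho}$ and $\mathcal I^{\mathrm{Lang}}_{[a,b]_\rho}$) together with Lemma \ref{lem:comm_I[ab]_I[a]_pi} (the representation-theoretic commutation). Writing $\mathfrak n=\mathcal I^{\mathrm{Lang}}_{[a]_\rho}(\mathfrak m)$, the key point is that $\mathrm D^{\mathrm R}_{[a]_\rho}(L(\mathfrak n))\cong L(\mathfrak m)$ (by Theorem \ref{thm:der:Lang:length=1}, since $\mathcal D^{\mathrm{Lang}}_{[a]_\rho}\circ \mathcal I^{\mathrm{Lang}}_{[a]_\rho}(\mathfrak m)=\mathfrak m$, which is the der-int inverse relation reflected combinatorially), and $\ell_{rel}(\mathfrak n)=\ell_{rel}(\mathfrak m)+1$ — so one cannot directly induct on $\mathfrak n$. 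Instead I would apply $\mathrm D^{\mathrm R}_{[a]_\rho}$ to both sides of the desired identity and use injectivity of $\mathrm D^{\mathrm R}_{[a]_\rho}$ on the relevant representations: $\mathrm D^{\mathrm R}_{[a]_\rho}\circ \mathrm I^{\mathrm R}_{[a,b]_\rho}(\pi)\cong \mathrm I^{\mathrm R}_{[a,b]_\rho}\circ \mathrm D^{\mathrm R}_{[a]_\rho}(\pi)$ (by Lemma \ref{lem:comm_I[ab]_I[a]_pi} rephrased, or a similar der-int commutation) has shorter relative length after stripping $[a]_\rho$, so one may apply induction, then match with $L(\mathcal D^{\mathrm{Lang}}_{[a]_\rho}\circ \mathcal I^{\mathrm{Lang}}_{[a,b]_\rho}(\mathfrak m))=L(\mathcal I^{\mathrm{Lang}}_{[a,b]_\rho}\circ \mathcal D^{\mathrm{Lang}}_{[a]_\rho}(\mathfrak m))$ via Lemma \ref{lem:Lang_comm_I[ab]_I[a]} (suitably transcribed), and finally apply Lemma \ref{lem:iso_der=>iso_rep} to cancel $\mathrm D^{\mathrm R}_{[a]_\rho}$.

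The hard part will be organizing the case $\varepsilon^{\mathrm R}_{[a]_\rho}(\pi)\neq 0$ so that the induction actually closes: since applying $\mathcal I^{\mathrm{Lang}}_{[a]_\rho}$ increases $\ell_{rel}$, one must instead reduce relative length by a derivative, which forces a careful bookkeeping of the interplay between integrals and derivatives at the cuspidal point $[a]_\rho$ — precisely the content of Lemmas \ref{lem:Lang_comm_I[ab]_I[a]} and \ref{lem:comm_I[ab]_I[a]_pi} — and one must check that the commutation $\mathrm D^{\mathrm R}_{[a]_\rho}\circ \mathrm I^{\mathrm R}_{[a,b]_\rho}\cong \mathrm I^{\mathrm R}_{[a,b]_\rho}\circ \mathrm D^{\mathrm R}_{[a]_\rho}$ holds with the correct non-vanishing hypotheses (which should follow since $[a]_\rho$ and $[a,b]_\rho$ have overlapping but compatible support, via \cite[Corollary 6.11]{LM16} and the relations in Section \ref{sss gk invol}). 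A secondary technical point is verifying that $\mathcal I^{\mathrm{Lang}}_\Delta(\mathfrak m)$ is always a genuine multisegment (the algorithm never fails, unlike the derivative algorithm) — but this is built into Algorithm \ref{alg:int:Lang}, and together with Theorem \ref{thm:der:Lang} it also yields, as a byproduct, the inverse relations $\mathcal D^{\mathrm{Lang}}_\Delta\circ \mathcal I^{\mathrm{Lang}}_\Delta=\mathrm{id}$. Once the statement over a fixed cuspidal line $\mathrm{Mult}_\rho$ is established, the general case follows from the standard reduction that parabolic induction and Jacquet functors decompose along cuspidal lines, as indicated in Section \ref{ss main results}.
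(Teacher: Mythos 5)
Your overall scheme is the paper's: reduction to good range via Lemma \ref{lem reduction to m1}, double induction with Proposition \ref{int:Lang:length=1} as base, and a dichotomy on $\varepsilon^{\mathrm R}_{[a]_\rho}(L(\mathfrak m))$; your case $\varepsilon^{\mathrm R}_{[a]_\rho}=0$ is exactly the paper's (Lemmas \ref{lem:Lang_I[ab]=I[a]_I[a+1,b]} and \ref{lem:I[ab]=I[a]_I[a+1,b]_pi} plus induction on the segment length). The deviation is in the case $\varepsilon^{\mathrm R}_{[a]_\rho}\neq 0$: the paper never applies a derivative to the integral and then cancels. It instead substitutes $\mathfrak m=\mathcal I^{\mathrm{Lang}}_{[a]_\rho}\circ\mathcal D^{\mathrm{Lang}}_{[a]_\rho}(\mathfrak m)$ and $L(\mathfrak m)\cong \mathrm I^{\mathrm R}_{[a]_\rho}\circ\mathrm D^{\mathrm R}_{[a]_\rho}(L(\mathfrak m))$, commutes the two \emph{integrals} on both sides (Lemmas \ref{lem:Lang_comm_I[ab]_I[a]} and \ref{lem:comm_I[ab]_I[a]_pi}), applies the inductive hypothesis to the shorter multisegment $\mathcal D^{\mathrm{Lang}}_{[a]_\rho}(\mathfrak m)$ together with the $\rho$-integral base case, and is done, so no derivative--integral commutation ever enters.

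Your variant (apply $\mathrm D^{\mathrm R}_{[a]_\rho}$ to both sides and cancel by Lemma \ref{lem:iso_der=>iso_rep}) can be made to work, but the two statements you actually need, namely $\mathrm D^{\mathrm R}_{[a]_\rho}\circ\mathrm I^{\mathrm R}_{[a,b]_\rho}(\pi)\cong\mathrm I^{\mathrm R}_{[a,b]_\rho}\circ\mathrm D^{\mathrm R}_{[a]_\rho}(\pi)$ and $\mathcal D^{\mathrm{Lang}}_{[a]_\rho}\circ\mathcal I^{\mathrm{Lang}}_{[a,b]_\rho}(\mathfrak m)=\mathcal I^{\mathrm{Lang}}_{[a,b]_\rho}\circ\mathcal D^{\mathrm{Lang}}_{[a]_\rho}(\mathfrak m)$, are \emph{not} what Lemmas \ref{lem:comm_I[ab]_I[a]_pi} and \ref{lem:Lang_comm_I[ab]_I[a]} say (those commute two integrals), nor do they come from \cite[Corollary 6.11]{LM16} or Section \ref{sss gk invol} as you suggest; as written this is the gap. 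It is a repairable one: since $\varepsilon^{\mathrm R}_{[a]_\rho}(\pi)\neq 0$, write $\pi\cong\mathrm I^{\mathrm R}_{[a]_\rho}(\tau)$ with $\tau=\mathrm D^{\mathrm R}_{[a]_\rho}(\pi)$, so that $\mathrm D^{\mathrm R}_{[a]_\rho}\circ\mathrm I^{\mathrm R}_{[a,b]_\rho}(\pi)\cong\mathrm D^{\mathrm R}_{[a]_\rho}\circ\mathrm I^{\mathrm R}_{[a]_\rho}\circ\mathrm I^{\mathrm R}_{[a,b]_\rho}(\tau)\cong\mathrm I^{\mathrm R}_{[a,b]_\rho}(\tau)$ by Lemma \ref{lem:comm_I[ab]_I[a]_pi} and $\mathrm D^{\mathrm R}_{\Delta}\circ\mathrm I^{\mathrm R}_{\Delta}=\mathrm{id}$; the combinatorial analogue follows the same way from Lemma \ref{lem:Lang_comm_I[ab]_I[a]} (applicable since $\mathcal D^{\mathrm{Lang}}_{[a]_\rho}(\mathfrak m)$ stays in good range) once one notes $\mathcal I^{\mathrm{Lan}}_{[a]_\rho}\circ\mathcal D^{\mathrm{Lan}}_{[a]_\rho}=\mathrm{id}$ and $\mathcal D^{\mathrm{Lan}}_{[a]_\rho}\circ\mathcal I^{\mathrm{Lan}}_{[a]_\rho}=\mathrm{id}$, which follow from Theorem \ref{thm:der:Lang:length=1} and Proposition \ref{int:Lang:length=1}. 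With this derivation spelled out (and observing that $\mathcal I^{\mathrm{Lang}}$ never returns $\infty$, so the final cancellation is legitimate), your argument closes and uses exactly the same inputs as the paper, just reorganized.
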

\begin{proof}
We use induction argument on the length $\ell_{rel}(\Delta)$ of $\Delta=[a,b]_\rho$, and the length $\ell_{rel}(\mathfrak{m})$ of $\mathfrak{m}$ to give a proof of the theorem. By Lemma \ref{lem reduction to m1}, we may assume $\mathfrak m$ is in good range for $\Delta$. By Proposition \ref{int:Lang:length=1}, for $\ell_{rel}(\Delta)=1$ and for any $\mathfrak{m}^\prime \in \mathrm{Mult}_\rho$, we have 
	\begin{equation}\label{Eq:int_Lang_A}
		\mathrm{I}^\mathrm{R}_{[a]_\rho}(L(\mathfrak{m}^\prime)) \cong  L \left(\mathcal{I}^\mathrm{Lang}_{[a]_\rho}(\mathfrak{m}^\prime)\right).   
	\end{equation} 
    This also serves as a basic case.
	
	Case 1. Let $\mathcal{D}^\mathrm{Lang}_{[a]_\rho}(\mathfrak{m}) \neq \infty$. As an inductive step, we assume that 
	\begin{equation}\label{Eq:int_Lang_B}
		\mathrm{I}^\mathrm{R}_{[a,b]_\rho}(L(\mathfrak{n})) \cong  L \left(\mathcal{I}^\mathrm{Lang}_{[a,b]_\rho}(\mathfrak{n})\right).  
	\end{equation}
	for any $\mathfrak{n} \in \mathrm{Mult}_\rho$ with $\ell_{rel}(\mathfrak{n}) < \ell_{rel}(\mathfrak{m})$. Then,
	\begin{align*}
		\mathcal{I}^\mathrm{Lang}_{[a,b]_\rho}(\mathfrak{m})   &= \mathcal{I}^\mathrm{Lang}_{[a,b]_\rho}\circ \mathcal{I}^\mathrm{Lang}_{[a]_\rho} \circ \mathcal{D}^\mathrm{Lang}_{[a]_\rho}(\mathfrak{m})\\
		&= \mathcal{I}^\mathrm{Lang}_{[a]_\rho}\circ \mathcal{I}^\mathrm{Lang}_{[a,b]_\rho} \circ \mathcal{D}^\mathrm{Lang}_{[a]_\rho}(\mathfrak{m}) \quad (\text{by Lemma }\ref{lem:Lang_comm_I[ab]_I[a]}).
	\end{align*}
	Therefore, we conclude that
	\begin{align*}
		L\left(\mathcal{I}^\mathrm{Lang}_{[a,b]_\rho}(\mathfrak{m}) \right) &= L\left(\mathcal{I}^\mathrm{Lang}_{[a]_\rho}\circ \mathcal{I}^\mathrm{Lang}_{[a,b]_\rho} \circ \mathcal{D}^\mathrm{Lang}_{[a]_\rho}(\mathfrak{m})\right) \\
		& \cong  \mathrm{I}^\mathrm{R}_{[a]_\rho} \left( L\left( \mathcal{I}^\mathrm{Lang}_{[a,b]_\rho} \circ \mathcal{D}^\mathrm{Lang}_{[a]_\rho}(\mathfrak{m})\right)\right) \quad (\text{by }\eqref{Eq:int_Lang_A})\\
		&\cong  \mathrm{I}^\mathrm{R}_{[a]_\rho} \circ \mathrm{I}^\mathrm{R}_{[a,b]_\rho} \left( L\left(  \mathcal{D}^\mathrm{Lang}_{[a]_\rho}(\mathfrak{m})\right)\right) \quad (\text{by }\eqref{Eq:int_Lang_B})\\
		&\cong  \mathrm{I}^\mathrm{R}_{[a]_\rho} \circ \mathrm{I}^\mathrm{R}_{[a,b]_\rho} \circ  \mathrm{D}^\mathrm{R}_{[a]_\rho}\left( L\left(\mathfrak{m}\right)\right) \quad (\text{by $\rho$-derivative})\\
		&\cong  \mathrm{I}^\mathrm{R}_{[a,b]_\rho} \circ \mathrm{I}^\mathrm{R}_{[a]_\rho} \circ  \mathrm{D}^\mathrm{R}_{[a]_\rho}\left( L\left(\mathfrak{m}\right)\right) \quad (\text{by Lemma } \ref{lem:comm_I[ab]_I[a]_pi})\\
		&\cong  \mathrm{I}^\mathrm{R}_{[a,b]_\rho}\left( L\left(\mathfrak{m}\right)\right).
	\end{align*}
	
	Case 2. Let $\mathcal{D}^\mathrm{Lang}_{[a]_\rho}(\mathfrak{m}) = \infty$. As an inductive step, we assume that 
	\begin{equation}\label{Eq:int_Lang_C}
		\mathrm{I}^\mathrm{R}_{[a+1,b]_\rho}(L(\mathfrak{m})) \cong  L \left(\mathcal{I}^\mathrm{Lang}_{[a+1,b]_\rho}(\mathfrak{m})\right). 
	\end{equation}
	Therefore, using Lemma \ref{lem:Lang_I[ab]=I[a]_I[a+1,b]}, we have
	\begin{align*}
		L\left(\mathcal{I}^\mathrm{Lang}_{[a,b]_\rho}(\mathfrak{m}) \right) & \cong  L\left(\mathcal{I}^\mathrm{Lang}_{[a]_\rho}\circ \mathcal{I}^\mathrm{Lang}_{[a+1,b]_\rho} (\mathfrak{m})\right) \\
		& \cong  \mathrm{I}^\mathrm{R}_{[a]_\rho} \left( L\left( \mathcal{I}^\mathrm{Lang}_{[a+1,b]_\rho} (\mathfrak{m})\right)\right) \quad (\text{by }\eqref{Eq:int_Lang_A})\\
		& \cong  \mathrm{I}^\mathrm{R}_{[a]_\rho} \circ \mathrm{I}^\mathrm{R}_{[a+1,b]_\rho} \left( L\left( \mathfrak{m}\right)\right) \quad (\text{by }\eqref{Eq:int_Lang_C})\\
		& \cong \mathrm{I}^\mathrm{R}_{[a,b]_\rho} \left( L\left(\mathfrak{m}\right)\right) \quad (\text{by Lemma } \ref{lem:I[ab]=I[a]_I[a+1,b]_pi}).
	\end{align*}
\end{proof}

\subsection{Left integral algorithm}

The following theorem follows from Theorem \ref{thm:integral_Lang} and Section \ref{sss gk invol}:

\begin{theorem}
For $\mathfrak m \in \mathrm{Mult}_{\rho}$ and $[a,b]_\rho \in \mathrm{Seg}_\rho$, we define \[\mathcal I^{\mathrm{Lang}, \mathrm L}_{[a,b]_{\rho}}(\mathfrak m)=\Theta\left(\mathcal I^{\mathrm{Lang}}_{[-b,-a]_{\rho^{\vee}}}(\Theta(\mathfrak m))\right).\] Then,
 \[\mathrm{I}^\mathrm{L}_{[a,b]_{\rho}} \left(L(\mathfrak{m}) \right) \cong L \left( \mathcal{I}_{[a,b]_{\rho}}^{\mathrm{Lang}, \mathrm{L}}(\mathfrak{m})  \right).\]
\end{theorem}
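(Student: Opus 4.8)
The final statement is a routine consequence of the right integral algorithm and the Gelfand--Kazhdan involution, so the proof is genuinely short. I record the plan here.

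\textbf{Plan of proof.} The plan is to combine Theorem~\ref{thm:integral_Lang} with the relations collected in Section~\ref{sss gk invol}. Recall from that section that for any $\pi_1 \in \mathrm{Rep}(G_{n_1})$ and $\pi_2 \in \mathrm{Rep}(G_{n_2})$ one has $\theta(\pi_1\times \pi_2)\cong \theta(\pi_2)\times \theta(\pi_1)$, and that $\theta$ sends an irreducible representation to its smooth dual; in particular $\theta(L(\mathfrak m))=L(\Theta(\mathfrak m))$. From these one obtains the identity $\mathrm I^{\mathrm L}_{[a,b]_{\rho}}(\theta(\pi))\cong \theta\bigl(\mathrm I^{\mathrm R}_{[-b,-a]_{\rho^{\vee}}}(\pi)\bigr)$, which was already stated in the excerpt. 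Applying this to $\pi=L(\Theta(\mathfrak m))$ and using $\theta(\pi)=\theta(\theta(L(\mathfrak m)))=L(\mathfrak m)$ (as $\theta$ is an involution) gives
\[
\mathrm I^{\mathrm L}_{[a,b]_{\rho}}(L(\mathfrak m)) \cong \theta\bigl(\mathrm I^{\mathrm R}_{[-b,-a]_{\rho^{\vee}}}(L(\Theta(\mathfrak m)))\bigr).
\]

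\textbf{Key step.} Now I would apply Theorem~\ref{thm:integral_Lang} to the right-hand side, with the segment $[-b,-a]_{\rho^{\vee}} \in \mathrm{Seg}_{\rho^{\vee}}$ and the multisegment $\Theta(\mathfrak m) \in \mathrm{Mult}_{\rho^{\vee}}$: this yields
\[
\mathrm I^{\mathrm R}_{[-b,-a]_{\rho^{\vee}}}(L(\Theta(\mathfrak m))) \cong L\bigl(\mathcal I^{\mathrm{Lang}}_{[-b,-a]_{\rho^{\vee}}}(\Theta(\mathfrak m))\bigr).
\]
Substituting and applying $\theta$ once more, together with $\theta(L(\mathfrak n))=L(\Theta(\mathfrak n))$ for $\mathfrak n=\mathcal I^{\mathrm{Lang}}_{[-b,-a]_{\rho^{\vee}}}(\Theta(\mathfrak m))$, gives
\[
\mathrm I^{\mathrm L}_{[a,b]_{\rho}}(L(\mathfrak m)) \cong L\Bigl(\Theta\bigl(\mathcal I^{\mathrm{Lang}}_{[-b,-a]_{\rho^{\vee}}}(\Theta(\mathfrak m))\bigr)\Bigr) = L\bigl(\mathcal I^{\mathrm{Lang},\mathrm L}_{[a,b]_{\rho}}(\mathfrak m)\bigr),
\]
where the last equality is just the definition of $\mathcal I^{\mathrm{Lang},\mathrm L}_{[a,b]_{\rho}}$ given in the statement. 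For the general case where $\mathfrak m \in \mathrm{Mult}$ is not supported on a single cuspidal line, one decomposes $\mathfrak m$ along cuspidal lines and notes that both the integral functors and the algorithm act line-by-line, but since the theorem is stated for $\mathfrak m \in \mathrm{Mult}_{\rho}$ and $\Delta \in \mathrm{Seg}_\rho$, this reduction is not even needed here.

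\textbf{Anticipated obstacle.} Frankly there is no substantial obstacle: the only thing to be careful about is bookkeeping with $\rho$ versus $\rho^{\vee}$ and making sure the two applications of $\theta$ cancel correctly using the involutivity $\theta^2\cong \mathrm{id}$ and $\Theta^2=\mathrm{id}$ on multisegments. One should also double-check that $\theta$ being a covariant auto-equivalence means it preserves the property of being a simple submodule, so that the unique simple submodule $\mathrm I^{\mathrm R}$ is carried to the unique simple submodule on the dual side after swapping the order of the product — this is exactly what the displayed isomorphism $\mathrm I^{\mathrm L}_{[a,b]_{\rho}}(\theta(\pi))\cong \theta(\mathrm I^{\mathrm R}_{[-b,-a]_{\rho^{\vee}}}(\pi))$ in Section~\ref{sss gk invol} encodes, so it may simply be cited. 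Accordingly, the written proof will be one or two lines: ``This follows from Theorem~\ref{thm:integral_Lang} and the discussion in Section~\ref{sss gk invol}.''
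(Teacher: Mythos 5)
Your proposal is correct and follows exactly the route the paper intends: combine Theorem~\ref{thm:integral_Lang} for right integrals with the Gelfand--Kazhdan involution relations from Section~\ref{sss gk invol}. The paper's proof is precisely the one-line citation you anticipate at the end, so your spelled-out version is just a more detailed account of the same argument.
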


\section{Integral in Zelevinsky classification} \label{sec:int_Zel}

%Let $\mathfrak{m} \in \mathrm{Mult}_\rho$ and $\Delta \in \mathrm{Seg}_\rho$. We define a multisegment $\mathcal{I}^\mathrm{Zel}_{\Delta}(\mathfrak{m})$ by the following algorithm (cf. Algorithm \ref{alg:int:Zel}) such that the right integral $\mathrm{I}^\mathrm{R}_\Delta(Z(\mathfrak{m}))$ is given by $Z \left(\mathcal{I}^\mathrm{Zel}_{\Delta}(\mathfrak{m}) \right)$ (cf. Theorem \ref{thm:integral_Zel}). 

In this section, we present an algorithm for computing $\mathrm I^{\mathrm R}_{\Delta}(Z(\mathfrak m))$. We shall continue the approach of using the MW algorithm from Section \ref{sec:der_Zel}. An alternate approach is to use reduction similar to Section \ref{sec:der_Lang}, while the details require some lengthy routine checking, and so we shall not provide details.

\subsection{Algorithm for integrals}
\begin{algorithm}\label{alg:int:Zel}
Let $\mathfrak{m} \in \mathrm{Mult}_\rho$ and $[a,b]_\rho \in \mathrm{Seg}_\rho$. Set $\mathfrak{m}_0=\mathfrak{m}$ to apply the following steps: 

{Step 1}. (Choose a downward sequence of minimal linked segments): Define the downward sequence of minimal linked segments in neighbors on $\mathfrak{m}_0$ ranging from $b$ to $a-1$ as follows: start with the shortest segment $\Delta^{b}_1$ (if it exists) in $\mathfrak{m}_0\left\langle b \right \rangle$. Recursively for $b-1 \geq i \geq a-1$,  we choose the shortest segment $\Delta^{i}_1$ (if it exists) in $\mathfrak{m}_0\left\langle i \right \rangle$ such that $\Delta^{i}_1 \prec \Delta^{i+1}_1$, and set $\Delta^{i}_1=\emptyset$ if it does not exist. Then the sequence $\Delta^{a-1}_1 \prec \cdots \prec \Delta^{b}_1$ defines a downward sequence of minimal linked segments in neighbors on $\mathfrak{m}_0$ ranging from $b$ to $a-1$. 

Step 2. (Remove and replace): We replace $\mathfrak{m}_0$ by $\mathfrak{m}_1$ defined by
	\[\mathfrak{m}_1:= \mathfrak{m}_0-\sum\limits_{i=a-1}^b {\Delta}^{i}_1 .\]

Step 3. (Repeat Step 1 and 2): Again find (if it exists say $\Delta^{a-1}_2 \prec \cdots \prec \Delta^{b}_2$) the downward sequence of minimal linked segments in neighbors on $\mathfrak{m}_1$ ranging from $b$ to $a-1$ and replace $\mathfrak{m}_1$ by 
	\[\mathfrak{m}_2:= \mathfrak{m}_1-\sum\limits_{i=a-1}^b {\Delta}^{i}_2 .\]
Repeat this removal process until it terminates after a finite number of times, say $k$ times, and there does not exist any downward sequence of minimal linked segments in neighbors on $\mathfrak{m}_k$ ranging from $b$ to $a-1$.
	
Step 4. (Upward sequence of maximal linked segments): If $\mathfrak{m}_k\left\langle a-1 \right \rangle \neq \emptyset$, we choose the maximal length segment $\widetilde{\Delta}_{a-1} \in \mathfrak{m}_k\left\langle a-1 \right \rangle$. Otherwise, we set $\widetilde{\Delta}_{a-1} = \emptyset$, the void segment. Recursively for $a \leq i \leq b-1$,  we choose the maximal segment $\widetilde{\Delta}_{i} \in \mathfrak{m}_k\left\langle i \right \rangle$ (if it exists) such that $\widetilde{\Delta}_{i-1} \prec \widetilde{\Delta}_{i}$. Otherwise, we set $\widetilde{\Delta}_{i}=\emptyset$. 

Step 5. (Extension): Finally, we define the right integral multisegment by
    \begin{equation}\label{eq:int_Zel}
   \mathcal{I}^\mathrm{Zel}_{[a,b]_\rho}(\mathfrak{m}) := \mathfrak{m} - \sum\limits_{i=a-1}^{b-1} \widetilde{\Delta}_{i} + \sum\limits_{i=a-1}^{b-1} \left(\widetilde{\Delta}_{i}\right)^+,     
    \end{equation}
	where, we set $\left(\widetilde{\Delta}_{i}\right)^+=[i+1,i+1]_\rho= \{\nu^{i+1}\rho\}$ if $\widetilde{\Delta}_{i}=\emptyset$. 
\end{algorithm}

\begin{example}
Let $\mathfrak{m}=\left\{ [0,2]_\rho, [0,1]_\rho, [0,1]_\rho, [1,2]_\rho, [1,1]_\rho, [2,3]_\rho \right\}$ and $\Delta=[2,3]_\rho$. Then, $\mathfrak{m}_1=\mathfrak{m}-[2,3]_\rho-[1,2]_\rho-[0,1]_\rho$. Since $\mathfrak{m}_1\left\langle 3 \right \rangle=\emptyset$, there is no removable downward sequence of minimal linked segments in neighbors on $\mathfrak{m}_1$ ranging from $3$ to $1$. We have $\widetilde{\Delta}_1=[0,1]_\rho$ and $\widetilde{\Delta}_2=\emptyset$. Therefore, $\mathcal{I}^\mathrm{Zel}_\Delta(\mathfrak{m}) = \mathfrak{m}-[0,1]_\rho + [0,2]_\rho+[3,3]_\rho$.
\end{example}

\begin{remark} \label{rmk zel alg mw}
One may consider the algorithm here as an effective version of \cite[Proposition 5.1]{LM16}, which does not involve the direct use of the M\oe glin-Waldspurger algorithm.
\end{remark}

\subsection{MW algorithm and Integral algorithm}
\begin{lemma} \label{lem redu integral }
Let $\mathfrak m \in \mathrm{Mult}_{\rho}$. Let $\nu^c\rho$ be the maximal cuspidal support for $\mathfrak m$. Fix an integer $b\leq c$. Suppose that there is no downward sequence of minimally linked segments in neighbors on $\mathfrak m$ ranging from $c$ to $b-1$. Let $\widetilde{\Delta}_{b-1}, \ldots, \widetilde{\Delta}_{c-1}$ be the (possibly void) segments participating in the extension process for $\mathcal I^{\mathrm{Zel}}_{[b,c]_{\rho}}(\mathfrak m)$ i.e., $\mathcal I^{\mathrm{Zel}}_{[b,c]_{\rho}}(\mathfrak m)=\mathfrak m-\sum\limits_{i=b-1}^{c-1}\widetilde{\Delta}_{i} + \sum\limits_{i=b-1}^{c-1}\widetilde{\Delta}_{i}^{+}$. Then the segments participating in the MW algorithm for $\mathcal I^{\mathrm{Zel}}_{[b,c]_{\rho}}(\mathfrak m)$ are  $\widetilde{\Delta}_{b-1}^+,\ldots, \widetilde{\Delta}_{c-1}^+$.
\end{lemma}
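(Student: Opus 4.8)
The statement asserts that when no downward sequence of minimally linked segments (in neighbors) ranging from $c$ to $b-1$ exists, applying Step 4 (upward sequence of maximally linked segments) and Step 5 (extension) of Algorithm \ref{alg:int:Zel} to compute $\mathcal I^{\mathrm{Zel}}_{[b,c]_{\rho}}(\mathfrak m)$, and then running one round of the MW algorithm (which finds the first segment $\Delta(\mathcal I^{\mathrm{Zel}}_{[b,c]_{\rho}}(\mathfrak m))$ by choosing shortest segments with decreasing endpoints), selects precisely the newly-extended segments $\widetilde\Delta_{b-1}^+, \ldots, \widetilde\Delta_{c-1}^+$. The natural approach is to track the combinatorics directly, by comparing the selection rule of Step 4 (maximal length segments, chosen going upward from $a-1$ to $b-1$, with $\widetilde\Delta_{i-1}\prec\widetilde\Delta_i$) with the MW selection rule (shortest segments, chosen going downward from $c$, with $\Delta_s\prec\Delta_{s-1}$).

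\textbf{Key steps.} First I would observe that since $\nu^c\rho$ is the maximal cuspidal support of $\mathfrak m$ and hence of $\mathcal I^{\mathrm{Zel}}_{[b,c]_{\rho}}(\mathfrak m)$ (the extension only enlarges some segments, none of which had endpoint $c$ unless $\widetilde\Delta_{c-1}$ is already accounted for — I should check $\widetilde\Delta_{c-1}^+$ has endpoint $c$ and is in $\langle c\rangle$), the MW algorithm for $\mathcal I^{\mathrm{Zel}}_{[b,c]_{\rho}}(\mathfrak m)$ must start at endpoint $c$. Second, I would show the first segment the MW algorithm picks is $\widetilde\Delta_{c-1}^+$: the point is that $\mathcal I^{\mathrm{Zel}}_{[b,c]_{\rho}}(\mathfrak m)\langle c\rangle = \mathfrak m^{>c-1}$-stuff plus $\widetilde\Delta_{c-1}^+$, and among segments ending at $c$, the extended one $\widetilde\Delta_{c-1}^+$ must be the shortest — this is where the hypothesis (no downward minimally linked sequence ranging from $c$ to $b-1$) enters, because the extension in Step 4 of the integral algorithm picks the \emph{maximal} length segment $\widetilde\Delta_{c-1}$ at endpoint $c-1$, so its extension $\widetilde\Delta_{c-1}^+$ starts at the smallest possible starting point, i.e. is the longest among the extended ones but needs to be compared against the \emph{original} segments at $\langle c\rangle$. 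I would argue that any original segment $\Delta\in\mathfrak m\langle c\rangle$ with $\ell_{rel}(\Delta) < \ell_{rel}(\widetilde\Delta_{c-1}^+)$ would, together with the $\widetilde\Delta_i$'s, allow one to build a downward minimally-linked sequence ranging from $c$ to $b-1$, contradicting the hypothesis. Third, inductively, after removing $\widetilde\Delta_{c-1}^+$ and descending to endpoint $c-1$, the MW rule asks for the shortest segment there preceding $\widetilde\Delta_{c-1}^+$; I claim this is $\widetilde\Delta_{c-2}^+$, using the relation $\widetilde\Delta_{c-2}\prec\widetilde\Delta_{c-1}$ from Step 4 (which gives $\widetilde\Delta_{c-2}^+\prec\widetilde\Delta_{c-1}^+$ by a direct check on the definitions of $(\cdot)^+$ and $\prec$) and again a minimality argument against original segments. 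I would continue this descent down to endpoint $b-1$, at which point $\widetilde\Delta_{b-1}^+$ is picked and the MW chain terminates at $b$ (or earlier if some $\widetilde\Delta_i=\emptyset$, in which case $\widetilde\Delta_i^+=[i+1]_\rho$ is a singleton and one must handle the void-segment convention — I should treat the case $\widetilde\Delta_i=\emptyset$ carefully, noting $[i+1]_\rho$ is as short as possible so it is still the MW choice, and the chain simply may not extend below $i+1$; but the MW chain should reach exactly down to $b$ because the $\widetilde\Delta_i$'s are defined for all $a-1\leq i\leq b-1$).

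\textbf{Main obstacle.} The technical heart is the minimality/maximality comparison: showing that at each endpoint $k$ with $b\leq k\leq c$, the extended segment $\widetilde\Delta_{k-1}^+$ is genuinely the shortest segment in $(\mathcal I^{\mathrm{Zel}}_{[b,c]_{\rho}}(\mathfrak m))\langle k\rangle$ that precedes the previously chosen $\widetilde\Delta_k^+$ (or, for $k=c$, the shortest overall in $\langle c\rangle$). The subtlety is that $(\mathcal I^{\mathrm{Zel}}_{[b,c]_{\rho}}(\mathfrak m))\langle k\rangle$ contains both the extended segment $\widetilde\Delta_{k-1}^+$ and all original segments $\Delta\in\mathfrak m\langle k\rangle$ that were not used as some $\widetilde\Delta_j$; one must rule out that an original segment is shorter and still precedes $\widetilde\Delta_k^+$. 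The clean way is a proof by contradiction: such an original segment, inserted into the chain $\widetilde\Delta_{b-1}\prec\cdots\prec\widetilde\Delta_{c-1}$ in place of the appropriate $\widetilde\Delta_j$ or alongside it, would produce a downward sequence of minimally linked segments in neighbors ranging from $c$ to $b-1$ in $\mathfrak m$ — precisely the configuration excluded by hypothesis. Making this exchange argument rigorous, respecting the "minimally linked" (shortest at each stage) qualifier and the neighbor-by-neighbor structure, is the one place real care is needed; the rest is bookkeeping with the conventions for $(\cdot)^+$, $\prec$, and void segments, which I would streamline rather than belabor.
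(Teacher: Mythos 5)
The overall strategy you propose matches the paper's: show that MW on $\mathcal I^{\mathrm{Zel}}_{[b,c]_\rho}(\mathfrak m)$ picks $\widetilde\Delta_{c-1}^+$ at level $c$ by contradiction with the no-downward-sequence hypothesis, then descend level by level to $b$, checking at each step that the extended segment is the shortest valid MW choice. The first observation (maximal cuspidal support stays at $\nu^c\rho$) and the level-$c$ contradiction argument are correct and are exactly the paper's.

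The place where your reasoning goes wrong is the ``Main obstacle'' paragraph, where you propose that at \emph{every} endpoint $b\leq k \leq c$ the offending original segment can be ``inserted into the chain $\widetilde\Delta_{b-1}\prec\cdots\prec\widetilde\Delta_{c-1}$'' to produce a downward sequence from $c$ to $b-1$ contradicting the hypothesis. That insertion gives a chain ending at level $c-1$, not $c$: after replacing $\widetilde\Delta_k$ by the shorter $\Delta$, you still have nothing at level $c$, and indeed nothing at level $c$ will be available (that is precisely what the level-$c$ argument just established). So for $b\leq k\leq c-1$ the hypothesis-based exchange argument does not close. The correct contradiction at these intermediate levels is much more local and does not invoke the hypothesis at all: if $\Delta\in\mathfrak m\langle k\rangle$ has $s(\widetilde\Delta_{k-1}) < s(\Delta) < s(\widetilde\Delta_k)$, then $\widetilde\Delta_{k-1}\prec\Delta$ and $\Delta$ is strictly longer than $\widetilde\Delta_k$ (both end at $k$), contradicting the ``maximal length'' choice of $\widetilde\Delta_k$ in Step 4. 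This is what the paper's second claim (``we cannot find $\Delta\in\mathfrak m\langle k\rangle$ with $\Delta\subset\widetilde\Delta_{k-1}^+$ and $\Delta\prec\widetilde\Delta_k^+$'') amounts to. You actually gesture at the right idea earlier (``a minimality argument against original segments''), but the ``clean way'' you then advertise is the wrong mechanism for $k<c$.

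A smaller imprecision concerns void segments. Under the natural reading that ``$\emptyset\prec\widetilde\Delta_i$'' is never satisfied, once some $\widetilde\Delta_{i_0}$ is void all $\widetilde\Delta_i$ for $i\geq i_0$ are void, and they become singletons $[i+1]_\rho$ in $\mathcal I^{\mathrm{Zel}}_{[b,c]_\rho}(\mathfrak m)$. MW then picks exactly these singletons $[c]_\rho, [c-1]_\rho,\ldots$ on the way down until it hits the first non-void $\widetilde\Delta_{i_0-1}^+$ and continues; the chain always reaches $b$ and stops there because either $\mathfrak m\langle b-1\rangle=\emptyset$ or $\widetilde\Delta_{b-1}$ was the longest segment in $\mathfrak m\langle b-1\rangle$. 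Your remark that ``the chain simply may not extend below $i+1$'' is therefore not what happens; the chain does extend, and the claim in the lemma requires it to.
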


\begin{proof}
Note that the condition on downward sequences guarantee that $\widetilde{\Delta}_{b-1}$ is the longest segment in $\mathfrak m\langle b-1 \rangle$, and for each $k=b,\ldots, c-1$, $\widetilde{\Delta}_k$ is the longest segment in $\mathfrak m\langle k \rangle$ linked to $\widetilde{\Delta}_{k-1}$. If a segment $\Delta_c \in \mathfrak m\langle c \rangle$ is shorter than $\widetilde{\Delta}_{c-1}^+$, the sequence $\Delta_c, \widetilde{\Delta}_{c-1},\ldots , \widetilde{\Delta}_{b-1}$ produces a downward sequence of linked segments in neighbors on $\mathfrak{m}$ ranging from $c$ to $b-1$, that contradicts the given condition. Also, for $c-1 \geq k \geq b$, we cannot find a segment $\Delta$ in $\mathfrak m\langle k \rangle$ such that $\Delta \subset \widetilde{\Delta}_{k-1}^+$, and $\Delta \prec \widetilde{\Delta}_k^+$. Then, one can use this to show the lemma. 
\end{proof}

\begin{lemma} \label{lem MW on integral}
Let $\mathfrak m \in \mathrm{Mult}_{\rho}$. Let $\Delta(\mathfrak{m})=[a,c]_\rho$ be the first segment produced by MW algorithm on $\mathfrak m$. Let $r=\varepsilon^{\mathrm{MW}}_{[a,c]_{\rho}}(\mathfrak m)+\ldots +\varepsilon^{\mathrm{MW}}_{[b-1,c]_{\rho}}(\mathfrak m)$ for some $a \leq b \leq c$, where $r=0$ if $a=b$. Then, 
\[  \left(\mathcal{D}^{\mathrm{MW}}\right)^{r+1} \left(\mathcal I^{\mathrm{Zel}}_{[b,c]_{\rho}}(\mathfrak m)\right)=\left(\mathcal{D}^{\mathrm{MW}}\right)^{r}(\mathfrak m).
\]
Moreover, if $a'\leq b-1$, $\varepsilon^{\mathrm{MW}}_{[a',c]_{\rho}}(\mathcal I^{\mathrm{Zel}}_{[b,c]_{\rho}}(\mathfrak m))=\varepsilon_{[a',c]_{\rho}}^{\mathrm{MW}}(\mathfrak m)$; and  the first segment produced by the MW-algorithm for $(\mathcal D^{\mathrm{MW}})^r\circ \mathcal I^{\mathrm{Zel}}_{[b,c]_{\rho}}(\mathfrak m) $ is $[b,c]_{\rho}$.
\end{lemma}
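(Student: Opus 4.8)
The plan is to compare the two sides by tracking, step by step, which segments participate in the MW algorithm on $\mathcal I^{\mathrm{Zel}}_{[b,c]_{\rho}}(\mathfrak m)$ versus which participate in the MW algorithm on $\mathfrak m$, and to show that the first $r+1$ rounds of MW on $\mathcal I^{\mathrm{Zel}}_{[b,c]_{\rho}}(\mathfrak m)$ ``resolve'' exactly the added points together with the first $r$ rounds of MW on $\mathfrak m$. The base case is Lemma \ref{lem redu integral }: when $a=b$ (so $r=0$), that lemma already says the segments participating in the first MW round on $\mathcal I^{\mathrm{Zel}}_{[b,c]_{\rho}}(\mathfrak m)$ are precisely $\widetilde{\Delta}_{b-1}^+, \ldots, \widetilde{\Delta}_{c-1}^+$, and removing them and passing to minus-segments recovers $\mathfrak m$ exactly (the $+$ and $-$ operations cancel), giving $\mathcal D^{\mathrm{MW}}(\mathcal I^{\mathrm{Zel}}_{[b,c]_{\rho}}(\mathfrak m))=\mathfrak m$. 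The moreover-part in that case is immediate since $\Delta(\mathfrak m)=[b,c]_{\rho}$ by hypothesis.

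\textbf{Key steps.} First I would set up notation: let $\widetilde{\Delta}_{b-1}, \ldots, \widetilde{\Delta}_{c-1}$ be the (possibly void) segments from the extension process for $\mathcal I^{\mathrm{Zel}}_{[b,c]_{\rho}}(\mathfrak m)$, and recall from Algorithm \ref{alg:int:Zel} that these arise only after the removable downward sequences of minimally linked segments in neighbors ranging from $c$ to $b-1$ have been exhausted; by Lemma \ref{lem mw number=no max link zel} (applied to the dual/reversed picture, or directly to the analogous counting for downward sequences), the number of such downward sequences equals $r=\varepsilon^{\mathrm{MW}}_{[a,c]_{\rho}}(\mathfrak m)+\ldots+\varepsilon^{\mathrm{MW}}_{[b-1,c]_{\rho}}(\mathfrak m)$. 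Second, I would invoke the structural results Lemma \ref{lem structure of w minus max} and Lemma \ref{lem mw after zel} (or their mild reformulations) to identify the segments participating in the MW rounds: by Lemma \ref{lem mw after zel}, after applying $\mathcal D^{\mathrm{Zel}}_{[b,c]_{\rho}}$ the segments participating in the first $r$ MW rounds lie inside $MW_k(\mathfrak m)-\Delta_{k,p_k^*}$ or $MW_k(\mathfrak m)$; here I would establish the integral analogue — that on $\mathcal I^{\mathrm{Zel}}_{[b,c]_{\rho}}(\mathfrak m)$ the MW algorithm first ``peels off'' the newly added upward-extended segments $\widetilde{\Delta}_i^+$ (exactly one MW round, as in Lemma \ref{lem redu integral }) and then, because that round precisely inverts the final extension step, the remaining $r$ MW rounds are forced to coincide with the first $r$ MW rounds on $\mathfrak m$. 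Third, I would assemble: $(\mathcal D^{\mathrm{MW}})^{r+1}(\mathcal I^{\mathrm{Zel}}_{[b,c]_{\rho}}(\mathfrak m)) = (\mathcal D^{\mathrm{MW}})^r((\mathcal D^{\mathrm{MW}})^1(\mathcal I^{\mathrm{Zel}}_{[b,c]_{\rho}}(\mathfrak m))) = (\mathcal D^{\mathrm{MW}})^r(\mathfrak m')$ where $\mathfrak m' = \mathcal I^{\mathrm{Zel}}_{[b,c]_{\rho}}(\mathfrak m) - \sum \widetilde{\Delta}_i^+ + \sum \widetilde{\Delta}_i = \mathfrak m$ by the cancellation of $(\cdot)^+$ with $(\cdot)^-$ on those segments. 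The ``moreover'' statements — that $\varepsilon^{\mathrm{MW}}_{[a',c]_{\rho}}$ is unchanged for $a' \leq b-1$ and that the first segment of MW on $(\mathcal D^{\mathrm{MW}})^r\circ \mathcal I^{\mathrm{Zel}}_{[b,c]_{\rho}}(\mathfrak m)$ is $[b,c]_{\rho}$ — then follow from $(\mathcal D^{\mathrm{MW}})^r\circ \mathcal I^{\mathrm{Zel}}_{[b,c]_{\rho}}(\mathfrak m) = (\mathcal D^{\mathrm{MW}})^{r-1}(\mathfrak m)$ together with the definition of $r$ as the count $\varepsilon^{\mathrm{MW}}_{[a,c]_{\rho}}(\mathfrak m)+\ldots+\varepsilon^{\mathrm{MW}}_{[b-1,c]_{\rho}}(\mathfrak m)$, which forces all MW first-segments of length $\geq c-b+2$ to have been used up after exactly $r$ rounds.

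\textbf{Main obstacle.} The delicate point will be showing rigorously that the single first MW round on $\mathcal I^{\mathrm{Zel}}_{[b,c]_{\rho}}(\mathfrak m)$ picks out \emph{exactly} the extended segments $\widetilde{\Delta}_{b-1}^+, \ldots, \widetilde{\Delta}_{c-1}^+$ and nothing else — i.e. that the shortest-segment choices inherent in the MW algorithm, applied to $\mathcal I^{\mathrm{Zel}}_{[b,c]_{\rho}}(\mathfrak m)$, do not accidentally reach into other segments of $\mathfrak m$, and that after this round the chosen minimal-linked downward structure among the remaining segments is literally the same as the one computed by the $r$ downward-sequence removals inside Algorithm \ref{alg:int:Zel}. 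This is essentially a minimal-linkedness bookkeeping argument, parallel to the proof of Condition (c) in Lemma \ref{lem structure of w minus max} and to Lemma \ref{lem mw after zel}; I expect it to require the same kind of injective-map-counting arguments (Lemma \ref{lem back to minimal link}-style) to rule out alternative choices, and to lean on the hypothesis that no downward sequence of minimally linked segments ranging from $c$ to $b-1$ survives in $\mathfrak m_k$, so that the extension segments are forced to be maximal-length at each endpoint. Once that identification is secured, the rest is the routine cancellation $(\cdot)^+\circ(\cdot)^- = \mathrm{id}$ on the relevant segments plus repeated application of Proposition \ref{prop:MW_der}.
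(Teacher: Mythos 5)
Your strategy breaks exactly at the point you flag as the main obstacle, and it cannot be repaired in the form you state: when $r\geq 1$ (equivalently $a<b$, which is the only nontrivial case), the first MW round on $\mathcal I^{\mathrm{Zel}}_{[b,c]_{\rho}}(\mathfrak m)$ does \emph{not} pick out only the extended segments $\widetilde{\Delta}_{b-1}^{+},\ldots,\widetilde{\Delta}_{c-1}^{+}$, so your intermediate identity $\mathcal D^{\mathrm{MW}}\bigl(\mathcal I^{\mathrm{Zel}}_{[b,c]_{\rho}}(\mathfrak m)\bigr)=\mathfrak m$, and with it the identity $(\mathcal D^{\mathrm{MW}})^{r}\circ\mathcal I^{\mathrm{Zel}}_{[b,c]_{\rho}}(\mathfrak m)=(\mathcal D^{\mathrm{MW}})^{r-1}(\mathfrak m)$ that you use for the ``moreover'' part, is false. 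Concretely, take $\mathfrak m=\{[2]_{\rho},[3]_{\rho}\}$ and $[b,c]_{\rho}=[3]_{\rho}$: then $\Delta(\mathfrak m)=[2,3]_{\rho}$, $r=1$, and $\mathcal I^{\mathrm{Zel}}_{[3]_{\rho}}(\mathfrak m)=\{[2]_{\rho},[3]_{\rho},[3]_{\rho}\}$. The first MW round on this multisegment removes the chain $[2]_{\rho}\prec[3]_{\rho}$ (the MW chain does not stop at ending $b$, because the segments of the downward sequences removed in Algorithm \ref{alg:int:Zel} are still present at endings below $b$ and are linked to whatever is chosen at ending $b$), yielding $\{[3]_{\rho}\}\neq\mathfrak m$; likewise $(\mathcal D^{\mathrm{MW}})^{1}\circ\mathcal I^{\mathrm{Zel}}_{[3]_{\rho}}(\mathfrak m)\neq(\mathcal D^{\mathrm{MW}})^{0}(\mathfrak m)$. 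Lemma \ref{lem redu integral } cannot be invoked for this step, since its hypothesis --- no downward sequence of minimally linked segments in neighbors from $c$ to $b-1$ --- fails for $\mathfrak m$ itself whenever $r\geq1$; it only applies after those $r$ sequences have been stripped off. Note that the lemma's conclusion is still true in the example, $(\mathcal D^{\mathrm{MW}})^{2}(\mathcal I^{\mathrm{Zel}}_{[3]_{\rho}}(\mathfrak m))=\emptyset=\mathcal D^{\mathrm{MW}}(\mathfrak m)$: the equality holds only in the aggregate after $r+1$ rounds, not round by round, so any proof that matches individual MW rounds with stages of $\mathfrak m$ is doomed.

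The missing idea is to bypass the round-by-round identification entirely. One shows that for $b\leq k\leq c$ the multisegments $MW_k(\mathfrak m)+\widetilde{\Delta}_{k-1}^{+}$, and for $a\leq k\leq b-1$ the multisegments $MW_k(\mathfrak m)$, form neighboring minimally linked families inside $\mathcal I^{\mathrm{Zel}}_{[b,c]_{\rho}}(\mathfrak m)$: minimal linkedness of the $MW_k(\mathfrak m)$ comes from Lemma \ref{lem structure of segments from MW}, that of the $\widetilde{\Delta}_k^{+}$ from Lemma \ref{lem redu integral } (legitimately applied after the $r$ downward sequences are removed), and the two chains are merged by the gluing Lemma \ref{lem combine minimal}. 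By the characterization of the segments participating in iterated MW algorithms (Lemma \ref{lem structure of segments from MW} together with uniqueness of minimal linkedness), these families are exactly the segments consumed by the first $r+1$ MW rounds on $\mathcal I^{\mathrm{Zel}}_{[b,c]_{\rho}}(\mathfrak m)$; summing the removals and using $(\widetilde{\Delta}^{+})^{-}=\widetilde{\Delta}$ then telescopes to $(\mathcal D^{\mathrm{MW}})^{r+1}(\mathcal I^{\mathrm{Zel}}_{[b,c]_{\rho}}(\mathfrak m))=(\mathcal D^{\mathrm{MW}})^{r}(\mathfrak m)$, and both ``moreover'' statements are read off from which segments participate rather than from your peeling picture. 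Your counting of the $r$ downward sequences (via the analogue of Lemma \ref{lem mw number=no max link zel}) is fine; it is the ``exactly one MW round inverts the extension step'' claim that must be replaced by this gluing/uniqueness argument.
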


\begin{proof}
We use the notations in Algorithm \ref{alg:int:Zel} applied to $\mathfrak m$. Comparing with the MW algorithm, we obtain $r$ downward sequences of minimally linked segments:
\[   \Delta_1^c,\ldots, \Delta_1^{b-1}; \ldots; \Delta_r^c, \ldots, \Delta_r^{b-1} .
\]
Using the notation in Algorithm \ref{alg:int:Zel}, we also have the segments $\widetilde{\Delta}_{c-1}, \ldots, \widetilde{\Delta}_{b-1}$ (possibly empty) in $\mathfrak m-\sum\limits_{i=1}^r\sum\limits_{k=b-1}^c\Delta_i^k$ participating in the extension process for $\mathcal I^{\mathrm{Zel}}_{[b,c]_{\rho}}(\mathfrak m)$. 

Now, for $b-1 \leq k \leq c$, let $MW_k(\mathfrak m)=\Delta_1^k+\ldots+\Delta_r^k$. For $k=b-1,\ldots, c-1$, Lemma  \ref{lem structure of segments from MW} implies that $MW_k(\mathfrak m)$ and $MW_{k+1}(\mathfrak m)$ are minimally linked in $\mathfrak m$, and so, they are also minimally linked in $\mathfrak m-\sum\limits_{k=b-1}^{c-1}\widetilde{\Delta}_k$. On the other hand, by Lemma \ref{lem redu integral }, for $k=b, \ldots, c-1$, $\widetilde{\Delta}_k^+$ and $\widetilde{\Delta}_{k+1}^+$ are minimally linked in $\mathcal I^{\mathrm{Zel}}_{[b,c]_{\rho}}(\mathfrak m)-\sum_{k=b-1}^cMW_k(\mathfrak m)$ and no segment in $(\mathcal I^{\mathrm{Zel}}_{[b,c]_{\rho}}(\mathfrak m)-\sum\limits_{k=b-1}^c MW_k(\mathfrak m))\langle b-1 \rangle$ is linked to $\widetilde{\Delta}_{b-1}^+$. Now by Lemma \ref{lem combine minimal}, for $b \leq k \leq c-1$, $MW_k(\mathfrak m)+\widetilde{\Delta}_{k-1}^+$ and $MW_{k+1}(\mathfrak m)+\widetilde{\Delta}_k^+$ are minimally linked in 
\[ \mathcal I^{\mathrm{Zel}}_{[b,c]_{\rho}}(\mathfrak m)=\left(\mathfrak m-\sum_{k=b-1}^{c-1}\widetilde{\Delta}_k-\sum_{k=b-1}^c WM_k(\mathfrak m)\right)+\sum_{k=b-1}^c WM_k(\mathfrak m)+\sum_k \widetilde{\Delta}_{k-1}^+ ,
\]
and $MW_{b-1}(\mathfrak m)$ and $MW_b(\mathfrak m)+\widetilde{\Delta}_{b-1}^+$ are also minimally linked in $\mathcal I^{\mathrm{Zel}}_{[b,c]_{\rho}}(\mathfrak m)$.

In order to find all the segments participating in the MW algorithm for \[\mathfrak m, \mathcal D^{\mathrm{MW}}(\mathfrak m), \ldots, (\mathcal D^{\mathrm{MW}})^r(\mathfrak m), \]
 we, for $k=b-2, \ldots, a$, recursively find the submultisegments $WM_k(\mathfrak m) \subseteq \mathfrak m \langle k\rangle$ such that $WM_k(\mathfrak m)$ and $WM_{k+1}(\mathfrak m)$ are minimally linked in $\mathfrak m$. For $k=b-2,\ldots, a$, as $\mathcal I^{\mathrm{Zel}}_{[b,c]_{\rho}}(\mathfrak m)\langle k \rangle =\mathfrak m \langle k\rangle$, we have $WM_k(\mathfrak m)$ and $WM_{k+1}(\mathfrak m)$  are also minimally linked in $\mathcal I^{\mathrm{Zel}}_{[b,c]_{\rho}}(\mathfrak m)$.\\ 
 Now, by using Lemma \ref{lem structure of segments from MW}(i) and above discussions, one has: $\left(\mathcal D^{\mathrm{MW}}\right)^{r+1}\left(\mathcal I^{\mathrm{Zel}}_{[b,c]_{\rho}}(\mathfrak m)\right)=$
\begin{align*} 
& \mathcal I_{[b,c]_{\rho}}^{\mathrm{Zel}}(\mathfrak m)-\sum_{k=a}^{b-1} MW_k(\mathfrak m)-\sum_{k=b}^c (MW_k(\mathfrak m)+\widetilde{\Delta}_{k-1}^+)+\sum_{k=a}^{b-1} MW_k(\mathfrak m)^- +\sum_{k=b}^c(MW_k(\mathfrak m)+\widetilde{\Delta}_{k-1}^+)^-  \\
&= \mathcal I_{[b,c]_{\rho}}^{\mathrm{Zel}}(\mathfrak m)-\sum_{k=a}^c MW_k(\mathfrak m)-\sum_{k=b}^c\widetilde{\Delta}_{k-1}^++\sum_{k=a}^cMW_k(\mathfrak m)^-+\sum_{k=b}^c \left(\widetilde{\Delta}_{k-1}^+ \right)^- \\
&=\mathfrak m-\sum_{k=a}^c MW_k(\mathfrak m)+\sum_{k=a}^c MW_k(\mathfrak m)^-=\left(\mathcal D^{\mathrm{MW}}\right)^r(\mathfrak m).
\end{align*}
The assertion for $\varepsilon^{\mathrm{MW}}_{[a^\prime,c]_{\rho}}$ follows from its definition, the segments participating in the MW-algorithms above, and Lemma \ref{lem structure of segments from MW}(i).
\end{proof}

\subsection{Main result}

\begin{theorem}\label{thm:integral_Zel}
Let $\Delta \in \mathrm{Seg}_\rho$ and $\mathfrak{m} \in \mathrm{Mult}_\rho$. Then, $\mathrm{I}^\mathrm{R}_\Delta(Z(\mathfrak{m})) \cong Z \left(\mathcal{I}^\mathrm{Zel}_\Delta(\mathfrak{m})\right).$
\end{theorem}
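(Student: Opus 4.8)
The plan is to mimic the structure of the proof of Theorem \ref{thm:integral_Lang}, running an induction on $\ell_{rel}(\Delta)$ and $\ell_{rel}(\mathfrak m)$, but replacing the exotic-duality inputs by the MW-algorithm machinery developed in Sections \ref{sec:der_Zel} and \ref{sec:int_Zel}, together with the gluing lemma for minimally linked multisegments from Appendix \ref{s glue minimal multiseg}. First I would reduce to the case where $\nu^c\rho$ (with $\Delta=[b,c]_\rho$) is the maximal cuspidal support of $\mathfrak m$, exactly as in the derivative case: using $\mathrm I^{\mathrm R}_{[b,c]_{\rho}}(Z(\mathfrak m))\hookrightarrow Z(\mathfrak m^{>c})\times Z(\mathfrak m^{\leq c})\times \mathrm{St}([b,c]_\rho)$ and the analogue of Proposition \ref{prop:Zel_der=0}, one shows $\mathcal I^{\mathrm{Zel}}_{[b,c]_{\rho}}(\mathfrak m)=\mathfrak m^{>c}+\mathfrak p$, where $Z(\mathfrak p)\cong \mathrm I^{\mathrm R}_{[b,c]_{\rho}}(Z(\mathfrak m^{\leq c}))$; this is consistent with the fact that $\mathfrak m^{>c}$ plays no role in Algorithm \ref{alg:int:Zel}. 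So it suffices to treat $\mathfrak m$ with maximal cuspidal support $\nu^c\rho$.

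The heart of the argument is the identity in Lemma \ref{lem MW on integral}, which says that applying the integral $\mathcal I^{\mathrm{Zel}}_{[b,c]_{\rho}}$ and then $r+1$ rounds of the MW algorithm (where $r=\varepsilon^{\mathrm{MW}}_{[a,c]_{\rho}}(\mathfrak m)+\ldots+\varepsilon^{\mathrm{MW}}_{[b-1,c]_{\rho}}(\mathfrak m)$ and $[a,c]_\rho$ is the first segment produced by MW on $\mathfrak m$) recovers $(\mathcal D^{\mathrm{MW}})^r(\mathfrak m)$, and moreover that $[b,c]_\rho$ is the first segment produced by MW for $(\mathcal D^{\mathrm{MW}})^r\circ\mathcal I^{\mathrm{Zel}}_{[b,c]_{\rho}}(\mathfrak m)$. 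Translating this into representation theory via Proposition \ref{prop:MW_der}: applying $\mathrm D^{\mathrm R}_{[b,c]_{\rho}}$ followed by the stack of derivatives $(\mathrm D^{\mathrm R}_{[b-1,c]_{\rho}})^{\varepsilon^{\mathrm{MW}}_{[b-1,c]_{\rho}}}\circ\ldots\circ(\mathrm D^{\mathrm R}_{[a,c]_{\rho}})^{\varepsilon^{\mathrm{MW}}_{[a,c]_{\rho}}}$ to $Z(\mathcal I^{\mathrm{Zel}}_{[b,c]_{\rho}}(\mathfrak m))$ gives the same as applying just the stack of derivatives to $Z(\mathfrak m)$. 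The derivatives $[b-1,c]_\rho,\ldots,[a,c]_\rho$ are pairwise unlinked with $[b,c]_\rho$ and with each other only in the appropriate directions; in fact one uses the commutativity of derivatives for unlinked segments (\cite[Lemma 4.4]{Cha_csq}) to move $\mathrm D^{\mathrm R}_{[b,c]_{\rho}}$ to the front, then cancels the common stack of $r$ derivatives on both sides by applying the corresponding integrals $r$ times (each derivative being nonzero along the way is guaranteed by the MW construction). This yields $\mathrm D^{\mathrm R}_{[b,c]_{\rho}}(Z(\mathcal I^{\mathrm{Zel}}_{[b,c]_{\rho}}(\mathfrak m)))\cong Z(\mathfrak m)$, hence $\mathrm I^{\mathrm R}_{[b,c]_{\rho}}(Z(\mathfrak m))\cong Z(\mathcal I^{\mathrm{Zel}}_{[b,c]_{\rho}}(\mathfrak m))$ by the derivative–integral inversion \cite[Corollary 2.4]{LM19}.

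I expect the main obstacle to be the careful bookkeeping in establishing Lemma \ref{lem MW on integral} itself — specifically verifying that the segments $\widetilde\Delta_{b-1}^+,\ldots,\widetilde\Delta_{c-1}^+$ produced by the extension process of Algorithm \ref{alg:int:Zel} are precisely the ones participating in the $(r+1)$-st MW algorithm, and that gluing the minimally linked blocks $MW_k(\mathfrak m)$ with the $\widetilde\Delta_{k-1}^+$ preserves minimal linkedness (this is where Lemma \ref{lem combine minimal} and Lemma \ref{lem redu integral } enter). The subtle point is the interaction at the boundary $k=b-1$, where $MW_{b-1}(\mathfrak m)$ must be minimally linked to $MW_b(\mathfrak m)+\widetilde\Delta_{b-1}^+$, and one must check $\varepsilon^{\mathrm{MW}}_{[a',c]_{\rho}}$ is unchanged for $a'\leq b-1$. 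Once Lemma \ref{lem MW on integral} is in hand, the deduction of the theorem is the short formal argument sketched above, parallel to the proof of Corollary \ref{prop:derivative} combined with the reduction of Proposition \ref{prop:Zel_der=0}; indeed one does not even need a double induction here since the MW machinery handles the general $\Delta$ directly.
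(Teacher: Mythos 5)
Your proposal is correct and follows essentially the same route as the paper: it pivots on Lemma \ref{lem MW on integral} together with Proposition \ref{prop:MW_der}, commutativity of derivatives for the nested (hence unlinked) segments $[a,c]_\rho,\ldots,[b,c]_\rho$, and integral cancellation, to arrive at $\mathrm D^{\mathrm R}_{[b,c]_{\rho}}(Z(\mathcal I^{\mathrm{Zel}}_{[b,c]_{\rho}}(\mathfrak m)))\cong Z(\mathfrak m)$ and then inversion. The only cosmetic difference is that you propose to handle $\mathfrak m^{>c}$ representation-theoretically up front (via an integral analogue of Proposition \ref{prop:Zel_der=0}), whereas the paper disposes of $\mathfrak m^{>c}$ at the end purely combinatorially through Theorem \ref{thm alg der zel}, avoiding the need for that analogue; and, as you correctly note at the end, no double induction is needed here.
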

\begin{proof}
Let $\Delta=[b,c]_{\rho}$. Let $\mathfrak n=\mathfrak m^{\leq c}$. Let $\Delta(\mathfrak{n})=[a,c]_{\rho}$ be the first segment produced by the MW-algorithm for $\mathfrak n$. Let $k_x=\varepsilon^{\mathrm{MW}}_{[x,c]_{\rho}}(\mathfrak n)$ for $a \leq x \leq b-1$, and set $r= k_{a}+\ldots +k_{b-1}$. Then,
\begin{align*}
 (\mathrm D^{\mathrm R}_{[b-1,c]_{\rho}})^{k_{b-1}}\circ \ldots \circ  (\mathrm D^{\mathrm R}_{[a,c]_{\rho}})^{k_{a}}(Z(\mathfrak n)) 
=& Z((\mathcal D^{\mathrm{MW}})^r(\mathfrak n)) \\
=& Z((\mathcal D^{\mathrm{MW}})^{r+1}\circ \mathcal I^{\mathrm{Zel}}_{[b,c]_{\rho}}(\mathfrak n)) \\
=& \mathrm D^{\mathrm R}_{\Delta}\circ(\mathrm D^{\mathrm R}_{[b-1,c]_{\rho}})^{k_{b-1}}\circ \ldots \circ  (\mathrm D^{\mathrm R}_{[a,c]_{\rho}})^{k_{a}}(Z(\mathcal I^{\mathrm{Zel}}_{\Delta}(\mathfrak n)))
\end{align*}
where the first equality follows from Proposition \ref{prop:MW_der}, the second equality follows from the first assertion of Lemma \ref{lem MW on integral}, and the third one follows from the second assertion of Lemma \ref{lem MW on integral}.

Now, applying integrals on both sides and using the commutativity of derivatives, we can cancel to obtain:
\[  \mathrm D^\mathrm{R}_{[b,c]_{\rho}}\left(Z\left(\mathcal I^{\mathrm{Zel}}_{[b,c]_{\rho}}(\mathfrak n)\right)\right)= Z(\mathfrak n) .
\]
With Theorem \ref{thm alg der zel}, we have $Z(\mathcal D^{\mathrm{Zel}}_{[b,c]_{\rho}}\circ \mathcal I^{\mathrm{Zel}}_{[b,c]_{\rho}}(\mathfrak n))=Z(\mathfrak n)$ and so $\mathcal D^{\mathrm{Zel}}_{[b,c]_{\rho}}\circ \mathcal I^{\mathrm{Zel}}_{[b,c]_{\rho}}(\mathfrak n)=\mathfrak n$. Now, note that
\begin{align*}
\mathcal D^{\mathrm{Zel}}_{[b,c]_{\rho}}\circ \mathcal I^{\mathrm{Zel}}_{[b,c]_{\rho}}(\mathfrak m) =&\mathcal D^{\mathrm{Zel}}_{[b,c]_{\rho}}\circ \mathcal I^{\mathrm{Zel}}_{[b,c]_{\rho}}(\mathfrak n+\mathfrak m^{>c})=\mathcal D^{\mathrm{Zel}}_{[b,c]_{\rho}}\left(I^{\mathrm{Zel}}_{[b,c]_{\rho}}(\mathfrak n)+\mathfrak m^{>c}\right) \\
= &\mathcal D^{\mathrm{Zel}}_{[b,c]_{\rho}}\left(\mathcal I^{\mathrm{Zel}}_{[b,c]_{\rho}}(\mathfrak n)\right)+\mathfrak m^{>c}=\mathfrak{n}+\mathfrak m^{>c}=\mathfrak m .
\end{align*}
Thus, by Theorem \ref{thm alg der zel} again,
$Z(\mathfrak m)=\mathrm D^{\mathrm R}_{[b,c]_{\rho}}(Z(\mathcal I^{\mathrm{Zel}}_{[b,c]_{\rho}})) 
$. Now, the theorem follows by applying $\mathrm I^{\mathrm R}_{[b,c]_{\rho}}$ on both sides.
\end{proof}

\subsection{Algorithm for left integral}

We again have the left version:

\begin{theorem}
For $\mathfrak m \in \mathrm{Mult}_{\rho}$ and $[a,b]_\rho \in \mathrm{Seg}_\rho$, define $\mathcal I^{\mathrm{Zel}, \mathrm L}_{[a,b]_{\rho}}(\mathfrak m)=\Theta\left(\mathcal I^{\mathrm{Zel}}_{[-b,-a]_{\rho^{\vee}}}(\Theta(\mathfrak m))\right)$. Then,
 \[\mathrm{I}^\mathrm{L}_{[a,b]_{\rho}} \left(Z(\mathfrak{m}) \right) \cong Z \left( \mathcal{I}_{[a,b]_{\rho}}^{\mathrm{Lang}, \mathrm{L}}(\mathfrak{m})  \right).\]
\end{theorem}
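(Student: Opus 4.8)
The plan is to obtain this statement as a purely formal consequence of Theorem \ref{thm:integral_Zel} together with the Gelfand-Kazhdan involution relations recorded in Section \ref{sss gk invol}. Recall from there that $\theta$ is a covariant auto-equivalence of $\mathrm{Rep}(G_n)$ satisfying $\theta(Z(\mathfrak n)) = Z(\Theta(\mathfrak n))$ for every $\mathfrak n \in \mathrm{Mult}$, that $\theta(\pi_1 \times \pi_2) \cong \theta(\pi_2)\times \theta(\pi_1)$, and consequently that
\[ \mathrm I^{\mathrm L}_{[a,b]_{\rho}}(Z(\mathfrak m)) \cong \theta\left(\mathrm I^{\mathrm R}_{[-b,-a]_{\rho^{\vee}}}(Z(\Theta(\mathfrak m)))\right). \]
The first step is simply to invoke this identity; the only bookkeeping needed is that $\theta$ carries the cuspidal line $\rho$ to $\rho^{\vee}$ and the segment $[a,b]_\rho$ to $[-b,-a]_{\rho^{\vee}}$, and that $\Theta\circ\Theta=\mathrm{id}$ on $\mathrm{Mult}$, both of which are immediate from the definitions in Section \ref{sss gk invol}.

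The second step is to apply Theorem \ref{thm:integral_Zel} to the multisegment $\Theta(\mathfrak m) \in \mathrm{Mult}_{\rho^{\vee}}$ and the segment $[-b,-a]_{\rho^{\vee}} \in \mathrm{Seg}_{\rho^{\vee}}$, which gives
\[ \mathrm I^{\mathrm R}_{[-b,-a]_{\rho^{\vee}}}(Z(\Theta(\mathfrak m))) \cong Z\left(\mathcal I^{\mathrm{Zel}}_{[-b,-a]_{\rho^{\vee}}}(\Theta(\mathfrak m))\right). \]
Applying the equivalence $\theta$ to both sides and using $\theta(Z(\mathfrak n)) = Z(\Theta(\mathfrak n))$ once more, we then get
\[ \mathrm I^{\mathrm L}_{[a,b]_{\rho}}(Z(\mathfrak m)) \cong Z\left(\Theta\left(\mathcal I^{\mathrm{Zel}}_{[-b,-a]_{\rho^{\vee}}}(\Theta(\mathfrak m))\right)\right) = Z\left(\mathcal I^{\mathrm{Zel}, \mathrm L}_{[a,b]_{\rho}}(\mathfrak m)\right), \]
where the last equality is exactly the definition of $\mathcal I^{\mathrm{Zel}, \mathrm L}_{[a,b]_{\rho}}(\mathfrak m)$ given in the statement. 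This is the desired isomorphism.

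There is no substantial obstacle in this argument: all the mathematical content lies inside Theorem \ref{thm:integral_Zel}, and what remains is a two-line transport along the Gelfand-Kazhdan duality. The only point that genuinely requires attention — and hence the ``hard part'', such as it is — is matching the conventions of Section \ref{sss gk invol} correctly, namely that the \emph{left} integral on the $\rho$-side corresponds to the \emph{right} integral on the $\rho^{\vee}$-side under the index change $[a,b]_\rho \mapsto [-b,-a]_{\rho^{\vee}}$; this is precisely one of the displayed isomorphisms derived in that section from $\theta(\pi_1\times\pi_2)\cong\theta(\pi_2)\times\theta(\pi_1)$, so once it is cited the proof is complete. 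The same scheme simultaneously proves the left analogues of Theorems \ref{thm:der:Lang}, \ref{thm alg der zel} and \ref{thm:integral_Lang}, which is why these left versions are stated without separate proofs.
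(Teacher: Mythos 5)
Your argument is correct and is exactly the proof the paper intends: the left-integral statement is obtained by transporting Theorem \ref{thm:integral_Zel} through the Gelfand--Kazhdan relations of Section \ref{sss gk invol}, precisely as you do. You also (rightly) prove the statement with $Z\left(\mathcal I^{\mathrm{Zel},\mathrm L}_{[a,b]_{\rho}}(\mathfrak m)\right)$ on the right-hand side, which is the intended reading of the theorem; the ``$\mathcal I^{\mathrm{Lang},\mathrm L}$'' appearing there is a typo in the paper.
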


\subsection{Exotic duality}

For completeness, we shall also establish an exotic duality analogous to Proposition \ref{prop dual derivative integral}. We use $\mathbb D_r$ defined in Section \ref{ss transfer exotic duality}.

\begin{proposition}
Let $\mathfrak m \in \mathrm{Mult}_{\rho}$ and $[a,b]_\rho \in \mathrm{Seg}_\rho$. Then, for sufficiently large $r$,
\begin{enumerate}
  \item[(i)] if $|\mathcal I^{\mathrm{Zel},\mathrm L}_{[a,b]_{\rho}}(\mathfrak m)|=|\mathfrak m|$, we have $\mathbb D_r(\mathcal I^{\mathrm{Zel}, \mathrm L}_{[a,b]_{\rho}}(\mathfrak m))=\mathcal D^{\mathrm{Zel}}_{[a,b]_{\rho}}(\mathbb D_r(\mathfrak m))$;
  \item[(ii)] $|\mathcal I^{\mathrm{Zel},\mathrm L}_{[a,b]_{\rho}}(\mathfrak m)|=|\mathfrak m|$ if and only if $\mathcal D^{\mathrm{Zel}}_{[a,b]_{\rho}}(\mathbb D_r(\mathfrak m)) \neq \infty$.
\end{enumerate}
\end{proposition}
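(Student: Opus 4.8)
The strategy is to mirror the argument of Proposition \ref{prop dual derivative integral} in the Zelevinsky setting, exploiting the same order-reversing bijection $\Psi$ between a good-range multisegment and its $\mathbb D_r$-dual. The key combinatorial fact to isolate first is that for sufficiently large $r$, a segment $\Delta=[a',b']_{\rho}$ of $\mathfrak m$ is transformed by $\Psi$ to $\Psi(\Delta)=[-r+b'+1,a'-1]_{\rho^{\vee}}$ (after applying $\Theta$), and: (a) $\Delta \prec \Delta''$ in $\mathfrak m$ iff $\Psi(\Delta)\prec \Psi(\Delta'')$; (b) $\ell_{rel}(\Delta)+\ell_{rel}(\Psi(\Delta))$ is the constant $r$, so shortest segments go to longest and vice versa; (c) $s(\Delta)=a'$ corresponds to $e(\Psi(\Delta))=a'-1$, so segments beginning at a fixed integer $i$ correspond to segments ending at $i-1$. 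These three observations are exactly what interchange the roles of Step~1 (downward sequences of \emph{minimal} linked segments, chosen by shortest) in Algorithm \ref{alg:int:Zel} for $\mathfrak m$ with Step~1 of Algorithm \ref{alg:der_Zel} (upward sequences of \emph{maximal} linked segments, chosen by longest) for $\Theta(\mathbb D_r(\mathfrak m))$.

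\textbf{Main steps.} First I would set up $\Psi$ and the left-derivative rewriting $\mathcal D^{\mathrm{Zel},\mathrm L}_{[a,b]_{\rho}}(\mathbb D_r(\mathfrak m))=\Theta(\mathcal D^{\mathrm{Zel}}_{[-b,-a]_{\rho^{\vee}}}(\Theta(\mathbb D_r(\mathfrak m))))$ and check that the removable downward sequences of minimal linked segments in neighbors on $\mathfrak m$ ranging from $b$ to $a-1$ correspond, under $\Theta\circ\Psi$, precisely to the removable upward sequences of maximal linked segments in neighbors on $\Theta(\mathbb D_r(\mathfrak m))$ ranging from $-b-1$ to $-a$; in particular the numbers of such sequences, hence the multisegments $\mathfrak m_k$ and their images, match. (For the left-integral $\mathcal I^{\mathrm{Zel},\mathrm L}$ I would dualize once more: the statement of this proposition is phrased for the \emph{left} integral against $[a,b]_\rho$, which via $\Theta$ becomes the \emph{right} integral against $[-b,-a]_{\rho^{\vee}}$, matching up with the right derivative $\mathcal D^{\mathrm{Zel}}_{[a,b]_\rho}$ on $\mathbb D_r(\mathfrak m)$ as stated.) Second, I would compare Step~4: the upward sequence of maximal linked segments $\widetilde{\Delta}_{a-1}\prec\cdots\prec\widetilde{\Delta}_{b-1}$ produced in Algorithm \ref{alg:int:Zel} for $\mathfrak m$ corresponds under $\Theta\circ\Psi$ to the downward sequence of minimal linked segments produced in Step~4 of Algorithm \ref{alg:der_Zel} for $\Theta(\mathbb D_r(\mathfrak m))$; here the hypothesis $|\mathcal I^{\mathrm{Zel},\mathrm L}_{[a,b]_{\rho}}(\mathfrak m)|=|\mathfrak m|$ translates (via observation (b)) into \emph{all} the $\widetilde{\Delta}_i$ being non-void, i.e.\ exactly the condition that the downward sequence on the dual side is complete, hence $\mathcal D^{\mathrm{Zel}}_{[a,b]_{\rho}}(\mathbb D_r(\mathfrak m))\neq\infty$. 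Third, I would verify the extension/truncation formulas match: $\Theta\circ\Psi(\widetilde{\Delta}_i^+)=(\widetilde{\Delta}_i')^-$ for the corresponding dual segment, so that (\ref{eq:int_Zel}) maps term-by-term to the $\mathcal D^{\mathrm{Zel}}$-formula, giving (i); and (ii) follows from the same dictionary applied to the completeness condition in Step~5' of Algorithm \ref{alg:der_Zel}.

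\textbf{The main obstacle.} The delicate point is not any single equality but the bookkeeping around the \emph{boundary index} $a-1$ (respectively $-a$ or $b$ on the dual side): Algorithm \ref{alg:int:Zel} lets $\widetilde{\Delta}_{a-1}$ possibly be void and then sets $(\widetilde{\Delta}_{a-1})^+=[a]_\rho$, while Algorithm \ref{alg:der_Zel}'s Step~4 terminates the downward sequence once a $\widetilde{\Delta}_i=\emptyset$ and declares the output $\infty$. One must check carefully that, under the hypothesis $|\mathcal I^{\mathrm{Zel},\mathrm L}_{[a,b]_{\rho}}(\mathfrak m)|=|\mathfrak m|$, the dual-side sequence never hits a void segment inside the range, so the two conventions are genuinely compatible; and for part (ii) one needs the converse: if $|\mathcal I^{\mathrm{Zel},\mathrm L}_{[a,b]_{\rho}}(\mathfrak m)|>|\mathfrak m|$ then on the dual side the required downward sequence is incomplete. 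I expect this ``$\infty$ vs.\ cardinality jump'' translation to absorb most of the work; the rest is routine given the dictionary (a)--(c). As in Lemma \ref{lem duality under cuspidal derivative}, once the dictionary is in place the verification is mechanical, and I would state it as such rather than writing out every case.
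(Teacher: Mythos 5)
Your overall strategy matches the paper's (the paper only remarks that the proof is ``similar to Proposition \ref{prop dual derivative integral}'' and omits the details), and your three observations (a)--(c) about the order-reversing bijection $\Psi$ are exactly the dictionary that makes the Zelevinsky algorithms interchange. However, the specific side-by-side you set up in the ``Main steps'' paragraph is not the one the stated proposition calls for, and the parenthetical ``dualize once more'' defers precisely the step where the care is needed.

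The proposition pairs the \emph{left} integral $\mathcal I^{\mathrm{Zel},\mathrm L}_{[a,b]_\rho}(\mathfrak m)=\Theta\bigl(\mathcal I^{\mathrm{Zel}}_{[-b,-a]_{\rho^\vee}}(\Theta(\mathfrak m))\bigr)$ with the \emph{right} derivative $\mathcal D^{\mathrm{Zel}}_{[a,b]_\rho}(\mathbb D_r(\mathfrak m))$. So the comparison should run Algorithm \ref{alg:int:Zel} on $\Theta(\mathfrak m)$ at $[-b,-a]_{\rho^\vee}$ against Algorithm \ref{alg:der_Zel} on $\mathbb D_r(\mathfrak m)$ at $[a,b]_\rho$, linked by $\mathbb D_r\circ\Theta\colon \Theta(\mathfrak m)\to \mathbb D_r(\mathfrak m)$, which identifies the two multisegments as relabelings of the segments of $\mathfrak m$, sends a segment ending at $k$ to one ending at $-k-1$, and in particular carries the integral range $-a,\ldots,-b-1$ to the derivative range $a-1,\ldots,b$ that Algorithm \ref{alg:der_Zel} actually uses. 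What you wrote instead compares downward sequences on $\mathfrak m$ itself (these compute the \emph{right} integral $\mathcal I^{\mathrm{Zel}}_{[a,b]_\rho}$, not the left one) with upward sequences on $\Theta(\mathbb D_r(\mathfrak m))$ over the range $-b-1,\ldots,-a$; but $\Theta\circ\Psi$ sends $[a',b']_\rho\in\mathfrak m$ to $[1-a',r-b'-1]_{\rho^\vee}$, so its image ends at $r-b'-1$ and the range you would actually see is $r-b-1,\ldots,r-a$, not $-b-1,\ldots,-a$ --- for large $r$ the range you cite is empty, so as written the comparison does not close. Once the two algorithms are lined up on $\Theta(\mathfrak m)$ and $\mathbb D_r(\mathfrak m)$ directly, the rest of your outline (Step~1/Step~4 matching, $(\cdot)^+\leftrightarrow(\cdot)^-$ under the bijection, and the ``$\infty$ versus cardinality jump'' dictionary for part (ii)) goes through as you describe.
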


A proof is similar to Proposition \ref{prop dual derivative integral}, and we omit the details.

\section{Applications} \label{s alg hd der}
\subsection{Algorithm for highest BZ derivative $\pi^-$ in Langlands classification}
To define an algorithm for computing the highest BZ derivative in the Langlands classification, we recall the notion of the removable free section $\mathfrak{rf}(\Delta)$ for a segment $\Delta$ in an upward sequence $\underline{\mathfrak{Us}}(\mathfrak{n})$ of maximally linked segments in $\mathfrak{n}\in \mathrm{Mult}_\rho$ as introduced in section \ref{subsec:der:lang}.

\begin{algorithm}\label{alg:hd:Lang}
 Let $\mathfrak m \in \mathrm{Mult}_{\rho}$. Set $\mathfrak{m}_1=\mathfrak m$ and apply the following steps:

Step 1. (Arrange the upward sequences) Let $\underline{\mathfrak{Us}}(\mathfrak{m}_1)=\Delta_{1,1} + \ldots + \Delta_{1,r_1}$. Recursively for $i \geq 2$, set 
\begin{equation}\label{eq:Us_m}
\mathfrak{m}_i=\mathfrak{m}_{i-1} - \underline{\mathfrak{Us}}(\mathfrak{m}_{i-1}) \text{ and }  \underline{\mathfrak{Us}}(\mathfrak{m}_i)=\Delta_{i,1} + \ldots + \Delta_{i,r_i},    
\end{equation}
where $\Delta_{i,j}=[x_{i,j}, y_{i,j}]_\rho$. This process terminates after a finite number,
 say $k$ times if $\underline{\mathfrak{Us}}(\mathfrak{m}_k)=\mathfrak{m}_k$. 

Step 2. (Remove the free section) Considering $\Delta_{i,j} \in \underline{\mathfrak{Us}}(\mathfrak{m}_i)$, we define the non-free section of $\Delta_{i,j}$ by
\[\mathfrak{nf}(\Delta_{i,j})=\begin{cases}
    \left[x_{i,j+1}-1,y_{i,j} \right]_\rho &\mbox{ if } j<r_i\\
    \emptyset &\mbox{ if } j=r_i.
\end{cases}\]
In other words, the disjoint union $\mathfrak{rf}(\Delta_{i,j}) \sqcup \mathfrak{nf}(\Delta_{i,j})=\Delta_{i,j}$.

Step 3. (Collect all non-free parts) Finally, we define, \[\mathfrak{d}({\mathfrak{m}})= \sum\limits_{i=1}^k \sum\limits_{j=1}^{r_i} \mathfrak{nf}(\Delta_{i,j}).\]
\end{algorithm}

We shall show that $\mathfrak{d}(\mathfrak m)$ gives the highest BZ derivative in Theorem \ref{thm:hd_Lang}. The key idea is to use a description of the highest BZ derivative in terms of derivatives of cuspidal representations, and then show the corresponding combinatorial counterpart in the following Lemma \ref{lem:hd_1}.

\begin{lemma}\label{lem:hd_1}
   Let $\mathfrak{m} \in \mathrm{Mult}_\rho$ and let $a$ (resp. $b$) be the smallest (resp.
 largest) integer such that $\nu^a\rho$ (resp. $\nu^b\rho$) lies in $\mathrm{csupp}(\mathfrak{m})$. Let $\varepsilon_a= \varepsilon^\mathrm{R}_{[a]_\rho}(L(\mathfrak{m}))$ and recursively, for each $t=a+1,\ldots,b$, let 
 \[\varepsilon_t=\varepsilon^\mathrm{R}_{[t]_\rho} \left( \left(\mathrm{D}^\mathrm{R}_{[t-1]_\rho}\right)^{\varepsilon_{t-1}} \circ \ldots \circ \left(\mathrm{D}^\mathrm{R}_{[a]_\rho}\right)^{\varepsilon_{a}} (L(\mathfrak{m}))\right).\]
 Then,
 \[\left(\mathcal{D}^\mathrm{Lang}_{[b]_\rho}\right)^{\varepsilon_{b}} \circ \ldots \circ \left(\mathcal{D}^\mathrm{Lang}_{[a]_\rho}\right)^{\varepsilon_{a}} (\mathfrak{m})=\mathfrak{d}(\mathfrak{m}).\]
\end{lemma}
\begin{proof}
Let's assume all the notations as mentioned in Algorithm \ref{alg:hd:Lang}. Then, by Algorithm \ref{alg:der:Lang}, we have
 \begin{equation}\label{eq:hd_a}
 \mathfrak{n}=\left(\mathcal{D}^\mathrm{Lang}_{[a]_\rho}\right)^{\varepsilon_{a}} (\mathfrak{m})=\mathfrak{m}-\sum\limits_{\substack{\Delta \in \mathfrak{m}[a]\\ [a]_\rho \in \mathfrak{rf}\left(\Delta\right)}} \Delta + \sum\limits_{\substack{\Delta \in \mathfrak{m}[a]\\ [a]_\rho \in \mathfrak{rf}\left(\Delta\right)}} {^-}\Delta,    
 \end{equation}
where $\mathfrak{rf}(\Delta)$ is defined in the system \eqref{eq:Us_m} of upward sequences $\underline{\mathfrak{Us}}(\mathfrak{m}_i)$ and the right hand side of \eqref{eq:hd_a} is obtained by deleting all removable free points $[a]_\rho$ from the segments of $\mathfrak{m}$ in that system.

Note that 
\[ \mathfrak n[a] =\{\mathfrak{nf}(\Delta) :\Delta \in \mathfrak{m}[a] \text{ and } \mathfrak{rf}(\Delta)=\emptyset\}\]
i.e. all segments in $\mathfrak m[a]$ with the whole segment to be the non-free section. It follows from Algorithm \ref{alg:der:Lang} that
\[ \left(\mathcal{D}^\mathrm{Lang}_{[b]_\rho}\right)^{\varepsilon_{b}} \circ \ldots \circ \left(\mathcal{D}^\mathrm{Lang}_{[a]_\rho}\right)^{\varepsilon_{a}} (\mathfrak{m})[a]=\mathfrak n[a]=\mathfrak{d}(\mathfrak{m})[a] .
\]

Now, one considers $\mathfrak n'=\mathfrak n-\mathfrak n[a]$. Again,
As in the discussion in \eqref{eq:Us_a+1_b}, the upward sequences of $\mathfrak n'$ are
\begin{align}\label{eq:Us_after_D_a}
 \underline{\mathfrak{Us}}(\mathfrak{n}'_i)=\begin{cases}
     \Delta_{i,2} + \ldots + \Delta_{i,r_i} &\mbox{ if } \Delta_{i,1} \in \mathfrak{m}_i[a] \text{ and } [a]_\rho \notin \mathfrak{rf}(\Delta_{i,1})\\
     {^-}\Delta_{i,1} + \Delta_{i,2} + \ldots + \Delta_{i,r_i} &\mbox{ if } [a]_\rho \in \mathfrak{rf}(\Delta_{i,1})\\
     \underline{\mathfrak{Us}}(\mathfrak{m}_i) &\mbox{ if } \mathfrak{m}_i[a]=\emptyset .
 \end{cases}   
\end{align} 
From (\ref{eq:Us_after_D_a}), note that the non-free section of each segment in $\mathfrak n'$ is the same as the non-free section of the corresponding one in $\mathfrak m$. Thus, we have, for $a'>a$
\begin{align*}
      \left(\mathcal{D}^\mathrm{Lang}_{[b]_\rho}\right)^{\varepsilon_{b}} \circ \ldots \circ \left(\mathcal{D}^\mathrm{Lang}_{[a+1]_\rho}\right)^{\varepsilon_{a}} (\mathfrak{n})[a'] &= \left(\mathcal{D}^\mathrm{Lang}_{[b]_\rho}\right)^{\varepsilon_{b}} \circ \ldots \circ \left(\mathcal{D}^\mathrm{Lang}_{[a+1]_\rho}\right)^{\varepsilon_{a}}(\mathfrak n')[a']  \\
      &=\mathfrak{d}(\mathfrak n')[a'] \\
      &=\mathfrak{d}(\mathfrak{m})[a'] ,
\end{align*}
where the first equality follows from that the segments $\mathfrak n[a]$ does not have a role in the derivatives $\mathcal D^{\mathrm{Lang}}_{[a'']_{\rho}}$ for $a''>a$; the second equality follows from induction; the third inequality follows from above discussion. This shows the lemma.

\end{proof}

A precise definition of the highest Bernstein-Zelevinsky derivative can be found in \cite[Section 4.2]{Zel}.

\begin{theorem}\label{thm:hd_Lang}
Let $\mathfrak{m} \in \mathrm{Mult}_\rho$ and  $\pi=L(\mathfrak{m})$. Then, we have $\pi^-\cong L\left( \mathfrak{d}(\mathfrak{m}) \right)$.
\end{theorem}
\begin{proof}
Let $a$ (resp. $b$) be the smallest (resp.
 largest) integer such that $\nu^a\rho$ (resp. $\nu^b\rho$) lies in $\mathrm{csupp}(\pi)$. Let $\varepsilon_a= \varepsilon^\mathrm{R}_{[a]_\rho}(\pi)$ and recursively, for each $t=a+1,\ldots,b$, let 
 \[\varepsilon_t=\varepsilon^\mathrm{R}_{[t]_\rho} \left( \left(\mathrm{D}^\mathrm{R}_{[t-1]_\rho}\right)^{\varepsilon_{t-1}} \circ \ldots \circ \left(\mathrm{D}^\mathrm{R}_{[a]_\rho}\right)^{\varepsilon_{a}} (\pi)\right).\]
Similar arugment in \cite[Proposition 3.6]{Cha_csq} deduces that
\[\left(\mathrm{D}^\mathrm{R}_{[b]_\rho}\right)^{\varepsilon_{b}} \circ \ldots \circ \left(\mathrm{D}^\mathrm{R}_{[a]_\rho}\right)^{\varepsilon_{a}} (\pi) \cong \pi^-.\]
By applying Theorem \ref{thm:der:Lang} to Lemma \ref{lem:hd_1}, we can conclude that $\pi^- \cong L(\mathfrak{d}(\mathfrak{m}))$.
 
% Using M\oe glin-Waldspurger algorithm, let $\mathfrak{n}=\mathfrak{m}^\#$ and so $\pi\cong Z(\mathfrak{n})$.  Let $c$ (resp. $d$) be the smallest (resp. largest) integer such that $\nu^c \rho \cong e(\Delta)$ (resp. $\nu^d \rho \cong e(\Delta)$) for some $\Delta \in \mathfrak{n}$. Then, $a \leq c$ and $d=b$. For $a \leq t \leq b$, define
 % \[k_t=\left| \left\{\Delta \in \mathfrak{n}: e(\Delta)\cong \nu^t \rho\right\}\right|.\]
%By Algorithm \ref{alg:der_Zel}, recursively, we have $k_t=\varepsilon_t$ for $a \leq t \leq b$. Thus, by \cite[Proposition 3.6]{Cha_csq}, we deduce that
%\[\left(\mathrm{D}^\mathrm{R}_{[b]_\rho}\right)^{\varepsilon_{b}} \circ \ldots \circ \left(\mathrm{D}^\mathrm{R}_{[a]_\rho}\right)^{\varepsilon_{a}} (\pi) \cong \pi^-.\]
%By applying Theorem \ref{thm:der:Lang} to Lemma \ref{lem:hd_1}, we can conclude that $\pi^- \cong L(\mathfrak{d}(\mathfrak{m}))$.
\end{proof}
\begin{example}
 Let $\pi=L(\mathfrak{m})$ with $\mathfrak{m}=[1,5]+[1,5]+[2,5]+[3,4]+[3,6]+[4,6]+[5,6]+[6,7]$. Then,  \begin{align*}
  \mathfrak{m}_1&=\mathfrak{m}   && \text{with }~\underline{\mathfrak{Us}}(\mathfrak{m}_1)=[1,5] + [3,6]+[6,7];\\
  \mathfrak{m}_2&=\mathfrak{m}_1-\underline{\mathfrak{Us}}(\mathfrak{m}_1)=[1,5]+[2,5]+[3,4]+[4,6]+[5,6]   & &\text{with }~\underline{\mathfrak{Us}}(\mathfrak{m}_2)=[1,5]+[4,6]; \\
  \mathfrak{m}_3&=\mathfrak{m}_2-\underline{\mathfrak{Us}}(\mathfrak{m}_2)=[2,5]+[3,4]+[5,6]   && \text{with }~\underline{\mathfrak{Us}}(\mathfrak{m}_3)=[2,5]+[5,6]\text{ and}\\
  \mathfrak{m}_4&=\mathfrak{m}_3-\underline{\mathfrak{Us}}(\mathfrak{m}_3)=[3,4]   && \text{with }~\underline{\mathfrak{Us}}(\mathfrak{m}_4)=[3,4].
 \end{align*} 
 We have the following non-free sections:
\begin{align*}
 \mathfrak{nf}([1,5])=[2,5], ~\mathfrak{nf}([3,6])=[5,6], ~\mathfrak{nf}([6,7])&=\emptyset, \\
 \mathfrak{nf}([1,5])=[3,5], ~\mathfrak{nf}([4,6])&=\emptyset,\\
 \mathfrak{nf}([2,5])=[4,5], ~\mathfrak{nf}([5,6])&=\emptyset,\\
 \mathfrak{nf}([3,4])&=\emptyset.
\end{align*} 
 Therefore, $\mathfrak{d}(\mathfrak{m})=[2,5]+[3,5]+[4,5]+[5,6],$ and $\pi^-=L\left([2,5]+[3,5]+[4,5]+[5,6] \right)$.
 \end{example}

\begin{example}
Let $\pi=L(\mathfrak{m})$ be a ladder representation, where $\mathfrak{m}=\{[1,4]_\rho, [3,6]_\rho,[7,9]_\rho\}$. Then, we have only one upward sequence of maximally linked segments with the smallest starting for $\mathfrak{m}$ given by $\underline{\mathfrak{Us}}(\mathfrak{m})=[1,4]_\rho + [3,6]_\rho + [7,9]_\rho$. Thus,
$\mathfrak{d}(\mathfrak{m})= \mathfrak{nf}([1,4]_\rho) + \mathfrak{nf}([3,6]_\rho) + \mathfrak{nf}([7,9]_\rho)=[2,4]_\rho + [6]_\rho+ \emptyset$, and $\pi^-=L([2,4]_\rho + [6]_\rho)$.
\end{example}

\begin{example}
 Let $\pi=L(\mathfrak{m})$ be a generic representation. Then, each segment $\Delta \in \mathfrak{m}$ lies in the distinct upward sequence $\underline{\mathfrak{Us}}(\mathfrak{m}_i)$, which is a singleton set. Therefore, $\mathfrak{rf}(\Delta)=\Delta$ for each $\Delta \in \mathfrak{m}$ and  $\mathfrak{d}(\mathfrak{m})= \emptyset$. Thus, $\pi^-=L(\emptyset)$, the trivial representation of $G_0$. 
\end{example}

\subsection{$\mathfrak{hd}(\pi)$ in Zelevinsky classification}
\begin{algorithm} \label{alg hd der}
Let $\mathfrak m \in \mathrm{Mult}_{\rho}$. Set $\mathfrak m_1=\mathfrak m$ and apply the following step:

Step 1. (Choose upward sequences) Let $a_1$ be the smallest integer such that $\mathfrak m_1\langle a_1 \rangle\neq \emptyset$. Let $\Delta_{1,a_1}$ be the longest segment in $\mathfrak m_1\langle a_1\rangle$. For $j \geq a_1+1$,  we recursively find the longest segment $\Delta_{1,j}$ in $\mathfrak m_1\langle j\rangle$ such that $\Delta_{1,j}$ is linked to $\Delta_{1,j-1}$. This process of choosing segments terminates when no further such segment $\Delta_{1,j}$ can be found. Set the last such segment to be $\Delta_{1,b_1}$ and define
\[ \mathfrak m_2=\mathfrak m_1-\Delta_{1,a_1}-\ldots -\Delta_{1,b_1} .
\]

Step 2. (Repeat Step 1) For $i\geq 2$, we repeat Step 1 for $\mathfrak m_i$, and obtain segments $\Delta_{i,a_i}, \ldots, \Delta_{i,b_i}$. We recursively define:
\[  \mathfrak m_{i+1}=\mathfrak m_i-\Delta_{i,a_i}-\ldots-\Delta_{i,b_i} .
\]
This removal process terminates after say $\ell$ times when $\mathfrak m_{\ell+1}=\emptyset$. 

Step 3.  Finally, we define
\[ \mathcal{H}^\mathrm{Zel}(\mathfrak m)=\sum_{i=1}^{\ell} [a_i,b_i]_{\rho} .
\]
\end{algorithm}

\begin{theorem}
For $\mathfrak m\in \mathrm{Mult}_{\rho}$, we have $ \mathfrak{hd}(Z(\mathfrak m))= \mathcal{H}^\mathrm{Zel}(\mathfrak m)$.
\end{theorem}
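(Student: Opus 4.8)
The strategy is to connect Algorithm \ref{alg hd der} directly to the derivative algorithm in Zelevinsky classification (Algorithm \ref{alg:der_Zel}) via the characterization of $\mathfrak{hd}$ provided by Lemma \ref{lem non-zero der hd}. Recall that $\mathfrak{hd}(\pi)$ is the multisegment whose segments $[a,c]_\rho$ record, for each $a$, the (unique) longest segment starting at $a$ along which a nonzero derivative exists, in the sense that $\mathrm{D}^\mathrm{R}_{[a,b]_\rho}(\pi)\neq 0$ iff $\mathfrak{hd}(\pi)$ contains some $[a,c]_\rho$ with $c\geq b$. So it suffices to show that $\mathcal H(\mathfrak m)$ satisfies this same characterization for $\pi = Z(\mathfrak m)$: namely that for every segment $[a,b]_\rho$,
\[
\mathcal H(\mathfrak m) \text{ contains a segment } [a,c]_\rho \text{ with } c\geq b \iff \mathrm D^\mathrm{R}_{[a,b]_\rho}(Z(\mathfrak m))\neq 0 \iff \mathcal D^\mathrm{Zel}_{[a,b]_\rho}(\mathfrak m)\neq \infty,
\]
the last equivalence being Theorem \ref{thm alg der zel}(i). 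Since $\mathfrak{hd}(Z(\mathfrak m))$ is uniquely determined by these membership conditions (as it is the $\leq^a_c$-maximal multisegment at each point), establishing the displayed equivalence for all $[a,b]_\rho$ forces $\mathfrak{hd}(Z(\mathfrak m))=\mathcal H(\mathfrak m)$.

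\textbf{Key steps.} First, I would observe that the upward sequences chosen in Algorithm \ref{alg hd der} are exactly the ``upward sequences of maximal linked segments in neighbors'' appearing in Step 1--3 of Algorithm \ref{alg:der_Zel}, but run over the \emph{entire} range of $\mathfrak m$ rather than truncated to a window $[a-1,b]$; this is the same combinatorial object studied in Lemma \ref{lem structure of segments from MW} and Lemma \ref{lem mw number=no max link zel}, where the number of removal upward sequences in neighbors ranging from $b-1$ to $c$ was identified with $\varepsilon^{\mathrm{MW}}_{[a,c]_\rho}(\mathfrak m)+\ldots+\varepsilon^{\mathrm{MW}}_{[b-1,c]_\rho}(\mathfrak m) = \varepsilon^\mathrm{R}_{[a,c]_\rho}(Z(\mathfrak m))+\ldots+\varepsilon^\mathrm{R}_{[b-1,c]_\rho}(Z(\mathfrak m))$. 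Second, I would show the decisive fact: a segment $[a,b]_\rho$ appears in $\mathcal H(\mathfrak m)$ (i.e. $[a_i,b_i]_\rho = [a,b]_\rho$ for some $i$) precisely when the greedy longest-segment extraction process, after having removed all sequences starting at points $< a$, reaches a maximal chain from $a$ to $b$. Equivalently, by maximality of the greedy choices, $\mathcal H(\mathfrak m)$ contains a segment $[a,c]_\rho$ with $c \geq b$ iff one can extract from $\mathfrak m$ an upward chain of maximally linked segments in neighbors starting at $a$ and reaching at least $b$ after removing the (finitely many) chains that start strictly before $a$; and the existence of such a chain is exactly what Lemma \ref{lem mw number=no max link zel} and Lemma \ref{lem zel non infty to mw} translate into $\mathcal D^\mathrm{Zel}_{[a,b]_\rho}(\mathfrak m)\neq\infty$ (the final selection step in Algorithm \ref{alg:der_Zel}, producing a downward sequence of minimal linked segments from $b$ to $a$, succeeds iff after the upward removals there remains enough ``room'' — which is governed by these same chain counts). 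Third, I would assemble these into the displayed equivalence and invoke Theorem \ref{thm alg der zel}(i) together with Lemma \ref{lem non-zero der hd} to conclude.

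\textbf{Main obstacle.} The technical heart is matching the \emph{global} greedy extraction in Algorithm \ref{alg hd der} with the \emph{localized} window algorithm $\mathcal D^\mathrm{Zel}_{[a,b]_\rho}$ of Algorithm \ref{alg:der_Zel}: one must show that the chains $\Delta_{i,a_i},\ldots,\Delta_{i,b_i}$ that \emph{pass through} the window $[a-1,b]$ are, after restriction, precisely the removal upward sequences in neighbors ranging from $a-1$ to $b$ that Algorithm \ref{alg:der_Zel} would use, and that the first chain \emph{starting at} $a$ and reaching $\geq b$ corresponds to the final ``downward sequence of minimal linked segments'' selection succeeding. This requires a careful induction on the steps, using the structural results of Lemma \ref{lem structure of segments from MW}, Lemma \ref{lem structure of w minus max} and Lemma \ref{lem mw number=no max link zel} to guarantee that greedily choosing longest segments globally does not ``waste'' segments that a windowed run would need — essentially an exchange/matching argument of the same flavor as Lemma \ref{lem back to minimal link}. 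Once this correspondence is pinned down, the rest is bookkeeping. An alternative, possibly cleaner route I would keep in reserve is induction on $|\mathfrak m|$: peel off the first chain $[a_1,b_1]_\rho$ of $\mathcal H(\mathfrak m)$, show $a_1$ is the minimal cuspidal-support starting point and that $[a_1,b_1]_\rho$ is the $\leq^a_{a_1}$-maximal segment with $\mathrm D^\mathrm{R}_{[a_1,b_1]_\rho}(Z(\mathfrak m))\neq 0$, then use Theorem \ref{thm alg der zel}(ii) and Lemma \ref{lem derivative removal} to identify $\mathfrak{hd}$ of the derivative with $\mathcal H$ of the reduced multisegment $\mathfrak m - \Delta_{1,a_1}-\ldots-\Delta_{1,b_1}$ (after checking this reduced multisegment is the one computing that derivative) and close the induction.
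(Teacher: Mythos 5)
Your first (main) route has a genuine gap at the very last step. Lemma \ref{lem non-zero der hd} only gives a \emph{yes/no} criterion: $\mathrm D^\mathrm{R}_{[a,b]_\rho}(Z(\mathfrak m))\neq 0$ iff $\mathfrak{hd}(Z(\mathfrak m))$ contains \emph{some} segment $[a,c]_\rho$ with $c\geq b$. Matching this criterion for $\mathcal H(\mathfrak m)$ and $\mathfrak{hd}(Z(\mathfrak m))$ determines, at each starting point $a$, the maximal length reached, but it does \emph{not} determine multiplicities. For instance, $\mathfrak{hd}(\pi)[a]=\{[a,5]_\rho\}$ and $\mathfrak{hd}(\pi)[a]=\{[a,5]_\rho,[a,3]_\rho\}$ pass exactly the same collection of these tests, so the claim that "$\mathfrak{hd}(Z(\mathfrak m))$ is uniquely determined by these membership conditions" is false. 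Note also that the definition of $\mathfrak{hd}$ via $\mathfrak h_c$ involves nonvanishing of an \emph{iterated} derivative $\mathrm D^\mathrm{R}_{\mathfrak h_c}$ on a whole multisegment at the point $\nu^c\rho$, which is strictly stronger information than having each individual $\mathrm D^\mathrm{R}_{[c,d]_\rho}(\pi)\neq 0$ separately. What the paper's proof actually uses, and what you would need, is the \emph{multiplicity} formula from \cite[Proposition 5.2]{Cha_csq}: the multiplicity of $[a,b]_\rho$ in $\mathfrak{hd}(\pi)$ equals $\varepsilon^\mathrm{R}_{[a,b]_\rho}(\pi)-\varepsilon^\mathrm{R}_{[a,b+1]_\rho}(\pi)$. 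So it is not enough to decide whether $\mathcal D^{\mathrm{Zel}}_{[a,b]_\rho}(\mathfrak m)\neq \infty$ once; you must count iterations, i.e. show $\bigl(\mathcal D^{\mathrm{Zel}}_{[a,b]_\rho}\bigr)^{r}(\mathfrak m)\neq\infty$ and $\bigl(\mathcal D^{\mathrm{Zel}}_{[a,b]_\rho}\bigr)^{r+1}(\mathfrak m)=\infty$ exactly when $r$ is the number of segments $[a,\tilde b]_\rho$ in $\mathcal H(\mathfrak m)$ with $\tilde b\geq b$. That counting is the substance of the paper's argument (and where the structural Lemmas \ref{lem structure of segments from MW}, \ref{lem mw number=no max link zel} genuinely come in), and your plan currently skips it.

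Your reserve alternative also needs repair: you propose to "identify $\mathfrak{hd}$ of the derivative with $\mathcal H$ of the reduced multisegment $\mathfrak m-\Delta_{1,a_1}-\ldots-\Delta_{1,b_1}$", but $\mathrm D^\mathrm{R}_{[a_1,b_1]_\rho}(Z(\mathfrak m))\cong Z(\mathcal D^{\mathrm{Zel}}_{[a_1,b_1]_\rho}(\mathfrak m))$ and $\mathcal D^{\mathrm{Zel}}_{[a_1,b_1]_\rho}(\mathfrak m)$ is \emph{not} obtained by deleting segments; it shortens a downward chain of segments each by one from the end. So the "reduced multisegment" you want to induct on is not the one that computes the derivative, and the induction hypothesis would be applied to the wrong object. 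If you want an induction on $|\mathfrak m|$, you would first need a lemma comparing $\mathcal H(\mathcal D^{\mathrm{Zel}}_{\Delta}(\mathfrak m))$ (not $\mathcal H$ of the chain-removed multisegment) with $\mathfrak r(\Delta,Z(\mathfrak m))$ from Lemma \ref{lem derivative removal} — i.e. you would still essentially be reproving the $\varepsilon$-count identity.
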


\begin{proof}
Most of the combinatorial arguments have been discussed before, and so we only sketch the main steps in this proof. We use the notations in Algorithm \ref{alg hd der}. Let $[a',b']_{\rho}\in \mathrm{Seg}_{\rho}$. Let $i^*$ be the largest integer such that $a_{i^*} \leq a'-1$. For each $i\leq i^*$, if $b_i\geq b'$, we set segments 
\[  \mathfrak n_i=\Delta_{i,a'-1}+\ldots +\Delta_{i,b'} \subset \mathfrak m_i
\]
and otherwise, set $\mathfrak n_i=\emptyset$. 

Now, one shows that the multisegment $\mathfrak n_1+\ldots+\mathfrak n_{i^*}$ coincides with the sum of all the removal upward sequences of maximal linked segments in neighbors on $\mathfrak m$ ranging from $a'-1$ to $b'$. This can be proved by a version of gluing suitable maximally linked segments of Lemma \ref{lem combine minimal}. 

Let 
\[ r= r_{[a',b']_{\rho}} :=|\left\{ [a', \widetilde{b}]_{\rho} \in \mathcal{H}^\mathrm{Zel}(\mathfrak m)[a'] : \widetilde{b}\geq b' \right\} |.\] 
Then, one can use the linked relation of the segments \[ \Delta_{i^*+1,a'}, \ldots, \Delta_{i^*+1,b'}, \ldots, \Delta_{i^*+r,a'}, \ldots, \Delta_{i^*+r,b'} \]
in $\mathfrak m_{i^*+1}$ to find a collection of minimally linked segments $MW_{b'}, \ldots, MW_{a'}$ such that
\begin{enumerate}
\item  for all $j=b',\ldots,a'$, $MW_j$ is a submultisegment of $\mathfrak m_{i^*+1}\langle j\rangle$, and $|MW_j|=r$;
\item $MW_{b'}$ consists of the first $r$ shortest segments in $\mathfrak m_{i^*+1}\langle b' \rangle$, and for $a'\leq j\leq b'-1$, $MW_{j}$ is minimally linked to $MW_{j+1}$ in $\mathfrak m_{i^*+1}$. 
\end{enumerate}
Showing above is similar to the proof of Lemma \ref{lem mw number=no max link zel}.

Now, this shows that $(\mathcal D_{[a',b']_{\rho}}^{\mathrm{Zel}})^r(\mathfrak m) \neq \infty$, and $\left(\mathcal D_{[a',b']_{\rho}}^{\mathrm{Zel}}\right)^{r+1}(\mathfrak m) = \infty$ can be proved by similar arguments. By Theorem \ref{thm alg der zel}, we have 
\[ \varepsilon^{\mathrm R}_{[a',b']_{\rho}}(Z(\mathfrak m))=r_{[a',b']_{\rho}}.
\]
By \cite[Propostion 5.2]{Cha_csq}, the multiplicity of the segment $[a',b']_{\rho}$ in $\mathfrak{hd}(Z(\mathfrak m))$ is precisely 
\[  \varepsilon^{\mathrm{R}}_{[a',b']_{\rho}}(Z(\mathfrak m))- \varepsilon^{\mathrm{R}}_{[a',b'+1]_{\rho}}(Z(\mathfrak m)) 
\]
and so is equal to $r_{[a',b']_{\rho}}-r_{[a',b'+1]_{\rho}}$, that is the number of segments $[a',b']_{\rho}$ in $\mathcal{H}^\mathrm{Zel}(\mathfrak m)$. Thus, one now sees that $\mathfrak{hd}(Z(\mathfrak m))=\mathcal{H}^\mathrm{Zel}(\mathfrak m)$.
\end{proof}
\begin{example}
\begin{enumerate}
\item Let $\mathfrak m=\left\{ [1,4]_{\rho}, [2,5]_{\rho}, [3,4]_{\rho},[2,6]_{\rho} \right\}$. Then, we get
\[  \mathfrak{hd}(Z(\mathfrak m))=\left\{ [4,5]_{\rho}, [4]_{\rho}, [6]_{\rho} \right\} .
\]
\item If all the segments in $\mathfrak m$ are mutually unlinked, then
\[  \mathfrak{hd}(Z(\mathfrak m))=\left\{ [e(\Delta)]_{\rho}: \Delta \in \mathfrak m \right\} .
\]
\end{enumerate}
\end{example}

%%%%%%%%%%%%%%%%%%%%%%%%%%%%%%%%%%%%%%%%%%%%%%%%%%%%%%%%%%%%%%%%%%%%%%%%%%%%%%%%%%%

\appendix

\section{More examples on $\mathfrak{tds}$-process } \label{s example tds process}

\begin{example} \label{ex tds a commute aa+1 Delta in m1}
 We consider $\mathfrak m=\left\{ [1,5]_{\rho}, [2,4]_{\rho}, [2]_{\rho}, [3,6]_{\rho} \right\}$ with $a=1$ and $b=3$. In such case, $\mathcal D_{[1,3]_{\rho}}^{\mathrm{Lang}}(\mathfrak m)=\left\{ [2,5]_{\rho}, [4]_{\rho}, [2]_{\rho}, [3,6]_{\rho} \right\}$ and $\mathfrak m_{[1,3]}=\left\{[1,5]_{\rho}, [2,4]_{\rho}, [3,6]_{\rho}\right\}$. Thus, the segments $[2,5]_{\rho}$ and $[3,6]_{\rho}$ in $\mathcal D_{[1,3]_{\rho}}^{\mathrm{Lang}}(\mathfrak m)$ participate in the $\mathfrak{tds}(\mathcal D_{[1,3]_{\rho}}^{\mathrm{Lang}}(\mathfrak m), 2)$-process. On the other hand, the segments $[2,4]_{\rho}$ and $[3,6]_{\rho}$ participate in the $\mathfrak{tds}(\mathfrak m, 2)$-process. So the role of $[2,4]_{\rho}$ in $\mathfrak m$ for $\mathfrak{tds}(\mathfrak m, 2)$-process is now replaced by the truncated $[2,5]_{\rho}$ for $\mathfrak{tds}(\mathcal D_{[1,3]_{\rho}}^{\mathrm{Lang}}(\mathfrak m), 2)$-process.
\end{example}

\begin{example}
 We consider $\mathfrak m=\left\{ [1,5]_{\rho}, [2,4]_{\rho}, [2]_{\rho}, [3,6]_{\rho} \right\}$ with $a=1$ and $b=3$. In such case, $\mathcal D_{[1,3]_{\rho}}^{\mathrm{Lang}}(\mathfrak m)=\left\{ [2,5]_{\rho}, [4]_{\rho}, [2]_{\rho}, [3,6]_{\rho} \right\}$. Thus, the segments $[2,5]_{\rho}$ and $[3,6]_{\rho}$ in $\mathcal D_{[1,3]_{\rho}}^{\mathrm{Lang}}(\mathfrak m)$ participate in the $\mathfrak{tds}(\mathcal D_{[1,3]_{\rho}}^{\mathrm{Lang}}(\mathfrak m), 2)$-process. On the other hand, the segments $[2,4]_{\rho}$ and $[3,6]_{\rho}$ participate in the $\mathfrak{tds}(\mathfrak m, 2)$-process. So the role of $[2,4]_{\rho}$ in $\mathfrak m$ for  $\mathfrak{tds}(\mathfrak m, 2)$-process is now replaced by the truncated $[2,5]_{\rho}$ for $\mathfrak{tds}(\mathcal D_{[1,3]_{\rho}}^{\mathrm{Lang}}(\mathfrak m), 2)$-process.
\end{example}

\begin{example}
Consider $\mathfrak m=\left\{ [1,6]_{\rho}, [2,3]_{\rho}, [2,5]_{\rho}, [3,4]_{\rho}, [3,7]_{\rho} \right\}$ with $a=1$ and $b=3$. In this case, $\mathcal D^{\mathrm{Lang}}_{[1,3]_{\rho}}(\mathfrak m)=\left\{ [2,6]_{\rho}, [2,3]_{\rho}, [3,5]_{\rho}, [4]_{\rho}, [3,7]_{\rho}\right\}$. Note that the segments participating in the removal steps of $\mathfrak{tus}(\mathfrak m, 2)$ are $[2,5]_{\rho}$, $[3,7]_{\rho}$, $[2,3]_{\rho}$ and $[3,4]_{\rho}$. On the other hand, the segments participating in the removal steps of $\mathfrak{tus}(\mathcal D^{\mathrm{Lang}}_{[1,3]_{\rho}}(\mathfrak m), 2)$ are $[2,6]_{\rho}$, $[3,7]_{\rho}$, $[2,3]_{\rho}$ and $[3,5]_{\rho}$. Here the truncated $[2,6]_{\rho}$ in $\mathcal D^{\mathrm{Lang}}_{[1,3]_{\rho}}(\mathfrak m)$ replaces the role $[2,5]_{\rho}$ in $\mathfrak m$, and the truncated $[3,5]_{\rho}$ in $\mathcal D^{\mathrm{Lang}}_{[1,3]_{\rho}}(\mathfrak m)$ replaces the role of $[3,4]_{\rho}$ in $\mathfrak m$.
\end{example}

\begin{example}
Consider $\mathfrak m=\left\{ [1,5]_{\rho}, [4, 7]_{\rho}, [2,3]_{\rho}, [2,5]_{\rho}, [3,4]_{\rho}\right\}$ with $a=1$ and $b=3$. In this case, $\mathcal D^{\mathrm{Lang}}_{[1,3]_{\rho}}(\mathfrak m)=\left\{ [3,5]_{\rho}, [4,7]_{\rho}, [2,3]_{\rho}, [2,5]_{\rho}, [4]_{\rho}\right\}$. Note that the segments participating in the removal steps of $\mathfrak{tus}(\mathfrak m, 2)$ are $[2,3]_{\rho}$ and $[3,4]_{\rho}$, and the segments participating in the removal steps of $\mathfrak{tus}(\mathcal D^{\mathrm{Lang}}_{[1,3]_{\rho}}(\mathfrak m), 2)$ are $[2,3]_{\rho}$ and $[3,5]_{\rho}$. Here the role of $[3,4]_{\rho}$ in $\mathfrak m$ is replaced by the truncated $[3,5]_{\rho}$ in $\mathcal D^{\mathrm{Lang}}_{[1,3]_{\rho}}(\mathfrak m)$. 
\end{example}

\section{Proof of Proposition \ref{prop dual derivative integral}} \label{appendix prop duality}

 To facilitate discussions, we define a natural bijective map:
$\Psi: \mathfrak m \rightarrow \mathbb D_r(\mathfrak m)$ determined by $\Psi([a',b']_{\rho})=[-r+b'+1,a'-1]_{\rho}$. We first make the following two simple observations:
\begin{enumerate}
\item[(a)] Two segments $\Delta$ and $\Delta'$ in $\mathfrak m$ are linked if and only if $\Psi(\Delta)$ and $\Psi(\Delta')$ in $\mathbb D_r(\mathfrak m)$ are linked. 
\item[(b)] The map
\[  \Delta \in \mathfrak m \mapsto \ell_{rel}(\Delta)+\ell_{rel}(\Psi(\Delta)) 
\]
is a constant map equal to $r$.
\end{enumerate}

We also have the following facts, whose proofs are elementary.
 Let $\Delta \in \mathfrak m$. 
 \begin{itemize}
     \item  By using the map $\Psi$, one sees that $a_1$ is the largest integer such that $a_1 \leq s(\Delta)$ and $\Delta' \prec \Delta$ for some $\Delta' \in \mathfrak m[a_1]$ if and only if $a_1$ is also the largest integer $a_1-1 \leq e(\Psi(\Delta))$ and $\Delta'' \prec \Psi(\Delta)$ for some $\Delta'' \in \mathbb D_r(\mathfrak m)\langle a_1-1\rangle$. 
     \item Moreover, for such $a_1$, by the second observation above, $\Delta'$ is the shortest choice in $\mathfrak m[a_1]$ such that $\Delta'\prec \Delta$ if and only if $\Psi(\Delta')$ is the longest choice in $\mathbb D_r(\mathfrak m)\langle a_1-1 \rangle$ such that $\Psi(\Delta') \prec \Psi(\Delta)$.
\end{itemize}

We now apply the map $\Theta$ on $\mathbb D_r(\mathfrak m)$ (resp. $\mathbb D^{[a,b]_\rho}_r(\mathfrak m)$) to apply Algorithm \ref{alg:der:Lang}. Let $\mathfrak n=\Theta( \mathbb D_r(\mathfrak m))$ (resp. $\Theta( \mathbb D^{[a,b]_\rho}_r(\mathfrak m))$). Now, it follows from the above two bullets: for $k\geq 1$ (resp. $k\geq 2$), the $k$-th upward sequence $\underline{\mathfrak{Us}}(\mathfrak n_k)$ (here $\mathfrak n_k$ is defined in an obvious way as in Algorithm \ref{alg:der:Lang}) maps naturally, under $(\Theta\circ \Psi)^{-1}$, to the downward sequence $\underline{\mathfrak{Ds}}(\mathfrak m_{k})$ (resp. $\underline{\mathfrak{Ds}}(\mathfrak m_{k-1})$), where $\mathfrak m_{k}$ and $\mathfrak m_{k-1}$ are defined as in Algorithm \ref{alg:int:Lang}. Moreover, for any $\Delta \in \underline{\mathfrak{Ds}}(\mathfrak m_{k-1})$, 
\[  \Theta\circ \Psi(\mathfrak{af}(\Delta))=\mathfrak{rf}(\Theta\circ \Psi(\Delta)) .
\]

 If we consider $\mathfrak n= \Theta (\mathbb D_r^{[a,b]_{\rho}}(\mathfrak m))=\Theta \left(\mathbb D_r(\mathfrak m)+[-r+b+1,b]_{\rho}\right)$, the first upward sequence $\underline{\mathfrak{Us}}(\mathfrak n_1)$ in Algorithm \ref{alg:der:Lang} contains only the segment $\Theta([b-r+1,b]_{\rho})$. The remaining upward sequences are exactly those from that for $\mathcal D^{\mathrm{Lang}}_{[-b,-a]_{\rho^\vee}}\left(\Theta \circ\mathbb D_r(\mathfrak m)\right)$.

Thus, running the two algorithms, if $\Delta_{p_1,q_1}, \ldots, \Delta_{p_{\ell}, q_{\ell}}$ are all segments participating in the extension process for $\mathcal I^{\mathrm{Lang}}_{[a,b]_{\rho}}(\mathfrak m)$, then 
\begin{enumerate}
    \item[(a)] if $s(\Delta_{p_{\ell}, q_{\ell}})<b+1$ (equivalently $|\mathcal I_{[a,b]_{\rho}}^{\mathrm{Lang}}(\mathfrak m)|>|\mathfrak m|$), then 
    $\Theta\circ\Psi(\Delta_{p_1,q_1}), \ldots, \Theta\circ \Psi(\Delta_{p_{\ell}, q_{\ell}})$ together with $[-b, r-b-1]_{\rho^\vee}$ participate in the truncation process for $\mathcal D^{\mathrm{Lang}}_{[-b,-a]_{\rho^\vee}}\left(\Theta \circ\mathbb D_r^{[a,b]_{\rho}}(\mathfrak m)\right)$. 
    \item[(b)] if $s(\Delta_{p_{\ell},q_{\ell}})=b+1$ (equivalently $|\mathcal I_{[a,b]_{\rho}}^{\mathrm{Lang}}(\mathfrak m)|=|\mathfrak m|$), then  $\Theta\circ\Psi(\Delta_{p_1,q_1}), \ldots, \Theta\circ \Psi(\Delta_{p_{\ell}, q_{\ell}})$ are all segments participating in the truncation process for the derivative $\mathcal D^{\mathrm{Lang}}_{[-b,-a]_{\rho^\vee}}\left(\Theta \circ\mathbb D_r(\mathfrak m)\right)$. 
\end{enumerate}

Now, using notations in Algorithm \ref{alg:int:Lang}, one can verify, for $k=1,\ldots \ell$
\[   \Theta\circ \Psi(\Delta_{p_k,q_k}^{\mathrm{ex}})=\Psi(\Delta_{p_k,q_k})^{\mathrm{trc}} ;
\]
and if $s(\Delta_{p_{\ell},q_{\ell}})<b+1$ (equivalently $|\mathcal I_{[a,b]_{\rho}}^{\mathrm{Lang}}(\mathfrak m)|>|\mathfrak m|$), then we also have:
\[  \Theta \circ \Psi([s(\Delta_{\ell}), b]_{\rho})=[-s(\Delta_{\ell})+1, r-b-1,]_{\rho^{\vee}}=([-b, r-b-1]_{\rho^{\vee}})^{\mathrm{trc}}
\]
Now, one verifies the formulas (i) and (ii) in the proposition by using the above equations, (\ref{alg_der_lang}) and (\ref{eq:int_Lang_[a,b]}).

For (iii), it is similar to the above discussion of (a) and (b) on the segments participating in the truncation process.

\section{Gluing minimally linked multisegments} \label{s glue minimal multiseg}
\begin{lemma} \label{lem combine minimal}
Let $\mathfrak m \in \mathrm{Mult}_{\rho}$ and $k \in \mathbb Z$. Let $\mathfrak p_k, \mathfrak p_k' \in \mathrm{Mult}_{\rho}$ such that all segments $\Delta$ in $\mathfrak p_k, \mathfrak p'_k$ satisfy $e(\Delta)=k$, and let $\mathfrak p_{k+1}, \mathfrak p'_{k+1} \in \mathrm{Mult}_{\rho}$ such that all segments $\Delta$ in $\mathfrak p_{k+1}, \mathfrak p'_{k+1}$ satisfy $e(\Delta)=k+1$. Suppose $|\mathfrak p_k|\leq |\mathfrak p_{k+1}|$, $|\mathfrak p_k'|\leq |\mathfrak p_{k+1}'|$, and furthermore $\mathfrak p_k$ and $\mathfrak p_{k+1}$ (resp. $\mathfrak p_k'$ and $\mathfrak p_{k+1}'$) are minimally linked in $\mathfrak m+\mathfrak p_k+\mathfrak p_{k+1}$ (resp. $\mathfrak m+\mathfrak p_k'+\mathfrak p_{k+1}'$), then $\mathfrak p_k+\mathfrak p_{k}'$ and $\mathfrak p_{k+1}+\mathfrak p_{k+1}'$ are minimally linked in $\mathfrak m+\mathfrak p_k+\mathfrak p_{k+1}+\mathfrak p_k'+\mathfrak p_{k+1}'$.
\end{lemma}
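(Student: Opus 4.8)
\textbf{Proof plan for Lemma \ref{lem combine minimal}.}

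The plan is to prove the three defining conditions of minimal linkedness (Definition \ref{def minimal linked}(ii)(a)--(c)) for the pair $(\mathfrak p_k+\mathfrak p_k', \mathfrak p_{k+1}+\mathfrak p_{k+1}')$ inside $\mathfrak m+\mathfrak p_k+\mathfrak p_{k+1}+\mathfrak p_k'+\mathfrak p_{k+1}'$. Write $\mathfrak M\langle k \rangle := \mathfrak p_k+\mathfrak p_k'$ and $\mathfrak M\langle k+1\rangle := \mathfrak p_{k+1}+\mathfrak p_{k+1}'$ for the full collections of segments ending at $k$ and $k+1$ in the big multisegment; we must show $\mathfrak p_k+\mathfrak p_k'$ is the (unique) submultisegment of $\mathfrak M\langle k\rangle$ minimally linked to $\mathfrak p_{k+1}+\mathfrak p_{k+1}'$. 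For condition (a), observe that from the two given linkings-by-mapping $f:\mathfrak p_k\to \mathfrak p_{k+1}$ and $f':\mathfrak p_k'\to\mathfrak p_{k+1}'$ one immediately gets an injective map $f\sqcup f':\mathfrak p_k+\mathfrak p_k'\to \mathfrak p_{k+1}+\mathfrak p_{k+1}'$ with $\Delta\prec (f\sqcup f')(\Delta)$, so $\mathfrak p_k+\mathfrak p_k'$ and $\mathfrak p_{k+1}+\mathfrak p_{k+1}'$ are linked by mapping. This is the easy half.

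For condition (b) (maximality of size), I would argue by a Hall-type/König-type matching argument. Suppose $\mathfrak n_1'\subseteq \mathfrak M\langle k\rangle$ is linked by mapping into $\mathfrak p_{k+1}+\mathfrak p_{k+1}'$ with $|\mathfrak n_1'|>|\mathfrak p_k|+|\mathfrak p_k'|$. Consider the bipartite graph on $\mathfrak M\langle k\rangle\sqcup \mathfrak M\langle k+1\rangle$ with an edge $\Delta$--$\Delta''$ whenever $\Delta\prec\Delta''$; since $e$-values are equal within each side, $\Delta\prec\Delta''$ is equivalent to $s(\Delta)>s(\Delta'')$, i.e. the edge relation is determined by a single total preorder on the starting points. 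The crucial structural fact is that this interval-like (``staircase'') structure lets one decompose any matching saturating a subset $S$ of $\mathfrak M\langle k\rangle$ into the part meeting $\mathfrak p_{k+1}$ and the part meeting $\mathfrak p_{k+1}'$; the minimal linkedness of $\mathfrak p_k,\mathfrak p_{k+1}$ (resp. $\mathfrak p_k',\mathfrak p_{k+1}'$) says $\mathfrak p_k$ (resp. $\mathfrak p_k'$) already achieves the largest subset of $\mathfrak M\langle k\rangle$ that can be matched into $\mathfrak p_{k+1}$ (resp. $\mathfrak p_{k+1}'$) in the presence of $\mathfrak m$. Combining, no larger submultisegment of $\mathfrak M\langle k\rangle$ can be matched into the union, giving a contradiction; hence $|\mathfrak p_k|+|\mathfrak p_k'|$ is maximal. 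The cleanest way to package this is to note that the ``shortest $r$ segments'' description of $MW$-type selections (as used repeatedly in Lemmas \ref{lem mw number=no max link zel} and \ref{lem structure of w minus max}) already gives a canonical maximal matched set, and that $\mathfrak p_k+\mathfrak p_k'$ can be replaced, without changing matched-size, by the first $|\mathfrak p_k|+|\mathfrak p_k'|$ shortest segments of $\mathfrak M\langle k\rangle$; one then checks this canonical set is matched into $\mathfrak M\langle k+1\rangle$ using $f,f'$.

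For condition (c) (leftmost choice in the increasing order): write $\mathfrak p_k+\mathfrak p_k'=\{\Gamma_1\subseteq\cdots\subseteq\Gamma_m\}$ and suppose $\mathfrak n_1'=\{\Gamma_1'\subseteq\cdots\subseteq\Gamma_m'\}\subseteq \mathfrak M\langle k\rangle$ is linked by mapping into $\mathfrak p_{k+1}+\mathfrak p_{k+1}'$ with $\mathfrak n_1'<\mathfrak p_k+\mathfrak p_k'$, i.e. $s(\Gamma_i)\le s(\Gamma_i')$ for all $i$ with strict inequality somewhere. The idea is again to split the matching of $\mathfrak n_1'$ into its $\mathfrak p_{k+1}$-part and its $\mathfrak p_{k+1}'$-part; because $e$-values agree within each side, each part is a submultisegment of $\mathfrak M\langle k\rangle$ of the same size matched into $\mathfrak p_{k+1}$ (resp. $\mathfrak p_{k+1}'$), and minimal linkedness of the two original pairs forces each part to dominate (in the $<$-order) the corresponding part of $\mathfrak p_k+\mathfrak p_k'$. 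Merging the two dominating parts in increasing order and using a standard rearrangement/interlacing inequality for sorted sequences, one concludes $s(\Gamma_i)\ge s(\Gamma_i')$ for all $i$, contradicting $\mathfrak n_1'<\mathfrak p_k+\mathfrak p_k'$. The uniqueness remark after Definition \ref{def minimal linked} then finishes the proof.

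The main obstacle is condition (c): one must be careful that ``splitting a matching of $\mathfrak n_1'$ into the $\mathfrak p_{k+1}$- and $\mathfrak p_{k+1}'$-parts'' is compatible with the increasing-order comparison, since the global sorted order of $\mathfrak n_1'$ interleaves segments sent to $\mathfrak p_{k+1}$ with those sent to $\mathfrak p_{k+1}'$. The resolution I anticipate is the elementary fact that if a sorted sequence is partitioned into two subsequences, and each subsequence is pointwise dominated by the corresponding subsequence extracted from another sorted sequence of equal type, then the merged sorted sequence is pointwise dominated as well; this interlacing lemma is where the real content sits, and it is exactly the combinatorial heart that the paper also leans on implicitly when proving Lemmas \ref{lem mw number=no max link zel} and \ref{lem structure of w minus max}. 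Everything else is bookkeeping of injective maps of the form $f\sqcup f'$ and invoking the hypotheses $|\mathfrak p_k|\le|\mathfrak p_{k+1}|$, $|\mathfrak p_k'|\le|\mathfrak p_{k+1}'|$ to guarantee the relevant submultisegments have the right cardinalities.
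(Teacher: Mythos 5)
Your overall route — verifying conditions (a)--(c) of Definition~\ref{def minimal linked}(ii) directly for the merged pair, via a decomposition of any candidate matching into its $\mathfrak p_{k+1}$-part and $\mathfrak p_{k+1}'$-part — is a genuinely different strategy from the paper's, which instead reduces to the case $|\mathfrak p_k'|=|\mathfrak p_{k+1}'|=1$ (resp.\ $|\mathfrak p_k'|=0$, $|\mathfrak p_{k+1}'|=1$) and then peels off one pair at a time inductively, tracking where the single added segment $\widetilde\Delta_i$ sits via its insertion index $j_i^*$. However, your argument has a concrete gap at its central step. The hypothesis that $\mathfrak p_k$ and $\mathfrak p_{k+1}$ are minimally linked is a statement \emph{inside} $\mathfrak m+\mathfrak p_k+\mathfrak p_{k+1}$, so it only bounds submultisegments drawn from the pool $(\mathfrak m+\mathfrak p_k+\mathfrak p_{k+1})\langle k\rangle=\mathfrak m\langle k\rangle+\mathfrak p_k$. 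But in the merged multisegment the pool at level $k$ is $\mathfrak m\langle k\rangle+\mathfrak p_k+\mathfrak p_k'$, and the $\mathfrak p_{k+1}$-part $\mathfrak n_A$ of your candidate matching is free to draw on segments of $\mathfrak p_k'$, about which the first hypothesis says nothing. Your assertion that ``$\mathfrak p_k$ already achieves the largest subset of $\mathfrak M\langle k\rangle$ that can be matched into $\mathfrak p_{k+1}$'' is therefore not an instance of the hypothesis: adding the (possibly longer) segments of $\mathfrak p_k'$ to the pool can strictly enlarge the maximum matched set into $\mathfrak p_{k+1}$, so the ``split and contradict'' step for (b), and the analogous step you use to set up the interlacing argument for (c), do not go through as written.

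To close this you would need a rerouting/augmenting-path argument: when a segment $\delta\in\mathfrak p_k'$ appears in $\mathfrak n_A$, exchange it against the $\mathfrak p_{k+1}'$-side using the matching $f'$ guaranteed by the second hypothesis, freeing targets so that the contradiction can be pushed onto the correct (smaller) pool. That is real work that your plan does not supply. Note also that this is exactly the difficulty the paper's induction is designed to avoid: in the base case only one extra segment $\widetilde\Delta_k$ enters the pool, so the casework on $j_k^*$ versus $j_{k+1}^*$ can explicitly account for every way it might interact with the matching. The abstract interlacing fact you invoke for (c) (a sorted merge of two pointwise-dominated subsequences is pointwise dominated) is correct in isolation, but it sits downstream of the same unjustified decomposition, so the gap propagates to (c) as well.
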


\begin{proof}
We shall first consider the case that $|\mathfrak p_k'|=|\mathfrak p_{k+1}'|=1$, and so let $\widetilde{\Delta}_k \in \mathfrak p'_k$ and let $\widetilde{\Delta}_{k+1}\in \mathfrak p'_{k+1}$. Let $r=|\mathfrak p_k|$, and let $s=|\mathfrak p_{k+1}|$. We write the segments in $\mathfrak p_k$ (resp. $\mathfrak p_{k+1}$) in the increasing order:
\[  \Delta_{k,1}, \ldots, \Delta_{k,r}, \quad (\mbox{resp.}\  \Delta_{k+1,1}, \ldots, \Delta_{k+1,s}) .
\]
Let $j^*_i$ $(i=k,k+1)$ be the smallest integer such that $s(\Delta_{i, j^*_i-1}) >s(\widetilde{\Delta}_i) \geq s(\Delta_{i,j^*_i})$.
For $1\leq x \leq r+1$, let
\[ \overline{\Delta}_{k,x}= \left\{ \begin{array}{cc}  \Delta_{k,x}   &   1\leq x \leq j_k^*-1 \\
\widetilde{\Delta}_{k} & x=j_k^* \\
\Delta_{k, x-1} &  j_k^*+1 \leq x \leq r  
\end{array} \right.
\]

We now obtain another ordering. Let $\left\{ \underline{\Delta}_{k,1}, \ldots, \underline{\Delta}_{k,r+1}\right\}$ be the submultisegment minimally linked to $\mathfrak p_{k+1}+\widetilde{\Delta}_{k+1}$ in $\mathfrak m+\mathfrak p_k+\mathfrak p_{k+1}+\widetilde{\Delta}_k+\widetilde{\Delta}_{k+1}$, written in the increasing order. For $1\leq x \leq r+1$, we have to show that $\underline{\Delta}_{k,x}=\overline{\Delta}_{k,x}$. 

\begin{enumerate}
\item Case 1: $j_k^*< j_{k+1}^*$. 
\begin{itemize}
\item For $1 \leq x \leq j_{k+1}^*-1$, if $\underline{\Delta}_{k, j_k^*}$ is in , then it contradicts that the segment $\mathfrak p_k$ is linked to the segment in $\mathfrak p_{k+1}$. 
\item For $x=j_{k+1}^*$, if $\underline{\Delta}_{k,j_{k+1}^*}$ is in $\mathfrak m$, then that segment is linked to $\widetilde{\Delta}_k$ and so, by our choice of indices, is also linked $\overline{\Delta}_{k+1, j_{k+1}^*-1}=\Delta_{k+1,j_{k+1}^*-1}$. This then contradicts the minimal linkedness of $\mathfrak p_k$ and $\mathfrak p_{k+1}$.
\item For $j_{k+1}^*+1\leq x \leq r+1$, the case is similar to above two cases.
%\item It remains to show $s=s'$. However, if it is not the case, one can show it contradicts the minimal linkedness of $\mathfrak p_k$ and $\mathfrak p_{k+1}$.
\end{itemize}
\item Case 2: $j_k^*\geq j_{k+1}^*$.
\begin{itemize}
    \item For $1\leq x\leq  j_{k+1}^*-1$, $\underline{\Delta}_{k,x}=\overline{\Delta}_{k,x}$ again comes from the minimal linkedness of $\mathfrak p_k$ and $\mathfrak p_{k+1}$.
    \item We consider $j_{k+1}^* \leq x \leq j_k^*$. Suppose $\underline{\Delta}_{k,x}$ is in $\mathfrak n$. Then $\underline{\Delta}_{k,x}$ must have to be shorter than $\widetilde{\Delta}_{k}$. This contradicts that $\widetilde{\Delta}_k$ and $ \widetilde{\Delta}_{k+1}$ are minimally linked in $\mathfrak m+\widetilde{\Delta}_k+\widetilde{\Delta}_{k+1}$.
    \item For $j_k^*<x\leq r+1$, one again uses the minimal linkedness between $\mathfrak p_k$ and $\mathfrak p_{k+1}$.
\end{itemize}
\end{enumerate}

The case that $|\mathfrak p_{k+1}'|=1$ and $|\mathfrak p_k'|=0$ is similar to the consideration of Case 2 above, and we omit the details.

For the general case, we find the first segment $\Delta_1$ in the increasing order of $\mathfrak p_{k+1}'$ and the first segment in the increasing order $\Delta_1'$ of $\mathfrak p_k'$. Then, one uses the given minimal linkedness to deduce that $\Delta_1$ and $\Delta_1'$ are minimally linked in $\mathfrak m+\Delta_1+\Delta_1'$; and $\mathfrak p-\Delta_1$ and $\mathfrak p'-\Delta_1'$ are minimally linked in $\mathfrak m+(\mathfrak p-\Delta_1)+(\mathfrak p'-\Delta_1')$. Then, one proceeds inductively by using the above two basic cases.
\end{proof}

%%%%%%%%%%%%%%%%%%%%%%%%%%%%%%%%%%%%%%%%%%%%%%%%%%%%%%%%%%%%%%%%%%%%%%%%%%%%%%%%%%%


\begin{thebibliography}{99999999}

\bibitem[Ato20]{Ato}  Hiraku Atobe,  \emph{On an algorithm to compute derivatives}. Manuscripta Math. 167 (2022), no. 3-4, 721-763. 


\bibitem[BLM13]{BLM13} Alexandru Ioan Badulescu, Erez Lapid, Alberto M\'inguez: \emph{Une condition suffisante pour l'irr\'eductibilit\'e d'une induite parabolique de $GL(m, D)$}. Ann. Inst. Fourier (Grenoble) 63(6), 2239-2266 (2013)

\bibitem[Cha22]{Ch22} Kei Yuen Chan, \emph{Restriction for general linear groups: The local non-tempered Gan-Gross-Prasad conjecture (non-Archimedean case)}. Crelles Journal, vol. 2022, no. 783, 2022, pp. 49-94. 

\bibitem[Cha23]{Cha_qbl} \bysame, \emph{Quotient branching law for p-adic $(\mathrm{GL}_{n+1}, \mathrm{GL}_{n})$ I: generalized GGP relevant pair}. arXiv:2212.05919 (v2, 2023).


\bibitem[Cha24a]{Cha_csq_ii} \bysame, \emph{Construction of simple quotients of Bernstein-Zelevinsky derivatives and highest derivative multisegments II: minimal sequences}. preprint (2024).



\bibitem[Cha24b]{Cha_csq_iii} \bysame, \emph{Construction of simple quotients of Bernstein-Zelevinsky derivatives and highest derivative multisegments III: properties of minimal sequences}. preprint (2024).

\bibitem[Cha24c]{Cha_duality} \bysame, \emph{Duality for generalized Gan-Gross-Prasad relevant pairs for $p$-adic $\mathrm{GL}_n$}, preprint, arXiv:2210.17249


\bibitem[Cha24d]{Ch24} \bysame, \emph{On the Product Functor on Inner Forms of the General Linear Group Over A Non-Archimedean Local Field}. Transformation Groups (2024).

\bibitem[Cha25]{Cha_csq} \bysame, \emph{Construction of simple quotients of Bernstein-Zelevinsky derivatives and highest derivative multisegments I: reduction to combinatorics}, Trans. Amer. Math. Soc. Ser. B 12 (2025), 851-909.  


\bibitem[CW25]{CW25} Kei Yuen Chan and Daniel Kayue Wong, \emph{On the Lefeschetz principle for $\mathrm{GL}(n, \mathbb C)$ and $\mathrm{GL}(m, \mathbb Q_p)$}. to appear in Israel Journal of Mathematics.


\bibitem[CT12]{CT12} Dan Ciubotaru and Peter Trapa, \emph{Duality for $GL(n, \mathbb{R})$, $GL(n, \mathbb{Q}_p)$, and the degenerate affine Hecke algebra for $gl(n)$}. Amer. J. Math. {\bf 134} (2012), 1-30.

\bibitem[Gur21]{Gu21} Maxim Gurevich, \emph{Quantum invariants for decomposition problems in type A rings of representations}.
Journal of Combinatorial Theory, Series A, {\bf 180} (2021).

\bibitem[Jan07]{Jan} Chris Jantzen, \emph{Jacquet modules of p-adic general linear groups}. Representation Theory: an electronic journal of the American Mathematical Society {\bf 11} (2007), 45-83. 

\bibitem[Jan14]{Jan14} \bysame, \emph{Tempered representations for classical p-adic groups}. Manuscripta. Math. 145(3-4), 319-387 (2014).

\bibitem[Jan18]{Jan18} \bysame, \emph{Jacquet modules and irreducibility of induced representations for classical p-adic groups}. Manuscripta Math. 156(1-2), 23-55 (2018). 

\bibitem[LM16]{LM16} Erez Lapid and Alberto M\'inguez, \emph{On parabolic induction on inner forms of the general linear group over a non-archimedean local field}, Select. Math., 22 No. 4 163-183 (2016), 2347-2400. 

\bibitem[LM18]{LM19} \bysame, \emph{Geometric conditions for $\square$-irreducibility of certain representations of the general linear group over a non-archimedean local field}, Advances in Mathematics 339(2018), 113-190.

\bibitem[LM20]{LM_inven} \bysame, \emph{Conjectures and results about parabolic induction of representations of $\mathrm{GL}_n(F)$}, Invent. Math. 222 (2020), no. 3, 695-747. 

\bibitem[LM22]{LM_pamq} \bysame, \emph{A binary operation on irreducible components of Lusztig's nilpotent
varieties II: applications and conjectures for representations of $GL_n$ over a non-archimedean local field}, (2022) to appear in Pure and Applied Mathematics Quarterly.


\bibitem[M\'in08]{Mi08} Alberto M\'inguez, \emph{ Correspondance de Howe explicite : paires duales de type II}, Annales scientifiques de l'\'Ecole Normale Sup\'erieure, S\'erie 4, Tome 41 (2008) no. 5, pp. 717-741.

\bibitem[M\'in09]{Min} \bysame, \emph{ Surl'irr\'educitibilit\'e d'une induite parabolique}. (French) J. Reine Angew. Math. 629 (2009), 107-131.  

\bibitem[MW86]{MW} Colette M\oe glin and Jean-Loup Waldspurger, \emph{ Sur l\'involution de Zelevinski}. (French) [The Zelevinskiĭ involution] J. Reine Angew. Math. 372 (1986), 136-177. 

\bibitem[Tad86]{Ta86} Marko Tadi\'c, \emph{On the classification of irreducible unitary representations of $GL(n)$ and the conjectures of Bernstein and Zelevinsky}, Ann. Sci. \'Ecole Norm. Sup., {\bf 19} (1986), 335-382.

\bibitem[Tad15]{Ta15} \bysame, \emph{On the reducibility points beyond the ends of complementary series of p-adic general linear groups}. J. Lie Theory 25(1), 147-183 (2015)


\bibitem[Tad22]{Ta22} \bysame, On unitarizability and Arthur packets. Manuscripta Math. 169, 327–367 (2022). 


\bibitem[Xu17]{Xu17} Bin Xu, \emph{On M\oe glin's parametrization of Arthur packets for p-adic quasisplit $Sp(N)$ and $SO(N)$}. Canad. J. Math. 69 (2017), no. 4, 890-960.

\bibitem[Zel80]{Zel} A.V. Zelevinsky, \emph{Induced representations of reductive p-adic groups. II. On irreducible representations of $GL(n)$}, Ann. Sci. École Norm. Sup. (4) 13(2)(1980) 165-210. 

\end{thebibliography}
\end{document}